\theoremstyle{plain}
\newtheorem{theorem}{Theorem}[section]
\newtheorem{lemma}[theorem]{Lemma}
\newtheorem{cor}[theorem]{Corollary}
\newtheorem{prop}[theorem]{Proposition}
\theoremstyle{definition}
\newtheorem{defi}[theorem]{Definition}
\newtheorem{example}[theorem]{Example}
\theoremstyle{remark}
\newtheorem{rem}[theorem]{Remark}
\numberwithin{equation}{section}
\newcommand{\gf}{\ensuremath{\mathbb{K}}}
\newcommand{\sg}[1]{\ensuremath{\mathbb{S}_{#1}}}
\newcommand{\cyc}[1]{\ensuremath{\mathbb{Z}/{#1}\mathbb{Z}}}
\newcommand{\cycsum}[1]{\ensuremath{\Theta_{#1}}}
\newcommand{\mat}[2]{\ensuremath{\mathrm{M}_{#1}\left(#2\right)}}
\newcommand{\gl}[2]{\ensuremath{\mathfrak{gl}_{#1}\left(#2\right)}}
\newcommand{\un}[1]{\ensuremath{\mathfrak{u}_{#1}}}
\newcommand{\cotimes}{\ensuremath{\hat{\otimes}}}
\newcommand{\innprod}[1][-,-]{\ensuremath{\langle #1 \rangle}}
\newcommand{\innproddelim}[1][-,-]{\ensuremath{\left\langle #1 \right\rangle}}
\newcommand{\innprodloc}[1][-,-]{\ensuremath{\langle #1 \rangle_{\mathrm{loc}}}}
\newcommand{\OTFT}[2]{\ensuremath{\mathbf{T}^{#1}_{#2}}}
\newcommand{\Morita}[1][\textgoth{A}]{\ensuremath{\mathcal{M}_{\gamma,\nu}^{#1}}}
\newcommand{\den}[1]{\ensuremath{|\Lambda_{#1}|}}
\newcommand{\csalg}[1]{\ensuremath{\widehat{S}\left(#1\right)}}
\newcommand{\csalgP}[1]{\ensuremath{\widehat{S}_+\left(#1\right)}}
\newcommand{\csalgdelim}[1]{\ensuremath{\widehat{S} #1}}
\newcommand{\csalgPdelim}[1]{\ensuremath{\widehat{S}_+ #1}}
\newcommand{\Surface}[2]{\ensuremath{\Sigma^{#1}_{#2}}}
\newcommand{\smooth}[1]{\ensuremath{C^{\infty}(#1)}}
\newcommand{\smoothlimit}[1]{\ensuremath{C^{\infty}_{\geq 0}(#1)}}
\newcommand{\smoothsing}[1]{\ensuremath{C^{\infty}_{<0}(#1)}}
\newcommand{\ptree}[1][p]{\ensuremath{\mathscr{T}_{#1}}}
\newcommand{\intcomm}[1]{\ensuremath{\mathscr{O}_{\hbar}\left(#1\right)}}
\newcommand{\intcommP}[1]{\ensuremath{\mathscr{O}_{\hbar}^+\left(#1\right)}}
\newcommand{\intcommI}[1]{\ensuremath{\mathscr{O}_{\hbar}^{\mathrm{Int}}\left(#1\right)}}
\newcommand{\intcommL}[1]{\ensuremath{\mathscr{O}_{\hbar}^{\mathrm{Loc}}\left(#1\right)}}
\newcommand{\intcommLI}[1]{\ensuremath{\mathscr{O}_{\hbar}^{\mathrm{LocInt}}\left(#1\right)}}
\newcommand{\intcommLIdelim}[1]{\ensuremath{\mathscr{O}_{\hbar}^{\mathrm{LocInt}}#1}}
\newcommand{\KontHam}[1]{\ensuremath{\mathscr{H}\left(#1\right)}}
\newcommand{\KontHamP}[1]{\ensuremath{\mathscr{H}_+\left(#1\right)}}
\newcommand{\KontHamPdelim}[1]{\ensuremath{\mathscr{H}_+ #1}}
\newcommand{\intnuP}[1]{\ensuremath{\mathscr{N}_{\nu}\left(#1\right)}}
\newcommand{\intnoncomm}[1]{\ensuremath{\mathscr{N}_{\gamma,\nu}\left(#1\right)}}
\newcommand{\intnoncommP}[1]{\ensuremath{\mathscr{N}_{\gamma,\nu}^+\left(#1\right)}}
\newcommand{\intnoncommI}[1]{\ensuremath{\mathscr{N}_{\gamma,\nu}^{\mathrm{Int}}\left(#1\right)}}
\newcommand{\intnoncommL}[1]{\ensuremath{\mathscr{N}_{\gamma,\nu}^{\mathrm{Loc}}\left(#1\right)}}
\newcommand{\intnoncommLI}[1]{\ensuremath{\mathscr{N}_{\gamma,\nu}^{\mathrm{LocInt}}\left(#1\right)}}
\newcommand{\intnoncommItree}[1]{\ensuremath{\mathscr{N}_{\gamma,\nu}^{\mathrm{Tree}}\left(#1\right)}}
\newcommand{\NCEffThy}[1]{\ensuremath{\mathbf{NCEffThy}\left(#1\right)}}
\newcommand{\NCEffThyL}[2]{\ensuremath{\mathbf{NCEffThy}_{#1}\left(#2\right)}}
\newcommand{\EffThy}[1]{\ensuremath{\mathbf{EffThy}\left(#1\right)}}
\newcommand{\EffThyL}[2]{\ensuremath{\mathbf{EffThy}_{#1}\left(#2\right)}}
\DeclareMathAlphabet{\mathpzc}{OT1}{pzc}{m}{it}
\DeclareMathOperator{\Hom}{Hom}
\DeclareMathOperator{\Aut}{Aut}
\DeclareMathOperator{\Bij}{Bij}
\DeclareMathOperator{\Tr}{Tr}
\DeclareMathOperator{\Sing}{Sing}
\begin{document}
\title{Noncommutative effective field theories and the large $N$ correspondence}
\author{Alastair Hamilton}
\address{Department of Mathematics and Statistics, Texas Tech University, Lubbock, TX 79409-1042. USA.} \email{alastair.hamilton@ttu.edu}
\begin{abstract}
We integrate the notion of an effective field theory, as described by Costello, with the framework of noncommutative symplectic geometry introduced by Kontsevich; providing a definition for the renormalization group flow in noncommutative geometry that is defined through the use of ribbon graphs. As in the commutative case, the resulting noncommutative effective field theories are in one-to-one correspondence with local interaction functionals. We explain how in this setting, the large $N$ correspondence discovered by 't~Hooft appears as a relation between noncommutative and commutative effective field theories. As an example, we apply this framework to study a noncommutative analogue of Chern-Simons theory.
\end{abstract}
\keywords{Effective field theory, noncommutative geometry, large $N$ limit, renormalization, Chern-Simons theory, Loday-Quillen-Tsygan Theorem.}
\makeatletter
\@namedef{subjclassname@2020}{%
  \textup{2020} Mathematics Subject Classification}
\makeatother
\subjclass[2020]{81T12, 81T15, 81T18, 81T30, 81T35, 81T75}
\maketitle
\small
\tableofcontents
\normalsize

\section{Introduction}

In this paper we will combine the framework of effective field theory that was developed by Costello in \cite{CosEffThy}, with the machinery of noncommutative symplectic geometry that was introduced by Kontsevich in \cite{KontSympGeom}. In the former, our interactions live in a commutative space of functions on the fields, given by a powerseries algebra of distributions on the space of smooth sections of a graded vector bundle. In the latter, full commutative symmetry is replaced with a more restrictive symmetry described by the action of the cyclic groups. This is the kind of notion of symmetry that appears in Feynman diagrams when we replace the usual world-line picture of particles interacting, with world-sheets formed by open strings.

\subsection{Background}
\subsubsection{Renormalization and effective field theory}

The definition of an \emph{effective field theory}, as we will use the term for the remainder of the paper, was formulated by Costello in \cite{CosEffThy}, in which he gave a mathematically precise definition for a quantum field theory in the perturbative Lagrangian formulation, based on the ideas of Kadanoff \cite{KadScaling}, Wilson \cite{WilRGcrit, WilRenormScalar}, Polchinski \cite{PolRenormLag} and others. In an effective field theory (also known as a low-energy effective field theory), a family of effective actions are specified, parameterized by the length scale (or equivalently, the energy scale) cut-off for the theory. These interactions must be suitably local in nature and related by the \emph{renormalization group flow}. The basic idea may be stated simply enough: these low-energy effective interactions should be all that is required to study phenomena up to the prescribed energy cut-off.

One of the main theorems that Costello proves in \cite{CosEffThy} is that there is a one-to-one correspondence between local functionals and effective field theories. This correspondence is defined through \emph{renormalization}. In this paper, we formulate and prove a noncommutative analogue of this result.

Separately, in \cite{CosEffThy}, Costello explains how to treat \emph{gauge theories} in the framework of effective field theory, which is done through the use of the Batalin-Vilkovisky formalism. The treatment of gauge theories lies outside the scope of the current article, but we take the subject up in earnest in \cite{NCRBV}, where we explain how to bring the noncommutative geometry that we apply here into this picture as well.

\subsubsection{Noncommutative symplectic geometry}\label{sec_introNCgeom}

We should begin by being careful to emphasize that the noncommutative geometry that we consider in this article arises on the space of fields, and the associated space of functionals; not on the spacetime itself (as it does, for example, through Moyal quantization \cite{KontDQ}). The type of noncommutative geometry that we employ in the current article was first considered by Kontsevich in \cite{KontSympGeom}, where it was shown to have a deep connection to moduli spaces of Riemann surfaces. This connection arises from the results of Harer \cite{HarOrbicells}, Mumford, Penner \cite{PenOrbicells} and Thurston; which provide an orbicellular decomposition of this moduli space in terms of \emph{ribbon graphs}. These ribbon graphs describe diagrams of interacting open strings---their vertices possess a natural cyclic structure and may be decorated by the interaction terms for our quantum field theory. This leads to a type of noncommutative geometry on the space of all such interaction functionals.

All of this may be extended to compactifications of the moduli space of Riemann surfaces. Such compactifications were considered by Looijenga in \cite{LooCompact} and by Kontsevich in \cite{KontAiry}, where in the latter they played a crucial role in his proof of Witten's conjectures. In this case, the orbicellular decomposition is described in terms of \emph{stable ribbon graphs}. In \cite{baran}, Barannikov explained how the noncommutative geometry of Kontsevich \cite{KontSympGeom} is modified when passing to the compactification described in \cite{KontAiry}. The corresponding noncommutative geometry for the compactification described by Looijenga in \cite{LooCompact} was described in \cite{HamCompact}. The latter is important, as the compactification picks up extra strata that arise from the degeneration of the boundary components of Riemann surfaces, and this leads to an extra parameter $\nu$ keeping track of this phenomena in the noncommutative geometry. As we shall see later, this parameter is crucial in describing large $N$ phenomena, cf.~\cite{GiGqHaZeLQT, GwHaZeGUE}.

\subsubsection{Noncommutative effective field theory}

In this paper we formulate a definition for a \emph{noncommutative effective field theory}. The preceding statement is subject to the same qualification as above---the purpose here is to incorporate the noncommutative geometry of Kontsevich \cite{KontSympGeom} into the work of Costello \cite{CosEffThy}, not to study quantum field theory on a deformation quantized spacetime; the latter being perhaps the more commonly understood meaning of this term, cf. \cite{NoteonNCCSThy}.

In this framework, the renormalization group flow is defined through the use of stable ribbon graphs. Following Costello, we prove noncommutative analogues of many of the foundational results of \cite{CosEffThy}. This includes establishing the fundamental properties of the renormalization group flow in this noncommutative setting, including:
\begin{itemize}
\item
the group identities for the flow, which describe it as an action by the abelian group of propagators on the space of interaction functionals, and leads to the definition of the renormalization group equation for an effective field theory;
\item
the existence (and uniqueness) of a set of counterterms for any local interaction,
\item
the use of these counterterms to define a noncommutative effective field theory through the process of renormalization, and
\item
that this process of renormalization defines a one-to-one correspondence between local interactions and noncommutative effective field theories.
\end{itemize}

One of the basic features of a noncommutative effective field theory is that it lifts the definition of an effective field theory, as defined in \cite{CosEffThy}, to the realm of noncommutative geometry. There is a natural way to pass from the noncommutative geometry that we use in this paper to the commutative geometry used by Costello in \cite{CosEffThy}. This is ultimately performed in a fairly straightforward fashion, by simply taking a further quotient by the additional commutativity relations. We show that under this map, the renormalization group flow in noncommutative geometry is transformed into the flow on commutative geometry. This implies, in particular, that any noncommutative effective field theory produces an effective field theory under this transformation.

Another important feature of noncommutative effective field theories is that any two-dimensional Open Topological Field Theory (OTFT) defines a transformation on the set of all such theories. In fact, it is precisely the axioms for an OTFT that ensure that such a transformation is well-defined, and has the requisite properties. It is of course well-known that an OTFT is the same thing as a Frobenius algebra \cite{AtiyahTFT, ChLaOTFT}. The transformation defined by this OTFT will take the space of fields to its tensor product with this Frobenius algebra. If we take as our input the algebra of $N$-by-$N$ matrices, then we get a family of transformations for a noncommutative effective field theory indexed by the rank $N$. This will allow us to formulate a version of the large $N$ correspondence in the spirit of 't~Hooft \cite{MAGOO, tHooftplanar}.

\subsubsection{The large $N$ correspondence}

Combining the transformations described in the preceding section, we may produce from any noncommutative effective field theory, a family of (commutative) effective field theories indexed by a positive integer $N$. The procedure is that first, given a noncommutative effective field theory, we apply the transformations defined by the OTFTs that are constructed from the Frobenius algebras of $N$-by-$N$ matrices. This yields a family of noncommutative effective field theories indexed by the rank $N$, from which we may then pass to the corresponding (commutative) effective field theories. This is represented by Diagram \eqref{dig_largeN}.

If the fields that we start with are differential forms on a manifold, then the fields that we end up with will be matrix-valued differential forms---in other words, they will represent connections on the manifold. As we explained in Section \ref{sec_introNCgeom} above, a noncommutative effective field theory describes a type of open string theory. The effective field theories that are spawned from this noncommutative theory will then be the corresponding gauge theories. Again, a more complete discussion of gauge theories will be provided in \cite{NCRBV}, in which we will also examine this large $N$ correspondence.

This, in principle, allows one to study this family of effective field theories using the noncommutative theory from which they originate. In fact, more is possible; in the reverse direction, there is a way to go backwards and deduce properties of the originating noncommutative effective field theory from the properties of the generated effective field theories. This arises (somewhat indirectly) as a consequence of the Loday-Quillen-Tsygan Theorem from algebraic K-theory \cite{LodayQuillen, Tsygan}. From this theorem we may deduce that a term vanishes in the noncommutative theory if and only if the corresponding term vanishes for all the commutative theories.

\subsubsection{Noncommutative Chern-Simons theory}

As an application of the ideas that are laid out in this paper, we introduce and briefly study a noncommutative analogue of Chern-Simons theory. Again, we should make clear that this differs from the approach---taken in \cite{SussCS} for instance---which makes use of the Moyal star product. Instead, the noncommutative version of Chern-Simons that we produce here has more in common with Costello's treatment of Chern-Simons theory in \cite{CosTCFT} as a differential form on the moduli space of Riemann surfaces, and with Witten's treatment of the theory in \cite{WitCSstring} as an open string theory.

Treating Chern-Simons theory as an open string theory leads to a definition of Chern-Simons theory as a noncommutative effective field theory. Under the large $N$ correspondence described in the preceding section, this noncommutative analogue of Chern-Simons theory yields effective field theories at all ranks $N$ defined on the same spacetime manifold. We show that these theories are precisely the $U(N)$ Chern-Simons theories. The calculation itself is quite straightforward, once the basic framework is set up.

As a simple demonstration of what may be achieved with the methods of this paper, we analyze the counterterms for our noncommutative Chern-Simons theory. It is known from the work of Axelrod-Singer \cite{AxelSingI, AxelSingII} and Kontsevich \cite{KontFeyn} that---by an extensive analysis of the counterterms---the counterterms for regular (commutative) Chern-Simons theory all vanish on a flat manifold. Presumably, a similar analysis employing the same techniques can be used to show that the counterterms for our noncommutative Chern-Simons theory all vanish as well. However, we prefer to deduce this result as a simple consequence of the large $N$ correspondence that we described above that makes use of the Loday-Quillen-Tsygan theorem.

Of course, in this paper we work over a compact manifold, and there are not many flat compact three-manifolds (in fact, there are exactly six orientable ones). In \cite{CosBVrenormalization}, Costello circumvents this problem by proving that effective field theories form a sheaf over the manifold, allowing him to use the same techniques to deal with curved spacetimes; however, this particular subject would take us too far afield and lies outside the scope of this article. We will have more to say about noncommutative Chern-Simons theory in the sequel \cite{NCRBV}.

It is worth mentioning that the same set of techniques should be equally applicable to Yang-Mills theory. That is, there should be a noncommutative analogue of Yang-Mills theory that generates the usual $U(N)$ Yang-Mills gauge theories under the large $N$ correspondence. In \cite[\S 6.5]{CosEffThy} Costello proves the renormalizability of Yang-Mills theory under certain conditions on the gauge group, and we believe that a similar application of our large $N$ correspondence should yield results here too.

\subsection{Acknowledgements}

The author is grateful to Owen Gwilliam, who successfully persuaded him that the Loday-Quillen-Tsygan theorem was relevant to the study of large $N$ limits of gauge theories, during a visit a number of years ago. The author also gratefully acknowledges many helpful conversations with Dmitri Pavlov, and with Mahmoud Zeinalian.

\subsection{Layout of the paper}

We begin in Section \ref{sec_freethy} by recalling the basic definition of a free theory. This structure will always serve as our initial starting point. In Section \ref{sec_RGflow} we introduce some basic definitions for graphs and Feynman amplitudes, and use these to define the renormalization group flow in noncommutative geometry and establish some of its basic properties. In Section \ref{sec_NCeffthy}, we introduce the definition of a noncommutative effective field theory. We explain how any noncommutative theory produces a commutative theory, and prove one of our main results stating that noncommutative effective field theories are in one-to-one correspondence with local functionals. In Section \ref{sec_largeN} we discuss the large $N$ correspondence in its most general context, and formulate a vanishing criteria using the Loday-Quillen-Tsygan Theorem. Finally, in Section \ref{sec_CSthy}, we provide an example of a noncommutative effective field theory in the form of a noncommutative analogue of Chern-Simons theory, and prove that on a flat spacetime it requires no counterterms. At the end of the paper we have included a brief appendix on topological vector spaces, in which we collect some standard facts and definitions.

\subsection{Notation and conventions}

For the rest of the paper, we will work over a ground field $\gf$, which will be either the real or complex numbers. We denote the algebra of $N$-by-$N$ matrices with entries in $\gf$ by $\mat{N}{\gf}$. Throughout the paper we will work with $\mathbb{Z}$-graded locally convex Hausdorff topological vector spaces $\mathcal{V}$ over $\gf$. Our convention will be to work with \emph{cohomologically} graded spaces, hence the suspension $\Sigma\mathcal{V}$ of a graded topological vector space $\mathcal{V}$ is defined by $\Sigma \mathcal{V}^i := \mathcal{V}^{i+1}$.

We will denote the space of \emph{continuous} $\gf$-linear maps between topological vector spaces $\mathcal{V}$ and $\mathcal{W}$ by
\[ \Hom_{\gf}\left(\mathcal{V},\mathcal{W}\right). \]
The continuous $\gf$-linear dual of $\mathcal{V}$ will be denoted by $\mathcal{V}^{\dag}$. These spaces will always be provided with the strong topology of uniform convergence on bounded sets.

The completed projective tensor product of two locally convex Hausdorff topological vector spaces $\mathcal{V}$ and $\mathcal{W}$ will be denoted by
\[ \mathcal{V}\cotimes\mathcal{W}. \]
The ordinary tensor product of two vector spaces will be denoted using the standard notation~$\otimes$. We shall denote by $\tau_{\mathcal{U},\mathcal{V}}$ (or just simply $\tau$), the permutation
\[ \mathcal{U}\cotimes\mathcal{V}\to\mathcal{V}\cotimes\mathcal{U}, \qquad u\otimes v \mapsto (-1)^{|v||u|}v\otimes u. \]

Given a locally convex Hausdorff topological vector space $\mathcal{V}$, we define the completed symmetric algebra of $\mathcal{V}$ and its nonunital counterpart by
\[ \csalg{\mathcal{V}}:=\prod_{i=0}^\infty \bigl[\mathcal{V}^{\cotimes i}\bigr]_{\sg{i}} \quad\text{and}\quad \csalgP{\mathcal{V}}:= \prod_{i=1}^\infty \bigl[\mathcal{V}^{\cotimes i}\bigr]_{\sg{i}}, \]
where $\sg{i}$ denotes the symmetric group. Here we have adopted the standard convention of denoting coinvariants using a subscript and invariants using a superscript.

Given a smooth manifold $M$, the algebra of smooth $\gf$-valued functions on $M$ will be denoted by $\smooth{M,\gf}$, with smooth real-valued functions being simply denoted by $\smooth{M}$. Likewise, the de Rham algebra of $\gf$-valued forms on $M$ will be denoted by $\Omega^{\bullet}(M,\gf)$. If $E$ is a vector bundle over $M$ then the space $\mathcal{E}:=\Gamma(M,E)$ of smooth sections of $E$ will always be endowed with the $C^{\infty}$-topology of uniform convergence of sections and their derivatives on compact sets, see the appendix for a brief definition and discussion of this topology.

The number of elements in a finite set $X$ will be denoted by $|X|$. In formulas, the identity map on a set $X$ will be denoted by $\mathds{1}_X$, or simply by $\mathds{1}$.

\section{Free theories} \label{sec_freethy}

In this section we will introduce and recall the most basic principle structure that we will be working with for the duration of the paper---the notion of a free theory, which is the starting point for an interacting theory. This will include the structure of the space of fields, the construction of propagators, and the notion of a local distribution and operator.

\subsection{Definition of a free theory}

\begin{defi} \label{def_freethy}
A \emph{free theory} consists of the following data:
\begin{enumerate}
\item \label{itm_freethy1}
A smooth compact manifold $M$.
\item
A $\mathbb{Z}$-graded $\gf$-vector bundle $E$ over $M$. We denote the space of smooth sections by $\mathcal{E}:=\Gamma(M,E)$.
\item \label{itm_freethy3}
A local pairing, consisting of a map of vector bundles
\[ \innprodloc:E\otimes E \to \den{M}\underset{\mathbb{R}}{\otimes}\gf \]
of some homogeneous degree $n$, where the real density bundle $\den{M}$ is concentrated in degree zero. This pairing must be nondegenerate over each fiber and graded (skew-)symmetric according to the parity of $n$:
\[ \innprodloc[v_1,v_2] = (-1)^{n+|v_1||v_2|}\innprodloc[v_2,v_1]. \]
\item
A generalized Laplacian
\[ H:\mathcal{E}\to\mathcal{E}. \]
This is a second order differential operator of degree zero, whose principal symbol $\sigma_2(H)$ is multiplication by a Riemannian metric $g$ on $M$, see \cite[\S 2.1]{BerGetVer}.
\end{enumerate}
The local pairing $\innprodloc$ determines an integration pairing on sections
\[ \innprod:\mathcal{E}\otimes\mathcal{E}\to\gf, \qquad  \innprod[s_1,s_2]:=\int_M \innprodloc[s_1,s_2]. \]
We require that the generalized Laplacian $H$ be self-adjoint under this pairing;
\[ \innprod[H s_1,s_2] = \innprod[s_1,H s_2]; \quad s_1,s_2\in\mathcal{E}. \]
\end{defi}

\begin{example} \label{exm_freescalar}
Choosing the trivial line bundle $E:=\gf$ on a compact Riemannian manifold $M$, the Riemannian density defines a local pairing and we may choose the Laplacian on $\mathcal{E}=\smooth{M,\gf}$ to be $(H+m^2)$, where $H$ is the canonical Laplace-Beltrami operator determined by the Riemannian metric. This is the starting point for scalar field theories with kinetic term given by
\[ -\frac{1}{2}\innproddelim[s,(H+m^2)s]= -\frac{1}{2}\int_M s (H+m^2)s, \quad s\in\mathcal{E}. \]
\end{example}

\begin{example} \label{exm_freeCSthy}
Let $\mathfrak{g}$ be a Lie algebra with a symmetric nondegenerate pairing $\innprod_{\mathfrak{g}}$. Choosing the bundle
\[E:=\Sigma\Lambda^{\bullet} T^*M\underset{\mathbb{R}}{\otimes}\mathfrak{g} \]
on a compact oriented Riemannian manifold $M$ of odd dimension $n$ yields the space of fields
\[ \mathcal{E}=\Sigma\Omega^{\bullet}(M,\gf)\underset{\mathbb{\gf}}{\otimes}\mathfrak{g} \]
of $\mathfrak{g}$-valued differential forms. Combining the pairing on $\mathfrak{g}$ with the exterior multiplication of forms, the orientation of $M$ yields a local pairing on $E$ of degree $(2-n)$. Note that this pairing becomes skew-symmetric after the shift in parity. The Hodge-Laplacian determines a generalized Laplacian $H:=[Q,Q^*]$ on $\mathcal {E}$, where $Q$ is the exterior derivative and $Q^*$ is its customary Hodge adjoint---since the dimension $n$ of $M$ is odd, $H$ will be self-adjoint under the pairing. This is the starting point for Chern-Simons types theories, with kinetic term (for $n=3$) given by
\[ \frac{1}{2}\innprod[A,QA]_{\mathcal{E}} = \frac{1}{2}\int_M\innprod[A, QA]_{\mathfrak{g}}, \quad A\in\mathcal{E}^0 = \Omega^1(M,\gf)\underset{\mathbb{\gf}}{\otimes}\mathfrak{g}. \]
\end{example}

\subsection{Propagators}

The kinetic term of a free theory will be accommodated through the appropriate choice of a propagator, which will be described in terms of the heat kernel. To define the heat kernel, we first introduce the convolution operator.

\begin{defi}
Given a free theory $\mathcal{E}$ as above, the convolution operator
\[ \star:\mathcal{E}\cotimes\mathcal{E}\to\Hom_{\gf}(\mathcal{E},\mathcal{E}) \]
is defined as follows. Given a homogeneous $K\in\mathcal{E}\cotimes\mathcal{E}$, define
\[ K\star:\mathcal{E}\to\mathcal{E}, \qquad K\star(s):=(-1)^{|K|}\left(\mathds{1}\cotimes\innprod\right)[K\otimes s]. \]
\end{defi}

Given the data of a free theory as above, the results of \cite[\S 2.3-2.5]{BerGetVer} show the existence of a unique heat kernel
\[ K\in\smooth{0,\infty}\underset{\mathbb{R}}{\cotimes}\mathcal{E}\cotimes\mathcal{E} = \Gamma\left((0,\infty)\times M\times M,E\boxtimes E\right). \]
If $t\in (0,\infty)$ then we denote the value of the heat kernel $K$ at the point $t$ by $K_t\in\mathcal{E}\cotimes\mathcal{E}$. The heat kernel possesses the following two defining properties:
\[ \left(\frac{\mathrm{d}}{\mathrm{d}t}\cotimes\mathds{1}\cotimes\mathds{1} + \mathds{1}\cotimes H\cotimes\mathds{1}\right)[K] = 0
\quad\text{and}\quad
\lim_{t\to 0}\left(K_t\star(s)\right) =  s, \quad\text{for all }s\in\mathcal{E}. \]

Note that if the pairing $\innprod$ has degree~$n$ then the heat kernel $K$ must have degree~$-n$ so that the operator $K_t\star$ has degree zero. Since the generalized Laplacian $H$ is self-adjoint, the operator $K_t\star$ will be self-adjoint too; cf. Proposition 2.17 of \cite{BerGetVer}. When this fact is combined with the (skew-)symmetry (according to its parity) of the pairing $\innprod$, it follows that the heat kernel $K_t\in\mathcal{E}\cotimes\mathcal{E}$ is symmetric; that is, it is invariant under the action of $\sg{2}$.

\begin{defi}
A \emph{propagator} for a free theory $\mathcal{E}$ is a symmetric  degree zero tensor
\[ P\in\bigl[\mathcal{E}\cotimes\mathcal{E}\bigr]^{\sg{2}}. \]
\end{defi}

\begin{rem}
In some sense, it would appear reasonable to try to include the propagator as part of the structure of a free theory, since it in principle determines the kinetic term of the theory. However, this is not so straightforwardly done, as we will soon see that what is required for the renormalization group flow is in fact a family of such propagators parameterized using length scale cutoffs for the theory.
\end{rem}

\begin{example}
Consider the free scalar field theory $\mathcal{E}:=\smooth{M,\gf}$ from Example \ref{exm_freescalar}. We may use the heat kernel $K\in\smooth{0,\infty}\cotimes_{\mathbb{R}}\mathcal{E}\cotimes\mathcal{E}$ for this free theory to define a family of propagators for this theory by
\[ P(\varepsilon,L) := \left(\int_{t=\varepsilon}^L\mathrm{d}t\cotimes\mathds{1}\cotimes\mathds{1}\right)[K]:(x,y)\longmapsto\int_{t=\varepsilon}^L K_t(x,y)\,\mathrm{d}t; \quad 0<\varepsilon<L<\infty. \]
\end{example}

\begin{example} \label{exm_CSpropagator}
Consider Chern-Simons theory
\[ \mathcal{E}=\Sigma\Omega^{\bullet}(M,\gf)\otimes\mathfrak{g} \]
on a three-manifold $M$, see Example \ref{exm_freeCSthy}. Using the heat kernel $K$ for this free theory we may define a family of propagators for this theory by
\[ P(\varepsilon,L) := \left(\int_{t=\varepsilon}^L\mathrm{d}t\cotimes Q^*\cotimes\mathds{1}\right)[K] = \int_{t=\varepsilon}^L\left(Q^*\cotimes\mathds{1}\right)[K_t]\,\mathrm{d}t; \quad 0<\varepsilon<L<\infty. \]
\end{example}

\subsection{Families of theories}

We will at some point have the need to consider families of free field theories indexed by some smooth parameter space $X$ with corners. This occurs for instance if we wish to explore how the scalar or Chern-Simons type field theories depend upon the Riemannian metric. This will be less relevant for us here, but will be more important in the sequel \cite{NCRBV}.

This is facilitated by considering the global sections $\mathcal{A}:=\Gamma(X,A)$ of some sheaf of graded commutative $\gf$-algebras, cf. Appendix 1 of \cite{CosEffThy}. We may safely limit ourselves to the case where $X:=\Delta^n$ is a simplex and $\mathcal{A}$ is either the algebra of smooth functions $\smooth{X,\gf}$ or the de Rham algebra $\Omega^{\bullet}(X,\gf)$. In either case, $\mathcal{A}$ will be a topological unital graded commutative $\gf$-algebra, equipped with the $C^{\infty}$-topology of Definition \ref{def_cinftopology}. We will denote the commutative multiplication on $\mathcal{A}$ by $\mu_{\mathcal{A}}$.

Any point $p\in X$ gives rise to a degree zero evaluation map from $\mathcal{A}$ to $\gf$. This will allow us to pass from a family of theories to an instance of a theory satisfying the requirements of Definition \ref{def_freethy}. Hence the reader that is uninterested in considering families of theories may simply take $\mathcal{A}:=\gf$ in everything that follows and hence ignore any further mention of the topic in the paper.

\begin{defi}
A \emph{family of free theories} over $\mathcal{A}$ consists of the following data:
\begin{itemize}
\item
Items \ref{itm_freethy1} to \ref{itm_freethy3} of Definition \ref{def_freethy}, as before.
\item
An operator of degree zero
\[ H:\mathcal{E}\to\mathcal{E}\cotimes\mathcal{A} \]
such that the canonical $\mathcal{A}$-linear extension
\[ \bigl(\mathds{1}\cotimes\mu_{\mathcal{A}}\bigr)\bigl(H\cotimes\mathds{1}\bigr):\mathcal{E}\cotimes\mathcal{A}\to\mathcal{E}\cotimes\mathcal{A}, \]
which we will also denote simply by $H$, is a differential operator.

Denoting by
\[ H_p:\mathcal{E}\to\mathcal{E} \]
the composition of $H$ with the evaluation map at a point $p\in X$, we require that $H_p$ is a generalized Laplacian for all $p\in X$.

Additionally, we require that $H$ be self-adjoint with respect to the pairing
\begin{equation} \label{eqn_innprodalg}
\innprod_{\mathcal{A}}:=\bigl(\innprod\cotimes\mu_{\mathcal{A}}\bigr)\bigl(\mathds{1}\cotimes\tau\cotimes\mathds{1}\bigr):\bigl(\mathcal{E}\cotimes\mathcal{A}\bigr)\cotimes\bigl(\mathcal{E}\cotimes\mathcal{A}\bigr)\to\mathcal{A}. \end{equation}
\end{itemize}
\end{defi}

The results from the appendix to Chapter 9 of \cite{BerGetVer} assert the existence of a heat kernel
\[ K\in\smooth{0,\infty}\underset{\mathbb{R}}{\cotimes}\mathcal{E}\cotimes\mathcal{E}\cotimes\mathcal{A}. \]
Let $K_t\in\mathcal{E}\cotimes\mathcal{E}\cotimes\mathcal{A}$ denote the evaluation of $K$ at a point $t>0$, and given $s\in\mathcal{E}\cotimes\mathcal{A}$ define
\[ K_t\star s := (-1)^{|K_t|}\bigl(\mathds{1}\cotimes\innprod_{\mathcal{A}}\bigr)[K_t\otimes s]\in\mathcal{E}\cotimes\mathcal{A}. \]
The heat kernel is characterized by the following two properties:
\[ \frac{\mathrm{d}}{\mathrm{d}t}\left(K_t\star s\right) = -H\left(K_t\star s\right) \quad\text{and}\quad \lim_{t\to 0}\left(K_t\star s\right) = s. \]
Note that, as before, the heat kernel $K$ and the pairing $\innprod$ must be of opposing degrees. If we denote by
\[ K|_p\in\smooth{0,\infty}\underset{\mathbb{R}}{\cotimes}\mathcal{E}\cotimes\mathcal{E} \]
the evaluation of $K$ at a point $p\in X$, then $K|_p$ will be the heat kernel for the generalized Laplacian $H_p$.

Since $H$ is self-adjoint, it follows that the operator $K_t\star$ is self-adjoint as well. Consequently, the heat kernel $K$ must be symmetric in $\mathcal{E}$.

\begin{defi}
A \emph{family of propagators} for a family of free theories $\mathcal{E}$ over $\mathcal{A}$ is a tensor
\[ P\in\mathcal{E}\cotimes\mathcal{E}\cotimes\mathcal{A} \]
of total degree zero that is symmetric in $\mathcal{E}$, that is $(\tau\cotimes\mathds{1})[P]=P$.
\end{defi}

\begin{example} \label{exm_canonicalpropagator}
Let $\mathcal{E}$ be a family of free theories over $\mathcal{A}$ and let
\[ D:\mathcal{E}\to\mathcal{E}\cotimes\mathcal{A} \]
be a homogeneous operator which has the same degree as the pairing $\innprod$ on $\mathcal{E}$, and whose canonical $\mathcal{A}$-linear extension to $\mathcal{E}\cotimes\mathcal{A}$ is a differential operator, which we will also denote by the same symbol $D$.

Consider the kernel $[D,K]$ given by
\[ [D,K]_t := (\mathds{1}_{\mathcal{E}\cotimes\mathcal{E}}\cotimes\mu)(\mathds{1}_{\mathcal{E}}\cotimes\tau\cotimes\mathds{1}_{\mathcal{A}})(D\cotimes\mathds{1}_{\mathcal{E}\cotimes\mathcal{A}})[K_t] - (\mathds{1}_{\mathcal{E}}\cotimes D)[K_t]. \]
Now suppose that $[D,K]=0$. This will occur for instance if:
\begin{itemize}
\item
the $\mathcal{A}$-linear operators $D$ and $H$ commute, and
\item
the operator $D$ is self-adjoint with respect to the pairing \eqref{eqn_innprodalg} in the sense that
\[ \innprod[Ds_1,s_2]_{\mathcal{A}} = (-1)^{|D||s_1|}\innprod[s_1,Ds_2]_{\mathcal{A}},\quad\text{for all }s_1,s_2\in\mathcal{E}. \]
\end{itemize}
In this case we may define a family of propagators for $\mathcal{E}$ over $\mathcal{A}$ by
\begin{equation} \label{eqn_canonicalpropagator}
P(\varepsilon,L):=\bigl(\mathds{1}_{\mathcal{E}\cotimes\mathcal{E}}\cotimes\mu\bigr)\bigl(\mathds{1}_{\mathcal{E}}\cotimes\tau\cotimes\mathds{1}_{\mathcal{A}}\bigr)\left(\int_{t=\varepsilon}^L\mathrm{d}t\cotimes D\cotimes\mathds{1}_{\mathcal{E}\cotimes\mathcal{A}}\right)[K]; \quad 0<\varepsilon,L<\infty.
\end{equation}
By definition, $P(L,\varepsilon)=-P(\varepsilon,L)$.
\end{example}

\subsection{Local distributions and operators}

When we come to discuss interacting theories, we will need the notion of a local functional. Given a free field theory $\mathcal{E}$, the space
\[ \bigl(\mathcal{E}^{\dag}\bigr)^{\cotimes l} = \bigl(\mathcal{E}^{\cotimes l}\bigr)^{\dag} = \Hom_{\gf}\bigl(\Gamma(M^l,E^{\boxtimes l}),\gf\bigr) \]
is a space of distributions.

\begin{defi} \label{def_localdistribution}
If $\mathcal{E}:=\Gamma(M,E)$ is the space of sections of a graded vector bundle $E$ over a compact manifold $M$ then we say that a distribution in $(\mathcal{E}^{\cotimes l})^{\dag}$ is a \emph{local distribution} if it can be written as a finite sum of distributions of the form
\begin{equation} \label{eqn_localdistribution}
s_1,\ldots,s_l \longmapsto \int_M D_1 s_1\cdots D_l s_l\,\mathrm{d}\varrho,
\end{equation}
where $D_1,\ldots D_l:\mathcal{E}\to\smooth{M,\gf}$ are differential operators and $\mathrm{d}\varrho\in\den{M}$ is a density.

More generally, if $\mathcal{E}$ is a family of free theories over $\mathcal{A}$ then we will say that an operator from $\mathcal{E}^{\otimes l}$ to $\mathcal{A}$ is a \emph{local $\mathcal{A}$-valued distribution} if it can be written as a finite sum of operators of the form \eqref{eqn_localdistribution}, where now each $D_i$ is an operator from $\mathcal{E}$ to $\smooth{M,\gf}\cotimes\mathcal{A}$ whose $\mathcal{A}$-linear extension to $\mathcal{E}\cotimes\mathcal{A}$ is a differential operator.
\end{defi}

\begin{example}
The kinetic terms from Example \ref{exm_freescalar} and Example \ref{exm_freeCSthy} provide examples of local distributions.
\end{example}

\section{The renormalization group flow} \label{sec_RGflow}

In this section we will define an analogue of the renormalization group flow in noncommutative geometry. We will explain how the noncommutative renormalization group flow lifts the customary (commutative) renormalization group flow, as defined by Costello in \cite[Ch. 2]{CosEffThy}, to the framework of noncommutative geometry.

\subsection{Feynman amplitudes}

The renormalization group flow will be defined in terms of a Feynman diagram expansion, and so we begin by recalling some basic material on graphs and Feynman amplitudes. While ordinary graphs appear in the definition of the renormalization group flow in commutative geometry \cite[\S 2.3]{CosEffThy}, defining the noncommutative renormalization group flow will involve replacing these graphs with ribbon graphs---in other words, the usual worldlines perspective for path integrals will be replaced through the use of worldsheets.

\subsubsection{Topological vector spaces attached to a set}

To make sense of the Feynman amplitudes, we begin by recalling some basic definitions from the theory of operads describing how we can attach vector spaces to sets using a monoidal structure, cf. \cite[\S 2]{GetKap}.

\begin{defi}
Let $\mathcal{V}$ be a $\mathbb{Z}$-graded nuclear space and $A$ a finite set of cardinality $n$. Define
\[ \mathcal{V}(\!(A)\!):=\Biggl[\bigoplus_{h\in\Bij\left(\{1,\ldots,n\},A\right)}\mathcal{V}^{\cotimes n}\Biggr]^{\sg{n}}, \]
where $\Bij\left(\{1,\ldots,n\},A\right)$ denotes the set of bijections and $\sg{n}$ acts simultaneously on both this set of bijections and the tensor product $\mathcal{V}^{\cotimes n}$. This is a functor on the groupoid of finite sets.
\end{defi}

There is an obvious family of isomorphisms
\begin{equation} \label{eqn_labelmaps}
\vartheta_h:\mathcal{V}^{\cotimes n}\to\mathcal{V}(\!(A)\!), \quad h\in\Bij\left(\{1,\ldots,n\},A\right);
\end{equation}
satisfying $\vartheta_{h\sigma}=\vartheta_h\circ\sigma$, and hence there are canonical maps
\begin{equation} \label{eqn_attachmapssym}
\bigl[\mathcal{V}^{\cotimes n}\bigr]^{\sg{n}}\to\mathcal{V}(\!(A)\!)\to\bigl[\mathcal{V}^{\cotimes n}\bigr]_{\sg{n}}.
\end{equation}

We have canonical identifications
\begin{equation} \label{eqn_tensorunion}
\mathcal{V}(\!(A\sqcup B)\!) = \mathcal{V}(\!(A)\!)\cotimes\mathcal{V}(\!(B)\!)
\end{equation}
and, when $\mathcal{V}$ is a nuclear Fr\'echet space, the identification\footnote{Here we are careful to apply the Koszul sign rule $(f\otimes g)[x\otimes y]=(-1)^{|g||x|}f(x)g(y)$.}
\[ \mathcal{V}^{\dag}(\!(A)\!) = \mathcal{V}(\!(A)\!)^{\dag}; \]
cf. Proposition 50.7 of \cite{Treves}.

More generally, if $\mathscr{X}$ is a finite set of cardinality $k$ and $\mathcal{V}$ is a function that assigns to every element $X$ of $\mathscr{X}$, a nuclear space $\mathcal{V}_X$, then we define
\[ \underset{X\in\mathscr{X}}{\widehat{\bigotimes}}\mathcal{V}_X := \Biggl[\bigoplus_{\mathbf{X}\in\Bij(\{1,\ldots,k\},\mathscr{X})}\mathcal{V}_{X_1}\cotimes\ldots\cotimes\mathcal{V}_{X_k}\Biggr]^{\sg{k}}. \]
Now if $\mathscr{X}$ is a finite collection of finite sets and $\mathcal{V}$ is a nuclear space, there is then a canonical identification
\[ \underset{A\in\mathscr{X}}{\widehat{\bigotimes}}\mathcal{V}(\!(A)\!) = \mathcal{V}\Bigl(\!\Bigl(\coprod_{A\in\mathscr{X}}A\Bigr)\!\Bigr). \]
Further, if $v$ is a function that assigns to every set $A\in\mathscr{X}$ a tensor $v_A\in\mathcal{V}(\!(A)\!)$ such that every element $v_A$, except possibly one of them, has even degree, then there is a canonically  defined tensor
\[ \underset{A\in\mathscr{X}}{\otimes} v_A \in \underset{A\in\mathscr{X}}{\widehat{\bigotimes}}\mathcal{V}(\!(A)\!) = \mathcal{V}\Bigl(\!\Bigl(\coprod_{A\in\mathscr{X}}A\Bigr)\!\Bigr). \]

\subsubsection{Stable graphs and commutative geometry}

Here we will recall from \cite{GetKap} the definition of a stable graph and from \cite{CosEffThy} the corresponding definition of the Feynman amplitude for a space of interactions arising from polynomial functions on the space of fields. Such a space of interactions is part of a commutative algebra; that is, it is based on a kind of commutative geometry for the space of fields. We will then subsequently generalize this to a space based on the noncommutative geometry of Kontsevich \cite{KontSympGeom}.

\begin{defi} \label{def_stablegraph}
A \emph{stable graph} $\Gamma$ consists of the following data:
\begin{itemize}
\item
A finite set $H(\Gamma)$, whose members are called the \emph{half-edges} of $\Gamma$.
\item
A finite set $V(\Gamma)$, whose members are called the \emph{vertices} of $\Gamma$.
\item
A map $\rho_{\Gamma}:H(\Gamma)\to V(\Gamma)$. The preimage of a vertex $v\in V(\Gamma)$ under this map consists of all those \emph{half-edges incident to the vertex $v$}. By an abuse of notation, we will denote this preimage by $v$ also and define the valency of $v$ to be its cardinality~$|v|$.
\item
An involution $\kappa_{\Gamma}:H(\Gamma)\to H(\Gamma)$. The fixed points of $\kappa_{\Gamma}$ are called the \emph{legs} of $\Gamma$, and the subset of all legs will be denoted by $L(\Gamma)$. Those orbits generated by $\kappa_{\Gamma}$ that consist of two elements will be called the \emph{edges} of $\Gamma$ and will be denoted by~$E(\Gamma)$.
\item
A map $l$ that assigns to every vertex $v\in V(\Gamma)$ a nonnegative integer $l(v)$ called the \emph{loop number} of $v$.
\end{itemize}
Additionally, we require the following restriction on the valencies of the vertices:
\begin{equation} \label{eqn_valencyconditions}
2l(v)+|v|\geq 3, \quad\text{for all }v\in V(\Gamma).
\end{equation}

We define the \emph{loop number} of the stable graph $\Gamma$ by
\[ \ell(\Gamma) := \mathbf{b}_1(\Gamma) + \sum_{v\in V(\Gamma)}l(v), \]
where $\mathbf{b}_1(\Gamma)$ is the first Betti number of the graph.
\end{defi}

From \eqref{eqn_valencyconditions} we may easily arrive at the inequalities:
\begin{equation} \label{eqn_graphinequalities}
\begin{split}
2\ell(\Gamma)+|L(\Gamma)| &\geq 2+|V(\Gamma)| \geq 3, \\
|H(\Gamma)| &\leq 3|L(\Gamma)| + 6(\ell(\Gamma) - 1);
\end{split}
\end{equation}
which hold for any connected\footnote{By convention, the empty graph is not considered to be connected.} stable graph $\Gamma$.

\begin{rem}
The nomenclature ``stable'' derives from the connection between such graphs and moduli spaces of stable curves, see for example \cite{DelMum} and \cite[\S 2]{GetKap}. Typically, in those settings the number $l(v)$ is instead called the ``genus'', as it refers to the genus of an irreducible component of the curve. However, this terminology will conflict with our later use of this term and hence we have renamed it as the ``loop number'' here, since it will essentially count the number of loops in our graph.
\end{rem}

\begin{defi}
Given a free field theory $\mathcal{E}$, consider the algebra of $\gf[[\hbar]]$-valued functionals on the fields
\[ \intcomm{\mathcal{E}}:=\csalg{\mathcal{E}^{\dag}}[[\hbar]] = \prod_{i=0}^\infty\hbar^i\csalg{\mathcal{E}^{\dag}} = \gf[[\hbar]]\cotimes\csalg{\mathcal{E}^{\dag}}. \]
We may write any functional $I\in\intcomm{\mathcal{E}}$ as a series
\[ I = \sum_{i,j=0}^{\infty}\hbar^i I_{ij}, \quad\text{where } I_{ij}\in\bigl[(\mathcal{E}^{\dag})^{\cotimes j}\bigr]_{\sg{j}}. \]
We will say that a functional $I\in\intcomm{\mathcal{E}}$ is an \emph{interaction} if it has degree zero, and if for all $i,j\geq 0$;
\begin{equation} \label{eqn_interactionconstraints}
I_{ij} = 0, \quad\text{whenever }2i+j<3.
\end{equation}
Denote the subspace of $\intcomm{\mathcal{E}}$ consisting of all those $I$ satisfying \eqref{eqn_interactionconstraints} by $\intcommP{\mathcal{E}}$, and the subspace consisting of all interactions by $\intcommI{\mathcal{E}}$.

More generally, if $\mathcal{E}$ is a family of free theories over $\mathcal{A}$ then we will say that
\[ I\in\intcomm{\mathcal{E},\mathcal{A}}:=\intcomm{\mathcal{E}}\cotimes\mathcal{A} \]
is a \emph{family of interactions} if it has total degree zero and if it satisfies the constraints given by \eqref{eqn_interactionconstraints}, where now each~$I_{ij}\in\bigl[(\mathcal{E}^{\dag})^{\cotimes j}\bigr]_{\sg{j}}\cotimes\mathcal{A}$. We define $\intcommP{\mathcal{E},\mathcal{A}}$ to be the subspace of $\intcomm{\mathcal{E},\mathcal{A}}$ consisting of those functionals satisfying \eqref{eqn_interactionconstraints} and $\intcommI{\mathcal{E},\mathcal{A}}$ to be the subspace consisting of all families of interactions.
\end{defi}

The notion of Feynman amplitude may be formulated in terms of the preceding structures as follows. If $W$ is any finite set of cardinality $n$ then there is a canonical map
\begin{equation} \label{eqn_attachtovertexsym}
\xymatrix{
\bigl[(\mathcal{E}^{\dag})^{\cotimes n}\bigr]_{\sg{n}}\cotimes\mathcal{A} \ar[rr]^{\sum_{\varsigma\in\sg{n}}\varsigma} && \bigl[(\mathcal{E}^{\dag})^{\cotimes n}\bigr]^{\sg{n}}\cotimes\mathcal{A} \ar[r] & \mathcal{E}^{\dag}(\!(W)\!)\cotimes\mathcal{A},
}
\end{equation}
where we have used the left-hand map of \eqref{eqn_attachmapssym}. Also note that for any finite set $W$ of cardinality $n$ there is a map
\begin{equation} \label{eqn_multiplymap}
\mathcal{A}(\!(W)\!)\to\bigl[\mathcal{A}^{\cotimes n}\bigr]_{\sg{n}}\to\mathcal{A},
\end{equation}
where we have used the right-hand map of \eqref{eqn_attachmapssym} as the first arrow and the commutative multiplication $\mu_{\mathcal{A}}$ of $\mathcal{A}$ as the last.

\begin{defi}
Given a family of interactions $I\in\intcommI{\mathcal{E},\mathcal{A}}$ for a family of free theories $\mathcal{E}$ over $\mathcal{A}$ and a stable graph $\Gamma$, attach the interaction $I$ to each vertex $v\in V(\Gamma)$ by considering the image of $I_{l(v)|v|}$ under the map \eqref{eqn_attachtovertexsym} to define tensors
\[ I(v)\in\mathcal{E}^{\dag}(\!(v)\!)\cotimes\mathcal{A}, \quad v\in V(\Gamma). \]
Now define
\begin{displaymath}
\begin{split}
I(\Gamma) := \underset{v\in V(\Gamma)}{\otimes} I(v) \in \underset{v\in V(\Gamma)}{\widehat{\bigotimes}}\bigl(\mathcal{E}^{\dag}(\!(v)\!)\cotimes\mathcal{A}\bigr) &= \mathcal{E}^{\dag}\bigl(\!\bigl(H(\Gamma)\bigr)\!\bigr)\cotimes\mathcal{A}\bigl(\!\bigl(V(\Gamma)\bigr)\!\bigr) \\
&= \Hom_{\gf}\bigl(\mathcal{E}\bigl(\!\bigl(H(\Gamma)\bigr)\!\bigr),\mathcal{A}\bigl(\!\bigl(V(\Gamma)\bigr)\!\bigr)\bigr).
\end{split}
\end{displaymath}

Likewise, if $P\in\mathcal{E}\cotimes\mathcal{E}\cotimes\mathcal{A}$ is a family of propagators for $\mathcal{E}$ then attach $P$ to every edge $e\in E(\Gamma)$ using the left-hand map of \eqref{eqn_attachmapssym} to define tensors
\[ P(e)\in\mathcal{E}(\!(e)\!)\cotimes\mathcal{A}, \quad e\in E(\Gamma). \]
Now define
\begin{equation} \label{eqn_graphpropagator}
P(\Gamma) := \underset{e\in E(\Gamma)}{\otimes} P(e) \in \underset{e\in E(\Gamma)}{\widehat{\bigotimes}}\bigl(\mathcal{E}(\!(e)\!)\cotimes\mathcal{A}\bigr) = \mathcal{E}\Bigl(\!\Bigl(\bigcup_{e\in E(\Gamma)}e\Bigr)\!\Bigr)\cotimes\mathcal{A}\bigl(\!\bigl(E(\Gamma)\bigr)\!\bigr).
\end{equation}

The \emph{Feynman amplitude}
\[ F_{\Gamma}(I,P)\in\Hom_{\gf}\bigl(\mathcal{E}\bigl(\!\bigl(L(\Gamma)\bigr)\!\bigr),\mathcal{A}\bigr) = \mathcal{E}^{\dag}\bigl(\!\bigl(L(\Gamma)\bigr)\!\bigr)\cotimes\mathcal{A} \]
is defined by evaluating $I(\Gamma)$ on $P(\Gamma)$. More precisely it is the composite
\begin{equation} \label{eqn_Feynmanamplitude}
\xymatrix{
\mathcal{E}\bigl(\!\bigl(L(\Gamma)\bigr)\!\bigr) \ar[rr]^-{x\mapsto x\otimes P(\Gamma)} \ar[d]_{F_{\Gamma}(I,P)} &&
\mathcal{E}\bigl(\!\bigl(L(\Gamma)\bigr)\!\bigr)\cotimes\mathcal{E}\Bigl(\!\Bigl(\underset{e\in E(\Gamma)}{\bigcup}e\Bigr)\!\Bigr)\cotimes\mathcal{A}\bigl(\!\bigl(E(\Gamma)\bigr)\!\bigr) \ar@{=}[d] \\
\mathcal{A} &
\mathcal{A}\bigl(\!\bigl(V(\Gamma)\bigr)\!\bigr)\cotimes\mathcal{A}\bigl(\!\bigl(E(\Gamma)\bigr)\!\bigr) \ar[l]_-{\mu} &
\mathcal{E}\bigl(\!\bigl(H(\Gamma)\bigr)\!\bigr)\cotimes\mathcal{A}\bigl(\!\bigl(E(\Gamma)\bigr)\!\bigr) \ar[l]_-{I(\Gamma)\cotimes\mathds{1}}
}
\end{equation}
where we first multiply by the propagators $P(\Gamma)$, then apply the interactions $I(\Gamma)$, and finally multiply all copies of $\mathcal{A}$ together using \eqref{eqn_multiplymap}.

The \emph{Feynman weight}
\[ w_{\Gamma}(I,P)\in\bigl[(\mathcal{E}^{\dag})^{\cotimes j}\bigr]_{\sg{j}}\cotimes\mathcal{A}, \]
where $j:=|L(\Gamma)|$ is the number of legs of $\Gamma$, is obtained from the Feynman amplitude $F_{\Gamma}(I,P)$ by applying the right-hand map of \eqref{eqn_attachmapssym}.
\end{defi}

\begin{rem} \label{rem_Feynmanequivariance}
The Feynman amplitude $F_{\Gamma}(I,P)$ is equivariant with respect to $\Gamma$ in the sense that if $\phi:\Gamma\to\Gamma'$ is an isomorphism of stable graphs then
\[ \phi^{\#}(F_{\Gamma}(I,P)) = F_{\Gamma'}(I,P), \]
where $\phi^{\#}$ denotes the induced morphism.
\end{rem}

Our distinction between Feynman amplitudes and weights arises as the latter will be used in the definition of certain functional integrals in the renormalization group flow, while passing from the amplitude to the weight entails some loss of information that we will need to keep track of in certain formulas.

\subsubsection{Stable ribbon graphs and noncommutative geometry}

Here we will recall from \cite{KontAiry} the notion of a stable ribbon graph and use this to give a definition for the Feynman amplitudes and weights for a space of interactions based upon the noncommutative geometry of Kontsevich \cite{KontSympGeom, KontFeyn}.

We begin by introducing the following terminology.

\begin{defi}
A \emph{cyclic decomposition} of a set $X$ is a partition of $X$ into cyclically ordered subsets.
\end{defi}

We note that this is precisely the same thing as a permutation $\varsigma$ on $X$, with the cyclic decomposition arising from the decomposition of $\varsigma$ into disjoint cycles.

\begin{defi}
A \emph{stable ribbon graph} consists of the following data:
\begin{itemize}
\item
A finite set of half-edges $H(\Gamma)$.
\item
A finite set of vertices $V(\Gamma)$.
\item
A map $\rho_{\Gamma}:H(\Gamma)\to V(\Gamma)$, which describes the incident half-edges, as explained before in Definition \ref{def_stablegraph}.
\item
An involution $\kappa_{\Gamma}:H(\Gamma)\to H(\Gamma)$ as before, describing the legs and edges of the graph.
\item
A cyclic decomposition $C(v)$ of the incident half-edges of every vertex $v\in V(\Gamma)$.
\item
Maps $g$ and $b$ that assign to every vertex $v\in V(\Gamma)$, nonnegative integers $g(v)$ and $b(v)$ called the \emph{genus} and \emph{boundary} of $v$ respectively.
\end{itemize}
We define the \emph{loop number} of a vertex $v\in V(\Gamma)$ by
\begin{equation} \label{eqn_vertexribbonloopnumber}
l(v) := 2g(v)+b(v)+|C(v)|-1.
\end{equation}
We require that:
\begin{equation} \label{eqn_valencyribbonconditions}
|C(v)|+b(v)>0 \quad\text{and}\quad 2l(v)+|v|\geq 3, \quad\text{for all }v\in V(\Gamma).
\end{equation}
\end{defi}

\begin{rem} \label{rem_loopnumber}
The nomenclature ``loop number'' has been chosen above as the first Betti number of a surface of genus $g(v)$ possessing $(b(v)+|C(v)|)$ boundary components that is placed at the vertex $v$ will be $l(v)$. We will make use of such a picture later in Section \ref{sec_OTFTflow} when we deal with certain aspects of the renormalization group flow in noncommutative geometry.
\end{rem}

We now need to explain how to contract an edge in a stable ribbon graph. These operations describe the various ways in which a Riemann surface may degenerate as we approach the boundary of the moduli space. For the same reason, they also describe what happens when we join the surfaces described in Remark \ref{rem_loopnumber} by replacing the edge with a thin ribbon, see Section \ref{sec_OTFTaxioms} and Figure \ref{fig_contractedge}. For the sake of brevity, we will just explain the details, but the reader interested in a more in depth discussion may consult \cite{ChLaOTFT, HamCompact, KontAiry, Mondello}.

\begin{defi} \label{def_contractedge}
Let $\Gamma$ be a stable ribbon graph and let $e\in E(\Gamma)$ be an edge. We may form a new stable ribbon graph $\Gamma/e$ by contracting the edge $e$ according to the following rules:
\begin{enumerate}
\item
When the edge $e$ joins distinct vertices $v_1,v_2\in V(\Gamma)$ then this edge is deleted and these two vertices are combined to form a new vertex $v$ in $\Gamma/e$. In forming the cycles of $v$, the cycles of $v_1$ and $v_2$ are unaltered, except that the cycles $c_1\in C(v_1)$ and $c_2\in C(v_2)$ that intersect $e$ are joined to form a new cycle $c$ of $v$, with a naturally defined cyclic ordering. The genus and boundary of $v$ are defined as the sum of those for $v_1$ and $v_2$.

The one exception to the above occurs when both $c_1$ and $c_2$ consist of just a single half-edge, for then the resulting cycle $c$ would be empty. Instead of this, the boundary $b(v)$ is further increased by one.
\item
When the edge $e$ is a loop joining a vertex $v$ to itself, we distinguish two cases:
\begin{enumerate}
\item
When the endpoints of $e$ lie in distinct cycles $c_1,c_2\in C(v)$, then the edge $e$ is deleted and these cycles are joined to form a new cycle $c$ of the vertex $v$, as above. The genus $g(v)$ is increased by one and the boundary $b(v)$ is unaltered. This situation is depicted in Figure \ref{fig_contractedge} in Section \ref{sec_OTFTaxioms}.

Again, the single exception to the above rule occurs when both $c_1$ and $c_2$ consist of a single half-edge. In this case we also increase $b(v)$ by one.
\item
Finally, when the endpoints of $e$ both lie in the same cycle $c\in C(v)$ then the edge $e$ is deleted and the cycle $c$ splits into two cycles $c_1$ and $c_2$ with naturally defined cyclic orderings. The genus $g(v)$ and boundary $b(v)$ are unaltered.

An exception occurs to the above rule when the endpoints of $e$ lie next to each other in the cyclic ordering on $c$, for then one of the cycles $c_1$ or $c_2$ would be empty. Instead of allowing an empty cycle, the boundary $b(v)$ is increased by one; except when $c$ contains no other half-edges except those from $e$, for then both $c_1$ and $c_2$ would be empty. In the latter case the boundary $b(v)$ is increased by two.
\end{enumerate}
\end{enumerate}
\end{defi}

The astute reader will notice that as a result of the various rules above, the loop numbers of vertices will increase by one when we contract a loop and add when we contract an edge that is not a loop. One may check quite simply that when contracting multiple edges in a stable ribbon graph, it makes no difference in which order we contract them.

Denote the collection of all cycles of a stable ribbon graph $\Gamma$ by
\[ C(\Gamma):=\bigcup_{v\in V(\Gamma)} C(v). \]
To define the number of boundary components of $\Gamma$, we assign to $\Gamma$ a permutation $c_{\Gamma}$ of its half-edges. This is defined by combining, from every vertex $v$ of $\Gamma$, the permutations that correspond to their cyclic decomposition $C(v)$. Now consider the permutation $\beta_{\Gamma}:=c_{\Gamma}\kappa_{\Gamma}$ and count the number of cycles (including $1$-cycles) in its cyclic decomposition, denoting the result by $\beta_{\Gamma}^{\#}$.

\begin{defi}
Given a connected stable ribbon graph $\Gamma$, we define the genus and number of boundary components of $\Gamma$ by
\begin{displaymath}
\begin{split}
g(\Gamma) &:= 1 - |V(\Gamma)| + \frac{1}{2}\bigl(|E(\Gamma)|+|C(\Gamma)|-\beta_{\Gamma}^{\#}\bigr) + \sum_{v\in V(\Gamma)} g(v), \\
B(\Gamma) &:= \beta_{\Gamma}^{\#} + \sum_{v\in V(\Gamma)} b(v).
\end{split}
\end{displaymath}
We will also need to keep track of
\[ b(\Gamma):=B(\Gamma) - |C(\Gamma/\Gamma)|, \]
where $\Gamma/\Gamma$ denotes the graph $\Gamma$ with all of its edges contracted.
\end{defi}

The terminology above is chosen because there is a canonical construction (cf. Section \ref{sec_OTFTflow}) that associates a surface to any stable ribbon graph $\Gamma$ by placing surfaces of genus $g(v)$ possessing $(b(v)+|C(v)|)$ boundary components at every vertex $v$ of $\Gamma$ and attaching ribbons to them. The numbers $g(\Gamma)$ and $B(\Gamma)$ record the genus and boundary components of that surface.

\begin{rem} \label{rem_genbdryedgecontract}
The reader will notice that the above numbers do not change if we contract an edge in the graph. In particular, if $v_{\Gamma/\Gamma}$ denotes the unique vertex of $\Gamma/\Gamma$ then
\begin{displaymath}
\begin{split}
b(\Gamma) &= b(\Gamma/\Gamma) = b(v_{\Gamma/\Gamma})\geq 0, \\
g(\Gamma) &= g(\Gamma/\Gamma) = g(v_{\Gamma/\Gamma})\geq 0.
\end{split}
\end{displaymath}
\end{rem}

One consequence of the above observation is that we have the inequalities
\begin{equation} \label{eqn_genbdryinequalities}
1 - |V(\Gamma)| + \frac{1}{2}\bigl(|E(\Gamma)|+|C(\Gamma)|-\beta_{\Gamma}^{\#}\bigr) \geq 0, \quad\text{and}\quad \beta_{\Gamma}^{\#} \geq |C(\Gamma/\Gamma)|,
\end{equation}
which are obtained by setting the genus and boundary function at every vertex to zero. In particular, it follows that
\begin{equation} \label{eqn_vertexinequalities}
g(v)\leq g(\Gamma) \quad\text{and}\quad b(v)\leq b(\Gamma), \quad\text{for all } v\in V(\Gamma).
\end{equation}

\begin{defi}
Given a connected stable ribbon graph $\Gamma$, we define the \emph{loop number} of $\Gamma$ by
\[ \ell(\Gamma) := 2g(\Gamma) + B(\Gamma) - 1. \]
\end{defi}

A straightforward calculation shows that this is given by
\begin{equation} \label{eqn_loopnumber}
\ell(\Gamma) = 1-\chi(\Gamma) + \sum_{v\in V(\Gamma)} l(v) = \mathbf{b}_1(\Gamma) + \sum_{v\in V(\Gamma)} l(v),
\end{equation}
where $\chi(\Gamma)$ is the Euler characteristic of $\Gamma$ and $\mathbf{b}_1(\Gamma)$ is the first Betti number of $\Gamma$. Just as before, the inequalities \eqref{eqn_graphinequalities} hold for any connected stable ribbon graph.

The interactions for our stable ribbon graphs will be taken from a space based on the noncommutative symplectic geometry of Kontsevich \cite{KontSympGeom}, which we now recall.

\begin{defi}
Given a free field theory $\mathcal{E}$, define
\[ \KontHam{\mathcal{E}}:=\prod_{n=0}^{\infty}\bigl[(\mathcal{E}^{\dag})^{\cotimes n}\bigr]_{\cyc{n}} \quad\text{and}\quad \KontHamP{\mathcal{E}}:=\prod_{n=1}^{\infty}\bigl[(\mathcal{E}^{\dag})^{\cotimes n}\bigr]_{\cyc{n}}, \]
where here we have taken the \emph{cyclic} coinvariants.

Now define
\[ \intnuP{\mathcal{E}} := \csalgP{\KontHam{\mathcal{E}}} \]
and the space in which our interactions will live by
\[ \intnoncomm{\mathcal{E}} := \intnuP{\mathcal{E}}[[\gamma]] = \gf[[\gamma]]\cotimes\intnuP{\mathcal{E}}. \]
\end{defi}

To explain the choice of notation above; note that if we denote the generator of the ground field $\gf$ by $\nu$ then
\begin{displaymath}
\begin{split}
\intnuP{\mathcal{E}} \subset \csalg{\KontHam{\mathcal{E}}} &= \csalg{\gf\oplus\KontHamP{\mathcal{E}}} = \csalg{\gf}\cotimes\csalg{\KontHamP{\mathcal{E}}} \\
&= \gf[[\nu]]\cotimes\csalg{\KontHamP{\mathcal{E}}}
\end{split}
\end{displaymath}
and so
\[ \intnoncomm{\mathcal{E}} \subset \gf[[\gamma,\nu]]\cotimes\csalg{\KontHamP{\mathcal{E}}}. \]

In this way we may write any $I\in\intnoncomm{\mathcal{E}}$ as a series
\[ I = \sum_{i,j,k=0}^{\infty} \gamma^i\nu^j I_{ijk}, \quad\text{where }I_{ijk}\in\bigl[\KontHamP{\mathcal{E}}^{\cotimes k}\bigr]_{\sg{k}}. \]
Note that from the definition of $\intnuP{\mathcal{E}}$ we must have
\[ I_{i00}=0, \quad\text{for all }i\geq 0. \]

We may also refine this further by noting that $\KontHamP{\mathcal{E}}$ is graded by the order of the tensor powers of $\mathcal{E}^{\dag}$ and that this induces a corresponding grading on $\csalg{\KontHamP{\mathcal{E}}}$. We may then write
\[ I_{ijk} = \sum_{l=0}^{\infty} I_{ijkl}, \]
where~$I_{ijkl}\in\bigl[\KontHamP{\mathcal{E}}^{\cotimes k}\bigr]_{\sg{k}}$ is a homogenous tensor of order $l$ in $\mathcal{E}^{\dag}$.

\begin{defi}
Given a free field theory $\mathcal{E}$, we say that $I\in\intnoncomm{\mathcal{E}}$ is an \emph{interaction} if it has degree zero and if
\begin{equation} \label{eqn_interactionNCconstraints}
I_{ijkl} = 0, \quad\text{whenever } 2(2i+j+k-1)+l<3.
\end{equation}
Denote the subspace of $\intnoncomm{\mathcal{E}}$ consisting of those $I$ satisfying \eqref{eqn_interactionNCconstraints} by $\intnoncommP{\mathcal{E}}$ and the subspace consisting of all interactions by $\intnoncommI{\mathcal{E}}$.

More generally, if $\mathcal{E}$ is a family of free theories over $\mathcal{A}$ then we will say that
\[ I\in\intnoncomm{\mathcal{E},\mathcal{A}}:=\intnoncomm{\mathcal{E}}\cotimes\mathcal{A} \]
is a \emph{family of interactions} if it has total degree zero and if it satisfies the constraints given by \eqref{eqn_interactionNCconstraints}, where now~$I_{ijkl}\in\bigl[\KontHamP{\mathcal{E}}^{\cotimes k}\bigr]_{\sg{k}}\cotimes\mathcal{A}$ and has homogeneous order $l$ in $\mathcal{E}^{\dag}$. Denote the subspace of $\intnoncomm{\mathcal{E},\mathcal{A}}$ consisting of those $I$ satisfying \eqref{eqn_interactionNCconstraints} by $\intnoncommP{\mathcal{E},\mathcal{A}}$ and the subspace consisting of all families of interactions by $\intnoncommI{\mathcal{E},\mathcal{A}}$.
\end{defi}

We are now ready to provide the definition of the Feynman amplitude for these interactions. The cyclic structures on the stable ribbon graphs are precisely what is needed for us to be able to attach our interactions to them. First note that if $A$ is a cyclically ordered set of cardinality $n$ and $\mathcal{V}$ is a $\mathbb{Z}$-graded nuclear space then \eqref{eqn_labelmaps} provides canonical maps
\[ \bigl[\mathcal{V}^{\cotimes n}\bigr]^{\cyc{n}}\to\mathcal{V}(\!(A)\!)\to\bigl[\mathcal{V}^{\cotimes n}\bigr]_{\cyc{n}}. \]
Hence we have maps
\begin{equation} \label{eqn_attachmapscyc}
\begin{split}
& \xymatrix{
\KontHam{\mathcal{E}} \ar[r] & \bigl[(\mathcal{E}^{\dag})^{\cotimes n}\bigr]_{\cyc{n}} \ar[r]^{\cycsum{n}} & \bigl[(\mathcal{E}^{\dag})^{\cotimes n}\bigr]^{\cyc{n}} \ar[r] & \mathcal{E}^{\dag}(\!(A)\!)
}, \\
& \xymatrix{
\mathcal{E}^{\dag}(\!(A)\!) \ar[r] & \bigl[(\mathcal{E}^{\dag})^{\cotimes n}\bigr]_{\cyc{n}} \ar[r] & \KontHam{\mathcal{E}}
};
\end{split}
\end{equation}
where we have used the projections from and inclusions into $\KontHam{\mathcal{E}}$, along with the cyclic symmetrization map
\[ \cycsum{n}(x) := \sum_{\varsigma\in\cyc{n}}\varsigma\cdot x. \]

Now if $C$ is a cyclic decomposition of a set $X$ consisting of $k$ cycles then we have maps
\begin{align}
\label{eqn_attachmapscycdecompon} & \xymatrix{
\bigl[\KontHamP{\mathcal{E}}^{\cotimes k}\bigr]_{\sg{k}} \ar[rr]^{\sum_{\varsigma\in\sg{k}}\varsigma} \ar[rrrd] && \bigl[\KontHamP{\mathcal{E}}^{\cotimes k}\bigr]^{\sg{k}} \ar[r] & \KontHamP{\mathcal{E}}(\!(C)\!) = \underset{c\in C}{\bigotimes}\KontHamP{\mathcal{E}} \ar[d] \\
&&& \underset{c\in C}{\bigotimes}\mathcal{E}^{\dag}(\!(c)\!) = \mathcal{E}^{\dag}(\!(X)\!)
} \\
\label{eqn_attachmapscycdecompoff} & \xymatrix{
\mathcal{E}^{\dag}(\!(X)\!) = \underset{c\in C}{\bigotimes}\mathcal{E}^{\dag}(\!(c)\!) \ar[r] & \underset{c\in C}{\bigotimes}\KontHamP{\mathcal{E}} = \KontHamP{\mathcal{E}}(\!(C)\!) \ar[r] & \bigl[\KontHamP{\mathcal{E}}^{\cotimes k}\bigr]_{\sg{k}}
}
\end{align}
where we have used the maps \eqref{eqn_attachmapssym} and \eqref{eqn_attachmapscyc}. The following example may help to clarify some of the above details.

\begin{example} \label{exm_cyclicwordproduct}
Suppose we are given nonnegative integers $k$ and $l$ and a list $r_1,\ldots,r_k$ of $k$ positive integers whose sum is $l$. Denote by $C_{\mathbf{r}}$ the canonical cyclic decomposition, consisting of $k$ cycles, on the set of integers between $1$ and $l$ defined by
\begin{equation} \label{eqn_cycdecpartition}
(1,\ldots,r_1)(r_1+1,\ldots,r_1+r_2)\ldots(l-r_k+1,\ldots,l).
\end{equation}
Suppose that we are given a tensor
\[ x = x_{11}\cdots x_{1r_1}x_{21}\cdots x_{2r_2}\cdots x_{k1}\cdots x_{kr_k} \in (\mathcal{E}^{\dag})^{\cotimes l} = \mathcal{E}^{\dag}(\!(\{1,\ldots,l\})\!). \]
Then the image of $x$ under the map \eqref{eqn_attachmapscycdecompoff} defined by the cyclic decomposition $C_{\mathbf{r}}$ is the product of cyclic words
\[ (x_{11}\cdots x_{1r_1})(x_{21}\cdots x_{2r_2})\cdots (x_{k1}\cdots x_{kr_k}). \]

Now given any family of interactions $I\in\intnoncomm{\mathcal{E},\mathcal{A}}$, we may write
\begin{equation} \label{eqn_intcycrep}
I_{ijkl} = \sum_{\begin{subarray}{c} r_1,\ldots,r_k\geq 1: \\ r_1+\cdots+r_k=l \end{subarray}}\lceil I_{ijkl}^{\mathbf{r}}\rceil, \quad\text{for some } I_{ijkl}^\mathbf{r}\in(\mathcal{E}^{\dag})^{\cotimes l}\cotimes\mathcal{A};
\end{equation}
where $\lceil I_{ijkl}^{\mathbf{r}}\rceil$ denotes the image of $I_{ijkl}^{\mathbf{r}}$ under the map \eqref{eqn_attachmapscycdecompoff} defined by the cyclic decomposition $C_{\mathbf{r}}$. Note that the choice of the $I_{ijkl}^{\mathbf{r}}$ are not unique.
\end{example}

\begin{defi} \label{def_Feynmanampnoncomm}
Let $I\in\intnoncommI{\mathcal{E},\mathcal{A}}$ be a family of interactions for a family $\mathcal{E}$ of free field theories over $\mathcal{A}$. Given a stable ribbon graph $\Gamma$, attach the interaction $I$ to every vertex $v$ of $\Gamma$ using the cyclic decomposition $C(v)$ of that vertex; that is, we consider the image of $I_{g(v)b(v)|C(v)||v|}$ under the map \eqref{eqn_attachmapscycdecompon}. This defines a collection of tensors
\[ I(v)\in\mathcal{E}^{\dag}(\!(v)\!)\cotimes\mathcal{A}, \quad v\in V(\Gamma). \]
Now define
\[ I(\Gamma) := \underset{v\in V(\Gamma)}{\otimes} I(v) \in \Hom_{\gf}\bigl(\mathcal{E}\bigl(\!\bigl(H(\Gamma)\bigr)\!\bigr),\mathcal{A}\bigl(\!\bigl(V(\Gamma)\bigr)\!\bigr)\bigr) \]
as before.

Now if $P\in\mathcal{E}\cotimes\mathcal{E}\cotimes\mathcal{A}$ is a family of propagators for $\mathcal{E}$ then we may attach $P$ to every edge of $\Gamma$ to form
\[ P(\Gamma) \in \mathcal{E}\Bigl(\!\Bigl(\bigcup_{e\in E(\Gamma)}e\Bigr)\!\Bigr)\cotimes\mathcal{A}\bigl(\!\bigl(E(\Gamma)\bigr)\!\bigr), \]
see \eqref{eqn_graphpropagator}. The Feynman amplitude
\[ F_{\Gamma}(I,P) \in \mathcal{E}^{\dag}\bigl(\!\bigl(L(\Gamma)\bigr)\!\bigr)\cotimes\mathcal{A} \]
is then defined by \eqref{eqn_Feynmanamplitude}, in the standard way.

To define the Feynman weight, we require a cyclic decomposition of the legs of $\Gamma$. First note that if $\Gamma$ is a corolla---that is a graph with just a single vertex and no edges, but possibly some legs---then there is a canonical cyclic decomposition of the legs of $\Gamma$ defined by the cyclic decomposition of the lone vertex. More generally, if $\Gamma$ is connected then the graph obtained from $\Gamma$ by contracting all of its edges will be a corolla with the same set of legs as $\Gamma$. Hence this will provide a cyclic decomposition of the legs $L(\Gamma)$, which we will call the \emph{canonical decomposition}.

Using the canonical cyclic decomposition of $L(\Gamma)$, we define the Feynman weight
\[ w_{\Gamma}(I,P)\in\csalg{\KontHamP{\mathcal{E}}}\cotimes\mathcal{A} \]
of a connected stable ribbon graph $\Gamma$ to be the image of the Feynman amplitude $F_{\Gamma}(I,P)$ under the map \eqref{eqn_attachmapscycdecompoff} determined by this cyclic decomposition.
\end{defi}

\begin{rem} \label{rem_Feynmanequivarianceribbon}
The Feynman amplitude $F_{\Gamma}(I,P)$ is equivariant with respect to isomorphisms of stable ribbon graphs in the sense explained by Remark \ref{rem_Feynmanequivariance}. For this reason, $w_{\Gamma}(I,P)$ only depends upon the isomorphism class of $\Gamma$.
\end{rem}

\subsubsection{Subgraphs and their amplitudes}

Many basic facts about the renormalization group flow and other calculations in this paper and in \cite{NCRBV} will come down to some simple facts about subgraphs and their associated Feynman amplitudes.

\begin{defi}
Let $\Gamma$ be a stable ribbon graph. A \emph{subgraph} of $\Gamma$ is a subset $\beta$ of the set of edges $E(\Gamma)$. We will denote by $\Gamma/\beta$ the result of contracting all the edges of the subgraph $\beta$ inside $\Gamma$.
\end{defi}

To any subgraph $\beta$ of $\Gamma$ we can associate an actual stable ribbon graph $\Gamma[\beta]$ in an obvious way. Formally:
\begin{itemize}
\item
The vertices consist of all those vertices of $\Gamma$ touched by the subgraph,
\[ V(\Gamma[\beta]):=\rho_{\Gamma}\Bigl(\bigcup_{e\in\beta}e\Bigr). \]
\item
The half-edges are then taken from these vertices,
\[ H(\Gamma[\beta]) := \rho_{\Gamma}^{-1}(V(\Gamma[\beta])). \]
\item
The vertices inherit their structure from $\Gamma$. That is the map $\rho_{\Gamma[\beta]}$ and the genus and boundary maps for $\Gamma[\beta]$ are just the restrictions of those for $\Gamma$. The vertices of $\Gamma[\beta]$ inherit their cyclic decompositions from $\Gamma$.
\item
The edges are $E(\Gamma[\beta]):=\beta$.
\end{itemize}

The opposite operation to contracting a subgraph is inserting a graph, which we will now define.

\begin{defi}
Let $\Gamma$ and $G$ be two stable ribbon graphs. An \emph{insertion} of $G$ into $\Gamma$ consists of the following data:
\begin{itemize}
\item
An injective map $\iota$ that assigns to every connected component $\mathscr{G}$ of $G$, a vertex $\iota(\mathscr{G})$ of $\Gamma$ satisfying
\[ g(\iota(\mathscr{G})) = g(\mathscr{G}) \quad\text{and}\quad b(\iota(\mathscr{G})) = b(\mathscr{G}). \]
\item
A map, also denoted by $\iota$, that assigns to every connected component $\mathscr{G}$ of $G$, a bijection $\iota_{\mathscr{G}}$ from the legs $L(\mathscr{G})$ to the incident half-edges of the vertex $\iota(\mathscr{G})$. Each bijection $\iota_{\mathscr{G}}$ must respect the canonical cyclic decomposition of the legs $L(\mathscr{G})$ and the cyclic decomposition at the vertex $\iota(\mathscr{G})$.
\end{itemize}
\end{defi}

Given an insertion $\iota$ of $G$ into $\Gamma$, we will denote by $\Gamma\circ_{\iota}G$ the stable ribbon graph formed by inserting $G$ into $\Gamma$ along $\iota$. Formally; if $\mathscr{G}_{\alpha}$,~$\alpha\in\mathcal{I}$ are the connected components of $G$ then:
\begin{itemize}
\item
The half-edges $H(\Gamma\circ_{\iota}G) := H(\Gamma)\sqcup(H(G)-L(G))$ are obtained by removing the legs of $G$.
\item
The edges are $E(\Gamma\circ_{\iota}G) := E(\Gamma)\sqcup E(G)$.
\item
The legs are $L(\Gamma\circ_{\iota}G) := L(\Gamma)$.
\item
The vertices chosen by $\iota$ are then removed,
\[ V(\Gamma\circ_{\iota}G) := \bigl(V(\Gamma)-\{\iota(\mathscr{G}_{\alpha}),\alpha\in\mathcal{I}\}\bigr)\sqcup V(G). \]
\item
The half-edges of $\Gamma$ that were incident to the removed vertices are then reassigned to the vertices of $G$ through its legs using the map~$\iota$,
\[ \rho_{\Gamma\circ_{\iota}G}(h):=\left\{\begin{array}{ll} \rho_G(h), & h\in H(G)-L(G) \\ \rho_{\Gamma}(h), & h\in H(\Gamma)-\rho_{\Gamma}^{-1}\{\iota(\mathscr{G}_{\alpha}),\alpha\in\mathcal{I}\} \\ \rho_G\bigl(\iota^{-1}_{\iota^{-1}(\rho_{\Gamma}(h))}(h)\bigr), & h\in\rho_{\Gamma}^{-1}\{\iota(\mathscr{G}_{\alpha}),\alpha\in\mathcal{I}\} \end{array}\right\}. \]
\item
The genus, boundary and cyclic decomposition at each vertex of $\Gamma\circ_{\iota}G$ is inherited directly from those of $\Gamma$ and $G$:
\begin{itemize}
\item
If $v\in V(\Gamma)$ and $v\notin\{\iota(\mathscr{G}_{\alpha}),\alpha\in\mathcal{I}\}$ then $\rho_{\Gamma\circ_{\iota}G}^{-1}(v) = \rho_{\Gamma}^{-1}(v)$ and the cyclic decomposition of $v$ is inherited directly from $\Gamma$.
\item
If $v\in V(G)$ then the insertion $\iota$ provides a bijection between $\rho_{\Gamma\circ_{\iota}G}^{-1}(v)$ and $\rho_G^{-1}(v)$ and the cyclic decomposition of $v$ is transferred from $G$ using this.
\end{itemize}
\end{itemize}

\begin{lemma} \label{lem_contractsubgraph}
Let $\Gamma$ be a connected stable ribbon graph and let $\beta$ be a subgraph of $\Gamma$. Then for every connected component $\mathscr{G}$ of $\Gamma[\beta]$, there is a unique vertex $v_{\mathscr{G}}\in V(\Gamma/\beta)$ such that the incident half-edges of $v_{\mathscr{G}}$ are the legs of $\mathscr{G}$,
\begin{equation} \label{eqn_legstovertices}
\rho_{\Gamma/\beta}^{-1}(v_{\mathscr{G}}) = L(\mathscr{G}).
\end{equation}

Additionally, for every component $\mathscr{G}$, the cyclic decomposition $C(v_{\mathscr{G}})$ at the vertex $v_{\mathscr{G}}$ agrees with the canonical cyclic decomposition of the legs of $\mathscr{G}$ under \eqref{eqn_legstovertices}, and the genus and boundary correspond too;
\[ g_{\Gamma/\beta}(v_{\mathscr{G}}) = g(\mathscr{G}) \quad\text{and}\quad b_{\Gamma/\beta}(v_{\mathscr{G}}) = b(\mathscr{G}). \]

Furthermore, if we denote by $V_{\beta}\subset V(\Gamma/\beta)$ the set consisting of all those vertices $v_{\mathscr{G}}$, where $\mathscr{G}$ runs over the connected components of $\Gamma[\beta]$, then
\begin{equation} \label{eqn_stableverticescontract}
V(\Gamma/\beta) - V_{\beta} = V(\Gamma) - V(\Gamma[\beta]).
\end{equation}
Under \eqref{eqn_stableverticescontract} we have that
\begin{displaymath}
\begin{split}
g_{\Gamma/\beta}(v) &= g_{\Gamma}(v), \quad b_{\Gamma/\beta}(v) = b_{\Gamma}(v) \quad\text{and} \\
\rho_{\Gamma/\beta}^{-1}(v) &= \rho_{\Gamma}^{-1}(v), \quad\text{for all }v\in V(\Gamma/\beta)-V_{\beta}
\end{split}
\end{displaymath}
and that the cyclic decompositions in the last equality coming from $\Gamma/\beta$ and $\Gamma$ coincide.
\end{lemma}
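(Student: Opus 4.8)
The plan is to reduce everything to two principles. First, the edge-contraction operation of Definition~\ref{def_contractedge} is \emph{local}: contracting an edge $e$ affects only $e$ itself, the vertex or pair of vertices it is incident to, and the cyclic decomposition, genus and boundary at those vertices; every other vertex, together with its incident half-edges, cyclic decomposition, genus and boundary, is reproduced verbatim in the contracted graph. Moreover, the recipe of Definition~\ref{def_contractedge} for the data at the new vertex — including all the exceptional ``boundary bump'' cases — refers only to data attached to $e$ and to the cycles of the incident vertices that $e$ meets. Second, by the remark following Definition~\ref{def_contractedge}, $\Gamma/\beta$ does not depend on the order in which the edges of $\beta$ are contracted.

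First I would dispose of the vertices untouched by $\beta$. Every edge of $\beta=E(\Gamma[\beta])$ is incident only to vertices of $V(\Gamma[\beta])$, so, contracting the edges of $\beta$ one at a time, no contraction ever involves a vertex of $V(\Gamma)-V(\Gamma[\beta])$. By locality each such vertex persists in $\Gamma/\beta$ with its incident half-edge set $\rho_\Gamma^{-1}(v)$, its cyclic decomposition, its genus $g_\Gamma(v)$ and its boundary $b_\Gamma(v)$ all unchanged; conversely, every vertex of $V(\Gamma[\beta])$ is absorbed into one of the vertices produced by collapsing a connected component of $\Gamma[\beta]$. Granting the construction of the $v_{\mathscr{G}}$ below, this is exactly the content of \eqref{eqn_stableverticescontract} together with the final block of equalities in the statement.

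Next, fix a connected component $\mathscr{G}$ of $\Gamma[\beta]$; it is itself a connected stable ribbon graph, all of whose structure is inherited from $\Gamma$, and its edge set is a subset of $\beta$. Using order-independence, contract first all the edges of $\mathscr{G}$. Since $\mathscr{G}$ is connected this collapses every vertex of $\mathscr{G}$ into a single vertex $v_{\mathscr{G}}$; by the sharp form of locality, the incident half-edges, cyclic decomposition, genus and boundary of $v_{\mathscr{G}}$ are then computed by the very same recipe as when one contracts all the edges of $\mathscr{G}$ inside $\mathscr{G}$ alone, i.e.\ they are those of the unique vertex of the corolla $\mathscr{G}/\mathscr{G}$. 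Since contracting an edge deletes its two half-edges, the half-edges incident (in $\Gamma$) to vertices of $\mathscr{G}$ that belong to an edge of $\mathscr{G}$ are precisely those removed in this process, while those that do not are exactly the legs $L(\mathscr{G})$; hence $\rho^{-1}(v_{\mathscr{G}})=L(\mathscr{G})$ already in this intermediate graph, and this survives the remaining contractions, which do not touch $v_{\mathscr{G}}$ — this is \eqref{eqn_legstovertices}. For uniqueness, the sets $\rho_{\Gamma/\beta}^{-1}(v)$, $v\in V(\Gamma/\beta)$, partition $H(\Gamma/\beta)$, so two vertices could share the incident half-edge set $L(\mathscr{G})$ only if $L(\mathscr{G})=\varnothing$; but a legless component of $\Gamma[\beta]$ is joined to the rest of $\Gamma$ by no edge or leg, so connectedness of $\Gamma$ forces it to be all of $\Gamma$, and then $V(\Gamma/\beta)=\{v_{\mathscr{G}}\}$. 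Finally, the canonical cyclic decomposition of $L(\mathscr{G})$ is \emph{by definition} (see Definition~\ref{def_Feynmanampnoncomm}) the cyclic decomposition of the vertex of $\mathscr{G}/\mathscr{G}$, hence equals $C(v_{\mathscr{G}})$; and Remark~\ref{rem_genbdryedgecontract}, applied to the connected stable ribbon graph $\mathscr{G}$, gives $g_{\Gamma/\beta}(v_{\mathscr{G}})=g(v_{\mathscr{G}/\mathscr{G}})=g(\mathscr{G})$ and $b_{\Gamma/\beta}(v_{\mathscr{G}})=b(v_{\mathscr{G}/\mathscr{G}})=b(\mathscr{G})$.

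The one genuinely fiddly point — and the step I expect to be the main obstacle — is establishing the sharp form of locality invoked above: one must walk through the case analysis of Definition~\ref{def_contractedge} (loop versus non-loop edge; endpoints in the same cycle or in different cycles; singleton cycles and adjacency within a cycle) and check that each prescribed modification of the cyclic decomposition, genus and boundary depends only on data local to the contracted edge, so that it plays out identically whether the edge is regarded as sitting in $\Gamma$ or in $\mathscr{G}$. This is bookkeeping rather than anything conceptual, and it could equally be packaged as an induction on $|\beta|$ using $\Gamma/\beta=(\Gamma/e)/(\beta-\{e\})$; in either guise the rest of the argument is purely formal.
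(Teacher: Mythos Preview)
Your proof is correct and takes essentially the same approach as the paper's own proof, which consists of a single sentence: induction on the number of edges in $\beta$, appealing to the definitions and to Remark~\ref{rem_genbdryedgecontract}. You have simply unpacked that induction into the two underlying ingredients---locality of edge contraction and order-independence---and supplied the careful bookkeeping (including the uniqueness argument in the legless case) that the paper leaves to the reader; your closing remark that the locality check ``could equally be packaged as an induction on $|\beta|$'' is precisely the paper's one-line proof.
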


\begin{figure}[htp]
\centering
\includegraphics{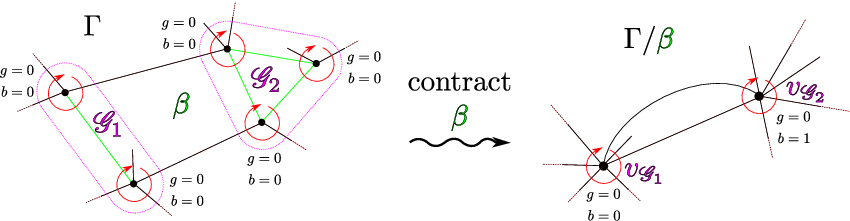}
\caption{The correspondence between the connected components of a subgraph and the vertices when the subgraph is collapsed.}
\label{fig_graphcollapse}
\end{figure}

\begin{proof}[Proof of Lemma \ref{lem_contractsubgraph}]
The proof is by induction on the number of edges in the subgraph~$\beta$ and follows tautologically from the definitions and the observations made in Remark \ref{rem_genbdryedgecontract}. A simple example is described by Figure \ref{fig_graphcollapse}.
\end{proof}

Lemma \ref{lem_contractsubgraph} and Equation \eqref{eqn_legstovertices} define a canonical insertion
\[ \iota_{\Gamma,\beta}:\mathscr{G}\mapsto v_{\mathscr{G}} \]
of $\Gamma[\beta]$ into $\Gamma/\beta$, which is the identity on the legs of $\mathscr{G}$.

The operations of insertion and contraction are inverse to one another, as we will now explain. We consider two categories:

\begin{defi} \label{def_catpairs}
The objects of our first category consist of pairs~$(\Gamma,\beta)$ where~$\Gamma$ is a connected stable ribbon graph and~$\beta$ is a subgraph of~$\Gamma$. A morphism from~$(\Gamma,\beta)$ to~$(\Gamma',\beta')$ in this category is an isomorphism $\phi:\Gamma\to\Gamma'$ of stable ribbon graphs satisfying~$\phi(\beta) = \beta'$.
\end{defi}

\begin{defi} \label{def_cattriples}
The objects of our second category are triples~$(\Gamma,\iota, G)$ in which both~$\Gamma$ and~$G$ are stable ribbon graphs, $\Gamma$~is connected, no connected component of~$G$ is a corolla and~$\iota$ is an insertion of~$G$ into~$\Gamma$. A morphism from $(\Gamma,\iota, G)$ to $(\Gamma',\iota', G')$ in this category consists of a pair of isomorphisms $\phi:\Gamma\to\Gamma'$ and $\psi:G\to G'$ of stable ribbon graphs that respect the insertions~$\iota$ and~$\iota'$ in the obvious sense.
\end{defi}

\begin{theorem} \label{thm_catequiv}
The categories defined by Definition \ref{def_catpairs} and Definition \ref{def_cattriples} are equivalent under the functors of contraction and insertion:
\begin{displaymath}
\begin{array}{ccc}
(\Gamma,\beta) & \rightharpoonup & (\Gamma/\beta,\iota_{\Gamma,\beta},\Gamma[\beta]) \\
(\Gamma\circ_{\iota} G,E(G)) & \leftharpoondown & (\Gamma,\iota,G)
\end{array}
\end{displaymath}
\end{theorem}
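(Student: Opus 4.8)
The plan is to verify that the two assignments in the statement are mutually inverse functors and that each is well-defined on morphisms. The heart of the matter is entirely combinatorial: it consists of checking that the two constructions $(\Gamma,\beta)\mapsto(\Gamma/\beta,\iota_{\Gamma,\beta},\Gamma[\beta])$ and $(\Gamma,\iota,G)\mapsto(\Gamma\circ_\iota G,E(G))$ undo one another on objects, and that they transport morphisms correctly. I would organize the argument into four steps.

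First I would check that the contraction assignment lands in the second category. Given $(\Gamma,\beta)$, the graph $\Gamma[\beta]$ is a stable ribbon graph by the explicit description preceding the statement (its vertices, half-edges, cyclic decompositions, genus and boundary are all inherited from $\Gamma$, so the valency conditions \eqref{eqn_valencyribbonconditions} are satisfied), and $\Gamma/\beta$ is connected since $\Gamma$ is. Lemma \ref{lem_contractsubgraph}, in particular \eqref{eqn_legstovertices} together with the matching of cyclic decompositions, genus and boundary, shows that $\iota_{\Gamma,\beta}$ is a genuine insertion of $\Gamma[\beta]$ into $\Gamma/\beta$. Finally no connected component $\mathscr{G}$ of $\Gamma[\beta]$ is a corolla: $\Gamma[\beta]$ has edge set exactly $\beta$, and every vertex of $\Gamma[\beta]$ is touched by some edge of $\beta$ by construction, so each component contains at least one edge.

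Second, I would verify $(\Gamma\circ_\iota G)/E(G)\cong\Gamma$ together with $\iota_{\Gamma\circ_\iota G,\,E(G)}=\iota$ and $(\Gamma\circ_\iota G)[E(G)]\cong G$, and conversely that for a pair $(\Gamma,\beta)$ one has $(\Gamma/\beta)\circ_{\iota_{\Gamma,\beta}}\Gamma[\beta]\cong\Gamma$ with $E(\Gamma[\beta])=\beta$ recovered on the nose. Both checks are bookkeeping against the explicit formulas: for the half-edges, $H(\Gamma\circ_\iota G)=H(\Gamma)\sqcup(H(G)-L(G))$ and contracting $E(G)$ reglues the half-edges of $G$ back through the legs via $\iota$, which is exactly the data of $\rho_{\Gamma\circ_\iota G}$ restricted to the relevant vertices; the cyclic decompositions, genus and boundary on the resulting vertices are then reassembled by the contraction rules of Definition \ref{def_contractedge} precisely to those of $\Gamma$, using Remark \ref{rem_genbdryedgecontract} to see that no numerical data is lost. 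The other direction is Lemma \ref{lem_contractsubgraph} read in the insertion direction, since $\Gamma[\beta]$ is inserted into $\Gamma/\beta$ at the vertices $v_{\mathscr{G}}$ and $\Gamma/\beta$ contracted along $V_\beta$ returns the vertices of $\Gamma$ not in $V(\Gamma[\beta])$ unchanged by \eqref{eqn_stableverticescontract}.

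Third, I would check functoriality on morphisms in both directions. An isomorphism $\phi:\Gamma\to\Gamma'$ with $\phi(\beta)=\beta'$ induces an isomorphism $\Gamma/\beta\to\Gamma'/\beta'$ and a restriction $\Gamma[\beta]\to\Gamma'[\beta']$, and these are compatible with $\iota_{\Gamma,\beta}$ and $\iota_{\Gamma',\beta'}$ because $v_{\mathscr{G}}$ is characterized intrinsically by \eqref{eqn_legstovertices}, hence preserved by $\phi$; so the contraction assignment is a functor. Conversely a morphism $(\phi,\psi)$ of triples sends $E(G)$ to $E(G')$ under the induced isomorphism $\Gamma\circ_\iota G\to\Gamma'\circ_{\iota'}G'$, yielding a morphism of pairs. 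These two functors are then quasi-inverse by the identifications of the second step, which moreover are natural.

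The main obstacle is the second step, and specifically the fact that the contraction rules of Definition \ref{def_contractedge}, with all their exceptional cases (empty cycles promoted to boundary components, loops at a single cycle splitting it, and the doubly-exceptional cases), are exactly reversed by the insertion operation. One must be careful that the ``canonical cyclic decomposition of the legs'' of a corolla obtained by contracting a connected graph — used to define which insertions are allowed — matches up with the cyclic decomposition at $v_{\mathscr{G}}$ in $\Gamma/\beta$; this is precisely the content of the second assertion of Lemma \ref{lem_contractsubgraph}, so in fact the bulk of the difficulty has been front-loaded there. With that lemma in hand, the verification is, as the proof in the excerpt for Lemma \ref{lem_contractsubgraph} puts it, tautological — an induction on $|\beta|$ reducing everything to the one-edge case, where the correspondence between Definition \ref{def_contractedge} and the insertion data is checked directly in each of the (finitely many) configurations.
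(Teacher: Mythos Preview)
Your proposal is correct and follows essentially the same approach as the paper: the paper's proof is a terse two-sentence argument that invokes Lemma~\ref{lem_contractsubgraph} to show contraction is a right-inverse to insertion on pairs, and that the insertion data $\iota$ furnishes the natural transformation in the other direction. Your four steps simply spell out in detail what the paper leaves implicit---well-definedness of the functors, the two composite identifications, and functoriality on morphisms---all resting on the same key lemma.
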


\begin{proof}
Applying Lemma \ref{lem_contractsubgraph}, it follows that the top arrow is a right-inverse to the bottom arrow. Likewise, in the opposite direction a natural transformation to the identity functor is constructed from the insertion $\iota$ using Lemma \ref{lem_contractsubgraph}.
\end{proof}

The preceding theorem is very simple, but is the basis for important calculations in the sections that follow and the sequel \cite{NCRBV}. Looking ahead in this regard, we require a slight modification of the notion of Feynman amplitude.

\begin{defi} \label{def_Feynmanampnoncommsubgraph}
Let $\mathcal{E}$ be a family of free field theories over $\mathcal{A}$ and let $\mathcal{F}$ be a mapping that assigns to every stable ribbon graph $\Gamma$, a functional
\[ \mathcal{F}(\Gamma) \in \mathcal{E}^{\dag}\bigl(\!\bigl(L(\Gamma)\bigr)\!\bigr)\cotimes\mathcal{A} \]
that is equivariant with respect to $\Gamma$ in the sense of Remark \ref{rem_Feynmanequivarianceribbon}.

Given a stable ribbon graph $\Gamma$ and a subgraph $\beta$ of $\Gamma$, we define a Feynman amplitude
\[ F_{\Gamma,\beta;\mathcal{F}}(I,P) \in \mathcal{E}^{\dag}\bigl(\!\bigl(L(\Gamma)\bigr)\!\bigr)\cotimes\mathcal{A} \]
for every family of interactions $I\in\intnoncommI{\mathcal{E},\mathcal{A}}$ and propagators $P$ by replacing the rules for the subgraph $\beta$ by $\mathcal{F}$.

More precisely, consider
\[ I(\Gamma,\beta) := \underset{v\in V(\Gamma/\beta) - V_{\beta}}{\otimes} I(v) \in \mathcal{E}^{\dag}\Bigl(\!\Bigl(\bigcup_{v\in V(\Gamma/\beta) - V_{\beta}}v\Bigr)\!\Bigr)\cotimes\mathcal{A}\bigl(\!\bigl(V(\Gamma/\beta) - V_{\beta}\bigr)\!\bigr) \]
where $V_{\beta}$ was defined in Lemma \ref{lem_contractsubgraph}. Then we have
\[ I(\Gamma,\beta)\otimes\mathcal{F}(\Gamma[\beta]) \in \mathcal{E}^{\dag}\bigl(\!\bigl(H(\Gamma/\beta)\bigr)\!\bigr)\cotimes\mathcal{A}\bigl(\!\bigl(V(\Gamma/\beta) - V_{\beta}\bigr)\!\bigr)\cotimes\mathcal{A}, \]
where we have used Lemma \ref{lem_contractsubgraph}. Applying this as before to $P(\Gamma/\beta)$ and using the commutative multiplication $\mu$ on $\mathcal{A}$ yields the Feynman amplitude $F_{\Gamma,\beta;\mathcal{F}}(I,P)$. Using the canonical cyclic decomposition of the legs $L(\Gamma)$, we may consider the image of this Feynman amplitude under the map \eqref{eqn_attachmapscycdecompoff}, which we denote by
\[ w_{\Gamma,\beta;\mathcal{F}}(I,P) \in \csalg{\KontHamP{\mathcal{E}}}\cotimes\mathcal{A}. \]
\end{defi}

\begin{rem}
The Feynman amplitude $F_{\Gamma,\beta;\mathcal{F}}(I,P)$ is equivariant with respect to morphisms of the pair $(\Gamma,\beta)$, as defined in Definition \ref{def_catpairs}, in the sense described by Remark \ref{rem_Feynmanequivariance}.
\end{rem}

\subsection{The renormalization group flow in noncommutative geometry}

In this section we will provide the definition for the renormalization group flow on the space of interactions $\intnoncomm{\mathcal{E}}$ and prove its most basic properties, including that it lifts the renormalization group flow defined on $\intcomm{\mathcal{E}}$ by \cite{CosEffThy}. We begin by recalling the details of the latter.

\subsubsection{The commutative case}

\begin{defi}
Let $\mathcal{E}$ be a family of free theories over $\mathcal{A}$ and let
\[ I\in\intcommI{\mathcal{E},\mathcal{A}} \quad\text{and}\quad P\in\mathcal{E}\cotimes\mathcal{E}\cotimes\mathcal{A} \]
be families of interactions and propagators for $\mathcal{E}$ respectively. We define the renormalization group flow by
\[ W(I,P) := \sum_{\Gamma} \frac{\hbar^{\ell(\Gamma)}}{|\Aut(\Gamma)|}w_{\Gamma}(I,P) \in \intcommI{\mathcal{E},\mathcal{A}}, \]
where the sum is taken over all (isomorphism classes of) connected stable graphs.
\end{defi}

The inequalities \eqref{eqn_graphinequalities} ensure that the renormalization group flow is well-defined (that is, that the sum converges appropriately) and that it maps interactions to interactions. The flow is continuous, and satisfies the following fundamental identities, which state that the (abelian) group of propagators acts on the space of interactions. They appear in \cite[\S 2.3]{CosEffThy} as Lemma 3.3.1 and a corollary of Lemma 3.4.1.

\begin{theorem}
Given families
\[ I\in\intcommI{\mathcal{E},\mathcal{A}} \quad\text{and}\quad P_1,P_2\in\mathcal{E}\cotimes\mathcal{E}\cotimes\mathcal{A} \]
of interactions and propagators for a family $\mathcal{E}$ of free theories over $\mathcal{A}$ we have:
\begin{align*}
W(I,0) &= I, \\
W(I,P_1+P_2) &= W(W(I,P_1),P_2).
\end{align*}
\end{theorem}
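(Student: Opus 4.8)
The first identity is immediate. When $P=0$ the tensor $P(\Gamma)=\bigotimes_{e\in E(\Gamma)}P(e)$ vanishes as soon as $E(\Gamma)\neq\emptyset$, so only the corollas survive: for each pair $(i,j)$ with $2i+j\geq 3$ there is a unique connected stable graph $\Gamma_{i,j}$ with one vertex of loop number $i$ and $j$ legs, and $\ell(\Gamma_{i,j})=i$, $|\Aut(\Gamma_{i,j})|=j!$. By the definitions of the maps \eqref{eqn_attachtovertexsym} and \eqref{eqn_attachmapssym}, the weight $w_{\Gamma_{i,j}}(I,0)$ is obtained from $I_{ij}\in\bigl[(\mathcal{E}^{\dag})^{\cotimes j}\bigr]_{\sg{j}}$ by the composite of the symmetrization $\sum_{\varsigma\in\sg{j}}\varsigma$ with the canonical projection back to the coinvariants, which is multiplication by $j!$; hence the $\Gamma_{i,j}$-term contributes $\hbar^{i}I_{ij}$, and summing over all admissible $(i,j)$ returns $I$, the remaining components of $I$ vanishing by \eqref{eqn_interactionconstraints}. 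The family version over $\mathcal{A}$ is identical, carrying $\mathcal{A}$ passively along via $\mu_{\mathcal{A}}$.

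For the composition law I would reorganize both sides into a single sum over isomorphism classes of pairs (connected stable graph, subgraph). On the left, since $P_1$ and $P_2$ have degree zero no Koszul signs intervene, so multilinearity of $P(\Gamma)=\bigotimes_{e}\bigl(P_1(e)+P_2(e)\bigr)$ gives $w_{\Gamma}(I,P_1+P_2)=\sum_{\alpha\subseteq E(\Gamma)}w^{\alpha}_{\Gamma}(I,P_1,P_2)$, where $w^{\alpha}_{\Gamma}$ is computed with $P_1$ on the edges of $\alpha$ and $P_2$ on the remaining edges. Grouping $\sum_{\Gamma}\frac{1}{|\Aut\Gamma|}\sum_{\alpha}$ by the isomorphism class of the pair $(\Gamma,\alpha)$ and applying orbit–stabilizer, $\sum_{\alpha\in\text{orbit}}1=|\Aut\Gamma|/|\Aut(\Gamma,\alpha)|$, rewrites the left-hand side as
\[ \sum_{[(\Gamma,\alpha)]}\frac{\hbar^{\ell(\Gamma)}}{|\Aut(\Gamma,\alpha)|}\,w^{\alpha}_{\Gamma}(I,P_1,P_2). \]

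On the right, I would substitute $W(I,P_1)=\sum_{G}\frac{\hbar^{\ell(G)}}{|\Aut G|}w_{G}(I,P_1)$ into $w_{\Gamma'}(W(I,P_1),P_2)$, so that at each vertex $v$ of $\Gamma'$ one attaches the component $W(I,P_1)_{l(v)|v|}=\sum_{G_v}\frac{1}{|\Aut G_v|}w_{G_v}(I,P_1)$ over connected stable graphs $G_v$ with $\ell(G_v)=l(v)$ and $|L(G_v)|=|v|$ (corollas included). The structural input is the composition of Feynman weights: plugging the weights $w_{G_v}(I,P_1)$ into the weight construction for $\Gamma'$ reproduces $w_{\Gamma'\circ_{\iota}G}(I,P)$ for $G:=\bigsqcup_{v}G_v$, the induced insertion $\iota$, and $P$ equal to $P_1$ on $E(G)$ and to $P_2$ on $E(\Gamma')$ — the symmetrizations built into \eqref{eqn_attachtovertexsym} and \eqref{eqn_attachmapssym} being precisely what supplies the sum over the leg-identifications $\iota_{G_v}$ with the correct multiplicity. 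Invoking the analogue of Theorem \ref{thm_catequiv} for stable graphs (cf.\ \cite[\S 2.3]{CosEffThy}; the corolla components of $G$ simply leave their vertex untouched), so that $(\Gamma',\iota,G)$ corresponds to $(\Gamma,\alpha):=(\Gamma'\circ_{\iota}G,\,E(G))$ with $\Gamma'=\Gamma/\alpha$ and $G=\Gamma[\alpha]$, one checks that the exponents of $\hbar$ agree — contracting $\alpha$ replaces each connected component $\mathscr{G}$ of $\Gamma[\alpha]$ by a vertex of loop number $\ell(\mathscr{G})$, so that $\ell(\Gamma/\alpha)=\mathbf{b}_1(\Gamma)+\sum_{v}l(v)=\ell(\Gamma)$ — and that the automorphism factors collapse, $\sum\frac{1}{|\Aut\Gamma'|\prod_{v}|\Aut G_v|}=\frac{1}{|\Aut(\Gamma,\alpha)|}$, the sum being over all data mapping to a fixed class $[(\Gamma,\alpha)]$. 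This exhibits the right-hand side as the same sum obtained above for the left-hand side.

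The main obstacle is exactly the last two points: verifying the composition-of-weights identity at the level of the symmetrization maps, and the bookkeeping of combinatorial factors showing that the automorphism weights on the two indexing sets match — in particular tracking how \eqref{eqn_attachtovertexsym} and \eqref{eqn_attachmapssym} interact with automorphisms of the $G_v$ that act nontrivially on legs. This is where Theorem \ref{thm_catequiv} (equivalently, the corresponding combinatorial lemmas of \cite[\S 2.3]{CosEffThy}) does the essential work, by presenting the pairs $(\Gamma,\alpha)$ and the triples $(\Gamma',\iota,G)$ as equivalent groupoids. Convergence is not an issue: by \eqref{eqn_graphinequalities} only finitely many graphs contribute to a fixed coefficient of $\hbar^{i}$ with a fixed number of legs, so every rearrangement above is a finite manipulation carried out order by order.
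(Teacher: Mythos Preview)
Your proof is correct and follows essentially the same approach as the paper: the paper cites \cite[\S 2.3]{CosEffThy} for the commutative statement itself, but its own detailed proof of the noncommutative analogue (Theorem~\ref{thm_intgrpact}) proceeds exactly as you do---expand the left-hand side over subgraphs via orbit--stabilizer to get a sum over pairs $(\Gamma,\alpha)$ with weight $|\Aut(\Gamma,\alpha)|^{-1}$, expand the right-hand side over insertions to get a sum over triples $(\Gamma',\iota,G)$, and match the two via the equivalence of Theorem~\ref{thm_catequiv}. The only cosmetic difference is that the paper first splits off the corolla summands of $W(I,P_1)$ (so that Definition~\ref{def_cattriples} can forbid corolla components in $G$) and carries out the automorphism count more explicitly via the auxiliary sets $B_{(\Gamma,\iota,G)}$ and $F_{(\Gamma,\iota,G)}$, whereas you allow corollas in $G$ and appeal directly to the groupoid equivalence; both treatments are equivalent.
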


\subsubsection{The noncommutative case} \label{sec_noncommflow}

We will now explain how to define the renormalization group flow for interactions taken from the space $\intnoncomm{\mathcal{E}}$ based on the noncommutative geometry of Kontsevich, and how to prove the same fundamental properties of this flow.

\begin{defi}
Let
\[ I\in\intnoncommI{\mathcal{E},\mathcal{A}} \quad\text{and}\quad P\in\mathcal{E}\cotimes\mathcal{E}\cotimes\mathcal{A} \]
be families of interactions and propagators for a family $\mathcal{E}$ of free theories over $\mathcal{A}$. The renormalization group flow in the noncommutative case is defined by
\begin{equation} \label{eqn_RGFnoncomm}
W(I,P) := \sum_{\Gamma} \frac{\gamma^{g(\Gamma)}\nu^{b(\Gamma)}}{|\Aut(\Gamma)|}w_{\Gamma}(I,P) \in \intnoncommI{\mathcal{E},\mathcal{A}},
\end{equation}
where now the sum is taken over all connected stable \emph{ribbon} graphs.
\end{defi}

The flow $I\mapsto W(I,P)$ is continuous. The same inequalities \eqref{eqn_graphinequalities} prove, as before, that the renormalization group flow takes interactions to interactions, and that the sum \eqref{eqn_RGFnoncomm} is finite in each multidegree. More precisely, we may write
\begin{displaymath}
\begin{split}
W(I,P) &= \sum_{i,j,k=0}^{\infty}\gamma^i\nu^j W_{ijk}(I,P), \quad W_{ijk}(I,P)\in\bigl[\KontHamP{\mathcal{E}}^{\cotimes k}\bigr]_{\sg{k}}\cotimes\mathcal{A}, \\
W_{ijk}(I,P) &= \sum_{l=0}^{\infty}W_{ijkl}(I,P);
\end{split}
\end{displaymath}
where $W_{ijkl}(I,P)$ is the sum \eqref{eqn_RGFnoncomm}, except taken over only those connected stable ribbon graphs~$\Gamma$ satisfying:
\[ g(\Gamma)=i, \quad b(\Gamma)=j, \quad |C(\Gamma/\Gamma)|=k \quad\text{and}\quad |L(\Gamma)|=l; \]
which becomes a finite sum.

Again, the renormalization group flow satisfies the same fundamental identities as before, which are listed in the following theorem.

\begin{theorem} \label{thm_intgrpact}
Let $\mathcal{E}$ be a family of free theories over $\mathcal{A}$ and let
\[ I\in\intnoncommI{\mathcal{E},\mathcal{A}} \quad\text{and}\quad P_1,P_2\in\mathcal{E}\cotimes\mathcal{E}\cotimes\mathcal{A} \]
be families of interactions and propagators for $\mathcal{E}$ respectively, then:
\begin{align}
\label{eqn_intgrpactzero}
W(I,0) &= I, \\
\label{eqn_intgrpactsum}
W(I,P_1+P_2) &= W(W(I,P_1),P_2).
\end{align}
\end{theorem}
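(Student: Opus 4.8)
The plan is to follow the template of Costello's proof of the commutative identities in \cite[\S 2.3]{CosEffThy}, with stable graphs everywhere replaced by stable ribbon graphs; the two genuinely new ingredients needed, namely the category equivalence of Theorem \ref{thm_catequiv} and the subgraph amplitudes of Definition \ref{def_Feynmanampnoncommsubgraph}, have already been set up for exactly this purpose. Identity \eqref{eqn_intgrpactzero} is the easy one: attaching $P=0$ to any edge annihilates the Feynman amplitude, so $w_{\Gamma}(I,0)=0$ unless $E(\Gamma)=\varnothing$, and the sum \eqref{eqn_RGFnoncomm} collapses onto the corollas. For a corolla $\Gamma$ with lone vertex $v$, the canonical cyclic decomposition of $L(\Gamma)$ is $C(v)$ itself, so $w_{\Gamma}(I,0)$ is obtained by attaching $I_{g(v)b(v)|C(v)||v|}$ via \eqref{eqn_attachmapscycdecompon} and pushing forward by \eqref{eqn_attachmapscycdecompoff}. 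The composite of these two maps is multiplication by $n$ on each cyclically ordered block of size $n$ (the round trip through $\mathcal{E}^{\dag}(\!(c)\!)$ involving $\cycsum{n}$) and by $k!$ across the $k$ blocks (the $\sum_{\varsigma\in\sg{k}}\varsigma$), and these factors are cancelled by $|\Aut(\Gamma)|$ once one sums over all isomorphism classes of corollas -- equivalently, over all cyclic decompositions of an $l$-element set into $k$ cycles modulo isomorphism. The resulting equality $\sum_{\text{corollas}}\gamma^{g}\nu^{b}|\Aut(\Gamma)|^{-1}w_{\Gamma}(I,0)=I$ is then a Burnside-type count, checked in each multidegree $(i,j,k,l)$.

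For \eqref{eqn_intgrpactsum} I would start from the left-hand side. Since $w_{\Gamma}(I,-)$ is multilinear in the propagator over the edges, expand $w_{\Gamma}(I,P_1+P_2)=\sum_{\beta\subseteq E(\Gamma)}w_{\Gamma}\bigl(I;\,P_1\text{ on }\beta,\ P_2\text{ on }E(\Gamma)\setminus\beta\bigr)$, so that $W(I,P_1+P_2)$ becomes a sum over pairs $(\Gamma,\beta)$ with $\Gamma$ a connected stable ribbon graph and $\beta$ a subgraph. By equivariance of the Feynman amplitude (Remark \ref{rem_Feynmanequivarianceribbon}) the summand depends only on the isomorphism class of $(\Gamma,\beta)$, and an orbit--stabilizer argument over the action of $\Aut(\Gamma)$ on subgraphs of $\Gamma$ rewrites the double sum as $\sum_{[(\Gamma,\beta)]}|\Aut(\Gamma,\beta)|^{-1}(\cdots)$. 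Now apply Theorem \ref{thm_catequiv}: isomorphism classes of pairs $(\Gamma,\beta)$ correspond bijectively, with matching automorphism groups, to isomorphism classes of triples $(\Gamma/\beta,\iota_{\Gamma,\beta},\Gamma[\beta])$ with no corolla components. Under this correspondence Remark \ref{rem_genbdryedgecontract} gives $g(\Gamma)=g(\Gamma/\beta)$ and $b(\Gamma)=b(\Gamma/\beta)$, so the exponents $\gamma^{g(\Gamma)}\nu^{b(\Gamma)}$ survive untouched; and Lemma \ref{lem_contractsubgraph} transports the vertex data correctly, so that a vertex $v_{\mathscr{G}}\in V_{\beta}$ of $\Gamma/\beta$ carries precisely the invariants $\bigl(g(\mathscr{G}),b(\mathscr{G}),|C(\mathscr{G}/\mathscr{G})|,|L(\mathscr{G})|\bigr)$ of the inserted component $\mathscr{G}$. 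Combining this with the factorization property -- that $w_{\Gamma}\bigl(I;\,P_1\text{ on }\beta,\ P_2\text{ elsewhere}\bigr)$ equals the subgraph amplitude $w_{\Gamma/\beta,\beta;\mathcal{F}}(I,P_2)$ of Definition \ref{def_Feynmanampnoncommsubgraph} with $\mathcal{F}=F_{\bullet}(I,P_1)$ -- and performing the orbit--stabilizer bookkeeping once more (to turn $\sum_{G}\sum_{\iota}|\Aut(\Gamma/\beta)|^{-1}|\Aut(G)|^{-1}$ into $\sum_{[(\Gamma',\iota,G)]}|\Aut(\Gamma',\iota,G)|^{-1}$), the sum is recognized as $\sum_{\Gamma'}\gamma^{g(\Gamma')}\nu^{b(\Gamma')}|\Aut(\Gamma')|^{-1}w_{\Gamma'}(W(I,P_1),P_2)=W(W(I,P_1),P_2)$. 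Here the corolla components of $G$ that Theorem \ref{thm_catequiv} excludes correspond exactly to the leading term $I$ of $W(I,P_1)$ decorating the untouched vertices $V(\Gamma')\setminus V_{\beta}=V(\Gamma)\setminus V(\Gamma[\beta])$ of \eqref{eqn_stableverticescontract}.

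The step I expect to be the real obstacle is the factorization/``gluing'' identity in the previous paragraph, intertwined with the discrepancy between Feynman amplitudes and Feynman weights flagged in the excerpt. Concretely, $W(I,P_1)$ is assembled from the \emph{weights} $w_{\mathscr{G}}(I,P_1)\in\bigl[\KontHamP{\mathcal{E}}^{\cotimes k}\bigr]_{\sg{k}}\cotimes\mathcal{A}$, which already live in cyclic and symmetric coinvariants, whereas decorating a vertex of $\Gamma'$ re-applies the symmetrization maps \eqref{eqn_attachmapscycdecompon}; so one must verify that the composite ``amplitude $\to$ weight $\to$ re-attach at a vertex'' differs from the amplitude of the inserted component only by the stabilizer factor that was absorbed into the automorphism count. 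This amounts to tracking the cyclic symmetrizations $\cycsum{n}$ and the $\sum_{\varsigma\in\sg{k}}\varsigma$ appearing in \eqref{eqn_attachmapscycdecompon}--\eqref{eqn_attachmapscycdecompoff} through the insertion, and it is more delicate than its commutative counterpart in \cite[\S 2.3]{CosEffThy} because of the extra cyclic layer and because $\Gamma[\beta]$ can be disconnected. Once this compatibility is established, \eqref{eqn_intgrpactsum} follows formally, all rearrangements being legitimate because the inequalities \eqref{eqn_graphinequalities} make every multidegree of \eqref{eqn_RGFnoncomm} a finite sum.
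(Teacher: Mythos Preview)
Your proposal is correct and follows essentially the same route as the paper: reduce \eqref{eqn_intgrpactzero} to corollas and cancel the cyclic/symmetric overcount against $|\Aut(\Gamma)|$, and for \eqref{eqn_intgrpactsum} expand $P_1+P_2$ over subgraphs (this is Lemma \ref{lem_Feynmansubamplitudes}), pass via Theorem \ref{thm_catequiv} to triples $(\Gamma',\iota,G)$, and match with the right-hand side using orbit--stabilizer. The ``amplitude $\to$ weight $\to$ re-attach'' compatibility you flag as the obstacle is exactly the content of Lemma \ref{lem_cyclicsetdettachattach}, which the paper invokes at precisely this point; with that lemma in hand the step is routine.
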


To prove Theorem \ref{thm_intgrpact}, we introduce two auxiliary lemmas:

\begin{lemma} \label{lem_cyclicsetdettachattach}
Let $u$ and $v$ be sets with cyclic decompositions and consider the  canonical maps
\[ \mathcal{E}^{\dag}(\!(u)\!)\to\csalg{\KontHamP{\mathcal{E}}} \quad\text{and}\quad \csalg{\KontHamP{\mathcal{E}}}\to\mathcal{E}^{\dag}(\!(v)\!) \]
defined by \eqref{eqn_attachmapscycdecompoff} and \eqref{eqn_attachmapscycdecompon} respectively. Then their composite is the map
\[ \mathcal{E}^{\dag}(\!(u)\!)\to\mathcal{E}^{\dag}(\!(v)\!), \qquad x\mapsto\sum_{\varphi:u\to v}\varphi^{\#}(x); \]
where the sum is taken over all those bijections $\varphi:u\to v$ respecting the cyclic decompositions on $u$ and $v$ (with the answer being zero if there are none).
\end{lemma}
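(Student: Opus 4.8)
The plan is to unwind the composite and reduce everything to the elementary principle that a ``norm map'' between (co)invariants, when sandwiched between the labelling maps \eqref{eqn_attachmapssym}, computes a sum over bijections --- applied once to the set of cycles and once inside each individual cycle. First I would dispose of a trivial case: \eqref{eqn_attachmapscycdecompoff} for $u$ takes values in the summand of $\csalg{\KontHamP{\mathcal{E}}}$ of symmetric degree $|C(u)|$, whereas \eqref{eqn_attachmapscycdecompon} for $v$ is defined on the summand of symmetric degree $|C(v)|$; hence the composite vanishes unless $|C(u)| = |C(v)|$, and since a bijection $u\to v$ respecting the cyclic decompositions restricts to a bijection of the sets of cycles, there are then likewise no such bijections, so both sides of the asserted identity are $0$. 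Henceforth I assume $|C(u)| = |C(v)| = k$, and I use throughout the canonical identifications $\mathcal{E}^{\dag}(\!(u)\!) = \underset{c\in C(u)}{\widehat{\bigotimes}}\mathcal{E}^{\dag}(\!(c)\!)$, similarly for $v$, together with $\KontHamP{\mathcal{E}}(\!(C)\!) = \underset{c\in C}{\widehat{\bigotimes}}\KontHamP{\mathcal{E}}$.

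The key computation, essentially independent of the cyclic structure, is this: for a $\mathbb{Z}$-graded nuclear space $\mathcal{W}$ and finite sets $A,B$ with $|A| = |B| = k$, the composite $\mathcal{W}(\!(A)\!)\to\bigl[\mathcal{W}^{\cotimes k}\bigr]_{\sg{k}}\to\bigl[\mathcal{W}^{\cotimes k}\bigr]^{\sg{k}}\to\mathcal{W}(\!(B)\!)$ built from the maps \eqref{eqn_attachmapssym}, with middle arrow the norm $\sum_{\varsigma\in\sg{k}}\varsigma$, equals $x\mapsto\sum_{\bar\varphi}\bar\varphi^{\#}(x)$, summed over all bijections $\bar\varphi:A\to B$. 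I would prove this by choosing labellings $h_A\in\Bij(\{1,\ldots,k\},A)$ and $h_B\in\Bij(\{1,\ldots,k\},B)$, writing a general element of $\mathcal{W}(\!(A)\!)$ as $\vartheta_{h_A}(x)$, and tracing it through: it maps to $[x]_{\sg{k}}$, then to $\sum_{\varsigma}\varsigma\cdot x$, then to $\vartheta_{h_B}\bigl(\sum_{\varsigma}\varsigma\cdot x\bigr) = \sum_{\varsigma}\vartheta_{h_B\varsigma}(x)$ via $\vartheta_{h\sigma} = \vartheta_h\circ\sigma$ from \eqref{eqn_labelmaps}; as $\varsigma$ runs over $\sg{k}$ the composite $h_B\varsigma$ runs over all bijections $\{1,\ldots,k\}\to B$, and $\vartheta_{h_B\varsigma}(x) = (h_B\varsigma h_A^{-1})^{\#}\bigl(\vartheta_{h_A}(x)\bigr)$, which is the claim. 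The identical argument, with $\cyc{n}$ in place of $\sg{k}$ and the maps of \eqref{eqn_attachmapscyc} (which route through $\KontHamP{\mathcal{E}}$), shows that for cyclically ordered sets $c,c'$ the composite $\mathcal{E}^{\dag}(\!(c)\!)\to\bigl[(\mathcal{E}^{\dag})^{\cotimes n}\bigr]_{\cyc{n}}\to\bigl[(\mathcal{E}^{\dag})^{\cotimes n}\bigr]^{\cyc{n}}\to\mathcal{E}^{\dag}(\!(c')\!)$, with middle arrow $\cycsum{n}$, equals $y\mapsto\sum_{\psi}\psi^{\#}(y)$ over cyclic-order-preserving bijections $\psi:c\to c'$ (vacuously $0$ when $|c|\neq|c'|$).

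Finally I would assemble these. Inspecting \eqref{eqn_attachmapscycdecompoff} and \eqref{eqn_attachmapscycdecompon}, the composite in question factors as: the per-cycle cyclic ``detach'' maps $\underset{c\in C(u)}{\widehat{\bigotimes}}\mathcal{E}^{\dag}(\!(c)\!)\to\KontHamP{\mathcal{E}}(\!(C(u))\!)$; then the composite $\KontHamP{\mathcal{E}}(\!(C(u))\!)\to\bigl[\KontHamP{\mathcal{E}}^{\cotimes k}\bigr]_{\sg{k}}\to\bigl[\KontHamP{\mathcal{E}}^{\cotimes k}\bigr]^{\sg{k}}\to\KontHamP{\mathcal{E}}(\!(C(v))\!)$ of the preceding paragraph with $\mathcal{W} = \KontHamP{\mathcal{E}}$, $A = C(u)$, $B = C(v)$; then the per-cycle cyclic ``attach'' maps $\KontHamP{\mathcal{E}}(\!(C(v))\!)\to\underset{c\in C(v)}{\widehat{\bigotimes}}\mathcal{E}^{\dag}(\!(c)\!)$. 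By the plain version of the key computation, the middle piece is $\sum_{\bar\varphi}\bar\varphi^{\#}$ over bijections $\bar\varphi:C(u)\to C(v)$; for each such $\bar\varphi$, commuting the per-cycle detach and attach past the factor permutation $\bar\varphi^{\#}$ reduces, factor by factor over $c\in C(u)$, to the cyclic version applied to the pair $(c,\bar\varphi(c))$, contributing $\sum_{\psi_c}\psi_c^{\#}$ over cyclic-order-preserving bijections $c\to\bar\varphi(c)$. Since the data of a bijection $\bar\varphi:C(u)\to C(v)$ together with a cyclic-order-preserving bijection $c\to\bar\varphi(c)$ for each $c\in C(u)$ is precisely a bijection $\varphi:u\to v$ respecting the cyclic decompositions, the composite is $x\mapsto\sum_{\varphi}\varphi^{\#}(x)$, with all Koszul signs absorbed into the induced maps of the functor $\mathcal{V}(\!(-)\!)$, and the sum is empty --- so the composite vanishes --- exactly when no such $\varphi$ exists (for instance when the multisets of cycle lengths of $u$ and $v$ disagree). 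The main obstacle is purely organizational: keeping the outer symmetrization over the set of cycles cleanly separated from the inner cyclic symmetrizations within each cycle, and matching the reindexing $\varsigma\mapsto h\varsigma$ against the definition of $\varphi^{\#}$ and its sign; with the $\vartheta_h$ formalism of \eqref{eqn_labelmaps} in hand this is routine, and there are no analytic subtleties since every sum in sight is finite.
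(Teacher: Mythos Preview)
Your proposal is correct and is precisely the unwinding-of-definitions argument the paper has in mind; the paper's own proof consists of the single sentence ``The statement ultimately follows tautologically from the definitions,'' so your detailed separation into the outer $\sg{k}$-norm over cycles and the inner $\cyc{n}$-norms within each cycle is exactly what that sentence is gesturing at.
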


\begin{proof}
The statement ultimately follows tautologically from the definitions.
\end{proof}

\begin{lemma} \label{lem_Feynmansubamplitudes}
Let $\mathcal{E}$ be a family of free theories over $\mathcal{A}$ and let
\[ I\in\intnoncommI{\mathcal{E},\mathcal{A}} \quad\text{and}\quad P_1,P_2\in\mathcal{E}\cotimes\mathcal{E}\cotimes\mathcal{A} \]
be families of interactions and propagators for $\mathcal{E}$ respectively. For every connected stable ribbon graph $\Gamma$,
\[ F_{\Gamma}(I,P_1+P_2) = \sum_{\beta\subset\Gamma} F_{\Gamma,\beta;F_{\ast}(I,P_1)}(I,P_2), \]
where $F_{\ast}(I,P_1)$ denotes the mapping $\Gamma\mapsto F_{\Gamma}(I,P_1)$ and the sum is taken over all subgraphs $\beta$ of $\Gamma$.
\end{lemma}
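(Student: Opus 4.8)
The plan is to expand $F_{\Gamma}(I,P_1+P_2)$ by multilinearity over the edges of $\Gamma$ and then match the resulting terms with the subgraph amplitudes $F_{\Gamma,\beta;F_{\ast}(I,P_1)}(I,P_2)$. Concretely, I would begin from \eqref{eqn_graphpropagator}, which writes the edge contribution to the Feynman amplitude as $P(\Gamma)=\underset{e\in E(\Gamma)}{\otimes}P(e)$; since all propagators here have degree zero no Koszul signs intervene, and distributivity over the finite set $E(\Gamma)$ gives
\[ \underset{e\in E(\Gamma)}{\otimes}\bigl(P_1(e)+P_2(e)\bigr) = \sum_{\beta\subseteq E(\Gamma)}\Bigl(\underset{e\in\beta}{\otimes}P_1(e)\Bigr)\otimes\Bigl(\underset{e\in E(\Gamma)-\beta}{\otimes}P_2(e)\Bigr). \]
Since the composite \eqref{eqn_Feynmanamplitude} is $\gf$-linear in $P(\Gamma)$, substituting this expansion would give $F_{\Gamma}(I,P_1+P_2)=\sum_{\beta}A_{\beta}$, where $A_{\beta}$ denotes the Feynman amplitude of $\Gamma$ built from the interaction $I$ at every vertex, the propagator $P_1$ on every edge of $\beta$, and $P_2$ on every edge of $E(\Gamma)-\beta$. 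The remaining task is then to identify $A_{\beta}=F_{\Gamma,\beta;F_{\ast}(I,P_1)}(I,P_2)$ for each subgraph $\beta$.

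To do this I would route the combinatorics through Lemma \ref{lem_contractsubgraph}. Writing $G:=\Gamma[\beta]$, with connected components $\mathscr{G}$ (none of which is a corolla, as every vertex of $G$ meets an edge of $\beta$), the lemma supplies the partitions $V(\Gamma)=\bigl(V(\Gamma)-V(G)\bigr)\sqcup V(G)$, $E(\Gamma)=\beta\sqcup E(\Gamma/\beta)$ and $H(\Gamma/\beta)=\bigl(H(\Gamma)-H(G)\bigr)\sqcup L(G)$, together with the facts that for $v\in V(\Gamma/\beta)-V_{\beta}$ one has $\rho_{\Gamma/\beta}^{-1}(v)=\rho_{\Gamma}^{-1}(v)$ with matching cyclic decomposition, genus and boundary (this is \eqref{eqn_stableverticescontract}), while for the vertex $v_{\mathscr{G}}$ attached to a component $\mathscr{G}$ one has $\rho_{\Gamma/\beta}^{-1}(v_{\mathscr{G}})=L(\mathscr{G})$, with $C(v_{\mathscr{G}})$ the canonical cyclic decomposition of $L(\mathscr{G})$ and $g_{\Gamma/\beta}(v_{\mathscr{G}})=g(\mathscr{G})$, $b_{\Gamma/\beta}(v_{\mathscr{G}})=b(\mathscr{G})$. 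Via the identifications \eqref{eqn_tensorunion}, these partitions factor the vertex tensor as $I(\Gamma)=I(\Gamma,\beta)\otimes I(G)$ with $I(G)=\underset{v\in V(G)}{\otimes}I(v)$, and factor the edge tensor ($P_1$ on $\beta$, $P_2$ elsewhere) as $P_1(G)\otimes P_2(\Gamma/\beta)$.

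The last step would be to reorganize the composite \eqref{eqn_Feynmanamplitude} computing $A_{\beta}$. Pairing $I(G)$ with $P_1(G)$ over the half-edges $H(G)-L(G)=\bigcup_{e\in\beta}e$ (leaving the legs $L(G)$ of $G$ free), and then multiplying the copies of $\mathcal{A}$ indexed by $V(G)\sqcup\beta$, is by Definition \ref{def_Feynmanampnoncomm} exactly the amplitude $F_{G}(I,P_1)=F_{\ast}(I,P_1)(G)\in\mathcal{E}^{\dag}(\!(L(G))\!)\cotimes\mathcal{A}$, where $F_{G}$ is read off multiplicatively over the components $\mathscr{G}$ (legitimate since $\mathcal{A}$ is commutative). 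Because composition and $\mu_{\mathcal{A}}$ are associative, I would perform this sub-contraction first; what remains is the contraction of $I(\Gamma,\beta)\otimes F_{G}(I,P_1)$ against $P_2(\Gamma/\beta)$ along the half-edges of $\Gamma/\beta$, followed by multiplying out the surviving $\mathcal{A}$-factors. Under the identification of $L(G)$ with $\bigsqcup_{\mathscr{G}}\rho_{\Gamma/\beta}^{-1}(v_{\mathscr{G}})$ --- which is exactly the canonical insertion $\iota_{\Gamma,\beta}$, acting as the identity on legs --- this is verbatim the recipe of Definition \ref{def_Feynmanampnoncommsubgraph} for $F_{\Gamma,\beta;F_{\ast}(I,P_1)}(I,P_2)$, the matching of cyclic decompositions, genus and boundary recorded above being precisely what licenses attaching each component amplitude $F_{\mathscr{G}}(I,P_1)$ at the vertex $v_{\mathscr{G}}$. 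This gives $A_{\beta}=F_{\Gamma,\beta;F_{\ast}(I,P_1)}(I,P_2)$ and hence the lemma.

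No individual step here is deep; as with Lemmas \ref{lem_contractsubgraph} and \ref{lem_cyclicsetdettachattach}, the identity ultimately ``follows tautologically from the definitions.'' The hard part will be the bookkeeping: keeping the spaces $\mathcal{E}(\!(A)\!)$, $\mathcal{E}^{\dag}(\!(A)\!)$ and $\mathcal{A}(\!(A)\!)$ aligned under the partitions of half-edges, vertices and edges furnished by Lemma \ref{lem_contractsubgraph}, and confirming that plugging each subgraph amplitude $F_{\mathscr{G}}(I,P_1)$ in as an ``effective vertex'' --- equivalently, invoking the contraction/insertion equivalence of Theorem \ref{thm_catequiv} --- agrees on the nose with the direct evaluation of $A_{\beta}$.
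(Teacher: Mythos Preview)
Your proposal is correct and follows essentially the same approach as the paper: expand $(P_1+P_2)(\Gamma)$ by multilinearity into $\sum_{\beta}P_2(\Gamma/\beta)\otimes P_1(\Gamma[\beta])$, use Lemma~\ref{lem_contractsubgraph} to factor $I(\Gamma)=I(\Gamma,\beta)\otimes I(\Gamma[\beta])$, and identify each summand with the subgraph amplitude of Definition~\ref{def_Feynmanampnoncommsubgraph}. The paper's proof records only these two displayed identities and declares the result immediate; you have simply written out more of the bookkeeping that the paper leaves to the reader.
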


\begin{proof}
Note that, under the notation adopted in Definition \ref{def_Feynmanampnoncomm} and \ref{def_Feynmanampnoncommsubgraph}, we have:
\begin{displaymath}
\begin{split}
I(\Gamma) &= I(\Gamma,\beta)\otimes I(\Gamma[\beta]), \\
(P_1+P_2)(\Gamma) &= \sum_{\beta\subset\Gamma} P_2(\Gamma/\beta)\otimes P_1(\Gamma[\beta]);
\end{split}
\end{displaymath}
where we have used Lemma \ref{lem_contractsubgraph}. The result follows immediately.
\end{proof}

\begin{proof}[Proof of Theorem \ref{thm_intgrpact}]
We begin by proving Equation \eqref{eqn_intgrpactzero}. Since the propagator is zero, the sum in \eqref{eqn_RGFnoncomm} will only be over all corollas. For a corolla $\Gamma$, the automorphisms consist of all those permutations of the half-edges that preserve the cyclic decomposition at the vertex, and this factor precisely cancels the factor in $w_{\Gamma}(I,P)$ that is picked up by attaching $I$ to $\Gamma$ when we sum over all such permutations, as may be deduced from Lemma \ref{lem_cyclicsetdettachattach}. This proves \eqref{eqn_intgrpactzero}.

Before proceeding, we note that one consequence of the above is the formula
\begin{equation} \label{eqn_Feynmansplit}
W(I,P_1) = I + \sum_G \frac{\gamma^{g(G)}\nu^{b(G)}}{|\Aut(G)|}w_{G}(I,P_1),
\end{equation}
where on the right we now sum over all connected stable ribbon graphs $G$ \emph{that are not corollas}.

Now we prove Equation \eqref{eqn_intgrpactsum}. We start by computing the left-hand side of \eqref{eqn_intgrpactsum}, since this is simple. Using Lemma \ref{lem_Feynmansubamplitudes} we have
\[ W(I,P_1+P_2) = \sum_{\Gamma}\sum_{\beta\subset\Gamma} \frac{\gamma^{g(\Gamma)}\nu^{b(\Gamma)}}{|\Aut(\Gamma)|} w_{\Gamma,\beta;F_{\ast}(I,P_1)}(I,P_2). \]
It follows from the orbit-stabilizer theorem that given a pair $(\Gamma,\beta)$ as in Definition \ref{def_catpairs}, the number of subgraphs $\beta'$ of $\Gamma$ such that $(\Gamma,\beta')$ is isomorphic to $(\Gamma,\beta)$ is
\[ \frac{|\Aut(\Gamma)|}{|\Aut(\Gamma,\beta)|}. \]
Hence it follows that
\begin{equation} \label{eqn_intgrpactsumleft}
W(I,P_1+P_2) = \sum_{(\Gamma,\beta)} \frac{\gamma^{g(\Gamma)}\nu^{b(\Gamma)}}{|\Aut(\Gamma,\beta)|} w_{\Gamma,\beta;F_{\ast}(I,P_1)}(I,P_2),
\end{equation}
where the sum is over all (isomorphism classes of) pairs $(\Gamma,\beta)$ taken from the category in Definition \ref{def_catpairs}.

Turning our attention to the right-hand side of \eqref{eqn_intgrpactsum} and using Equation \eqref{eqn_Feynmansplit}, we have
\[ W(W(I,P_1),P_2) = W\biggl(I + \sum_G \frac{\gamma^{g(G)}\nu^{b(G)}}{|\Aut(G)|}w_{G}(I,P_1),P_2\biggr). \]
We therefore turn our attention to computing the Feynman amplitudes in the above. For a connected stable ribbon graph $\Gamma$ we may use Lemma \ref{lem_cyclicsetdettachattach} to write
\begin{multline*}
\biggl(I + \sum_G \frac{\gamma^{g(G)}\nu^{b(G)}}{|\Aut(G)|}w_{G}(I,P_1)\biggr)(\Gamma) =
\\
\sum_{V\subset V(\Gamma)}\sum_{\Lambda:v\mapsto\Lambda(v)}\sum_{\Phi:v\mapsto\Phi_v} \frac{1}{\prod_{v\in V}|\Aut(\Lambda(v))|}\biggl(\underset{v\in V}{\otimes}\Phi_v^{\#}(F_{\Lambda(v)}(I,P_1))\biggr)\otimes\biggl(\underset{v\in V(\Gamma)-V}{\otimes}I(v)\biggr),
\end{multline*}
where the sum is taken over all:
\begin{itemize}
\item
subsets $V$ of $V(\Gamma)$;
\item
functions $\Lambda$ that assign to every vertex $v\in V$, a connected graph (isomorphism class) $\Lambda(v)$ that is not a corolla and satisfies $g(\Lambda(v))=g(v)$ and $b(\Lambda(v))=b(v)$;
\item
functions $\Phi$ that assign to every vertex $v\in V$, a bijective map $\Phi_v$ from the legs of the graph $\Lambda(v)$ to the incident half-edges of $v$, such that $\Phi_v$ respects the cyclic decomposition $C(v)$ on $v$ and the canonical decomposition on $L(\Lambda(v))$.
\end{itemize}

Given a triple $(V,\Lambda,\Phi)$ as above, there is an obvious insertion of the graph $\bigsqcup_{v\in V}\Lambda(v)$ into $\Gamma$ that sends the connected component $\Lambda(v)$ to the vertex $v$. We will denote this insertion by $\iota_{(V,\Lambda,\Phi)}$.

Collecting the terms in the above formula we may write
\[ W(W(I,P_1),P_2) = \sum_{(\Gamma,\iota,G)}\frac{\gamma^{g(\Gamma)}\nu^{b(\Gamma)}|B_{(\Gamma,\iota,G)}|}{|\Aut(\Gamma)|\prod_{\mathscr{G}\subset G}|\Aut(\mathscr{G})|} w_{\Gamma,\iota,G;F_{\ast}(I,P_1)}(I,P_2), \]
where:
\begin{itemize}
\item
the sum is over all triples $(\Gamma,\iota,G)$ taken from the category in Definition \ref{def_cattriples};
\item
we denote by $B_{(\Gamma,\iota,G)}$, the set consisting of all those triples $(V,\Lambda,\Phi)$ as above, for which
\[ \Bigl(\Gamma,\iota_{(V,\Lambda,\Phi)},\bigsqcup_{v\in V}\Lambda(v)\Bigr) \cong (\Gamma,\iota,G); \]
\item
we denote by $\prod_{\mathscr{G}\subset G}$, the product over all the connected components $\mathscr{G}$ of $G$, and
\item
we denote by $w_{\Gamma,\iota,G;F_{\ast}(I,P_1)}(I,P_2)$ the Feynman weight formed by:
\begin{itemize}
\item
attaching, for every connected component $\mathscr{G}$ of $G$, the Feynman amplitude $\iota_{\mathscr{G}}^{\#}(F_{\mathscr{G}}(I,P_1))$ to the vertex $\iota(\mathscr{G})$ of $\Gamma$;
\item
attaching the interaction $I$ to all those remaining vertices of $\Gamma$ that are not targeted by the insertion $\iota$, and
\item
attaching the propagator $P_2$ to all the edges of $\Gamma$.
\end{itemize}
\end{itemize}

It just remains to count $B_{(\Gamma,\iota,G)}$. Given a stable ribbon graph $G$, denote by $S_G$ the group consisting of all those permutations $T$ on the set of connected components of $G$ for which~$T(\mathscr{G})\cong\mathscr{G}$ for every connected component $\mathscr{G}$ of $G$. In this way we have
\[ |\Aut(G)| = |S_G|\prod_{\mathscr{G}\subset G}|\Aut(\mathscr{G})|. \]

Denote by $F_{(\Gamma,\iota,G)}$, the set fibered over $B_{(\Gamma,\iota,G)}$ whose fiber over a point $(V,\Lambda,\Phi)$ consists of all those bijections $T$ from $V$ to the set of connected components of $G$, for which $T(v)\cong\Lambda(v)$ for all $v\in V$. The group $S_G$ acts freely and transitively on the fibers, hence
\[ |F_{(\Gamma,\iota,G)}|=|S_G||B_{(\Gamma,\iota,G)}|. \]
Any bijection $T$ taken from one of the fibers of $F_{(\Gamma,\iota,G)}$ provides a way to canonically identify $G$ with~$\bigsqcup_{v\in V}\Lambda(v)$. Using this, the set $F_{(\Gamma,\iota,G)}$ may be identified with set consisting of all those insertions $\iota'$ from $G$ to $\Gamma$ such that
\[ (\Gamma,\iota',G) \cong (\Gamma,\iota,G). \]

Counting the elements of the latter using the orbit-stabilizer theorem we find that
\[ |F_{(\Gamma,\iota,G)}| = \frac{|\Aut(\Gamma)||\Aut(G)|}{|\Aut(\Gamma,\iota,G)|}. \]
Putting all this together, we arrive at the equation
\begin{equation} \label{eqn_intgrpactsumright}
W(W(I,P_1),P_2) = \sum_{(\Gamma,\iota,G)}\frac{\gamma^{g(\Gamma)}\nu^{b(\Gamma)}}{|\Aut(\Gamma,\iota,G)|} w_{\Gamma,\iota,G;F_{\ast}(I,P_1)}(I,P_2).
\end{equation}
Comparing equations \eqref{eqn_intgrpactsumleft} and \eqref{eqn_intgrpactsumright} and applying the equivalence of categories from Theorem \ref{thm_catequiv}, we see that Equation \eqref{eqn_intgrpactsum} follows.
\end{proof}

\subsubsection{The renormalization group flow at the tree level}

At the tree level the renormalization group flow is quite simple, so here we will take a moment to briefly describe this situation.

\begin{defi} \label{def_tree}
We will say that a stable graph or stable ribbon graph is a \emph{tree} if it is connected and has loop number zero.
\end{defi}

\begin{rem} \label{rem_tree}
This is equivalent to requiring the first Betti number of the graph to vanish and the loop number to vanish at every vertex of the graph. For a stable ribbon graph this in particular implies that every vertex consists of just a single cycle of valency at least three and has vanishing genus and boundary; in other words, it is an ordinary ribbon graph, cf. \cite{KontFeyn} and \cite[\S 5.5]{OpAlgTopPhys}. Furthermore, any such graph $\Gamma$ must have vanishing $g(\Gamma)$ and $b(\Gamma)$, at least three legs, and $|C(\Gamma/\Gamma)|=1$.
\end{rem}

\begin{defi}
Given a family of free theories $\mathcal{E}$ over $\mathcal{A}$, we will say that $I\in\intnoncommI{\mathcal{E},\mathcal{A}}$ is a \emph{tree-level interaction} if
\[ I_{ijk}=0, \quad\text{whenever }(i,j,k)\neq(0,0,1). \]
We will denote the subspace of $\intnoncommI{\mathcal{E},\mathcal{A}}$ consisting of all tree-level interactions by $\intnoncommItree{\mathcal{E},\mathcal{A}}$.
\end{defi}

Consider the projection
\[ \intnoncommI{\mathcal{E},\mathcal{A}}\to\intnoncommItree{\mathcal{E},\mathcal{A}}, \qquad I\mapsto I_{001}. \]

\begin{prop} \label{prop_treeflow}
Let $P\in\mathcal{E}\cotimes\mathcal{E}\cotimes\mathcal{A}$ be a family of propagators for a family $\mathcal{E}$ of free theories over $\mathcal{A}$. Define $W^{\mathrm{Tree}}(I,P)$ by the same formula as \eqref{eqn_RGFnoncomm}, except that now the sum is taken over all \emph{trees}. Then the following diagram commutes:
\begin{displaymath}
\xymatrix{
\intnoncommI{\mathcal{E},\mathcal{A}} \ar[d] \ar[rr]^{W(-,P)} && \intnoncommI{\mathcal{E},\mathcal{A}} \ar[d] \\ \intnoncommItree{\mathcal{E},\mathcal{A}} \ar[rr]^{W^{\mathrm{Tree}}(-,P)} && \intnoncommItree{\mathcal{E},\mathcal{A}}
}
\end{displaymath}
\end{prop}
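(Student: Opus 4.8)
The plan is to reduce the commutativity of the square to two separate observations: that the tree-level component of $W(I,P)$ coincides with $W^{\mathrm{Tree}}(I,P)$, and that $W^{\mathrm{Tree}}(I,P)$ depends on $I$ only through its tree-level part $I_{001}$. Granting these, the clockwise composite in the diagram sends $I$ to $W^{\mathrm{Tree}}(I,P)$, while the counterclockwise composite sends $I$ to $W^{\mathrm{Tree}}(I_{001},P)$, and the second observation identifies them.

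For the first observation I would start from the multidegree decomposition $W(I,P)=\sum_{i,j,k}\gamma^i\nu^j W_{ijk}(I,P)$ recorded just above the statement. Applying the projection $I\mapsto I_{001}$ to $W(I,P)$ extracts $W_{001}(I,P)=\sum_l W_{001l}(I,P)$, which by construction is the sum \eqref{eqn_RGFnoncomm} restricted to connected stable ribbon graphs $\Gamma$ with $g(\Gamma)=0$, $b(\Gamma)=0$ and $|C(\Gamma/\Gamma)|=1$. I claim these are precisely the trees. Indeed, for such a $\Gamma$ one has $B(\Gamma)=b(\Gamma)+|C(\Gamma/\Gamma)|=1$, hence $\ell(\Gamma)=2g(\Gamma)+B(\Gamma)-1=0$, so $\Gamma$ is connected with vanishing loop number, i.e.\ a tree in the sense of Definition \ref{def_tree}; the converse is exactly the content of Remark \ref{rem_tree}. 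Since every tree has $g(\Gamma)=b(\Gamma)=0$, the factor $\gamma^{g(\Gamma)}\nu^{b(\Gamma)}$ appearing in \eqref{eqn_RGFnoncomm} is $1$, so $W_{001}(I,P)$ is literally $\sum_{\Gamma}\frac{1}{|\Aut(\Gamma)|}w_\Gamma(I,P)$ with the sum over trees, which is the definition of $W^{\mathrm{Tree}}(I,P)$. This also shows, using the inequalities \eqref{eqn_graphinequalities}, that $W^{\mathrm{Tree}}(-,P)$ is well-defined and, being concentrated in multidegree $(0,0,1)$, lands in $\intnoncommItree{\mathcal{E},\mathcal{A}}$.

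For the second observation I would invoke Remark \ref{rem_tree} again: every vertex $v$ of a tree $\Gamma$ satisfies $g(v)=b(v)=0$ and $|C(v)|=1$, so in the construction of $w_\Gamma(I,P)$ the tensor $I(v)$ attached at $v$ is obtained from $I_{g(v)b(v)|C(v)||v|}=I_{001|v|}$, the order-$|v|$ component of $I_{001}$. This component is unchanged when $I$ is replaced by its projection $I_{001}$, so $w_\Gamma(I,P)=w_\Gamma(I_{001},P)$ for every tree $\Gamma$, and summing over trees gives $W^{\mathrm{Tree}}(I,P)=W^{\mathrm{Tree}}(I_{001},P)$. Combined with the first observation, this yields the commutativity of the square.

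I do not expect any real obstacle: the argument is essentially a matter of unwinding definitions. The one place that requires care is the graph-theoretic identification of the multidegree-$(0,0,1)$ stable ribbon graphs with trees, but that is immediate from the identity $\ell(\Gamma)=2g(\Gamma)+B(\Gamma)-1$ and the normalization $B(\Gamma)=b(\Gamma)+|C(\Gamma/\Gamma)|$, together with the facts already collected in Remark \ref{rem_tree}.
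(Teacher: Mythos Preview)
Your proposal is correct and is precisely the argument the paper has in mind: the paper's proof reads in its entirety ``Follows immediately from the discussion in Remark \ref{rem_tree},'' and your two observations are exactly the content of that remark unpacked. Your identification of the multidegree-$(0,0,1)$ graphs with trees via $\ell(\Gamma)=2g(\Gamma)+B(\Gamma)-1$ and $B(\Gamma)=b(\Gamma)+|C(\Gamma/\Gamma)|$, together with the vertex-level consequence $g(v)=b(v)=0$, $|C(v)|=1$, is the whole story.
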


\begin{proof}
Follows immediately from the discussion in Remark \ref{rem_tree}.
\end{proof}

\subsubsection{The passage from noncommutative to commutative geometry}

As we will now explain, we can pass from noncommutative to commutative geometry by simply forgetting the additional structure present on the noncommutative side. We will see that when we do this, the usual renormalization group flow is lifted by the flow in noncommutative geometry.

\begin{defi} \label{def_mapNCtoCom}
Given a free field theory $\mathcal{E}$, consider the natural quotient map
\[ \sigma:\KontHamP{\mathcal{E}}\to\csalgPdelim{(\mathcal{E}^{\dag})}\subset\intcomm{\mathcal{E}}, \]
where we pass from $\cyc{n}$ to $\sg{n}$-coinvariants. We extend this to a map
\[ \sigma_{\gamma,\nu}:\intnoncomm{\mathcal{E}}\to\intcomm{\mathcal{E}} \]
by defining
\[ \sigma_{\gamma,\nu}(\gamma^i\nu^j x_1\cdots x_k) = \hbar^{2i+j+k-1}\sigma(x_1)\cdots\sigma(x_k), \quad x_1,\ldots,x_k\in\KontHamP{\mathcal{E}}. \]
\end{defi}

Note that the above defines a map from $\intnoncommP{\mathcal{E}}$ to $\intcommP{\mathcal{E}}$ and hence on the spaces of interactions, cf. \eqref{eqn_interactionconstraints} and \eqref{eqn_interactionNCconstraints}.

\begin{theorem} \label{thm_flowNCtoCom}
The map $\sigma_{\gamma,\nu}$ intertwines the two renormalization group flows. That is, if $P\in\mathcal{E}\cotimes\mathcal{E}\cotimes\mathcal{A}$ is a family of propagators for a family $\mathcal{E}$ of free theories over $\mathcal{A}$ then the following diagram commutes:
\begin{displaymath}
\xymatrix{
\intnoncommI{\mathcal{E},\mathcal{A}} \ar[rr]^{W(-,P)} \ar[d]_{\sigma_{\gamma,\nu}} && \intnoncommI{\mathcal{E},\mathcal{A}} \ar[d]^{\sigma_{\gamma,\nu}} \\
\intcommI{\mathcal{E},\mathcal{A}} \ar[rr]^{W(-,P)} && \intcommI{\mathcal{E},\mathcal{A}}
}
\end{displaymath}
\end{theorem}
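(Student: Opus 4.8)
The plan is to compare the two flows term by term in their Feynman expansions, via a forgetful operation from stable ribbon graphs to stable graphs. First I would note that the $\mathcal{A}$-linear extension of $\sigma_{\gamma,\nu}$ is continuous and compatible with the multidegree decompositions: it carries the summand of multidegree $(i,j,k,l)$ of $\intnoncomm{\mathcal{E},\mathcal{A}}$ into the part of $\intcomm{\mathcal{E},\mathcal{A}}$ of $\hbar$-degree $2i+j+k-1$ and polynomial order $l$, and only finitely many source multidegrees hit a given target bidegree; since $W(I,P)$ of \eqref{eqn_RGFnoncomm} is finite in each multidegree, $\sigma_{\gamma,\nu}$ may be applied to it term by term. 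Then let $U$ send a connected stable ribbon graph $\Gamma$ to the stable graph $U(\Gamma)$ having the same half-edges, vertices, $\rho_\Gamma$ and $\kappa_\Gamma$, with loop-number function $v\mapsto 2g(v)+b(v)+|C(v)|-1$; by \eqref{eqn_loopnumber} we have $\ell(U(\Gamma))=\mathbf{b}_1(\Gamma)+\sum_v l(v)=\ell(\Gamma)$. Since $w_\Gamma(I,P)$ lies in the symmetric-algebra degree $|C(\Gamma/\Gamma)|$ part of $\csalg{\KontHamP{\mathcal{E}}}\cotimes\mathcal{A}$ — it is a product of one cyclic word for each block of the canonical cyclic decomposition of $L(\Gamma)$ — the definition of $\sigma_{\gamma,\nu}$, together with $B(\Gamma)=\beta_\Gamma^{\#}+\sum_v b(v)=b(\Gamma)+|C(\Gamma/\Gamma)|$ and $\ell(\Gamma)=2g(\Gamma)+B(\Gamma)-1$, gives
\[ \sigma_{\gamma,\nu}\bigl(\gamma^{g(\Gamma)}\nu^{b(\Gamma)}w_\Gamma(I,P)\bigr)=\hbar^{\ell(\Gamma)}\,\sigma_{*}\bigl(w_\Gamma(I,P)\bigr), \]
where $\sigma_{*}\colon\csalg{\KontHamP{\mathcal{E}}}\cotimes\mathcal{A}\to\csalg{(\mathcal{E}^{\dag})}\cotimes\mathcal{A}$ is the $\mathcal{A}$-linear algebra map induced by $\sigma$.

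It then suffices to prove, for each connected stable graph $G$, that
\[ \sum_{U(\Gamma)\cong G}\frac{\sigma_{*}(w_\Gamma(I,P))}{|\Aut(\Gamma)|}=\frac{w_G(\sigma_{\gamma,\nu}(I),P)}{|\Aut(G)|}; \]
summing over $G$ and substituting into \eqref{eqn_RGFnoncomm} then yields $\sigma_{\gamma,\nu}(W(I,P))=W(\sigma_{\gamma,\nu}(I),P)$. I would establish this identity by first handling the automorphism bookkeeping via orbit--stabilizer: ribbon-graph automorphisms form a subgroup $\Aut(\Gamma)\subseteq\Aut(U(\Gamma))$, and, fixing a representative of $G$, the group $\Aut(G)$ acts on the set of all ribbon structures on it with orbits the isomorphism classes $\Gamma$ mapping to $G$ and stabilizer of a ribbon structure $r$ equal to $\Aut((G,r))$; hence the left-hand side equals $|\Aut(G)|^{-1}\sum_{r}\sigma_{*}(w_{(G,r)}(I,P))$, the sum running over all ribbon structures $r$ on the fixed graph $G$.

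The remaining, and principal, step is a bookkeeping lemma in the spirit of Lemma \ref{lem_cyclicsetdettachattach} for the commutativization map. On the leg side it is purely tautological: the composite of the cyclic leg-assembly map \eqref{eqn_attachmapscycdecompoff} (for the canonical cyclic decomposition of $L(\Gamma)$) with $\sigma_{*}$ is exactly the projection $\mathcal{E}^{\dag}(\!(L(\Gamma))\!)\to\bigl[(\mathcal{E}^{\dag})^{\cotimes|L(\Gamma)|}\bigr]_{\sg{|L(\Gamma)|}}$ that defines the commutative Feynman weight. On the vertex side it carries the content: for a vertex $v$ of $G$ of loop number $l(v)$, summing the noncommutative vertex-attachment \eqref{eqn_attachmapscycdecompon} applied to $I_{g\,b\,|C|\,|v|}$ over all choices of nonnegative integers $(g,b)$ and cyclic decompositions $C$ of the half-edges at $v$ with $2g+b+|C|-1=l(v)$, and then pushing forward along $\sigma$, reproduces the symmetric vertex-attachment \eqref{eqn_attachtovertexsym} applied to $\bigl(\sigma_{\gamma,\nu}(I)\bigr)_{l(v)\,|v|}$ — the sum over cyclic decompositions, the $\cyc{n}$-symmetrizations built into \eqref{eqn_attachmapscyc}, and the passage from cyclic to symmetric coinvariants effected by $\sigma$ conspire to rebuild the full $\sg{n}$-symmetrization. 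Since the propagators are attached to the (common) edge set identically in both theories, combining these vertex identities (taken as a product over $V(G)$), the unchanged propagator contractions, and the leg identity yields $\sum_{r}\sigma_{*}(w_{(G,r)}(I,P))=w_G(\sigma_{\gamma,\nu}(I),P)$, as required.

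I expect essentially all of the genuine difficulty to reside in the vertex bookkeeping lemma: one must simultaneously track the count of cyclic decompositions into a prescribed number of blocks, the internal cyclic symmetrizations, and the forgetful $\cyc{n}\to\sg{n}$ coinvariant passage, and verify that their combination is the symmetric attachment with the correct multiplicity — the kind of calculation the paper elsewhere dispatches as tautological, but which here encodes the theorem. It is also worth checking explicitly that the argument runs uniformly over vertices with empty cyclic decomposition (the $\nu$-carrying boundary vertices, where $|C(v)|=0$ and the attached functional is a scalar), so that no such contribution is mishandled. The reduction steps, the $\hbar$-exponent identity, and the leg statement are, by contrast, entirely formal.
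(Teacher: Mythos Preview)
Your proposal is correct and follows essentially the same route as the paper: both define the forgetful map from stable ribbon graphs to stable graphs, reduce to the per-stable-graph identity \eqref{eqn_graphtoribbongraph}, handle the automorphism weights by orbit--stabilizer (your sum over ribbon structures $r$ on a fixed $G$ is exactly the paper's sum over quadruples $(g,b,k,C)$), and isolate the main content in the vertex-attachment calculation, which the paper carries out explicitly via the representatives $I^{\mathbf{r}}_{ijkl}$ and Lemma~\ref{lem_cyclicsetdettachattach}. Your identification of the vertex bookkeeping as the crux is exactly right; the paper's equations \eqref{eqn_interactiontilde}--\eqref{eqn_interactiontildegraph} are precisely that computation made explicit.
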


\begin{proof}
Recall from Example \ref{exm_cyclicwordproduct} that if we are given a family of interactions $I\in\intnoncommI{\mathcal{E},\mathcal{A}}$, we may write
\begin{equation} \label{eqn_interactionnottilde}
I_{ijkl} = \sum_{\begin{subarray}{c} r_1,\ldots,r_k\geq 1: \\ r_1+\cdots+r_k=l \end{subarray}}\lceil I_{ijkl}^{\mathbf{r}}\rceil, \quad\text{for some } I_{ijkl}^\mathbf{r}\in(\mathcal{E}^{\dag})^{\cotimes l}\cotimes\mathcal{A};
\end{equation}
where $\lceil I_{ijkl}^{\mathbf{r}}\rceil$ denotes the image of $I_{ijkl}^{\mathbf{r}}$ under the map \eqref{eqn_attachmapscycdecompoff} that is induced by the cyclic decomposition $C_{\mathbf{r}}$ defined by \eqref{eqn_cycdecpartition}.

Set $\tilde{I}:=\sigma_{\gamma,\nu}(I)$, then
\begin{equation} \label{eqn_interactiontilde}
\tilde{I}_{pq} = \sum_{\begin{subarray}{c} i,j,k\geq 0: \\ 2i+j+k-1 = p\end{subarray}}\sum_{\mathbf{r}}\tilde{I}_{ijkq}^{\mathbf{r}};
\end{equation}
where $\tilde{I}_{ijkq}^{\mathbf{r}}$ denotes the image of the tensor $I_{ijkq}^{\mathbf{r}}$ under the right-hand map of \eqref{eqn_attachmapssym}.

We may write
\[ \sigma_{\gamma,\nu}(W(I,P)) = \sum_{\Gamma} \frac{\hbar^{\ell(\Gamma)}}{|\Aut(\Gamma)|}\tilde{F}_{\Gamma}(I,P), \]
where the sum is taken over all connected stable \emph{ribbon} graphs and $\tilde{F}_{\Gamma}(I,P)$ denotes the image of the Feynman amplitude defined by Definition \ref{def_Feynmanampnoncomm} under the right hand map of \eqref{eqn_attachmapssym}.

To every stable \emph{ribbon} graph, there is naturally associated a stable graph that is obtained by forgetting the cyclic decomposition at each vertex and defining the loop number at each vertex by \eqref{eqn_vertexribbonloopnumber}. Given a stable graph $G$, let $\lfloor G\rfloor$ denote the corresponding fiber of (isomorphism classes of) stable ribbon graphs that sit over $G$ under this mapping. Then we may write
\[ \sigma_{\gamma,\nu}(W(I,P)) = \sum_G\hbar^{\ell(G)}\sum_{\Gamma\in\lfloor G\rfloor} \frac{1}{|\Aut(\Gamma)|}\tilde{F}_{\Gamma}(I,P), \]
where the sum runs over all connected stable graphs $G$, and all stable \emph{ribbon} graphs $\Gamma$ in the fiber $\lfloor G\rfloor$.

From the above, we see that it suffices to show that for every connected stable graph~$G$,
\begin{equation} \label{eqn_graphtoribbongraph}
\frac{1}{|\Aut(G)|}w_G(\tilde{I},P) = \sum_{\Gamma\in\lfloor G\rfloor}\frac{1}{|\Aut(\Gamma)|}\tilde{F}_{\Gamma}(I,P).
\end{equation}

Using \eqref{eqn_interactiontilde} we calculate
\[ \tilde{I}(G) = \sum_{\begin{subarray}{c} g:v\mapsto g(v) \\ b:v\mapsto b(v) \\ k:v\mapsto k(v) \end{subarray}}\sum_{\mathbf{r}:v\mapsto\mathbf{r}(v)}\sum_{\Psi:v\mapsto\Psi_v} \underset{v\in V(G)}{\otimes} \Psi_v^{\#}\Bigl(I_{g(v)b(v)k(v)|v|}^{\mathbf{r}(v)}\Bigr), \]
where the sum is taken over all:
\begin{itemize}
\item
those functions $g$, $b$ and $k$ that assign to every vertex $v$ of $G$, nonnegative integers satisfying
\[2g(v)+b(v)+k(v)-1=l(v);\]
\item
those functions $\mathbf{r}$ that assign to every vertex $v$ of $G$, a list $\mathbf{r}(v)$ consisting of $k(v)$ positive integers  $r_1(v),\ldots,r_{k(v)}(v)$ whose sum is $|v|$;
\item
functions $\Psi$ that assign to every vertex $v$ of $G$, a bijection $\Psi_v$ from the set $\{1,\ldots,|v|\}$ to the incident half-edges of $v$.
\end{itemize}

We may then rewrite this as
\begin{equation} \label{eqn_interactiontildegraph}
\tilde{I}(G) = \sum_{\begin{subarray}{c} g:v\mapsto g(v) \\ b:v\mapsto b(v) \\ k:v\mapsto k(v) \end{subarray}}\sum_{\begin{subarray}{c} \mathbf{r}:v\mapsto\mathbf{r}(v) \\ C:v\mapsto C(v) \end{subarray}}\sum_{\Phi:v\mapsto\Phi_v} \underset{v\in V(G)}{\otimes} \Phi_v^{\#}\Bigl(I_{g(v)b(v)k(v)|v|}^{\mathbf{r}(v)}\Bigr),
\end{equation}
where the sum is taken over all:
\begin{itemize}
\item
functions $g$, $b$, $k$ and $\mathbf{r}$ as above;
\item
functions $C$ that assign to every vertex $v$ of $G$, a cyclic decomposition $C(v)$ of the incident half-edges of $v$ that consists of precisely $k(v)$ cycles;
\item
functions $\Phi$ that assign to every vertex $v$ of $G$, a bijection $\Phi_v$ from the set $\{1,\ldots,|v|\}$ to the incident half-edges of $v$ \emph{that takes the cyclic decomposition $C_{\mathbf{r}(v)}$ to $C(v)$}.
\end{itemize}

Given a quadruple $(g,b,k,C)$ as above, we may turn the stable graph $G$ into a stable \emph{ribbon} graph
\[ \Gamma_{(g,b,k,C)}\in\lfloor G\rfloor \]
in the tautological way.

Now from \eqref{eqn_interactionnottilde} and Lemma \ref{lem_cyclicsetdettachattach} it follows that
\[ I(\Gamma_{(g,b,k,C)}) = \sum_{\mathbf{r}:v\to\mathbf{r}(v)}\sum_{\Phi:v\mapsto\Phi_v}\underset{v\in V(\Gamma_{(g,b,k,C)})}{\otimes} \Phi_v^{\#}\Bigl(I_{g(v)b(v)k(v)|v|}^{\mathbf{r}(v)}\Bigr), \]
where the sum is taken over all those functions $\mathbf{r}$ and $\Phi$ as above. Combining this with \eqref{eqn_interactiontildegraph} and passing to the Feynman amplitudes we get
\begin{displaymath}
\begin{split}
F_G(\tilde{I},P) &= \sum_{g,b,k,C} F_{\Gamma_{(g,b,k,C)}}(I,P), \\
w_G(\tilde{I},P) &= \sum_{\Gamma\in\lfloor G\rfloor}N_{\Gamma}\tilde{F}_{\Gamma}(I,P);
\end{split}
\end{displaymath}
where $N_{\Gamma}$ counts the number of quadruples $(g,b,k,C)$ as above for which the stable ribbon graph $\Gamma_{(g,b,k,C)}$ is isomorphic to $\Gamma$. Note that the group $\Aut(G)$ of automorphisms of the stable graph $G$ acts transitively on the set of such quadruples, with stabilizer the group $\Aut(\Gamma)$ of automorphisms of the stable ribbon graph $\Gamma$; therefore
\[ N_{\Gamma}=\frac{|\Aut(G)|}{|\Aut(\Gamma)|}. \]
This proves \eqref{eqn_graphtoribbongraph}.
\end{proof}

\subsection{Open topological field theories} \label{sec_OTFTflow}

One very important property of the renormalization group flow that we introduced in Section \ref{sec_noncommflow} is that it is compatible with a certain transformation of the space of interactions which we will introduce in this section that is defined by a two-dimensional Open Topological Field Theory (OTFT); indeed, we will see that it is the very axioms of an OTFT that will ensure its compatibility. This transformation was first introduced in a different context in \cite{HamNCBV}.

A well-known theorem due to Atiyah et. al. \cite{AtiyahTFT} states that these theories are in one-to-one correspondence with Frobenius algebras. The most interesting examples for us will come from considering matrix algebras. In this case the transformation we get from our OTFT arises as a certain variation of the map \cite[\S 1.2]{LodayCyclicHomology} that appears in the statement of the Morita invariance of Hochschild cohomology, as we first explained in \cite{GiGqHaZeLQT}. Combining this transformation with the map $\sigma_{\gamma,\nu}$ defined by Definition \ref{def_mapNCtoCom} leads to a variation of the map appearing in the Loday-Quillen-Tsygan Theorem \cite{LodayQuillen, Tsygan}. This will be important for us later when we wish to consider certain large $N$ behavior, such as in Chern-Simons Theory, cf.~\cite{GwHaZeGUE, NCRBV}.

\subsubsection{The transformation defined by an Open Topological Field Theory}

Let $\textgoth{A}$ be a (unital) $\mathbb{Z}$-graded Frobenius algebra. This means that it has a nondegenerate degree zero trace map $\Tr:\textgoth{A}\to\gf$. The nondegeneracy condition ensures there is a degree zero symmetric tensor
\[ \innprod_{\textgoth{A}}^{-1} = \sum_i x_i\otimes y_i \in \textgoth{A}\otimes \textgoth{A} \]
such that for all $a\in \textgoth{A}$,
\[ a = \sum_i \Tr(ax_i)y_i. \]
In particular, $\textgoth{A}$ must be finite-dimensional.

We will define a family of multilinear maps
\begin{equation} \label{eqn_OTFTmaps}
\OTFT{g,b}{r_1,\ldots,r_k}:\textgoth{A}^{\otimes r_1}\otimes\cdots\otimes \textgoth{A}^{\otimes r_k}\to\gf; \qquad g,b\geq 0,\quad r_1,\ldots,r_k\geq 1.
\end{equation}
Let $\Surface{g,b}{r_1,\ldots,r_k}$ denote the standard oriented surface of genus $g$ with $b$ unlabeled boundary components and $k$ labeled boundary components in which the labeling on the $i$th boundary component consists of $r_i$ subintervals embedded into the boundary and labeled from $1$ to $r_i$ clockwise around the boundary according to the orientation. This surface is depicted in Figure \ref{fig_standardsurface}.

\begin{figure}[htp]
\centering
\includegraphics{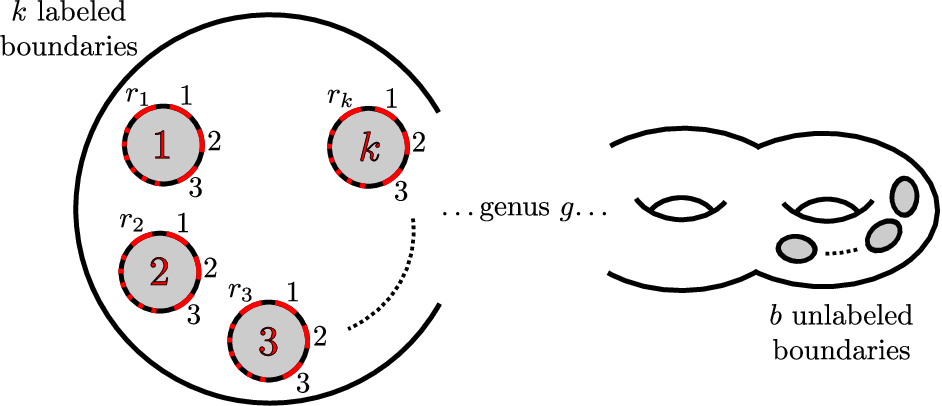}
\caption{The standard surface $\Surface{g,b}{r_1,\ldots,r_k}$ of genus $g$ with $b$ unlabeled boundary components and $k$ labeled components.}
\label{fig_standardsurface}
\end{figure}

The map $\OTFT{g,b}{r_1,\ldots,r_k}$ is the multilinear map assigned to the standard surface $\Surface{g,b}{r_1,\ldots,r_k}$ by the Open Topological Field Theory that is constructed from the Frobenius algebra $\textgoth{A}$. The preceding statement has a precise meaning, not to be recalled here, that is formulated in Section 3 of \cite{ChLaOTFT} in which OTFTs are defined as algebras over a certain modular operad.

Instead of recalling those accompanying details, we will simply provide an explicit formula for these maps. We begin by defining central elements
\[ \Xi_{\mathrm{bdry}} := \sum_i x_i y_i \quad\text{and}\quad \Xi_{\mathrm{gen}} := \sum_{i,j} (-1)^{|x_j||y_i|}x_ix_jy_iy_j \]
of degree zero in our Frobenius algebra $\textgoth{A}$. The multilinear maps are then defined by
\begin{multline*}
\OTFT{g,b}{r_1,\ldots,r_k}(a_{11},\ldots,a_{1r_1};\ldots;a_{k1},\ldots, a_{kr_k}) := \\
\sum_{i_1,\ldots,i_k} (-1)^p \Tr\bigl(x_{i_k}\cdots x_{i_1}\bigr)\Tr\bigl(\Xi_{\mathrm{bdry}}^b\Xi_{\mathrm{gen}}^g y_{i_1}a_{11}\cdots a_{1r_1}\cdots y_{i_k}a_{k1}\cdots a_{kr_k}\bigr),
\end{multline*}
where the sign given by the Koszul sign rule is $p:=\sum_{1\leq t<s\leq k}\sum_{j=1}^{r_t}|y_{i_s}||a_{tj}|$.

\begin{example} \label{exm_matrixOTFT}
When $\textgoth{A}$ is the Frobenius algebra of $N$-by-$N$ matrices $\mat{N}{\gf}$ then the OTFT maps are
\[ \OTFT{g,b}{r_1,\ldots,r_k}(A_{11},\ldots,A_{1r_1};\ldots;A_{k1},\ldots,A_{kr_k}) = N^b\Tr(A_{11}\cdots A_{1r_1})\cdots\Tr(A_{k1}\cdots A_{kr_k}). \]
See \cite[\S 4.5]{GiGqHaZeLQT} for details.
\end{example}

The maps $\OTFT{g,b}{\mathbf{r}}$ satisfy some basic symmetry relations coming from the axioms of an OTFT. Since there is an automorphism of the standard surface $\Surface{g,b}{\mathbf{r}}$ that permutes the boundary components, we have that for all $\varsigma\in\sg{k}$;
\[ \OTFT{g,b}{r_1,\ldots,r_k}(\mathbf{a}_1;\ldots;\mathbf{a}_k) = \\
\pm\OTFT{g,b}{r_{\varsigma(1)},\ldots,r_{\varsigma(k)}}(\mathbf{a}_{\varsigma(1)};\ldots;\mathbf{a}_{\varsigma(k)}), \quad \mathbf{a}_i\in\textgoth{A}^{\otimes r_i}. \]
Likewise, since there is an automorphism of the standard surface that rotates any boundary component of $\Surface{g,b}{\mathbf{r}}$, we have that for any collection of cyclic permutations ${\varsigma_i\in\cyc{r_i}}$;
\[ \OTFT{g,b}{r_1,\ldots,r_k}(\mathbf{a}_1;\ldots;\mathbf{a}_k) = \\
\pm\OTFT{g,b}{r_1,\ldots,r_k}(\varsigma_1\cdot\mathbf{a}_1;\ldots;\varsigma_k\cdot\mathbf{a}_k), \quad\mathbf{a}_i\in\textgoth{A}^{\otimes r_i}. \]

A more succinct way to state these symmetry relations is as follows. Suppose that $\mathbf{r}$ and $\mathbf{r}'$ are both lists of $k$ positive integers whose sum is $l$, and consider the cyclic decompositions $C_\mathbf{r}$ and $C_\mathbf{r'}$ defined by \eqref{eqn_cycdecpartition}; then
\begin{equation} \label{eqn_OTFTrelations}
\OTFT{g,b}{\mathbf{r}} = \OTFT{g,b}{\mathbf{r'}}\circ\varsigma, \quad\text{for every }\varsigma\in\sg{l}\text{ satisfying }\varsigma\cdot C_\mathbf{r} = C_\mathbf{r'}.
\end{equation}

\begin{example} \label{exm_OTFTfreethy}
Let $\mathcal{E}=\Gamma(M,E)$ be a family of free theories over $\mathcal{A}$. Consider the tensor product $E\otimes \textgoth{A}$ of the vector bundle $E$ with the trivial bundle $\textgoth{A}$. Denote the sections by
\[ \mathcal{E}_{\textgoth{A}}:=\Gamma(M,E\otimes \textgoth{A}) = \mathcal{E}\otimes \textgoth{A}. \]
A local pairing may be defined by
\[ \innprodloc[v_1\otimes a_1,v_2\otimes a_2]^{\textgoth{A}} := (-1)^{|a_1||v_2|}\innprodloc[v_1,v_2]\Tr(a_1 a_2).  \]
Extend the family $H$ of generalized Laplacians on $\mathcal{E}$ to $\mathcal{E}_{\textgoth{A}}$ in the obvious way by setting
\[ H_{\textgoth{A}} := (\mathds{1}\cotimes\tau)(H\otimes\mathds{1}):\mathcal{E}_{\textgoth{A}}\to\mathcal{E}_{\textgoth{A}}\cotimes\mathcal{A}. \]

This defines a family $\mathcal{E}_{\textgoth{A}}$ of free theories over $\mathcal{A}$. If $K$ is the heat kernel for $\mathcal{E}$ then
\[ K_{\textgoth{A}} := K\otimes\innprod_{\textgoth{A}}^{-1} \in \smooth{0,\infty}\underset{\mathbb{R}}{\cotimes}\mathcal{E}_{\textgoth{A}}\cotimes\mathcal{E}_{\textgoth{A}}\cotimes\mathcal{A} \]
will be the heat kernel for $\mathcal{E}_{\textgoth{A}}$. We will be most interested in the case where $\mathcal{E}$ is the de Rham algebra and $\textgoth{A}$ is a matrix algebra, which leads to spaces of fields formed from connections.
\end{example}

We define our transformation of the space of interactions that is associated to an OTFT by tensoring our interactions with our OTFT.

\begin{defi} \label{def_OTFTtransformation}
Let $\mathcal{E}$ be a free theory and $\textgoth{A}$ be a Frobenius algebra. Recall from Example \ref{exm_cyclicwordproduct} that given any $I\in\intnoncomm{\mathcal{E}}$ we may write
\[ I_{ijkl} = \sum_{\mathbf{r}}\lceil I_{ijkl}^{\mathbf{r}}\rceil \]
for some (nonunique) choice of distributions $I_{ijkl}^\mathbf{r}\in(\mathcal{E}^{\dag})^{\cotimes l}$. We define a map
\[ \Morita:\intnoncomm{\mathcal{E}}\longrightarrow\intnoncomm{\mathcal{E}_{\textgoth{A}}}, \qquad I=\sum_{i,j,k,l,\mathbf{r}}\gamma^i\nu^j\lceil I_{ijkl}^{\mathbf{r}}\rceil \longmapsto \sum_{i,j,k,l,\mathbf{r}}\gamma^i\nu^j\lceil I_{ijkl}^{\mathbf{r}}\otimes\OTFT{i,j}{\mathbf{r}}\rceil; \]
where
\[ I_{ijkl}^{\mathbf{r}}\otimes\OTFT{i,j}{\mathbf{r}}\in(\mathcal{E}^{\dag})^{\cotimes l}\otimes(\textgoth{A}^{\dag})^{\otimes l} = (\mathcal{E}_{\textgoth{A}}^{\dag})^{\cotimes l} \]
and $\lceil I_{ijkl}^{\mathbf{r}}\otimes\OTFT{i,j}{\mathbf{r}}\rceil$ denotes, as before, its image in $\intnoncomm{\mathcal{E}_{\textgoth{A}}}$ under the map \eqref{eqn_attachmapscycdecompoff} determined by the cyclic decomposition $C_{\mathbf{r}}$ defined by \eqref{eqn_cycdecpartition}.
\end{defi}

Note that $\Morita$ maps $\intnoncommP{\mathcal{E}}$ to $\intnoncommP{\mathcal{E}_{\textgoth{A}}}$ and hence will map interactions to interactions. The map $\Morita$ is well-defined and does not depend on the choice of distributions $I_{ijkl}^\mathbf{r}$. This is a consequence of the symmetry conditions \eqref{eqn_OTFTrelations} imposed by the OTFT.

\subsubsection{Axioms of an Open Topological Field Theory} \label{sec_OTFTaxioms}

We wish to demonstrate that the map $\Morita$ is compatible with the renormalization group flow. For this we will need to recall the properties of an OTFT in the form of the following two Lemmas.

Let $\Gamma$ be a connected stable ribbon graph and at each vertex $v$ of $\Gamma$, use the cyclic decomposition $C(v)$ to attach the labeled subintervals of a standard surface $\Surface{g(v),b(v)}{\mathbf{r}(v)}$ to the incident half-edges of $v$; each cycle of $C(v)$ is attached to a separate boundary component of $\Surface{g(v),b(v)}{\mathbf{r}(v)}$ using the cyclic ordering. Replacing the edges of $\Gamma$ with ribbons $I\times I$ attached to the boundary subintervals yields an oriented surface $\Sigma_{\Gamma}$. The remaining boundary subintervals of $\Sigma_{\Gamma}$ are labeled by the legs of $\Gamma$.

Note that $\Sigma_{\Gamma}$ is independent (up to homeomorphism) of the choices involved in its construction. Since we may rotate and permute the boundary components of the standard surface using a homeomorphism, it does not matter how we use the cyclic decomposition $C(v)$ to attach the standard surface at each vertex.

\begin{lemma} \label{lem_surfacecontract}
Let $\Gamma$ be a connected stable ribbon graph with $l$ legs. Make any choice of a list $\mathbf{r}$ of ${k:=|C(\Gamma/\Gamma)|}$ positive integers, together with a bijection $\phi$ from $\{1,\ldots,l\}$ to $L(\Gamma)$ that takes the cyclic decomposition $C_{\mathbf{r}}$ defined by \eqref{eqn_cycdecpartition} to the canonical decomposition on $L(\Gamma)$. Then $\Sigma_{\Gamma}$ is homeomorphic to the standard surface $\Surface{g(\Gamma),b(\Gamma)}{\mathbf{r}}$, and a homeomorphism may be chosen so that on the boundary subintervals, it becomes the bijection $\phi$.
\end{lemma}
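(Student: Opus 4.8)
The plan is to argue by induction on the number of edges of $\Gamma$, exploiting the principle --- foreshadowed in the discussion preceding Definition \ref{def_contractedge} and in Figure \ref{fig_contractedge} --- that ``replacing an edge by a thin ribbon and then collapsing it'' realizes the combinatorial edge contraction of Definition \ref{def_contractedge} at the level of surfaces. Thus the key geometric claim will be that $\Sigma_\Gamma$ is homeomorphic to $\Sigma_{\Gamma/e}$ for every edge $e$, by a homeomorphism fixing the leg subintervals; iterating reduces $\Gamma$ to the corolla $\Gamma/\Gamma$, which is the base case.

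For the base case, suppose $\Gamma$ has no edges, so it is a corolla with a single vertex $v$. Then $\Gamma/\Gamma = \Gamma$, hence $g(\Gamma)=g(v)$, $b(\Gamma)=b(v)$, $k=|C(v)|$, and the canonical decomposition of $L(\Gamma)$ is $C(v)$ itself. By construction $\Sigma_\Gamma$ is just a copy of a standard surface whose $k$ labeled boundary components correspond to the cycles of $C(v)$, with the labeled subintervals glued to the half-edges of $v$ according to the cyclic ordering. Since $\phi$ carries $C_{\mathbf{r}}$ to $C(v)$ it induces a length-preserving bijection of the cycles of $C_{\mathbf{r}}$ with those of $C(v)$, and --- using that permuting and rotating the boundary components of a standard surface are homeomorphisms of it, as noted in the construction of $\Sigma_\Gamma$ --- we may take this standard surface to be $\Surface{g(\Gamma),b(\Gamma)}{\mathbf{r}}$ and arrange the gluing so that the identification of its labeled subintervals with $L(\Gamma)$ is precisely $\phi$.

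For the inductive step, let $e$ be an edge of $\Gamma$ and form $\Gamma/e$, which is again a connected stable ribbon graph with one fewer edge, with $L(\Gamma/e)=L(\Gamma)$, and --- since $(\Gamma/e)/(\Gamma/e)=\Gamma/\Gamma$, so that by Remark \ref{rem_genbdryedgecontract} the numbers $g(\Gamma)$, $b(\Gamma)$, $k$ and the canonical decomposition of the legs are unchanged. It remains to produce a homeomorphism $\Sigma_\Gamma \cong \Sigma_{\Gamma/e}$ fixing the leg subintervals; granting this, composing such homeomorphisms down to $\Gamma/\Gamma$ and then applying the base case to $\Gamma/\Gamma$ completes the proof. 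The homeomorphism is built by collapsing the thin ribbon glued along $e$ and gluing the standard surfaces at its ends directly along the relevant boundary subintervals, and one checks case by case (following Definition \ref{def_contractedge}) that the result is exactly the standard surface of the contracted vertex: when $e$ joins distinct vertices $v_1,v_2$, genera add and the two boundary components carrying $c_1$ and $c_2$ fuse into one carrying the concatenated cyclic word (or, in the exceptional case of two singleton cycles, a new boundary component with no marked arcs), matching the rules for $\Gamma/e$; when $e$ is a loop, one uses the standard dichotomy that joining two boundary components of one surface by an arc raises the genus by one while cutting along an arc with both endpoints on one boundary component splits it into two, again matching the rules and their exceptional sub-cases. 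As the surgery is supported near $e$, it may be taken to fix all leg subintervals. A convenient running check is the Euler characteristic: $\chi(\Sigma_\Gamma)=\sum_{v\in V(\Gamma)}\bigl(2-2g(v)-b(v)-|C(v)|\bigr)-|E(\Gamma)|$, which one verifies equals $2-2g(\Gamma)-B(\Gamma)=\chi\bigl(\Surface{g(\Gamma),b(\Gamma)}{\mathbf{r}}\bigr)$ using the defining formulas for $g(\Gamma)$ and $B(\Gamma)$, and which is manifestly preserved under each contraction step.

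The main obstacle is exactly the case analysis in the inductive step: verifying that the local two-dimensional surgery at an edge reproduces each rule (and each exceptional sub-case) of Definition \ref{def_contractedge}, while correctly tracking the cyclic words of marked subintervals on the boundary circles. This is routine surface topology, best accompanied by pictures, but it is where essentially all the content lies; everything else is formal.
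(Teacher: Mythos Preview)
Your proposal is correct and follows essentially the same approach as the paper: induction on the number of edges, reducing to the single-edge case, where one checks that the rules of Definition~\ref{def_contractedge} exactly record the effect of attaching a ribbon $I\times I$ at the surface level. The paper's proof is terser (it cites Lemma~\ref{lem_contractsubgraph} to set up the induction and then defers the single-edge verification to Figure~\ref{fig_contractedge}), but your more explicit base case, case analysis, and Euler-characteristic sanity check are all in the same spirit.
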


\begin{proof}
By Lemma \ref{lem_contractsubgraph} and using induction on the number of edges of $\Gamma$, it suffices to consider the case when $\Gamma$ has only a single edge. In this case we see immediately that the rules listed in Definition \ref{def_contractedge} ensure that contracting an edge corresponds to attaching a ribbon $I\times I$, cf. Figure \ref{fig_contractedge}.
\begin{figure}[htp]
\centering
\includegraphics{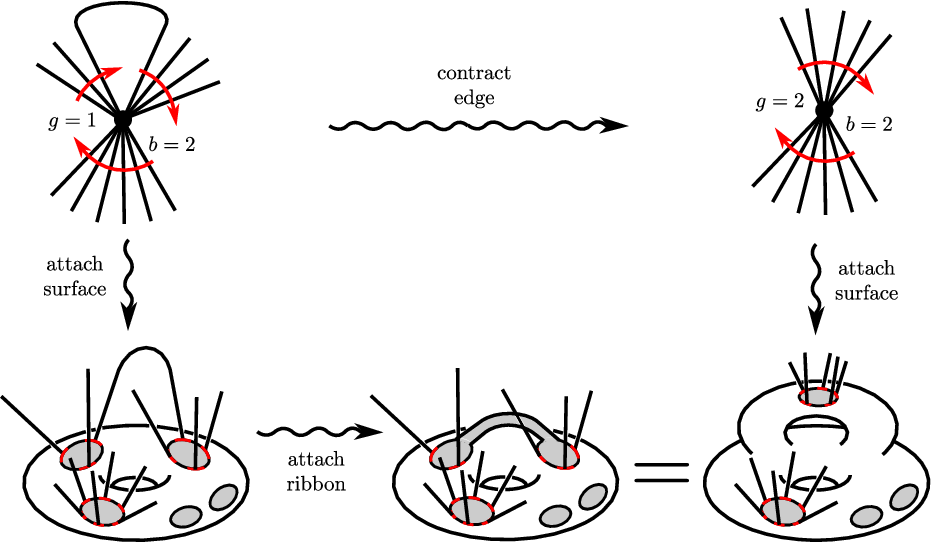}
\caption{Contracting an edge in a stable ribbon graph corresponds to attaching a ribbon to the standard surface (or surfaces) inserted at the corresponding vertex (or vertices). The rules for the genus and boundary functions keep track of the genus and unlabeled boundary components of the corresponding surfaces.}
\label{fig_contractedge}
\end{figure}\end{proof}

Let $\textgoth{A}$ be a Frobenius algebra and $\Gamma$ be a connected stable ribbon graph. Using the same construction as above, we may attach the OTFT maps
\[ \OTFT{g(v),b(v)}{\mathbf{r}}\in (\textgoth{A}^{\dag})^{\otimes |v|} \]
to every vertex $v$ of $\Gamma$ using any list of positive integers $\mathbf{r}$ and bijection $\phi$ that takes the cyclic decomposition $C_{\mathbf{r}}$ to $C(v)$. Again, the symmetry relations \eqref{eqn_OTFTrelations} ensure that the resulting map $\mathbf{T}(v)\in\textgoth{A}^{\dag}(\!(v)\!)$ does not depend upon the choice of $\mathbf{r}$ and $\phi$. Combining the maps $\mathbf{T}(v)$ from all the vertices $v$ of $\Gamma$ yields a map $\mathbf{T}(\Gamma)\in\textgoth{A}^{\dag}(\!(H(\Gamma))\!)$. Now attach the symmetric tensors $\innprod_{\textgoth{A}}^{-1}$ to every edge $e$ of $\Gamma$ to form a tensor $\innprod_{\textgoth{A}}^{-1}(\Gamma)$. Evaluating one on the other defines a map
\[ F_{\Gamma}(\textgoth{A}) := \mathbf{T}(\Gamma)\bigl[\innprod_{\textgoth{A}}^{-1}(\Gamma)\bigr] \in\textgoth{A}^{\dag}(\!(L(\Gamma))\!). \]

\begin{lemma} \label{lem_OTFTaxiom}
Let $\Gamma$ be a connected stable ribbon graph with $l$ legs and $\textgoth{A}$ be a Frobenius algebra. As before, make any choice of a list $\mathbf{r}$ of ${k:=|C(\Gamma/\Gamma)|}$ positive integers, together with a bijection $\phi$ that takes the cyclic decomposition $C_{\mathbf{r}}$ to the canonical decomposition on $L(\Gamma)$. Then the image in $(\textgoth{A}^{\dag})^{\otimes l}$ of $F_{\Gamma}(\textgoth{A})$ under the map \eqref{eqn_labelmaps} defined by the bijection $\phi$ is the multilinear map $\OTFT{g(\Gamma),b(\Gamma)}{\mathbf{r}}$.
\end{lemma}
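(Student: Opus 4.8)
The plan is to follow the pattern of the proof of Lemma \ref{lem_surfacecontract} and induct on the number of edges of $\Gamma$. The base case is that of a corolla: if $\Gamma$ has no edges then $F_{\Gamma}(\textgoth{A}) = \mathbf{T}(v)$ for the unique vertex $v$, and since $g(v) = g(\Gamma)$, $b(v) = b(\Gamma)$, and $C(v)$ is the canonical cyclic decomposition of $L(\Gamma)$, the very definition of $\mathbf{T}(v)$ together with the symmetry relations \eqref{eqn_OTFTrelations} identifies the image of $F_{\Gamma}(\textgoth{A})$ under $\phi$ with $\OTFT{g(\Gamma),b(\Gamma)}{\mathbf{r}}$. (As in Lemma \ref{lem_surfacecontract}, the relations \eqref{eqn_OTFTrelations} are exactly what make all of this independent of the auxiliary choices of $\mathbf{r}$ and $\phi$.) For the inductive step, choose an edge $e$ of $\Gamma$ and carry out, inside $\mathbf{T}(\Gamma)\bigl[\innprod_{\textgoth{A}}^{-1}(\Gamma)\bigr]$, only the contraction against the tensor $\innprod_{\textgoth{A}}^{-1}$ attached to $e$, leaving all other edge tensors untouched. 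The crux is to show that this partial contraction converts the vertex map(s) of $\Gamma$ incident to $e$ into the vertex map(s) of $\Gamma/e$, so that $F_{\Gamma}(\textgoth{A}) = F_{\Gamma/e}(\textgoth{A})$ as elements of $\textgoth{A}^{\dag}(\!(L(\Gamma))\!) = \textgoth{A}^{\dag}(\!(L(\Gamma/e))\!)$. Granting this, the induction closes: $\Gamma/e$ has one fewer edge, it has the same legs and canonical leg-decomposition as $\Gamma$, and $g(\Gamma/e) = g(\Gamma)$, $b(\Gamma/e) = b(\Gamma)$ by Remark \ref{rem_genbdryedgecontract}.

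Thus everything reduces to the single-edge identity, which I would establish by running through the cases of Definition \ref{def_contractedge}. In each case one substitutes the explicit formula for the OTFT maps at the relevant vertices, performs the sum against $\innprod_{\textgoth{A}}^{-1} = \sum_i x_i \otimes y_i$, and simplifies using the cyclic invariance of the trace, the defining property $a = \sum_i \Tr(ax_i)y_i$, and the centrality of $\Xi_{\mathrm{bdry}}$ and $\Xi_{\mathrm{gen}}$. When $e$ joins two distinct vertices, expanding the product of the two trace expressions and applying the defining property once collapses it to a single OTFT map in which the two cycles meeting $e$ have been concatenated, with genera and boundaries added (Definition \ref{def_contractedge}(1)). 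When $e$ is a loop whose endpoints lie in distinct cycles of $C(v)$, the same self-contraction at the single vertex $v$ produces precisely one extra central factor of $\Xi_{\mathrm{gen}}$ while merging those two cycles, which is the genus increase of Definition \ref{def_contractedge}(2a). When $e$ is a loop whose endpoints lie in one and the same cycle, the contraction instead splits that cycle into two, leaving genus and boundary fixed (Definition \ref{def_contractedge}(2b)); the built-in coupling $\Tr(x_{i_k}\cdots x_{i_1})$ in the OTFT formula is what makes this manipulation go through for an arbitrary Frobenius algebra, not merely a matrix algebra. In every case the exceptional sub-rules of Definition \ref{def_contractedge} --- where a would-be cycle becomes empty and $b(v)$ is incremented instead --- correspond to one of the intervening ``arcs'' in the trace being empty, i.e. equal to the unit $1 \in \textgoth{A}$; tracking this degeneration through the computation converts the phantom empty cycle into one (or, in the doubly-degenerate instance, two) additional power(s) of $\Xi_{\mathrm{bdry}}$, exactly as the rule prescribes. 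As a sanity check, Example \ref{exm_matrixOTFT} makes all of these identities transparent for $\textgoth{A} = \mat{N}{\gf}$.

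The main obstacle will be bookkeeping rather than ideas: keeping track of the Koszul signs (the sign $(-1)^p$ in $\OTFT{g,b}{\mathbf{r}}$, the signs in $\innprod_{\textgoth{A}}^{-1}$ and in $\Xi_{\mathrm{gen}}$, and the signs produced by the permutation $\varsigma$ of \eqref{eqn_OTFTrelations} when pushing forward along $\phi$), and making sure that each of the several exceptional sub-cases of Definition \ref{def_contractedge} --- including the one where a loop's endpoints are cyclically adjacent and the one where a cycle contains only the two endpoints of $e$ --- is matched with its algebraic counterpart. A more conceptual, though less self-contained, route, which I would record as a remark, is to appeal directly to the fact that an OTFT is an algebra over the modular operad of open surfaces in the sense of \cite{ChLaOTFT}: then $F_{\Gamma}(\textgoth{A})$ is, by construction, the operadic composite prescribed by the gluing pattern of $\Gamma$, hence coincides with the value of the OTFT on the surface $\Sigma_{\Gamma}$, and Lemma \ref{lem_surfacecontract} identifies $\Sigma_{\Gamma}$ with $\Surface{g(\Gamma),b(\Gamma)}{\mathbf{r}}$ compatibly with $\phi$.
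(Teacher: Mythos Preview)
Your proposal is correct, but it is considerably more hands-on than the paper's own treatment. The paper does not carry out the edge-by-edge induction or the case analysis of Definition \ref{def_contractedge} at all; instead, it simply observes that the statement of the lemma (combined with Lemma \ref{lem_contractsubgraph} and Lemma \ref{lem_surfacecontract}) \emph{is} the assertion that the maps $\OTFT{g,b}{\mathbf{r}}$ form an Open Topological Field Theory, and cites Proposition 3.4 and Theorem 3.7(2) of \cite{ChLaOTFT} for that fact. In other words, what you record at the end as a ``more conceptual, though less self-contained, route'' is the paper's entire proof. Your explicit inductive verification---checking each sewing rule against the closed formula for $\OTFT{g,b}{\mathbf{r}}$ in terms of $\Xi_{\mathrm{bdry}}$, $\Xi_{\mathrm{gen}}$ and the coupling trace $\Tr(x_{i_k}\cdots x_{i_1})$---amounts to reproving the cited result of \cite{ChLaOTFT} from scratch. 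That buys self-containment and makes the role of the edge-contraction rules in Definition \ref{def_contractedge} transparent, at the cost of the sign-and-case bookkeeping you already flag; the paper's approach trades that labor for a dependence on the external reference.
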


\begin{proof}
This is precisely the statement (when applied in conjunction with Lemma \ref{lem_contractsubgraph} and Lemma \ref{lem_surfacecontract}) that the maps \eqref{eqn_OTFTmaps} define an Open Topological Field Theory. It appears as Proposition 3.4 and Theorem 3.7(2) in \cite{ChLaOTFT}, where the axioms of an OTFT are recast and extended using the language of modular operads.
\end{proof}

\subsubsection{Compatibility with the renormalization group flow} We will now show that the transformation defined by an OTFT is compatible with the renormalization group flow. This will be a consequence of the axioms for an OTFT.

\begin{theorem} \label{thm_flowOTFT}
Let $P\in\mathcal{E}\cotimes\mathcal{E}\cotimes\mathcal{A}$ be a family of propagators for a family of free theories $\mathcal{E}$ over $\mathcal{A}$, and let $\textgoth{A}$ be a Frobenius algebra. Define a family of propagators for $\mathcal{E}_{\textgoth{A}}$ by
\begin{equation} \label{eqn_OTFTpropagator}
P_{\textgoth{A}} := P\otimes\innprod_{\textgoth{A}}^{-1}\in\mathcal{E}_{\textgoth{A}}\cotimes\mathcal{E}_{\textgoth{A}}\cotimes\mathcal{A}.
\end{equation}
Then the following diagram commutes:
\begin{displaymath}
\xymatrix{
\intnoncommI{\mathcal{E},\mathcal{A}} \ar[rr]^{W(-,P)} \ar[d]_{\Morita} && \intnoncommI{\mathcal{E},\mathcal{A}} \ar[d]^{\Morita} \\
\intnoncommI{\mathcal{E}_{\textgoth{A}},\mathcal{A}} \ar[rr]^{W(-,P_{\textgoth{A}})} && \intnoncommI{\mathcal{E}_{\textgoth{A}},\mathcal{A}}
}
\end{displaymath}
\end{theorem}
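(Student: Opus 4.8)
The plan is to prove the identity $\Morita\circ W(-,P) = W(-,P_{\textgoth{A}})\circ\Morita$ one stable ribbon graph at a time. Both flows are sums over (isomorphism classes of) connected stable ribbon graphs $\Gamma$, weighted by $\gamma^{g(\Gamma)}\nu^{b(\Gamma)}/|\Aut(\Gamma)|$, and the combinatorial group $\Aut(\Gamma)$ does not change when we replace $\mathcal{E}$ by $\mathcal{E}_{\textgoth{A}}$. Since $\Morita$ is $\gf[[\gamma,\nu]]$-linear and continuous, and the sums \eqref{eqn_RGFnoncomm} are finite in each multidegree, it will suffice to show that for each $\Gamma$ the operator $\Morita$ carries the contribution of $\Gamma$ to $W(I,P)$ onto the contribution of $\Gamma$ to $W(\Morita(I),P_{\textgoth{A}})$; equivalently, that $w_{\Gamma}(\Morita(I),P_{\textgoth{A}})$ is obtained from $w_{\Gamma}(I,P)$ by the recipe of Definition \ref{def_OTFTtransformation} applied in the $\gamma^{g(\Gamma)}\nu^{b(\Gamma)}$--graded component.

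First I would identify the vertex and edge decorations for $\Morita(I)$ and $P_{\textgoth{A}}$. Fixing $\Gamma$ and a vertex $v$, one attaches $\Morita(I)_{g(v)b(v)|C(v)||v|}$ to $v$ via the map \eqref{eqn_attachmapscycdecompon} determined by the cyclic decomposition $C(v)$; unwinding Definition \ref{def_OTFTtransformation} and using Lemma \ref{lem_cyclicsetdettachattach}, only the summand indexed by the list $\mathbf{r}(v)$ of cycle lengths of $C(v)$ contributes, and $\Morita(I)(v)$ is precisely the image of $I(v)\otimes\mathbf{T}(v)$ under the canonical identification $\mathcal{E}^{\dag}(\!(v)\!)\cotimes\textgoth{A}^{\dag}(\!(v)\!)=\mathcal{E}_{\textgoth{A}}^{\dag}(\!(v)\!)$, where $\mathbf{T}(v)$ is the OTFT tensor of the paragraph before Lemma \ref{lem_OTFTaxiom}. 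Similarly, from \eqref{eqn_OTFTpropagator} and Example \ref{exm_OTFTfreethy}, the decoration of an edge $e$ is the image of $P(e)\otimes\innprod_{\textgoth{A}}^{-1}(e)$ under $\mathcal{E}(\!(e)\!)\cotimes\textgoth{A}(\!(e)\!)=\mathcal{E}_{\textgoth{A}}(\!(e)\!)$.

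Next I would factor the Feynman amplitude. Because the pairing on $\mathcal{E}_{\textgoth{A}}$ constructed in Example \ref{exm_OTFTfreethy} is the product of the pairing on $\mathcal{E}$ with $\Tr$ on $\textgoth{A}$ (up to the prescribed Koszul sign), and $\mathcal{A}$ enters both constructions identically, the contractions in \eqref{eqn_Feynmanamplitude} separate: $F_{\Gamma}(\Morita(I),P_{\textgoth{A}})$ is the image of $F_{\Gamma}(I,P)\otimes F_{\Gamma}(\textgoth{A})$ under $\mathcal{E}^{\dag}(\!(L(\Gamma))\!)\cotimes\textgoth{A}^{\dag}(\!(L(\Gamma))\!)\cotimes\mathcal{A}=\mathcal{E}_{\textgoth{A}}^{\dag}(\!(L(\Gamma))\!)\cotimes\mathcal{A}$, with $F_{\Gamma}(\textgoth{A})$ the algebraic amplitude defined before Lemma \ref{lem_OTFTaxiom}. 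Now Lemma \ref{lem_surfacecontract} and Lemma \ref{lem_OTFTaxiom} identify $F_{\Gamma}(\textgoth{A})$, under the bijection $\phi$ carrying a chosen $C_{\mathbf{r}}$ to the canonical cyclic decomposition of $L(\Gamma)$, with the surface invariant $\OTFT{g(\Gamma),b(\Gamma)}{\mathbf{r}}$. Passing from amplitudes to weights by applying \eqref{eqn_attachmapscycdecompoff} for the canonical cyclic decomposition of $L(\Gamma)$ — and representing $w_{\Gamma}(I,P)$ via the same $\mathbf{r}$ as in Example \ref{exm_cyclicwordproduct} — while observing that $(g(\Gamma),b(\Gamma))$ are exactly the exponents of $\gamma$ and $\nu$ that $\Morita$ reads off the contribution of $\Gamma$, one obtains the per-graph identity, and summing over $\Gamma$ finishes the proof.

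The main obstacle is the factorization step: one must check that the Koszul signs generated by reorganizing the $\mathcal{E}$-- and $\textgoth{A}$--tensor slots along the vertices and edges of $\Gamma$ match exactly those encoded in the pairing of Example \ref{exm_OTFTfreethy} and in the defining formula for $\OTFT{g,b}{\mathbf{r}}$. This compatibility is precisely the content of the OTFT axioms, which is why Lemma \ref{lem_OTFTaxiom} — that the algebraic amplitudes $F_{\Gamma}(\textgoth{A})$ compute the surface invariants $\OTFT{g(\Gamma),b(\Gamma)}{\mathbf{r}}$ — is carrying the real weight here; granting it, the remaining combinatorics are of the same routine character as in the proof of Theorem \ref{thm_intgrpact}.
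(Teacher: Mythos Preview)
Your proposal is correct and follows essentially the same route as the paper's proof: reduce to the per-graph identity \eqref{eqn_OTFTcommute}, use Lemma \ref{lem_cyclicsetdettachattach} to establish $\Morita(I)(v)=I(v)\otimes\mathbf{T}(v)$, deduce the factorization $F_{\Gamma}(\Morita(I),P_{\textgoth{A}})=F_{\Gamma}(I,P)\otimes F_{\Gamma}(\textgoth{A})$, and then invoke Lemma \ref{lem_OTFTaxiom} to identify $F_{\Gamma}(\textgoth{A})$ with $\OTFT{g(\Gamma),b(\Gamma)}{\mathbf{r}}$ before passing to weights. The only minor imprecision is your phrase ``only the summand indexed by the list $\mathbf{r}(v)$ of cycle lengths of $C(v)$ contributes''---in fact one sums over all $\mathbf{r}$ and bijections $\phi$ carrying $C_{\mathbf{r}}$ to $C(v)$, as the paper writes it, but this does not affect the argument.
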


\begin{proof}
We must show that for every connected stable ribbon graph $\Gamma$ and interaction ${I\in\intnoncommI{\mathcal{E},\mathcal{A}}}$,
\begin{equation} \label{eqn_OTFTcommute}
\gamma^{g(\Gamma)}\nu^{b(\Gamma)}w_{\Gamma}\bigl(\Morita(I),P_{\textgoth{A}}\bigr) = \Morita\bigl(\gamma^{g(\Gamma)}\nu^{b(\Gamma)}w_{\Gamma}(I,P)\bigr).
\end{equation}
Note that using a representation for $I$ of the form \eqref{eqn_intcycrep}, it follows from Lemma \ref{lem_cyclicsetdettachattach} that for every vertex $v$ of $\Gamma$,
\begin{displaymath}
\begin{split}
\Morita(I)(v) &= \sum_{\mathbf{r}}\sum_{\phi}\phi^{\#}(I^{\mathbf{r}}_{ijkl})\otimes\phi^{\#}(\OTFT{i,j}{\mathbf{r}}) \\
&= \sum_{\mathbf{r}}\sum_{\phi}\phi^{\#}(I^{\mathbf{r}}_{ijkl})\otimes\mathbf{T}(v) = I(v)\otimes\mathbf{T}(v);
\end{split}
\end{displaymath}
where $i:=g(v)$, $j:=b(v)$, $k:=|C(v)|$, $l:=|v|$ and we sum over all $k$-tuples of positive integers $\mathbf{r}$ whose sum is $l$ and all bijections $\phi$ that take $C_{\mathbf{r}}$ to $C(v)$.

From this we conclude that
\[ F_{\Gamma}\bigl(\Morita(I),P_{\textgoth{A}}\bigr) = F_{\Gamma}(I,P)\otimes F_{\Gamma}(\textgoth{A}). \]
To find an expression for the Feynman weights $w_{\Gamma}$, choose an $\mathbf{r}$ and a bijection $\phi$ that takes the cyclic decomposition on $L(\Gamma)$ to $C_{\mathbf{r}}$. We have
\begin{displaymath}
\begin{split}
\Morita\bigl(\gamma^{g(\Gamma)}\nu^{b(\Gamma)}w_{\Gamma}(I,P)\bigr) &= \Morita\bigl(\gamma^{g(\Gamma)}\nu^{b(\Gamma)}\bigl\lceil\phi^{\#}F_{\Gamma}(I,P)\bigr\rceil\bigr) \\
&= \gamma^{g(\Gamma)}\nu^{b(\Gamma)}\bigl\lceil\phi^{\#}F_{\Gamma}(I,P)\otimes\OTFT{g(\Gamma),b(\Gamma)}{\mathbf{r}}\bigr\rceil, \\
w_{\Gamma}\bigl(\Morita(I),P_{\textgoth{A}}\bigr) &= \bigl\lceil\phi^{\#}F_{\Gamma}\bigl(\Morita(I),P_{\textgoth{A}}\bigr)\bigr\rceil \\
&= \bigl\lceil\phi^{\#}F_{\Gamma}(I,P)\otimes\phi^{\#}F_{\Gamma}(\textgoth{A})\bigr\rceil; \end{split}
\end{displaymath}
where $\lceil x\rceil$ denotes the image of $x$ under the map \eqref{eqn_attachmapscycdecompoff} defined by the cyclic decomposition $C_{\mathbf{r}}$. Equation \eqref{eqn_OTFTcommute} now follows from Lemma \ref{lem_OTFTaxiom}.
\end{proof}

\section{Noncommutative effective field theories} \label{sec_NCeffthy}

In this section we will formulate and prove a noncommutative analogue of one of the main results of \cite{CosEffThy} that is fundamental to the perspective put forth there by Costello. Using our version of the renormalization group flow developed in the preceding section, we will introduce---following Costello---the notion of a noncommutative effective field theory. We will then prove one of the main results of this paper---that noncommutative effective field theories are in one-to-one correspondence with local interactions that are taken from our space based upon the noncommutative geometry of Kontsevich.

The correspondence is based upon the process of renormalization and we will begin by introducing and recalling from \cite{CosEffThy} the requisite notions and results that are necessary to discuss this process in our context.

\subsection{Locality}

We introduced in Definition \ref{def_localdistribution} the notion of a local distribution, which we now use to define the notion of a local functional; cf. \cite[\S 2.13.4]{CosEffThy}.

\begin{defi}
Let $\mathcal{E}$ be a family of free theories over $\mathcal{A}$. We will say that a functional in $\bigl[(\mathcal{E}^{\dag})^{\cotimes j}\bigr]_{\sg{j}}\cotimes\mathcal{A}$ is \emph{local} if it may be represented by a local $\mathcal{A}$-valued distribution. More generally, a functional $I\in\intcomm{\mathcal{E},\mathcal{A}}$ will be called local if $I_{ij}$ is local for all $i$ and $j$. We will denote the subspace of $\intcomm{\mathcal{E},\mathcal{A}}$ consisting of all local functionals by $\intcommL{\mathcal{E},\mathcal{A}}$ and the subspace consisting of all local interactions by $\intcommLI{\mathcal{E},\mathcal{A}}$.

Turning our attention now to the noncommutative case, we will say that $I\in\intnoncomm{\mathcal{E},\mathcal{A}}$ is \emph{local} if for every $i$, $j$, $k$ and $l$ there is a representation
\[ I_{ijkl} = \sum_{\mathbf{r}}\lceil I_{ijkl}^{\mathbf{r}}\rceil, \quad I_{ijkl}^\mathbf{r}\in(\mathcal{E}^{\dag})^{\cotimes l}\cotimes\mathcal{A} \]
of the form \eqref{eqn_intcycrep} in which every $I_{ijkl}^{\mathbf{r}}$ is a local $\mathcal{A}$-valued distribution. We will denote the subspace of $\intnoncomm{\mathcal{E},\mathcal{A}}$ consisting of all local functionals by $\intnoncommL{\mathcal{E},\mathcal{A}}$ and the subspace consisting of all local interactions by $\intnoncommLI{\mathcal{E},\mathcal{A}}$.
\end{defi}

We note that it follows tautologically from the definitions that the maps $\sigma_{\gamma,\nu}$ and $\Morita$ described in Definition \ref{def_mapNCtoCom} and \ref{def_OTFTtransformation} respectively take local functionals to local functionals.

\subsection{Renormalization and counterterms}

The construction of an effective field theory is carried out through the process of renormalization. Here we will recall the relevant details from \cite{CosEffThy} that describe the analytic properties of the Feynman amplitudes that make them amenable to this process and prove the existence of a set of counterterms for any local interaction.

\subsubsection{Renormalization schemes}

To define the renormalization of the Feynman amplitudes, we need the notion of a renormalization scheme.

\begin{defi}
Consider the space $\smooth{0,1}$ of smooth functions on the open unit interval and denote by $\smoothlimit{0,1}$ the subspace consisting of those functions that admit a limit as $\varepsilon\to 0$. A \emph{renormalization scheme} is a choice of a complimentary subspace
\[ \smooth{0,1} = \smoothlimit{0,1}\oplus\smoothsing{0,1}. \]
We will denote the projection onto this subspace by $\mathpzc{p}:\smooth{0,1}\to\smoothsing{0,1}$.
\end{defi}

For the rest of the paper we will assume that we have made a fixed choice of renormalization scheme.

\begin{rem}
We should explain the relationship of the above definition to that of Costello's, cf. Definition 9.4.1 of \cite[\S 2.9.4]{CosEffThy}. In that definition, the space of smooth functions is replaced with the space of periods. The above definition encompasses the latter in the following sense. Suppose that $\mathscr{P}(0,1)$ is the space of periods (or, in fact, any subspace of $\smooth{0,1}$) and we have a decomposition
\[ \mathscr{P}(0,1) = \mathscr{P}_{\geq 0}(0,1)\oplus \mathscr{P}_{<0}(0,1), \]
where $\mathscr{P}_{\geq 0}(0,1) := \mathscr{P}(0,1)\cap\smoothlimit{0,1}$. Then we may choose, arbitrarily, a complement
\[ \smooth{0,1} = \smoothlimit{0,1}\oplus \mathscr{P}_{<0}(0,1)\oplus F \]
and set $\smoothsing{0,1} := \mathscr{P}_{<0}(0,1)\oplus F$. The projection $\mathpzc{p}:\smooth{0,1}\to\smoothsing{0,1}$ defined in this way becomes the projection onto the singular periods when restricted to the subspace of periods $\mathscr{P}(0,1)$.
\end{rem}

\subsubsection{Counterterms and asymptotic expansions}

Central to the construction of counterterms is the fact that the singular behavior of the Feynman amplitudes of local interactions may be explicitly described in terms of an asymptotic expansion in the short-time cutoff parameter $\varepsilon$. This was done by Costello in Theorem 4.0.2 of \cite[App. 1]{CosEffThy}, and we begin by recalling the details of this theorem that are relevant to us here.

\begin{theorem} \label{thm_asymptotics}
Let $\mathcal{E}$ be a family of free theories over $\mathcal{A}$ and let $\Gamma$ be a stable ribbon graph. Let $P(\varepsilon,L)$ be a propagator of the form \eqref{eqn_canonicalpropagator} from Example \ref{exm_canonicalpropagator} and let $I\in\intnoncommLI{\mathcal{E},\mathcal{A}}$ be a family of local interactions. Then there are functionals
\[ \Psi_r\in\csalg{\KontHamP{\mathcal{E}}}\cotimes\mathcal{A}\underset{\mathbb{R}}{\cotimes}\smooth{0,\infty},\quad r\geq 1; \]
smooth functions $g_r\in\smooth{0,1}$, and a monotonically increasing sequence of integers $d_R$ tending to infinity, such that for all $R$ and $L>0$,
\[ \lim_{\varepsilon\to 0}\varepsilon^{-d_R}\biggl( w_{\Gamma}(I,P(\varepsilon,L)) - \sum_{r=1}^R g_r(\varepsilon)\Psi_r(L)\biggr) = 0; \]
where $\Psi_r(L)\in\csalg{\KontHamP{\mathcal{E}}}\cotimes\mathcal{A}$ denotes the functional evaluated at the point $L$.

Furthermore, there are local functionals
\[ \Phi_{rs}\in\csalg{\KontHamP{\mathcal{E}}}\cotimes\mathcal{A}, \quad r,s\geq 1 \]
and smooth functions $f_{rs}\in\smooth{0,\infty}$ such that for every $r$, there is a monotonically increasing sequence of integers $d_S$ tending to infinity, such that for all $S$,
\[ \lim_{L\to 0}L^{-d_S}\biggl(\Psi_r(L)-\sum_{s=1}^S f_{rs}(L)\Phi_{rs}\biggr) = 0. \]
\end{theorem}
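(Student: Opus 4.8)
The plan is to reduce the statement to Costello's analysis of the analytic behaviour of Feynman amplitudes in the commutative setting, namely Theorem~4.0.2 of \cite[App.~1]{CosEffThy}, by observing that the noncommutative Feynman weight is obtained from an honest commutative Feynman amplitude by applying a continuous $\gf$-linear map that does not depend on the cutoffs $\varepsilon$ and $L$. First I would peel off the ribbon structure: to the stable ribbon graph $\Gamma$ associate the underlying stable graph $\bar\Gamma$ by forgetting the cyclic decompositions at the vertices and declaring the loop number at each vertex via \eqref{eqn_vertexribbonloopnumber}; condition \eqref{eqn_valencyribbonconditions} then forces the valency inequality \eqref{eqn_valencyconditions}, so $\bar\Gamma$ is a genuine stable graph. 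Writing $I\in\intnoncommLI{\mathcal{E},\mathcal{A}}$ in the form \eqref{eqn_intcycrep}, the tensor $I(v)\in\mathcal{E}^{\dag}(\!(v)\!)\cotimes\mathcal{A}$ attached to each vertex $v$ by the rule \eqref{eqn_attachmapscycdecompon} is a finite sum of ``placed'' local $\mathcal{A}$-valued distributions. Since the Feynman amplitude $F_{\Gamma}(I,P)$ is defined by the very same composite \eqref{eqn_Feynmanamplitude} as in the commutative case---only the prescription for attaching the interaction to a vertex differs---it is literally an instance of the commutative Feynman amplitude for the stable graph $\bar\Gamma$, built from the local distributions $I(v)$ at its vertices and the propagator $P(\varepsilon,L)$ of the heat-kernel form \eqref{eqn_canonicalpropagator} on its edges.

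Second, I would invoke Theorem~4.0.2 of \cite[App.~1]{CosEffThy}; it is routine to check that the analysis in its proof applies equally to the Feynman amplitude $F_{\bar\Gamma}\in\mathcal{E}^{\dag}(\!(L(\Gamma))\!)\cotimes\mathcal{A}$ before any symmetrization of the legs, since the estimates there are statements about integrals of the heat kernel over $M$ and see only the underlying stable graph together with the local distributions placed at its vertices. This produces coefficient distributions that are smooth functions of $L$, local coefficient distributions for the $L\to0$ part, scalar functions $g_r\in\smooth{0,1}$ and $f_{rs}\in\smooth{0,\infty}$, and the monotone integer sequences $d_R$ and $d_S$, with the two asymptotic expansions holding verbatim for $F_{\Gamma}(I,P(\varepsilon,L))$. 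Finally I would push the whole picture forward along the map \eqref{eqn_attachmapscycdecompoff} determined by the canonical cyclic decomposition of $L(\Gamma)$: this map is continuous, $\gf$-linear and independent of $\varepsilon$ and $L$, so it commutes with the limits $\varepsilon\to0$, $L\to0$ and with multiplication by $g_r(\varepsilon)$ and $f_{rs}(L)$, and it carries $\mathcal{E}^{\dag}(\!(L(\Gamma))\!)\cotimes\mathcal{A}\cotimes_{\mathbb{R}}\smooth{0,\infty}$ into $\csalg{\KontHamP{\mathcal{E}}}\cotimes\mathcal{A}\cotimes_{\mathbb{R}}\smooth{0,\infty}$. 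Taking $\Psi_r$ and $\Phi_{rs}$ to be the images of the commutative coefficients, both expansions transport directly to the claimed expansions for $w_{\Gamma}(I,P)$; and $\Phi_{rs}$ is local because, by definition, a functional in $\intnoncomm{\mathcal{E},\mathcal{A}}$ is local exactly when it admits a representation as a sum of terms $\lceil(\text{local distribution})\rceil$ built via \eqref{eqn_attachmapscycdecompoff}.

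The main point requiring care---though it is not a serious obstacle---is the identification in the first step of $F_{\Gamma}(I,P)$ as a bona fide commutative Feynman amplitude of $\bar\Gamma$, so that Costello's theorem together with all of its heat-kernel analysis can be imported wholesale rather than re-derived, plus the elementary check that the symmetrization map \eqref{eqn_attachmapscycdecompoff}, being continuous and cutoff-independent, transports asymptotic expansions and preserves locality. All the genuinely analytic content---the short-time expansion in $\varepsilon$, the long-time expansion in $L$, and the construction of the sequences $d_R$ and $d_S$---is inherited verbatim from \cite[App.~1]{CosEffThy}.
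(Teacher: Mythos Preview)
Your approach is essentially the same as the paper's: both reduce directly to Costello's Theorem~4.0.2 in \cite[App.~1]{CosEffThy}, observing that the ribbon structure only affects how vertex tensors are assembled and how legs are symmetrized, neither of which touches the heat-kernel estimates. The paper does flag two technical points you absorbed into ``routine to check'': Costello's theorem is stated for \emph{totally ordered} graphs (so the ribbon graph amplitude is a finite sum over compatible orderings), and his propagator does not carry the extra differential operator $D$ from \eqref{eqn_canonicalpropagator} (which is absorbed into the interaction terms using that total ordering); neither is a serious obstacle, but both deserve explicit mention.
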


\begin{rem}
As explained in \cite{CosEffThy}, the functions $g_r$ are actually periods and have finite order poles at zero.
\end{rem}

\begin{rem}
Theorem \ref{thm_asymptotics} follows directly from the above cited theorem of \cite{CosEffThy}, except perhaps that it may be necessary to explain two minor details.

First, the theorem in \cite{CosEffThy} is formulated using totally ordered graphs. The Feynman amplitude for a stable ribbon graph will be a sum of such amplitudes over all possible compatible orderings and so Theorem \ref{thm_asymptotics} follows immediately from this.

Second, the cited theorem does not directly consider propagators of the form \eqref{eqn_canonicalpropagator} which are modified by a differential operator $D$. However, the operator $D$ may instead be absorbed into the interaction terms, which is possible using the total ordering on the graph.
\end{rem}

Following \cite{CosEffThy}, we may use the preceding theorem to extract the singular parts of the Feynman weights.

\begin{defi}
Let $\mathcal{E}$ be a family of free theories over $\mathcal{A}$ and suppose that
\[ W:(0,1)\to\intnoncomm{\mathcal{E},\mathcal{A}}, \qquad \varepsilon\mapsto W(\varepsilon) \]
is a function for which we can find $R$ functionals $I_r\in\intnoncomm{\mathcal{E},\mathcal{A}}$ and smooth functions $g_r\in\smooth{0,1}$ such that
\[ \lim_{\varepsilon\to 0}\biggl(W(\varepsilon) - \sum_{r=1}^R g_r(\varepsilon)I_r\biggr) = 0. \]
Then we may define the singular part of this function by
\[ \Sing(W) := \sum_{r=1}^R \mathpzc{p}(g_r)I_r. \]
\end{defi}

It is straightforward to show that $\Sing$ is well-defined and has the following properties:
\begin{itemize}
\item
$\Sing$ is an idempotent linear operator on the space of functions defined above.
\item
$W$ is nonsingular (that is $W(\varepsilon)$ converges as $\varepsilon\to 0$) if and only if $\Sing(W)=0$.
\item
$(W-\Sing(W))$ is always nonsingular.
\item
$\Sing$ is natural with respect to continuous transformations of $\intnoncomm{\mathcal{E},\mathcal{A}}$.
\end{itemize}

To construct the counterterms for our theory, we will use an inductive procedure similar to the one employed by Costello in Theorem 10.1.1 of \cite[\S 2.10.1]{CosEffThy}, which was shown in \cite{HamBPHZ} to be equivalent to the usual BPHZ algorithm \cite[\S 5]{ColRenormalization}. We therefore consider the lexicographic order on the terms in our interaction. This well-ordering is defined for lists $\mathbf{a}$ and $\mathbf{b}$ consisting of $n$ nonnegative integers by
\[ \mathbf{a}<\mathbf{b} \Leftrightarrow \exists i:(a_i<b_i \land \forall j<i,a_j=b_j).\]

For convenience, we introduce the following notation for a functional $I\in\intnoncomm{\mathcal{E},\mathcal{A}}$:
\[ I_{<(i,j,k,l)}:=\sum_{(p,q,r,s)<(i,j,k,l)}\gamma^p\nu^q I_{pqrs}, \qquad I_{\leq(i,j,k,l)}:=\sum_{(p,q,r,s)\leq(i,j,k,l)}\gamma^p\nu^q I_{pqrs}. \]
The construction of the counterterms rests on the following basic formula.

\begin{prop} \label{prop_orderformula}
Let $\mathcal{E}$ be a family of free theories over $\mathcal{A}$ and $P$ be a family of propagators for $\mathcal{E}$. Let $i$, $j$, $k$ and $l$ be nonnegative integers and $I,J\in\intnoncommI{\mathcal{E},\mathcal{A}}$ be families of interactions such that $J_{<(i,j,k,l)}=0$. Then
\[ W(I+J,P) = W(I,P) + \gamma^i\nu^j J_{ijkl} + O, \]
where $O_{\leq(i,j,k,l)}$ vanishes.
\end{prop}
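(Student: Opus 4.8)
The plan is to expand $W(I+J,P)$ over connected stable ribbon graphs and, for each graph, to read off the multidegree in $(\gamma,\nu,\text{cyclic-word count},\text{order})$ of its contribution, isolating the graphs that can produce a term of multidegree $\leq(i,j,k,l)$. The vertex-attachment map \eqref{eqn_attachmapscycdecompon} is linear in the interaction and $I,J$ have degree zero, so for any connected stable ribbon graph $\Gamma$ one has $(I+J)(\Gamma)=\bigotimes_{v\in V(\Gamma)}\bigl(I(v)+J(v)\bigr)=\sum_{S\subseteq V(\Gamma)}\bigl(\bigotimes_{v\in S}J(v)\bigr)\otimes\bigl(\bigotimes_{v\notin S}I(v)\bigr)$; as the Feynman weight is linear in $I(\Gamma)$ this gives
\[ W(I+J,P)=W(I,P)+\sum_{\Gamma}\sum_{\emptyset\neq S\subseteq V(\Gamma)}\frac{\gamma^{g(\Gamma)}\nu^{b(\Gamma)}}{|\Aut(\Gamma)|}\,w^{S}_{\Gamma}, \]
where $w^{S}_{\Gamma}$ is the Feynman weight computed with $J$ attached at the vertices of $S$, $I$ attached elsewhere, and $P$ on the edges. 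The term of $\Gamma$ lies in the summand of multidegree $(g(\Gamma),b(\Gamma),|C(\Gamma/\Gamma)|,|L(\Gamma)|)$, which by Remark \ref{rem_genbdryedgecontract} and the definition of $l(v)$ equals the index quadruple $(g(v),b(v),|C(v)|,|v|)$ of the unique vertex $v=v_{\Gamma/\Gamma}$ of $\Gamma/\Gamma$.

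Now fix $(\Gamma,S)$ with $S\neq\emptyset$ and $w^{S}_{\Gamma}\neq 0$, and choose $v^{*}\in S$. Since $J(v^{*})$ is built from $J_{g(v^{*})b(v^{*})|C(v^{*})||v^{*}|}$ and $J_{<(i,j,k,l)}=0$, we must have $(g(v^{*}),b(v^{*}),|C(v^{*})|,|v^{*}|)\geq(i,j,k,l)$ lexicographically. The main step is then to show that if $\Gamma$ has at least one edge,
\[ (g(\Gamma),b(\Gamma),|C(\Gamma/\Gamma)|,|L(\Gamma)|)>(g(v^{*}),b(v^{*}),|C(v^{*})|,|v^{*}|), \]
so that such $(\Gamma,S)$ contributes only in multidegree strictly above $(i,j,k,l)$ and hence not to $O_{\leq(i,j,k,l)}$. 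I proceed coordinate by coordinate: $g(\Gamma)\geq\sum_{v}g(v)\geq g(v^{*})$ by \eqref{eqn_genbdryinequalities}, equality forcing $g(v)=0$ for $v\neq v^{*}$; then $b(\Gamma)=\beta_{\Gamma}^{\#}-|C(\Gamma/\Gamma)|+\sum_{v}b(v)\geq\sum_{v}b(v)\geq b(v^{*})$, again by \eqref{eqn_genbdryinequalities}, equality forcing $b(v)=0$ for $v\neq v^{*}$; then $l(v_{\Gamma/\Gamma})=\ell(\Gamma)=\mathbf{b}_{1}(\Gamma)+\sum_{v}l(v)\geq l(v^{*})$ by \eqref{eqn_loopnumber}, which combined with \eqref{eqn_vertexribbonloopnumber} gives $|C(\Gamma/\Gamma)|\geq|C(v^{*})|$, equality forcing $\mathbf{b}_{1}(\Gamma)=0$ and $l(v)=0$ for $v\neq v^{*}$; and in that last case $\Gamma$ is a tree with at least two vertices, each vertex other than $v^{*}$ has valency $\geq 3$ by \eqref{eqn_valencyribbonconditions}, so $|L(\Gamma)|=|v_{\Gamma/\Gamma}|=\sum_{v}|v|-2|E(\Gamma)|\geq|v^{*}|+(|V(\Gamma)|-1)>|v^{*}|$, using $|E(\Gamma)|=|V(\Gamma)|-1$. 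Throughout one uses that contracting an edge never decreases the genus, boundary or cycle count, as recorded after Definition \ref{def_contractedge}.

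It remains to treat the edgeless graphs. If $\Gamma$ has no edges it is a corolla, $S=V(\Gamma)=\{v\}$, the propagator plays no role, and the term lies in multidegree $(g(v),b(v),|C(v)|,|v|)$: when this quadruple is $<(i,j,k,l)$ the relevant component of $J$ is zero; when it is $>(i,j,k,l)$ the term does not affect $O_{\leq(i,j,k,l)}$; and summing over the corollas whose quadruple equals $(i,j,k,l)$ reproduces exactly $\gamma^{i}\nu^{j}J_{ijkl}$ --- this is the computation already performed in the proof of \eqref{eqn_intgrpactzero}, in which the automorphism factor of each corolla cancels the overcounting of Lemma \ref{lem_cyclicsetdettachattach}. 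Putting the two cases together, every term of $W(I+J,P)-W(I,P)$ other than $\gamma^{i}\nu^{j}J_{ijkl}$ has multidegree $>(i,j,k,l)$, whence $W(I+J,P)=W(I,P)+\gamma^{i}\nu^{j}J_{ijkl}+O$ with $O_{\leq(i,j,k,l)}=0$.

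The technical heart is the displayed strict inequality above: that collapsing all the edges of a graph with at least one edge strictly increases the lexicographically ordered vertex quadruple. Its delicacy comes from the several exceptional cases in the edge-contraction rules of Definition \ref{def_contractedge} --- singleton cycles, and loops that split a cycle as opposed to loops that merge two cycles --- for each of which one must check that the net effect on $(g,b,|C|,|v|)$ is still a weak increase, with strictness ultimately forced by the stability bound $2l(v)+|v|\geq 3$ as soon as a superfluous vertex is present.
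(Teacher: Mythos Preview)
Your proof is correct and follows essentially the same approach as the paper: both argue coordinate by coordinate through the lexicographic order, using the inequalities \eqref{eqn_genbdryinequalities}, \eqref{eqn_loopnumber}, and \eqref{eqn_valencyribbonconditions} to show that a connected stable ribbon graph with at least one edge has strictly larger multidegree $(g(\Gamma),b(\Gamma),|C(\Gamma/\Gamma)|,|L(\Gamma)|)$ than any of its vertices. The only cosmetic difference is that the paper packages this as a single claim (if the graph multidegree is $\leq(i,j,k,l)\leq$ the vertex multidegree then $\Gamma$ is a corolla and equality holds throughout), whereas you prove the equivalent forward-direction inequality directly and make the expansion over subsets $S\subseteq V(\Gamma)$ explicit.
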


\begin{proof}
We make the following claim. Suppose that we are given
\begin{equation} \label{eqn_fourtupleineq}
(p,q,r,s)\leq (i,j,k,l)\leq (p',q',r',s'),
\end{equation}
a connected stable ribbon graph $\Gamma$ for which
\[ g(\Gamma)=p, \quad b(\Gamma)=q, \quad |C(\Gamma/\Gamma)|=r \quad\text{and}\quad |L(\Gamma)|=s \]
and with a vertex $v\in V(\Gamma)$ for which
\[ g(v)=p', \quad b(v)=q', \quad |C(v)|=r', \quad\text{and}\quad |v|=s'. \]
Then $\Gamma$ is a corolla and $(p,q,r,s)=(i,j,k,l)=(p',q',r',s')$.

The proposition follows as a direct consequence of Equation \eqref{eqn_intgrpactzero} and the above claim, which we now prove. By \eqref{eqn_fourtupleineq} we have $p\leq p'$ and by \eqref{eqn_vertexinequalities} we know that $p'\leq p$ so that $p=p'$. Using the leftmost inequality of \eqref{eqn_genbdryinequalities} it then follows that
\begin{equation} \label{eqn_orderformulaaux1}
1 - |V(\Gamma)| + \frac{1}{2}\bigl(|E(\Gamma)|+|C(\Gamma)|-\beta_{\Gamma}^{\#}\bigr) = 0
\end{equation}
and that $g(v')=0$ for all vertices $v'\neq v$.

Next, since $p=p'$, by \eqref{eqn_fourtupleineq} we must have $q\leq q'$ and by \eqref{eqn_vertexinequalities} we must have $q'\leq q$. Hence $q=q'$ and, using the rightmost inequality of \eqref{eqn_genbdryinequalities}, it follows that
\begin{equation} \label{eqn_orderformulaaux2}
\beta_{\Gamma}^{\#}=|C(\Gamma/\Gamma)|=r
\end{equation}
and that $b(v')=0$---and consequently, by \eqref{eqn_valencyribbonconditions}, that $|C(v')|\geq 1$---for all vertices $v'\neq v$.

Next we write
\begin{equation} \label{eqn_orderformulaaux3}
\begin{split}
|C(\Gamma)| &= r'+\sum_{v'\neq v}|C(v')| \\
&= r' + |V(\Gamma)|-1 + \sum_{v'\neq v}\bigl(|C(v')|-1\bigr).
\end{split}
\end{equation}
Substituting \eqref{eqn_orderformulaaux2} and \eqref{eqn_orderformulaaux3} into \eqref{eqn_orderformulaaux1} yields the equation
\[ \mathbf{b}_1(\Gamma) + \sum_{v'\neq v}\bigl(|C(v')|-1\bigr) + r'-r = 0. \]
Since $p=p'$ and $q=q'$ we must have $r\leq r'$ by \eqref{eqn_fourtupleineq}. It then follows from the above equation that $r=r'$, the first Betti number $\mathbf{b}_1(\Gamma)$ of the graph vanishes, the Euler characteristic is $\chi(\Gamma)=1$, and that $|C(v')|=1$ for all vertices $v'\neq v$.

Now it follows from \eqref{eqn_valencyribbonconditions} that every vertex $v'\neq v$ is at least trivalent. From this we obtain
\begin{displaymath}
\begin{split}
s &= |H(\Gamma)|-2|E(\Gamma)| = s' + \sum_{v'\neq v}|v'| -2|E(\Gamma)| \\
&\geq s' + 3(|V(\Gamma)|-1) -2|E(\Gamma)| = s' + |E(\Gamma)|.
\end{split}
\end{displaymath}
Now since $p=p'$, $q=q'$ and $r=r'$ we must have $s\leq s'$ by \eqref{eqn_fourtupleineq}. It therefore follows from the above inequality that $s=s'$, $|E(\Gamma)|=0$ and $|V(\Gamma)|=1$.
\end{proof}

Now we are ready to explain the construction of the counterterms.

\begin{theorem} \label{thm_counterterms}
Let $\mathcal{E}$ be a family of free theories over $\mathcal{A}$ and $I\in\intnoncommLI{\mathcal{E},\mathcal{A}}$ a family of local interactions. Suppose that $P(\varepsilon,L)$ is a propagator of the form \eqref{eqn_canonicalpropagator} defined by Example \ref{exm_canonicalpropagator}. Then there is a unique series of purely singular local counterterms for $I$ and $P(-,-)$; that is, there is a unique $\varepsilon$-parameterized interaction
\[ I^{\mathrm{CT}}\in\intnoncommI{\mathcal{E},\mathcal{A}}\underset{\mathbb{R}}{\cotimes}\smooth{0,1} \]
such that:
\begin{itemize}
\item
for all $i,j,k,l\geq 0$;
\[ I^{\mathrm{CT}}_{ijkl}\in\intnoncommL{\mathcal{E},\mathcal{A}}\underset{\mathbb{R}}{\otimes}\smoothsing{0,1}, \]
\item
and for all $L>0$, the limit
\[ \lim_{\varepsilon\to 0}W\bigl(I-I^{\mathrm{CT}}(\varepsilon),P(\varepsilon,L)\bigr) \]
exists, where convergence takes place in $\intnoncommI{\mathcal{E},\mathcal{A}}$. In the above expression, $I^{\mathrm{CT}}(\varepsilon)\in\intnoncommI{\mathcal{E},\mathcal{A}}$ denotes the value of $I^{\mathrm{CT}}$ at the point $\varepsilon\in (0,1)$.
\end{itemize}
\end{theorem}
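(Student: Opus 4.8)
The plan is to build $I^{\mathrm{CT}}$ inductively, one multidegree at a time, following the lexicographic order on the quadruples $(i,j,k,l)$, mirroring Costello's proof of Theorem 10.1.1 of \cite{CosEffThy}. At each stage we use Theorem~\ref{thm_asymptotics} to make sense of the singular part $\Sing$ of the relevant Feynman weights, and Proposition~\ref{prop_orderformula} to see that subtracting the correct counterterm in multidegree $(i,j,k,l)$ cancels the singularity there without disturbing the lower multidegrees. Uniqueness will fall out of the same scheme: at each step the requirement that the $\varepsilon\to0$ limit exist pins down the degree-$(i,j,k,l)$ component of $I^{\mathrm{CT}}$ as exactly that singular part. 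We first carry this out for a fixed scale $L>0$, and afterwards remove the dependence on $L$.

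In detail, fix $L>0$ and suppose $I^{\mathrm{CT}}_{<(i,j,k,l)}$ has been constructed, with each lower-degree component in $\intnoncommL{\mathcal{E},\mathcal{A}}\cotimes_{\mathbb{R}}\smoothsing{0,1}$, so that $W\bigl(I-I^{\mathrm{CT}}_{<(i,j,k,l)}(\varepsilon),P(\varepsilon,L)\bigr)$ is nonsingular in every multidegree $<(i,j,k,l)$. Write $\tilde I:=I-I^{\mathrm{CT}}_{<(i,j,k,l)}(\varepsilon)$. The multidegree-$(i,j,k,l)$ component of $W(\tilde I,P(\varepsilon,L))$ is a \emph{finite} sum of Feynman weights $w_{\Gamma}(\tilde I,P(\varepsilon,L))$ over connected stable ribbon graphs with $g(\Gamma)=i$, $b(\Gamma)=j$, $|C(\Gamma/\Gamma)|=k$, $|L(\Gamma)|=l$ (finiteness from the inequalities \eqref{eqn_graphinequalities}), each a polynomial in the (fixed, singular) counterterm coefficients and in weights built from $I$, hence each admitting an asymptotic expansion in $\varepsilon$ by Theorem~\ref{thm_asymptotics}. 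Thus $\Sing$ applies in this multidegree, and we define $I^{\mathrm{CT}}_{ijkl}$ by declaring $\gamma^i\nu^j\,I^{\mathrm{CT}}_{ijkl}$ to be $\Sing$ of that degree-$(i,j,k,l)$ component; it is purely singular by construction, and it satisfies the interaction constraints \eqref{eqn_interactionNCconstraints} because the flow of an interaction is an interaction. Now apply Proposition~\ref{prop_orderformula} with interaction $\tilde I$ and $J:=-\gamma^i\nu^j I^{\mathrm{CT}}_{ijkl}(\varepsilon)$, which has $J_{<(i,j,k,l)}=0$: the flow $W\bigl(I-I^{\mathrm{CT}}_{\leq(i,j,k,l)}(\varepsilon),P(\varepsilon,L)\bigr)$ agrees with $W(\tilde I,P(\varepsilon,L))$ in all multidegrees $<(i,j,k,l)$ and, in multidegree $(i,j,k,l)$, differs from it exactly by subtracting $I^{\mathrm{CT}}_{ijkl}(\varepsilon)$, i.e.\ by removing the singular part. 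Hence it is nonsingular in all multidegrees $\leq(i,j,k,l)$, and the induction proceeds. Running it to completion gives $I^{\mathrm{CT}}$ with $\lim_{\varepsilon\to0}W(I-I^{\mathrm{CT}}(\varepsilon),P(\varepsilon,L))$ existing; the same termwise argument shows any other purely singular local counterterm agrees with $I^{\mathrm{CT}}$ in each multidegree, giving uniqueness at scale $L$.

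Two points remain, and they are the heart of the matter. First, \emph{locality}: the singular functionals extracted from Theorem~\ref{thm_asymptotics} come a priori multiplied by the $L$-dependent $\Psi_r(L)$, which are not manifestly local, so one must first establish $L$-independence and then conclude locality from the $L\to0$ asymptotics of Theorem~\ref{thm_asymptotics}, whose coefficients $\Phi_{rs}$ are local; here one uses naturality of $\Sing$ and that $\intnoncommL{\mathcal{E},\mathcal{A}}$ is preserved by all operations involved, including $W$ applied to local interactions (by the inductive hypothesis, degree by degree). Second, \emph{$L$-independence}: by the group law of Theorem~\ref{thm_intgrpact}, for $0<\varepsilon<L'<L$ one has $W\bigl(I-I^{\mathrm{CT}}(\varepsilon),P(\varepsilon,L)\bigr)=W\bigl(W(I-I^{\mathrm{CT}}(\varepsilon),P(\varepsilon,L')),P(L',L)\bigr)$, and since $x\mapsto W(x,P(L',L))$ is a continuous bijection with inverse $W(-,P(L,L'))=W(-,-P(L',L))$ which does not involve $\varepsilon$, the $\varepsilon\to0$ limit of the left-hand side exists iff that of $W(I-I^{\mathrm{CT}}(\varepsilon),P(\varepsilon,L'))$ does; thus the counterterms built at scale $L$ also work at scale $L'$, and uniqueness forces them to coincide.

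The main obstacle I anticipate is precisely this interplay between $L$-dependence and locality, together with the care needed to verify that $\Sing$ is genuinely natural with respect to the continuous but nonlinear transformation $W(-,P(L',L))$ and that the various asymptotic expansions combine correctly under sums and products. Once these bookkeeping points are settled, the rest is the standard inductive cancellation argument of \cite{CosEffThy}, transported verbatim into the ribbon-graph setting using Proposition~\ref{prop_orderformula} in place of its commutative counterpart.
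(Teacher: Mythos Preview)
Your approach is the paper's: induct on the lexicographic order of $(i,j,k,l)$, define $I^{\mathrm{CT}}_{ijkl}$ via $\Sing$ using Theorem~\ref{thm_asymptotics}, invoke Proposition~\ref{prop_orderformula} to see this kills the singularity in that multidegree without touching lower ones, and deduce locality from $L$-independence together with the small-$L$ asymptotics in Theorem~\ref{thm_asymptotics}. Uniqueness is handled the same way.

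The one thing that needs reorganizing is the placement of the $L$-independence and locality arguments. Your inductive hypothesis correctly requires each lower-degree counterterm to lie in $\intnoncommL{\mathcal{E},\mathcal{A}}\otimes_{\mathbb{R}}\smoothsing{0,1}$, and this is essential: without locality of the already-constructed counterterms, Theorem~\ref{thm_asymptotics} does not apply to the weights $w_\Gamma(\tilde I,P(\varepsilon,L))$ and $\Sing$ is undefined at the next step. But you then defer the verification of locality for the newly defined $I^{\mathrm{CT}}_{ijkl}$ until after the entire induction, which leaves the inductive step unclosed. The paper avoids this by proving $L$-independence and then locality of $I^{\mathrm{CT}}_{ijkl}$ \emph{inside} the inductive step, before advancing. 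Its $L$-independence argument is a direct computation with $\Sing$ and the group law, using only that the flow is nonsingular in strictly lower multidegrees, so there is no circularity. Your alternative route to $L$-independence via uniqueness is fine and arguably cleaner (uniqueness uses only pure singularity, not locality), but it too must be invoked degree by degree, not once at the end.
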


\begin{rem}
Here we emphasize that the use of  $\otimes$ above denotes the ordinary algebraic tensor product, in which only finite sums are allowed.
\end{rem}

\begin{rem} \label{rem_CtermCos}
In Theorem 13.4.3 of \cite[\S 2.13.4]{CosEffThy} Costello proves the same theorem for the space $\intcomm{\mathcal{E},\mathcal{A}}$. That is, the above theorem holds if we replace $\intnoncomm{\mathcal{E},\mathcal{A}}$ everywhere in the above by $\intcomm{\mathcal{E},\mathcal{A}}$. This will be relevant later in Section \ref{sec_RenormEffThy} when we talk about the construction of effective field theories.
\end{rem}

\begin{rem} \label{rem_ctermtransform}
Consider the transformations $\sigma_{\gamma,\nu}$ and $\Morita$ defined by Definition \ref{def_mapNCtoCom} and Definition \ref{def_OTFTtransformation} respectively. It follows from Theorem \ref{thm_flowNCtoCom} and Theorem \ref{thm_flowOTFT} that these transformations respect the counterterms assigned to a local interaction.
\end{rem}

\begin{proof}[Proof of Theorem \ref{thm_counterterms}]
The argument is essentially the same as that of Theorem 10.1.1 of \cite[\S 2.10]{CosEffThy}, which we nonetheless include here for the sake of completeness.

First, we address the uniqueness of the counterterms. Suppose that $I^{\mathrm{CT}}$ and $J^{\mathrm{CT}}$ are two series of purely singular local counterterms for $I$ and $P(-,-)$. We prove by induction on the well-ordering that
\begin{equation} \label{eqn_uniquecounterterms}
I_{pqrs}^{\mathrm{CT}} = J_{pqrs}^{\mathrm{CT}}, \quad\text{for all }p,q,r,s\geq 0.
\end{equation}

Suppose that \eqref{eqn_uniquecounterterms} holds for all $(p,q,r,s)<(i,j,k,l)$. Using Proposition \ref{prop_orderformula} we see that
\begin{displaymath}
\begin{split}
W_{ijkl}\bigl(I-I^{\mathrm{CT}}(\varepsilon),P(\varepsilon,L)\bigr) &= W_{ijkl}\bigl(I-I_{\leq(i,j,k,l)}^{\mathrm{CT}}(\varepsilon),P(\varepsilon,L)\bigr) \\
&= W_{ijkl}\bigl(I-J_{\leq(i,j,k,l)}^{\mathrm{CT}}(\varepsilon) - \bigl(I_{ijkl}^{\mathrm{CT}}(\varepsilon)-J_{ijkl}^{\mathrm{CT}}(\varepsilon)\bigr),P(\varepsilon,L)\bigr) \\
&= W_{ijkl}\bigl(I-J^{\mathrm{CT}}(\varepsilon),P(\varepsilon,L)\bigr) - \bigl(I_{ijkl}^{\mathrm{CT}}(\varepsilon)-J_{ijkl}^{\mathrm{CT}}(\varepsilon)\bigr).
\end{split}
\end{displaymath}
It follows that
\[ 0=\Sing\bigl(I_{ijkl}^{\mathrm{CT}}-J_{ijkl}^{\mathrm{CT}}\bigr) = I_{ijkl}^{\mathrm{CT}}-J_{ijkl}^{\mathrm{CT}}, \]
where we have used the fact that the counterterms are purely singular.

To construct the counterterms $I_{ijkl}^{\mathrm{CT}}$, we again proceed by induction on the well-ordering. Suppose that purely singular local counterterms
\[ I_{pqrs}^{\mathrm{CT}}, \quad (p,q,r,s)<(i,j,k,l) \]
have been constructed such that the limit
\[ \lim_{\varepsilon\to 0}W_{pqrs}\bigl(I-I_{\leq(p,q,r,s)}^{\mathrm{CT}}(\varepsilon),P(\varepsilon,L)\bigr) \]
exists for all $(p,q,r,s)<(i,j,k,l)$ and $L>0$.

Define
\begin{equation} \label{eqn_counterterms}
I_{ijkl}^{\mathrm{CT}}(\varepsilon,L) := \Sing\Bigl[\varepsilon\mapsto W_{ijkl}\bigl(I-I_{<(i,j,k,l)}^{\mathrm{CT}}(\varepsilon),P(\varepsilon,L)\bigr)\Bigr].
\end{equation}
This makes sense by Theorem \ref{thm_asymptotics}. Now, using Proposition \ref{prop_orderformula} we see that for all $L>0$,
\begin{multline*}
\Sing\Bigl[\varepsilon\mapsto W_{ijkl}\bigl(I-I_{<(i,j,k,l)}^{\mathrm{CT}}(\varepsilon)-I_{ijkl}^{\mathrm{CT}}(\varepsilon,L),P(\varepsilon,L)\bigr)\Bigr] = \\
\Sing\Bigl[\varepsilon\mapsto W_{ijkl}\bigl(I-I_{<(i,j,k,l)}^{\mathrm{CT}}(\varepsilon),P(\varepsilon,L)\bigr)-I_{ijkl}^{\mathrm{CT}}(\varepsilon,L)\Bigr]=0.
\end{multline*}

It remains to show that $I_{ijkl}^{\mathrm{CT}}(\varepsilon,L)$ does not depend on $L$ and has the required form. We begin with the first problem and use Equation \eqref{eqn_intgrpactsum} of Theorem \ref{thm_intgrpact} along with Proposition \ref{prop_orderformula}.
\begin{displaymath}
\begin{split}
I_{ijkl}^{\mathrm{CT}}(\varepsilon,L') &= \Sing\Bigl[\varepsilon\mapsto W_{ijkl}\bigl(I-I_{<(i,j,k,l)}^{\mathrm{CT}}(\varepsilon),P(\varepsilon,L')\bigr)\Bigr] \\
&= \Sing\Bigl[\varepsilon\mapsto W_{ijkl}\bigl(W_{\leq(i,j,k,l)}\bigl(I-I_{<(i,j,k,l)}^{\mathrm{CT}}(\varepsilon),P(\varepsilon,L)\bigr),P(L,L')\bigr)\Bigr] \\
&= W_{ijkl}\Bigl(\Sing\Bigl[\varepsilon\mapsto W_{\leq(i,j,k,l)}\bigl(I-I_{<(i,j,k,l)}^{\mathrm{CT}}(\varepsilon),P(\varepsilon,L)\bigr)\Bigr],P(L,L')\Bigr).
\end{split}
\end{displaymath}
However, for all $(p,q,r,s)<(i,j,k,l)$;
\[ W_{pqrs}\bigl(I-I_{<(i,j,k,l)}^{\mathrm{CT}}(\varepsilon),P(\varepsilon,L)\bigr) = W_{pqrs}\bigl(I-I_{\leq(p,q,r,s)}^{\mathrm{CT}}(\varepsilon),P(\varepsilon,L)\bigr), \]
which is nonsingular by the inductive hypothesis. Therefore,
\begin{displaymath}
\begin{split}
I_{ijkl}^{\mathrm{CT}}(\varepsilon,L') &= W_{ijkl}\Bigl(\Sing\Bigl[\varepsilon\mapsto W_{ijkl}\bigl(I-I_{<(i,j,k,l)}^{\mathrm{CT}}(\varepsilon),P(\varepsilon,L)\bigr)\Bigr],P(L,L')\Bigr) \\
&= W_{ijkl}\Bigl(I_{ijkl}^{\mathrm{CT}}(\varepsilon,L),P(L,L')\Bigr) = I_{ijkl}^{\mathrm{CT}}(\varepsilon,L).
\end{split}
\end{displaymath}

By Theorem \ref{thm_asymptotics} we may write
\[ I_{ijkl}^{\mathrm{CT}}(\varepsilon,L) = \sum_{r=1}^R g_r(\varepsilon)\Psi_r(L) \]
where the $g_r\in\smoothsing{0,1}$ are linearly independent and each $\Psi_r$ has an approximation
\[ \lim_{L\to 0}\biggl(\Psi_r(L)-\sum_{s=1}^S f_{rs}(L)\Phi_{rs}\biggr) = 0 \]
in terms of local functionals $\Phi_{rs}\in\csalg{\KontHamP{\mathcal{E}}}\cotimes\mathcal{A}$. Since the counterterms $I_{ijkl}^{\mathrm{CT}}(\varepsilon,L)$ do not depend upon $L$, neither will each $\Psi_r$, and it follows from the above approximation that each $\Psi_r$ is a local functional.
\end{proof}

We will need the following very basic and routine result concerning the counterterms associated to trees.

\begin{lemma} \label{lem_treecterterm}
Let $I\in\intnoncommI{\mathcal{E},\mathcal{A}}$ be a family of interactions for a family $\mathcal{E}$ of free theories over $\mathcal{A}$. Suppose furthermore that the tree-level part $I_{001}$ of $I$ is a local interaction, and let $P(\varepsilon,L)$ be a family of propagators of the form \eqref{eqn_canonicalpropagator}.
\begin{enumerate}
\item \label{itm_treecterterm1}
Let $\Gamma$ be a connected stable ribbon graph such that:
\begin{itemize}
\item
the first Betti number $\mathbf{b}_1(\Gamma)$ vanishes, and
\item
the loop number of every vertex of $\Gamma$, except possibly one, vanishes.
\end{itemize}
Then for all $L>0$, the limit
\[ \lim_{\varepsilon\to 0} F_{\Gamma}(I,P(\varepsilon,L)) \]
exists in $\Hom_{\gf}(\mathcal{E}(\!(L(\Gamma))\!),\mathcal{A})$. Moreover, it then converges to zero as $L\to 0$, unless $\Gamma$ is a corolla.
\item \label{itm_treecterterm2}
The tree-level counterterms vanish,
\[ I_{001}^{\mathrm{CT}} = 0. \]
\end{enumerate}
\end{lemma}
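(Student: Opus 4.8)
The plan is to prove part (1) by induction on the number of edges of $\Gamma$, and to deduce part (2) from it. For part (1), since $\mathbf{b}_1(\Gamma)=0$ the graph $\Gamma$ is a tree and has no self-loops, so every edge joins two distinct vertices. The base case $|E(\Gamma)|=0$ is immediate: then $\Gamma$ is a corolla and $F_{\Gamma}(I,P)$ does not involve $P$. If $|E(\Gamma)|\geq1$ then $|V(\Gamma)|\geq2$, so $\Gamma$ has at least two vertices incident to exactly one edge; as at most one vertex of $\Gamma$ has positive loop number, I can choose such a vertex $v_0$ with $l(v_0)=0$. The relations $l(v_0)=0$, $|C(v_0)|+b(v_0)>0$ and $2l(v_0)+|v_0|\geq3$ then force $g(v_0)=b(v_0)=0$, $|C(v_0)|=1$ and $|v_0|\geq3$, so $I(v_0)$ is a cyclic symmetrization of a local $\mathcal{A}$-valued distribution. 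Let $e_0$ be the unique edge at $v_0$, let $h_0\in v_0$ be its half-edge, and let $h_0'$ be the other half-edge of $e_0$, incident to a vertex $v_1\neq v_0$.

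The analytic heart of the matter is the behaviour of the propagator as a contraction operator. From the form \eqref{eqn_canonicalpropagator} one has $P(\varepsilon,L)\star = D\circ\int_{t=\varepsilon}^{L}K_t\star\,\mathrm{d}t$, and the heat-semigroup estimates of \cite{BerGetVer} (the operators $K_t\star$ for $0<t\leq1$, together with the identity, form an equicontinuous family on $\mathcal{E}\cotimes\mathcal{A}$) show that $\int_{t=\varepsilon}^{L}K_t\star\,\mathrm{d}t$ converges, uniformly on bounded sets, as $\varepsilon\to0$, to the operator $\int_{t=0}^{L}K_t\star\,\mathrm{d}t$, and that this limit tends to $0$ as $L\to0$ because $\bigl\|\int_{t=0}^{L}K_t\star s\,\mathrm{d}t\bigr\|\leq L\sup_{0<t\leq1}\|K_t\star s\|$. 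Since $v_0$ is incident to only $e_0$ and $I(v_0)$ is a local distribution, feeding $P(\varepsilon,L)$ into the slot $h_0$ of $I(v_0)$ and integrating the differential operators of $I(v_0)$ by parts onto the heat kernel exhibits the resulting partial amplitude — a map from the inputs on the legs of $v_0$ to $\mathcal{E}\cotimes\mathcal{A}$ at $h_0'$ — as $D$ applied to $\int_{t=\varepsilon}^{L}K_t\star\,\mathrm{d}t$, post-composed with a continuous multilinear function of those leg-inputs; hence it converges uniformly on bounded sets as $\varepsilon\to0$, with limit continuous in $L$ and vanishing at $L=0$. Absorbing this data into a modified (still continuous, $\varepsilon$- and $L$-dependent) interaction at $v_1$ and invoking the inductive hypothesis for the smaller tree obtained by removing $v_0$ and $e_0$ — after the routine strengthening of the hypothesis that permits the vertex interactions to depend on such auxiliary parameters in this controlled way — yields the existence of $\lim_{\varepsilon\to0}F_{\Gamma}(I,P(\varepsilon,L))$ in $\Hom_{\gf}(\mathcal{E}(\!(L(\Gamma))\!),\mathcal{A})$; this limit is the amplitude of the distributional propagator $\int_{t=0}^{L}K_t\star\,\mathrm{d}t$, is continuous in $L\geq0$, and equals $F_{\Gamma}(I,0)=0$ at $L=0$ since $\Gamma$ is not a corolla. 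The step I expect to be the main obstacle is precisely this bookkeeping: making rigorous that, when $\mathbf{b}_1(\Gamma)=0$, every propagator enters only through the well-behaved operator $P(\varepsilon,L)\star$ and is never ``traced against itself'' through the pairing $\innprod$ — which is what occurs around a loop and would reinstate the genuine $\varepsilon$-singularities controlled by Theorem \ref{thm_asymptotics}.

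For part (2), I would use \eqref{eqn_loopnumber} to note that a connected stable ribbon graph with $g(\Gamma)=b(\Gamma)=0$ and $|C(\Gamma/\Gamma)|=1$ has $\ell(\Gamma)=2g(\Gamma)+B(\Gamma)-1=0$, hence $\mathbf{b}_1(\Gamma)=0$ and $l(v)=0$ for every vertex $v$; such a graph is a tree, and its Feynman weight $w_{\Gamma}(I,P)$ depends only on the tree-level part $I_{001}$, which is local by hypothesis. Then I would prove $I^{\mathrm{CT}}_{001l}=0$ by induction on $l$, the cases $l\leq2$ being void by \eqref{eqn_interactionNCconstraints}. In the inductive step, the components $I^{\mathrm{CT}}_{pqrs}$ with $(p,q,r,s)<(0,0,1,l)$ are those with $(p,q,r)=(0,0,0)$, which vanish identically, and those with $(p,q,r)=(0,0,1)$ and $s<l$, which vanish by the inductive hypothesis, so $I^{\mathrm{CT}}_{<(0,0,1,l)}=0$ and \eqref{eqn_counterterms} gives
\[ I^{\mathrm{CT}}_{001l}(\varepsilon,L)=\Sing\bigl[\varepsilon\mapsto W_{001l}(I,P(\varepsilon,L))\bigr]. \]
Since $W_{001l}(I,P(\varepsilon,L))$ is a finite sum of Feynman weights $w_{\Gamma}(I,P(\varepsilon,L))$ over trees $\Gamma$ with $l$ legs, part (1) shows each has a limit as $\varepsilon\to0$, so $\varepsilon\mapsto W_{001l}(I,P(\varepsilon,L))$ is nonsingular and $\Sing$ annihilates it; hence $I^{\mathrm{CT}}_{001l}=0$, completing the induction and the proof.
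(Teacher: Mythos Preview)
Your proof is correct and follows essentially the same strategy as the paper: induction on the size of the tree, peeling off a leaf corolla with vanishing loop number, using locality of $I_{001}$ to integrate by parts so that the edge contraction is expressed through the operator $P(\varepsilon,L)\star$, and then invoking heat-kernel convergence. Part~(2) is deduced from part~(1) by induction on $l$, exactly as in the paper.

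The one organizational difference worth flagging is in the inductive step. You propose to absorb the partial amplitude at $v_0$ into a modified, $(\varepsilon,L)$-dependent interaction at $v_1$, and then strengthen the induction hypothesis to accommodate such parameter-dependent vertex data. The paper instead keeps the interaction $I$ unchanged on the smaller graph $\Gamma_2$ and factors the full amplitude as a composition,
\[
F_{\Gamma}(I,P(\varepsilon,L)) \;=\; \pm\,\mu_{\mathcal{A}}\bigl(F_{\Gamma_2}(I,P(\varepsilon,L))\cotimes\mathds{1}_{\mathcal{A}}\bigr)\circ\bigl(\mathds{1}_{\mathcal{E}^{\cotimes l_2-1}}\cotimes(P(\varepsilon,L)\star X)\bigr),
\]
so that the inductive hypothesis applies directly to $F_{\Gamma_2}$. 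Convergence of the composite is then handled by Banach--Steinhaus together with a short lemma from the appendix (Lemma~\ref{lem_compositionconverge}: if $\phi_t$ and $\psi_t$ are one-parameter families of operators between nuclear Fr\'echet spaces that converge weakly, then so does $\psi_t\circ\phi_t$). This replaces your ``routine strengthening'' and keeps the induction hypothesis identical to the lemma statement; your version would also go through once the strengthened hypothesis is written out, but the composition-of-operators formulation is more economical and avoids having to track non-local or parameter-dependent interactions through the recursion.
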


\begin{proof}
Note that \eqref{itm_treecterterm2} follows from \eqref{itm_treecterterm1}, as then we may show that $I_{001l}^{\mathrm{CT}}$ vanishes for all~$l$ by a simple induction.

A basic fact about any connected graph $\Gamma$ with vanishing first Betti number, that is not a corolla, is that there are always at least two vertices that are only attached to the graph by a single edge. Hence any graph $\Gamma$ of the type described above is obtained by attaching a corolla $\Gamma_1$ with vanishing loop number to a smaller graph $\Gamma_2$ of the same type. The proof may therefore proceed by induction on the number of vertices in such a graph. Let $l$, $l_1$ and $l_2$ be the number of legs of the graphs $\Gamma$, $\Gamma_1$ and $\Gamma_2$ respectively.

Using a representative from the cyclic ordering on the legs of $\Gamma_1$, we regard $I(\Gamma_1)$ as a cyclically invariant map from $\mathcal{E}^{\cotimes l_1}$ to $\mathcal{A}$. Since the interaction $I_{001}$ is local, we may use integration by parts to write
\[ I(\Gamma_1)(s_1,\ldots,s_{l_1}) = \innprod[s_1,X(s_2,\ldots,s_{l_1})]_{\mathcal{A}}; \]
for some operator $X:\mathcal{E}^{\cotimes l_1 - 1}\to\mathcal{E}\cotimes\mathcal{A}$, see Remark 2.17 of \cite{NCRBV}. Next, choosing appropriate representatives for the cyclic orderings on the legs of $\Gamma$ and $\Gamma_2$, we may identify $F_{\Gamma}(I,P(\varepsilon,L))$ (up to a sign) with the map
\[ \mu_{\mathcal{A}}\bigl(F_{\Gamma_2}(I,P(\varepsilon,L))\cotimes\mathds{1}_{\mathcal{A}}\bigr)\bigl(\mathds{1}_{\mathcal{E}^{\cotimes l_2-1}}\cotimes(P(\varepsilon,L)\star X)\bigr)\in\Hom_{\gf}\bigl(\mathcal{E}^{\cotimes l},\mathcal{A}\bigr). \]

By the basic property of the heat kernel, we know that the operator $P(\varepsilon,L)\star$ converges pointwise as $\varepsilon\to 0$. By the inductive hypothesis, the operator $F_{\Gamma_2}(I,P(\varepsilon,L))$ also converges weakly as $\varepsilon\to 0$. Applying the Banach-Steinhaus Theorem and Proposition 32.5 of \cite{Treves}, it follows that the operators
\begin{displaymath}
\begin{split}
\mathds{1}_{\mathcal{E}^{\cotimes l_2-1}}\cotimes(P(\varepsilon,l)\star X) &\in \Hom_{\gf}\bigl(\mathcal{E}^{\cotimes l},\mathcal{E}^{\cotimes l_2}\cotimes\mathcal{A}\bigr) \\
F_{\Gamma_2}(I,P(\varepsilon,L))\cotimes\mathds{1}_{\mathcal{A}} &\in \Hom_{\gf}\bigl(\mathcal{E}^{\cotimes l_2}\cotimes\mathcal{A},\mathcal{A}\cotimes\mathcal{A}\bigr)
\end{split}
\end{displaymath}
both converge weakly. Now apply Lemma \ref{lem_compositionconverge}.

The same argument shows that if we then take the limit as $L\to 0$, the Feynman amplitude will vanish unless $\Gamma$ is a corolla.
\end{proof}

\subsection{Effective field theory}

We are now ready to introduce the notion of a \emph{noncommutative} effective field theory and to explain its relationship to the notion of a (commutative) effective field theory described by Costello in \cite{CosEffThy}. We will then prove an analogue of one of the main results of \cite{CosEffThy} connecting effective field theories to local interactions.

\subsubsection{The definition of an effective field theory}

We begin by introducing the definition of a noncommutative effective field theory following \cite{CosEffThy}. We will then briefly recall the definition from \cite[\S 2.13.4]{CosEffThy} of a (commutative) effective field theory.

\begin{defi} \label{def_NCeffthy}
Let $\mathcal{E}$ be a family of free theories over $\mathcal{A}$ and $P(\varepsilon,L)$ be a family of propagators of the form \eqref{eqn_canonicalpropagator}. Suppose that
\[ I\in\intnoncommI{\mathcal{E},\mathcal{A}}\underset{\mathbb{R}}{\cotimes}\smooth{0,\infty} \]
is an $L$-parameterized family of interactions, with $I[L]\in\intnoncommI{\mathcal{E},\mathcal{A}}$ denoting the interaction defined at the point $L>0$. Then we say that $I$ defines a \emph{noncommutative effective field theory} if:
\begin{itemize}
\item
the renormalization group equation
\[ I[L'] = W(I[L],P(L,L')) \]
holds for all $L,L'>0$ and
\item
the interactions are asymptotically local; that is for all $i,j,k,l\geq 0$ there is a small $L$ asymptotic expansion
\[ I_{ijkl}[L]\simeq\sum_{r=1}^{\infty}f_r(L)\Phi_r, \]
where $f_r\in\smooth{0,\infty}$ and $\Phi_r\in\intnoncommLI{\mathcal{E},\mathcal{A}}$ are local interactions.
\end{itemize}
We will denote the set of noncommutative effective field theories by
\[ \NCEffThy{\mathcal{E},\mathcal{A}}\subset\intnoncommI{\mathcal{E},\mathcal{A}}\underset{\mathbb{R}}{\cotimes}\smooth{0,\infty}. \]
\end{defi}

To specify more precisely the statement in Definition \ref{def_NCeffthy} that the interactions are asymptotically local; this means that there is a monotonically increasing sequence of integers $d_R$ tending to infinity, such that for all $R$
\[ \lim_{L\to 0}L^{-d_R}\biggl(I_{ijkl}[L]-\sum_{r=1}^R f_r(L)\Phi_r\biggr) = 0. \]

\begin{rem} \label{rem_effthy}
The definition of a (commutative) effective field theory is introduced in \cite{CosEffThy} in exactly the same way, with the only difference being that we replace $\intnoncomm{\mathcal{E},\mathcal{A}}$ everywhere in Definition \ref{def_NCeffthy} with $\intcomm{\mathcal{E},\mathcal{A}}$ and $I_{ijkl}[L]$ with $I_{ij}[L]$. We will denote the set of effective field theories by $\EffThy{\mathcal{E},\mathcal{A}}$.
\end{rem}

\subsubsection{Passing from noncommutative to commutative geometry}

One basic feature of a noncommutative effective field theory is that any noncommutative effective field theory defines a (commutative) effective field theory.

\begin{prop} \label{prop_NCtocomEffThy}
Let $\mathcal{E}$ be a family of free theories over $\mathcal{A}$ and $P(\varepsilon,L)$ be a family of propagators of the form \eqref{eqn_canonicalpropagator}. Then the map
\[ \sigma_{\gamma,\nu}:\intnoncommI{\mathcal{E},\mathcal{A}}\underset{\mathbb{R}}{\cotimes}\smooth{0,\infty}\to\intcommI{\mathcal{E},\mathcal{A}}\underset{\mathbb{R}}{\cotimes}\smooth{0,\infty} \]
of Definition \ref{def_mapNCtoCom} transforms noncommutative effective field theories into (commutative) effective field theories.
\end{prop}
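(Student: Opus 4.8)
The plan is to check directly that the $L$-parameterized family $\tilde{I}[L] := \sigma_{\gamma,\nu}(I[L])$ satisfies the two conditions of Definition \ref{def_NCeffthy}, read in the commutative form described in Remark \ref{rem_effthy}, for an arbitrary noncommutative effective field theory $I\in\NCEffThy{\mathcal{E},\mathcal{A}}$. Since $\sigma_{\gamma,\nu}$ is a continuous linear map carrying $\intnoncommI{\mathcal{E},\mathcal{A}}$ into $\intcommI{\mathcal{E},\mathcal{A}}$ (as recorded just after Definition \ref{def_mapNCtoCom}), the assignment $L\mapsto\tilde{I}[L]$ is a well-defined element of $\intcommI{\mathcal{E},\mathcal{A}}\cotimes\smooth{0,\infty}$; so it remains only to verify the renormalization group equation and asymptotic locality for $\tilde{I}$.

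The renormalization group equation is immediate from the intertwining property established in Theorem \ref{thm_flowNCtoCom}: for all $L,L'>0$,
\[ \tilde{I}[L'] = \sigma_{\gamma,\nu}\bigl(I[L']\bigr) = \sigma_{\gamma,\nu}\bigl(W(I[L],P(L,L'))\bigr) = W\bigl(\sigma_{\gamma,\nu}(I[L]),P(L,L')\bigr) = W\bigl(\tilde{I}[L],P(L,L')\bigr), \]
where the second equality uses that $I$ itself satisfies the noncommutative renormalization group equation.

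For asymptotic locality I would recall, from the proof of Theorem \ref{thm_flowNCtoCom}, that for each pair $p,q\geq 0$ the component $\tilde{I}_{pq}[L]$ is a \emph{finite} sum, indexed by the triples $(i,j,k)$ with $2i+j+k-1=p$ (and a further finite sum over the compositions $\mathbf{r}$ of $q$ into $k$ parts), of the images of the components $I_{ijkq}[L]$ under the continuous, locality-preserving map obtained by applying $\sigma$ to each cyclic-word factor and then the commutative multiplication on $\csalg{\mathcal{E}^{\dag}}$. Feeding the small-$L$ asymptotic expansions $I_{ijkq}[L]\simeq\sum_r f_r(L)\Phi_r$ (with $\Phi_r\in\intnoncommLI{\mathcal{E},\mathcal{A}}$), which exist by the asymptotic locality of the noncommutative theory $I$, through these maps yields asymptotic expansions of the corresponding summands of $\tilde{I}_{pq}[L]$ in terms of local commutative functionals; because only finitely many $(i,j,k,\mathbf{r})$ contribute to a fixed $(p,q)$, one may pass to a common monotone sequence $d_R$ and merge these into a single small-$L$ asymptotic expansion of $\tilde{I}_{pq}[L]$ with local coefficients. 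This exhibits $\tilde{I}$ as a (commutative) effective field theory.

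The argument is essentially formal, resting entirely on Theorem \ref{thm_flowNCtoCom} together with the continuity of $\sigma_{\gamma,\nu}$ and its compatibility with locality. The only point that will require any genuine attention is the combinatorial bookkeeping in the last step: tracking how the $(\gamma,\nu,k)$-grading on the noncommutative side collapses onto the single $\hbar$-grading on the commutative side, and verifying that the finitely many resulting asymptotic expansions can be amalgamated with one uniform rate of decay.
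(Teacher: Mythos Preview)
Your proof is correct and follows the same approach as the paper, which simply records the result as ``a simple corollary of Theorem \ref{thm_flowNCtoCom}.'' You have unpacked the asymptotic locality verification that the paper leaves implicit, but the core argument is identical.
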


\begin{proof}
This is a simple corollary of Theorem \ref{thm_flowNCtoCom}.
\end{proof}

It is also worth pointing out that an Open Topological Field Theory also defines a transformation of noncommutative effective field theories. We will see the importance of this feature when we come to discuss the large $N$ correspondence in Section \ref{sec_largeN}.

\begin{prop} \label{prop_OTFTEffThy}
Let $\mathcal{E}$ be a family of free theories over $\mathcal{A}$ and $P(\varepsilon,L)$ be a family of propagators of the form \eqref{eqn_canonicalpropagator}. Then for any Frobenius algebra $\textgoth{A}$, the map
\[ \Morita:\intnoncommI{\mathcal{E},\mathcal{A}}\underset{\mathbb{R}}{\cotimes}\smooth{0,\infty}\to\intnoncommI{\mathcal{E}_{\textgoth{A}},\mathcal{A}}\underset{\mathbb{R}}{\cotimes}\smooth{0,\infty} \]
of Definition \ref{def_OTFTtransformation} transforms noncommutative effective field theories for the propagator $P(\varepsilon,L)$ into noncommutative effective field theories for the propagator $P_{\textgoth{A}}(\varepsilon,L)$ defined by \eqref{eqn_OTFTpropagator}.
\end{prop}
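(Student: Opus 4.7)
The plan is to verify the two defining conditions of a noncommutative effective field theory in Definition \ref{def_NCeffthy} for the transformed family $\Morita(I) \in \intnoncommI{\mathcal{E}_{\textgoth{A}},\mathcal{A}}\cotimes\smooth{0,\infty}$, using that $\Morita$ is a $\smooth{0,\infty}$-linear continuous map and exploiting the two key facts established earlier: Theorem \ref{thm_flowOTFT} (compatibility of $\Morita$ with the renormalization group flow) and the observation, recorded right after the definition of locality, that $\Morita$ sends local functionals to local functionals.

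First I would handle the renormalization group equation. Assume $I$ satisfies $I[L'] = W(I[L], P(L,L'))$ for all $L, L' > 0$. Applying $\Morita$ to both sides and invoking Theorem \ref{thm_flowOTFT} with the propagator $P(L,L')$ yields
\[
\Morita(I)[L'] = \Morita\bigl(W(I[L], P(L,L'))\bigr) = W\bigl(\Morita(I[L]), P_{\textgoth{A}}(L,L')\bigr) = W\bigl(\Morita(I)[L], P_{\textgoth{A}}(L,L')\bigr),
\]
where in the last step I use that $\Morita$ is defined pointwise in $L$. This establishes the renormalization group equation for $\Morita(I)$ with the propagator $P_{\textgoth{A}}$.

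Next I would handle asymptotic locality. By hypothesis, for each multi-index $(i,j,k,l)$ there is a sequence of local interactions $\Phi_r \in \intnoncommLI{\mathcal{E},\mathcal{A}}$ and smooth functions $f_r \in \smooth{0,\infty}$ such that
\[
\lim_{L\to 0} L^{-d_R}\Bigl(I_{ijkl}[L] - \sum_{r=1}^R f_r(L)\Phi_r\Bigr) = 0
\]
for a monotonically increasing sequence $d_R \to \infty$. I apply $\Morita$ to this relation. Since $\Morita$ is a continuous linear map between the appropriate topological vector spaces and is $\smooth{0,\infty}$-linear, it commutes with the limit and extracts $f_r(L)$ as a scalar, producing
\[
\lim_{L\to 0} L^{-d_R}\Bigl(\Morita(I)_{ijkl}[L] - \sum_{r=1}^R f_r(L)\Morita(\Phi_r)\Bigr) = 0,
\]
where I have also used that $\Morita$ is compatible with the multi-index decomposition (it preserves $g(v)$, $b(v)$, $|C(v)|$ and $|v|$ at each vertex by its very definition, so $\Morita$ maps the $(i,j,k,l)$-component to the $(i,j,k,l)$-component). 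Since $\Morita$ sends local functionals to local functionals, each $\Morita(\Phi_r) \in \intnoncommLI{\mathcal{E}_{\textgoth{A}},\mathcal{A}}$, giving the required asymptotic locality of $\Morita(I)$.

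There is no real obstacle here; the proposition is essentially a bookkeeping corollary of Theorem \ref{thm_flowOTFT} together with the fact (already noted) that $\Morita$ preserves locality, plus the straightforward continuity/linearity properties of $\Morita$. The only minor subtlety worth verifying in the write-up is that $\Morita$ respects the four-index decomposition of interactions, which is immediate from its definition (it acts component by component in $\gamma$, $\nu$, the tensor power $k$, and the order $l$ in $\mathcal{E}^{\dag}$).
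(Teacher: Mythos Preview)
Your proof is correct and follows essentially the same approach as the paper, which simply records the result as ``a simple corollary of Theorem \ref{thm_flowOTFT}.'' You have spelled out the two verifications (the renormalization group equation via Theorem \ref{thm_flowOTFT}, and asymptotic locality via the locality-preservation and continuity of $\Morita$) that the paper leaves implicit.
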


\begin{proof}
Again, this is just a simple corollary of Theorem \ref{thm_flowOTFT}.
\end{proof}

\subsubsection{Renormalization and effective field theory} \label{sec_RenormEffThy}

Another basic feature of effective field theory is that effective field theories are produced by renormalizing the interaction, see \cite[\S 2.11]{CosEffThy}.

\begin{defi} \label{def_intrenormalized}
Let $\mathcal{E}$ be a family of free theories over $\mathcal{A}$ and $P(\varepsilon,L)$ be a family of propagators of the form \eqref{eqn_canonicalpropagator}. Given a local interaction $I\in\intnoncommLI{\mathcal{E},\mathcal{A}}$, we define a renormalized interaction
\[ I^\mathrm{R}[L] :=  \lim_{\varepsilon\to 0}W\bigl(I-I^{\mathrm{CT}}(\varepsilon),P(\varepsilon,L)\bigr), \quad L>0. \]
\end{defi}

Likewise, given a local interaction $I\in\intcommLI{\mathcal{E},\mathcal{A}}$, a renormalized interaction $I^\mathrm{R}[L]$ was defined in \cite{CosEffThy} by the same formula. This and the definition above make sense by Theorem \ref{thm_counterterms}, cf. Remark \ref{rem_CtermCos}. In \cite[\S 2.11]{CosEffThy} it was shown that the interactions $I^\mathrm{R}[L]$ defined there form a (commutative) effective field theory, cf. Remark \ref{rem_effthy}. We now formulate the corresponding analogue of this result in noncommutative effective field theory.

\begin{prop} \label{prop_renormalizedthy}
Let $I\in\intnoncommLI{\mathcal{E},\mathcal{A}}$ be a family of local interactions for a family $\mathcal{E}$ of free theories over $\mathcal{A}$ and let $P(\varepsilon,L)$ be a family of propagators of the form \eqref{eqn_canonicalpropagator}. Then the interactions
\[ I^\mathrm{R}[L]\in\intnoncommI{\mathcal{E},\mathcal{A}}, \quad L>0 \]
define a noncommutative effective field theory.
\end{prop}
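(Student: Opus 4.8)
The plan is to verify the two conditions of Definition \ref{def_NCeffthy}---the renormalization group equation and asymptotic locality---together with smooth dependence on $L$, which comes for free from the smoothness built into Theorem \ref{thm_asymptotics}.

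First I would dispatch the renormalization group equation, which is essentially formal. Fix $L,L'>0$. Since the flow $J\mapsto W(J,P(L,L'))$ is continuous, it commutes with the $\varepsilon\to 0$ limit defining $I^{\mathrm{R}}[L]$, so by the group identity \eqref{eqn_intgrpactsum} of Theorem \ref{thm_intgrpact},
\[
W\bigl(I^{\mathrm{R}}[L], P(L,L')\bigr) = \lim_{\varepsilon\to 0} W\bigl(I - I^{\mathrm{CT}}(\varepsilon),\, P(\varepsilon,L) + P(L,L')\bigr).
\]
For a propagator of the form \eqref{eqn_canonicalpropagator} the defining integral is additive, so $P(\varepsilon,L)+P(L,L')=P(\varepsilon,L')$ (using the convention $P(L,\varepsilon)=-P(\varepsilon,L)$ to accommodate any ordering of $\varepsilon$, $L$, $L'$), whence the right-hand side is exactly $\lim_{\varepsilon\to 0}W(I-I^{\mathrm{CT}}(\varepsilon),P(\varepsilon,L'))=I^{\mathrm{R}}[L']$.

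The substantive part is asymptotic locality, and here I would follow the inductive argument of \cite[\S 2.11]{CosEffThy}. Fix a multidegree $(i,j,k,l)$ and suppose, by induction on the lexicographic well-ordering, that $I^{\mathrm{R}}_{pqrs}[L]$ admits the required small-$L$ asymptotic expansion for all $(p,q,r,s)<(i,j,k,l)$. As in the uniqueness argument for Theorem \ref{thm_counterterms}, $I^{\mathrm{R}}_{ijkl}[L]$ depends only on the counterterms of multidegree $\leq(i,j,k,l)$, so by Proposition \ref{prop_orderformula} it is the $\varepsilon\to 0$ limit of a finite sum of Feynman weights $w_\Gamma(I-I^{\mathrm{CT}}_{\leq(i,j,k,l)}(\varepsilon),P(\varepsilon,L))$. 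Since $I$ is local and each $I^{\mathrm{CT}}(\varepsilon)$ is a local interaction, Theorem \ref{thm_asymptotics} supplies for each such weight an asymptotic expansion in $\varepsilon$ whose coefficients $\Psi_r(L)$ themselves have small-$L$ asymptotic expansions $\sum_s f_{rs}(L)\Phi_{rs}$ with the $\Phi_{rs}$ local functionals in $\csalg{\KontHamP{\mathcal{E}}}\cotimes\mathcal{A}$. By construction of the counterterms, the purely singular-in-$\varepsilon$ part is cancelled, so the surviving $\varepsilon\to 0$ limit is a combination of these local functionals with coefficient functions bounded near $L=0$; assembling these across the finitely many contributing graphs yields the desired expansion of $I^{\mathrm{R}}_{ijkl}[L]$. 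The base case of the induction is the tree-level multidegrees, handled by Lemma \ref{lem_treecterterm}: the tree-level counterterms vanish and the tree-level weights are nonsingular (and in fact vanish as $L\to 0$ for non-corollas), so they trivially admit such an expansion.

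I expect the main obstacle to be precisely the bookkeeping in this last step: one must know that the local functionals appearing in the small-$L$ expansion of $\Psi_r(L)$ are not destroyed by subtracting the counterterms or by passing to the $\varepsilon\to 0$ limit, and that this holds uniformly over the graphs contributing in a fixed multidegree. As in \cite{CosEffThy}, this is controlled by the naturality and idempotency of $\Sing$ together with the fact---already exploited in Theorem \ref{thm_counterterms}---that the counterterms are local and purely singular; no genuinely new analytic difficulty arises from working with stable ribbon graphs in place of ordinary stable graphs, since the relevant estimates of Theorem \ref{thm_asymptotics} were already transported to the ribbon setting there.
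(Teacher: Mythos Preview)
Your proposal is correct and follows the same approach as the paper: the renormalization group equation from Theorem \ref{thm_intgrpact} together with continuity and the additivity $P(\varepsilon,L)+P(L,L')=P(\varepsilon,L')$, and asymptotic locality from Theorem \ref{thm_asymptotics}. The paper's proof is two sentences deferring to these results; you have simply unpacked the bookkeeping (in particular the inductive handling of the $\varepsilon$-dependence of $I^{\mathrm{CT}}$) that the paper leaves implicit.
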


\begin{proof}
The renormalization group equation is a consequence of Theorem \ref{thm_intgrpact} and the fact that the renormalization group flow is continuous. That the interactions are asymptotically local follows from Theorem \ref{thm_asymptotics}.
\end{proof}

This construction of effective field theories is compatible with the transformations described in Proposition \ref{prop_NCtocomEffThy} and \ref{prop_OTFTEffThy}.

\begin{prop} \label{prop_renormcommute}
Let $\mathcal{E}$ be a family of free theories over $\mathcal{A}$ and $P(\varepsilon,L)$ be a family of propagators of the form \eqref{eqn_canonicalpropagator}. Then the following diagram commutes:
\begin{displaymath}
\xymatrix{
\intnoncommLI{\mathcal{E},\mathcal{A}} \ar_{\sigma_{\gamma,\nu}}[d] \ar^{I\mapsto I^{\mathrm{R}}}[rr] && \NCEffThy{\mathcal{E},\mathcal{A}} \ar^{\sigma_{\gamma,\nu}}[d] \\
\intcommLI{\mathcal{E},\mathcal{A}} \ar^{I\mapsto I^{\mathrm{R}}}[rr] && \EffThy{\mathcal{E},\mathcal{A}}
}
\end{displaymath}
If in addition $\textgoth{A}$ is a Frobenius algebra, then the following diagram commutes:
\begin{displaymath}
\xymatrix{
\intnoncommLI{\mathcal{E},\mathcal{A}} \ar_{\Morita}[d] \ar^{I\mapsto I^{\mathrm{R}}}[rr] && \NCEffThy{\mathcal{E},\mathcal{A}} \ar^{\Morita}[d] \\
\intnoncommLI{\mathcal{E}_{\textgoth{A}},\mathcal{A}} \ar^{I\mapsto I^{\mathrm{R}}}[rr] && \NCEffThy{\mathcal{E}_{\textgoth{A}},\mathcal{A}}
}
\end{displaymath}
\end{prop}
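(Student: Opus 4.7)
The plan is to prove both commutative diagrams by a direct unwinding of Definition \ref{def_intrenormalized} together with the fact that the two transformations are continuous, are compatible with the renormalization group flow, and respect counterterms. No new construction is needed: every piece is already in place from the preceding results.

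For the first diagram, let $I\in\intnoncommLI{\mathcal{E},\mathcal{A}}$ be a local interaction, so by Definition \ref{def_intrenormalized} one has
\[ I^{\mathrm{R}}[L] = \lim_{\varepsilon\to 0} W\bigl(I - I^{\mathrm{CT}}(\varepsilon), P(\varepsilon, L)\bigr). \]
First I would apply $\sigma_{\gamma,\nu}$ to both sides; since $\sigma_{\gamma,\nu}$ is a continuous linear map, it can be passed inside the limit. Next I would invoke Theorem \ref{thm_flowNCtoCom} to rewrite $\sigma_{\gamma,\nu}\bigl(W(I - I^{\mathrm{CT}}(\varepsilon),P(\varepsilon,L))\bigr) = W\bigl(\sigma_{\gamma,\nu}(I) - \sigma_{\gamma,\nu}(I^{\mathrm{CT}}(\varepsilon)), P(\varepsilon, L)\bigr)$. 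Finally, I would use Remark \ref{rem_ctermtransform} to identify $\sigma_{\gamma,\nu}(I^{\mathrm{CT}})$ with the counterterms $\sigma_{\gamma,\nu}(I)^{\mathrm{CT}}$ for the image interaction $\sigma_{\gamma,\nu}(I) \in \intcommLI{\mathcal{E},\mathcal{A}}$ (which is local since $\sigma_{\gamma,\nu}$ preserves locality). Combining these gives
\[ \sigma_{\gamma,\nu}\bigl(I^{\mathrm{R}}[L]\bigr) = \lim_{\varepsilon\to 0} W\bigl(\sigma_{\gamma,\nu}(I) - \sigma_{\gamma,\nu}(I)^{\mathrm{CT}}(\varepsilon), P(\varepsilon, L)\bigr) = \sigma_{\gamma,\nu}(I)^{\mathrm{R}}[L], \]
which is exactly the commutativity of the first diagram. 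The target of this renormalized interaction lands in $\EffThy{\mathcal{E},\mathcal{A}}$ by Proposition \ref{prop_NCtocomEffThy} (equivalently by the proof of Proposition \ref{prop_renormalizedthy} applied in the commutative setting; cf.~Remark \ref{rem_CtermCos}).

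The argument for the second diagram is formally identical. Starting from the same formula for $I^{\mathrm{R}}[L]$, I would apply the continuous linear map $\Morita$, pass it through the limit, use Theorem \ref{thm_flowOTFT} to move it past $W$ (noting that the propagator on the target side becomes $P_{\textgoth{A}}(\varepsilon,L)$, as in Proposition \ref{prop_OTFTEffThy}), and then invoke Remark \ref{rem_ctermtransform} again to identify $\Morita(I^{\mathrm{CT}})$ with the counterterms $\Morita(I)^{\mathrm{CT}}$ of the (local) image interaction $\Morita(I)\in\intnoncommLI{\mathcal{E}_{\textgoth{A}},\mathcal{A}}$. This yields $\Morita(I^{\mathrm{R}}[L]) = \Morita(I)^{\mathrm{R}}[L]$.

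There is no genuine obstacle here: the only routine check is that $\sigma_{\gamma,\nu}$ and $\Morita$ are continuous on $\intnoncomm{\mathcal{E},\mathcal{A}}$ (which is immediate from Definitions \ref{def_mapNCtoCom} and \ref{def_OTFTtransformation}, since both are built termwise from continuous operations on the factors), and that they map local interactions to local interactions (noted explicitly after Definition \ref{def_OTFTtransformation} and tautologically after the definition of locality). The real content of the proposition is therefore already packaged in Theorem \ref{thm_flowNCtoCom}, Theorem \ref{thm_flowOTFT}, and Remark \ref{rem_ctermtransform}; the present statement is a clean corollary that records the compatibility at the level of effective field theories.
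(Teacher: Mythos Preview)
Your proposal is correct and follows essentially the same approach as the paper. The paper's proof is a one-line reference to Theorem \ref{thm_flowNCtoCom}, Theorem \ref{thm_flowOTFT}, and Remark \ref{rem_ctermtransform}; you have simply unpacked that reference into the explicit chain of steps (continuity, intertwining with the flow, identification of counterterms), which is exactly what those citations encode.
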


\begin{proof}
This, again, is a simple consequence of Theorem \ref{thm_flowNCtoCom} and Theorem \ref{thm_flowOTFT}; see Remark \ref{rem_ctermtransform}.
\end{proof}

\subsubsection{Finite-level effective field theories}

In what follows, and in \cite{NCRBV}, we will need to consider manipulations of finite-level effective field theories. That is, we will define a certain filtration on our spaces of interactions and we will consider the process of building effective field theories step-by-step through the levels of our filtration. This will be of particular importance in \cite{NCRBV} when we apply these ideas to the Batalin-Vilkovisky formalism and determine the controlling obstruction theory for constructing a solution to the quantum master equation.

We will filter our spaces of interactions based on the loop number.

\begin{defi}
Let $\mathcal{E}$ be a family of free theories over $\mathcal{A}$. We define a complete Hausdorff decreasing filtration on $\intcomm{\mathcal{E},\mathcal{A}}$ by
\[ F_p\intcomm{\mathcal{E},\mathcal{A}} :=\hbar^p\intcomm{\mathcal{E},\mathcal{A}}, \quad p\geq 0. \]
If, given $I\in\intcomm{\mathcal{E},\mathcal{A}}$, we define
\[ I_{[i]}:=\hbar^i\sum_{j=0}^{\infty} I_{ij}, \]
then $F_p\intcomm{\mathcal{E},\mathcal{A}}$ is the subspace of $\intcomm{\mathcal{E},\mathcal{A}}$ consisting of all those functionals $I$ satisfying
\[ I_{[i]}=0, \quad \text{for all }0\leq i<p. \]

Given $I\in\intnoncomm{\mathcal{E},\mathcal{A}}$ define
\[ I_{[n]} := \sum_{\begin{subarray}{c} i,j,k\geq 0: \\ 2i+j+k-1 = n\end{subarray}} \gamma^i\nu^j I_{ijk} \quad\text{and}\quad I_{[n]l} := \sum_{\begin{subarray}{c} i,j,k\geq 0: \\ 2i+j+k-1 = n\end{subarray}} \gamma^i\nu^j I_{ijkl}. \]
Then we likewise define a complete Hausdorff decreasing filtration on $\intnoncomm{\mathcal{E},\mathcal{A}}$ by defining $F_p\intnoncomm{\mathcal{E},\mathcal{A}}$ to be the subspace of $\intnoncomm{\mathcal{E},\mathcal{A}}$ consisting of all those functionals $I$ satisfying
\[ I_{[n]}=0, \quad\text{for all } 0\leq n<p. \]
\end{defi}

\begin{rem} \label{rem_filter}
Note that $I_{[0]}=I_{001}$ is the tree-level part of an interaction $I\in\intnoncommI{\mathcal{E},\mathcal{A}}$ and that we may identify the space of tree-level interactions by
\[ \intnoncommItree{\mathcal{E},\mathcal{A}} = \intnoncommI{\mathcal{E},\mathcal{A}}/F_1 \intnoncommI{\mathcal{E},\mathcal{A}}. \]

Note also that the maps $\sigma_{\gamma,\nu}$ of Definition \ref{def_mapNCtoCom} and $\Morita$ of Definition \ref{def_OTFTtransformation} both respect the filtrations $F_p$ defined above using the loop number. More precisely, if $I'=\sigma_{\gamma,\nu}(I)$ then $I'_{[n]}=\sigma_{\gamma,\nu}(I_{[n]})$, and likewise for $\Morita$.
\end{rem}

Central to our discussion of finite-level theories will be the following formula involving the above filtration. Before introducing it, we make the following definition.

\begin{defi} \label{def_ptree}
We will say that a stable ribbon graph (or stable graph) $\Gamma$ is a \emph{$p$-tree} if:
\begin{itemize}
\item
it is connected,
\item
the first Betti number $\mathbf{b}_1(\Gamma)$ vanishes, and
\item
the loop number of every vertex of $\Gamma$ vanishes, except one, which must have loop number $p$.
\end{itemize}
We will denote the class of $p$-trees by $\ptree$.
\end{defi}

\begin{rem} \label{rem_filterformula}
Note that if $\Gamma\in\ptree$ and if $v\in V(\Gamma)$ is the vertex of loop number $p$ then by \eqref{eqn_valencyribbonconditions},
\[ |L(\Gamma)| \geq |v| + |V(\Gamma)|-1. \]
Hence, the valency of $v$ can be no larger than the number of legs of $\Gamma$, with equality if and only if $\Gamma$ is a corolla.
\end{rem}

Note that $p$-trees are more general objects than ordinary trees, which were defined by Definition \ref{def_tree}. In this sense, an ordinary tree is the same thing as a $0$-tree. Any $p$-tree will have loop number $p$.

\begin{prop} \label{prop_filterformula}
Let $\mathcal{E}$ be a family of free theories over $\mathcal{A}$ and $P$ be a family of propagators for $\mathcal{E}$. For every $I\in\intnoncommI{\mathcal{E},\mathcal{A}}$ and $J\in F_p\intnoncommI{\mathcal{E},\mathcal{A}}$ we have, providing that $p>0$, the formula
\[ W(I+J,P) = W(I,P) + \sum_{\Gamma\in\ptree} \frac{\gamma^{g(\Gamma)}\nu^{b(\Gamma)}}{|\Aut(\Gamma)|}w_{\Gamma}\bigl(I_{[0]}+J_{[p]},P\bigr) \mod F_{p+1}; \]
where the sum is taken over all those stable ribbon graphs which are $p$-trees.
\end{prop}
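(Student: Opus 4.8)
The plan is to expand $W(I+J,P)$ via \eqref{eqn_RGFnoncomm} and then, for each connected stable ribbon graph $\Gamma$, expand the Feynman weight $w_{\Gamma}(I+J,P)$ over the subsets $S\subseteq V(\Gamma)$ of vertices at which $J$ is attached in place of $I$. Since the attachment of an interaction to a vertex $v$ uses only its single component indexed by the vertex data $(g(v),b(v),|C(v)|,|v|)$, and $(I+J)_{g(v)b(v)|C(v)||v|}=I_{g(v)b(v)|C(v)||v|}+J_{g(v)b(v)|C(v)||v|}$, this is just the multilinearity of vertex attachment, entirely parallel to the propagator expansion used in the proof of Lemma \ref{lem_Feynmansubamplitudes}. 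The contributions with $S=\emptyset$ reassemble, after summing over all $\Gamma$, into $W(I,P)$, so the problem reduces to understanding the contributions with $S\neq\emptyset$ modulo $F_{p+1}\intnoncommI{\mathcal{E},\mathcal{A}}$.

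Next I would set up the loop-number bookkeeping. For a fixed stable ribbon graph $\Gamma$ the loop number $l(v)=2g(v)+b(v)+|C(v)|-1$ of each vertex is determined by $\Gamma$, the term $\gamma^{g(\Gamma)}\nu^{b(\Gamma)}w_{\Gamma}(-,P)$ contributes to the loop-number-$\ell(\Gamma)$ graded piece of $\intnoncommI{\mathcal{E},\mathcal{A}}$, and by \eqref{eqn_loopnumber} one has $\ell(\Gamma)=\mathbf{b}_1(\Gamma)+\sum_{v\in V(\Gamma)}l(v)$; hence a term is nonzero modulo $F_{p+1}$ only if $\ell(\Gamma)\leq p$. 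On the other hand, because $J\in F_p\intnoncommI{\mathcal{E},\mathcal{A}}$ its components of loop number strictly less than $p$ vanish, so a contribution with $v\in S$ vanishes unless $l(v)\geq p$ for every $v\in S$.

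The decisive step is then purely arithmetic: if $S\neq\emptyset$, pick $v_0\in S$; then $p\leq l(v_0)\leq\ell(\Gamma)\leq p$, forcing $\ell(\Gamma)=p$, $\mathbf{b}_1(\Gamma)=0$, $l(v_0)=p$, and $l(v)=0$ for all other vertices. Since $p>0$, no other vertex can belong to $S$, so $S=\{v_0\}$ and $\Gamma$ is a $p$-tree in the sense of Definition \ref{def_ptree} with distinguished vertex $v_0$. Conversely, for a $p$-tree $\Gamma$ the contribution with $S=\{v_0\}$ survives modulo $F_{p+1}$, and here attaching $J$ at the loop-number-$p$ vertex $v_0$ selects precisely a summand of $J_{[p]}$ while attaching $I$ at the remaining loop-number-$0$ vertices selects precisely the tree-level part $I_{001}=I_{[0]}$; as $I_{[0]}$ has no loop-number-$p$ component and $J_{[p]}$ (using $p>0$) no loop-number-$0$ component, this single contribution is in fact all of $w_{\Gamma}(I_{[0]}+J_{[p]},P)$. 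Finally, the distinguished vertex of a $p$-tree is its unique vertex of positive loop number, so $\Aut(\Gamma)$ fixes it and the automorphism factors in \eqref{eqn_RGFnoncomm} carry over unchanged; convergence and multidegree-finiteness of the resulting sum over $p$-trees follow from \eqref{eqn_graphinequalities} exactly as for the flow itself. Collecting these facts gives the stated identity.

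I expect the only real obstacle to be the arithmetic of the third paragraph: correctly combining the three ingredients ``$J\in F_p$ forces $l(v)\geq p$ on $S$'', the identity $\ell(\Gamma)=\mathbf{b}_1(\Gamma)+\sum_v l(v)$, and the truncation at $F_{p+1}$, together with the hypothesis $p>0$, to conclude that the surviving graphs are exactly the $p$-trees with a single $J$-vertex---and then verifying that the surviving contribution is literally $w_{\Gamma}(I_{[0]}+J_{[p]},P)$ and not merely one of its summands. Everything else is the routine multilinear and orbit-counting bookkeeping already used in Section \ref{sec_noncommflow}.
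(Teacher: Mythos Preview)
Your proposal is correct and follows essentially the same route as the paper's proof: the paper observes that $w_{\Gamma}(I+J,P)=w_{\Gamma}(I,P)$ unless $\Gamma$ has a vertex of loop number at least $p$, then uses \eqref{eqn_loopnumber} together with $\ell(\Gamma)\leq p$ to force $\Gamma$ to be a $p$-tree, and finally records the identity $w_{\Gamma}(I+J,P)=w_{\Gamma}(I,P)+w_{\Gamma}(I_{[0]}+J_{[p]},P)$ for such $\Gamma$. Your subset expansion over $S\subseteq V(\Gamma)$ is simply a more explicit unpacking of that last identity, and your loop-number arithmetic is exactly the paper's.
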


\begin{rem} \label{rem_filteranalogue}
The same formula holds when we replace $\intnoncomm{\mathcal{E},\mathcal{A}}$ with $\intcomm{\mathcal{E},\mathcal{A}}$, providing that we replace stable ribbon graphs with stable graphs and $\gamma^{g(\Gamma)}\nu^{b(\Gamma)}$ with $\hbar^p$, the proof being exactly the same.
\end{rem}

\begin{rem}
Note that as a corollary of Proposition \ref{prop_filterformula} and Equation \eqref{eqn_intgrpactsum}, as well as Proposition \ref{prop_treeflow}, we obtain the formula
\begin{multline} \label{eqn_ptreegrpact}
\sum_{\Gamma\in\ptree}\frac{\gamma^{g(\Gamma)}\nu^{b(\Gamma)}}{|\Aut(\Gamma)|}w_{\Gamma}\bigl(I_{[0]}+J_{[p]},P_1+P_2\bigr) = \\
\sum_{\Gamma\in\ptree}\frac{\gamma^{g(\Gamma)}\nu^{b(\Gamma)}}{|\Aut(\Gamma)|}w_{\Gamma}\biggl(W^{\mathrm{Tree}}\bigl(I_{[0]},P_1\bigr)+\sum_{\Gamma'\in\ptree}\frac{\gamma^{g(\Gamma')}\nu^{b(\Gamma')}}{|\Aut(\Gamma')|}w_{\Gamma'}\bigl(I_{[0]}+J_{[p]},P_1\bigr),P_2\biggr).
\end{multline}
\end{rem}

\begin{proof}[Proof of Proposition \ref{prop_filterformula}]
If $\Gamma$ is a stable ribbon graph then
\[ w_{\Gamma}(I+J,P) = w_{\Gamma}(I,P), \]
unless $\Gamma$ has a vertex $v$ with loop number at least $p$. If in addition the loop number of $\Gamma$ is no more than $p$, then it follows from \eqref{eqn_loopnumber} that the first Betti number $\mathbf{b}_1(\Gamma)$ vanishes, the loop number of $v$ is $p$, and the loop number of every other vertex of $\Gamma$ vanishes. Hence, $\Gamma$ is a $p$-tree and
\[ w_{\Gamma}(I+J,P) = w_{\Gamma}(I,P) + w_{\Gamma}\bigl(I_{[0]}+J_{[p]},P\bigr). \]
\end{proof}

The preceding formula implies in particular that the renormalization group flow $W(-,P)$ is well-defined modulo $F_p$. This allows us to introduce the notion of a level $p$ effective field theory.

\begin{defi}
Let $\mathcal{E}$ be a family of free theories over $\mathcal{A}$ and $P(\varepsilon,L)$ be a family of propagators of the form \eqref{eqn_canonicalpropagator}. If $p\geq 0$ then we say that
\[ I\in\bigl(\intnoncommI{\mathcal{E},\mathcal{A}}/F_{p+1}\intnoncommI{\mathcal{E},\mathcal{A}}\bigr)\underset{\mathbb{R}}{\cotimes}\smooth{0,\infty} \]
defines a \emph{level $p$ noncommutative effective field theory} if:
\begin{itemize}
\item
the renormalization group equation
\[ I[L'] = W(I[L],P(L,L')) \]
holds for all $L,L'>0$ and
\item
each $I_{ijkl}[L]$ is asymptotically local as $L\to 0$.
\end{itemize}

We will denote the set of level $p$ noncommutative effective field theories by
\[ \NCEffThyL{p}{\mathcal{E},\mathcal{A}}\subset\bigl(\intnoncommI{\mathcal{E},\mathcal{A}}/F_{p+1}\intnoncommI{\mathcal{E},\mathcal{A}}\bigr)\underset{\mathbb{R}}{\cotimes}\smooth{0,\infty}. \]
We will call a level zero noncommutative effective field theory a \emph{tree-level noncommutative effective field theory}.
\end{defi}

The definition of a level $p$ (commutative) effective field theory is the same, except that we replace $\intnoncomm{\mathcal{E},\mathcal{A}}$ everywhere with $\intcomm{\mathcal{E},\mathcal{A}}$ and $I_{ijkl}[L]$ with $I_{ij}[L]$. We will denote the set of level $p$ effective field theories by $\EffThyL{p}{\mathcal{E},\mathcal{A}}$.

Any noncommutative/commutative effective field theory determines level $p$ theories simply by projecting onto the quotient modulo $F_{p+1}$. Likewise, any level $(p+1)$ theory projects onto a level $p$ theory. We would like to describe the fiber of such a projection. Before we do so however, we will need the following lemma.

\begin{lemma} \label{lem_localintegration}
Let $\mathcal{E}$ be a family of free theories over $\mathcal{A}$ and $P(\varepsilon,L)$ be a family of propagators of the form \eqref{eqn_canonicalpropagator}. Suppose that
\[ I,I'\in\NCEffThyL{p}{\mathcal{E},\mathcal{A}} \]
are two level $p$ theories such that
\[ I=I'\mod F_p \qquad\text{and}\qquad I_{[p]q}=I_{[p]q}',\quad\text{for all }q<l. \]
Then there is a local interaction
\[ J\in F_p\intnoncommLI{\mathcal{E},\mathcal{A}}/F_{p+1}\intnoncommLI{\mathcal{E},\mathcal{A}} \]
such that
\[ I_{[p]l}[L] - I_{[p]l}'[L] = J, \quad\text{for all }L>0. \]
\end{lemma}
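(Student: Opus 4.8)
The plan is to set $J:=I_{[p]l}[L]-I_{[p]l}'[L]$ and prove two facts: that this element does not depend on $L$, and that it is local. The claim that $J$ represents a class in $F_p\intnoncommLI{\mathcal{E},\mathcal{A}}/F_{p+1}\intnoncommLI{\mathcal{E},\mathcal{A}}$ then follows from these two, since $I_{[p]l}$ is by definition built from terms $\gamma^i\nu^j(\cdots)$ with $2i+j+k-1=p$ and so already lies at level $p$.

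To prove the $L$-independence, I would feed the renormalization group equation $I[L']=W(I[L],P(L,L'))$ into Proposition \ref{prop_filterformula} (when $p=0$ one uses Proposition \ref{prop_treeflow} instead, and the $p$-tree sum appearing below is then just the tree-level flow). Writing $I[L]=I_{<p}[L]+I_{[p]}[L]$ with $I_{<p}[L]:=\sum_{n<p}I_{[n]}[L]$ and $I_{[p]}[L]\in F_p$, that proposition gives, modulo $F_{p+1}$,
\[
W(I[L],P(L,L'))=W(I_{<p}[L],P(L,L'))+\sum_{\Gamma\in\ptree}\frac{\gamma^{g(\Gamma)}\nu^{b(\Gamma)}}{|\Aut(\Gamma)|}\,w_{\Gamma}\bigl(I_{[0]}[L]+I_{[p]}[L],P(L,L')\bigr),
\]
and the same identity holds for $I'$. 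Since $I\equiv I'$ modulo $F_p$ we have $I_{<p}[L]=I_{<p}'[L]$, and in particular $I_{[0]}[L]=I_{[0]}'[L]$; subtracting the two identities therefore kills the $W(I_{<p})$ term and leaves only the $p$-tree sum. Extracting the leg-count-$l$ part (that is, the multidegree $[p]l$ part), only $p$-trees $\Gamma$ with exactly $l$ legs survive. For such a $\Gamma$, Remark \ref{rem_filterformula} shows every vertex has valency at most $l$, with the value $l$ attained only at the loop-$p$ vertex of a corolla; hence in a non-corolla $p$-tree with $l$ legs the loop-$p$ vertex is decorated by a component, of leg count strictly less than $l$, of $I_{[p]}[L]$ or of $I_{[p]}'[L]$, and these agree by hypothesis, while the loop-zero vertices carry the common interaction $I_{[0]}[L]$ and the edges the common propagator. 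Every non-corolla contribution therefore cancels in the difference, leaving the sum over corolla $p$-trees with $l$ legs; these have no edges, so $P$ drops out, and by the same cancellation of automorphisms against attachment symmetrizations used to prove $W(\cdot,0)=\cdot$ in Equation \eqref{eqn_intgrpactzero}, this sum equals $I_{[p]l}[L]-I_{[p]l}'[L]$. Hence $I_{[p]l}[L']-I_{[p]l}'[L']=I_{[p]l}[L]-I_{[p]l}'[L]$ for all $L,L'>0$. This reduction to corollas is the crux of the argument; the rest is bookkeeping.

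To see that the constant $J$ is local, I would invoke the asymptotic locality built into the definition of a level $p$ noncommutative effective field theory: each of $I_{[p]l}[L]$ and $I_{[p]l}'[L]$ admits a small-$L$ asymptotic expansion whose coefficients are local interactions, say $\sum_r f_r(L)\Phi_r$ and $\sum_s h_s(L)\Psi_s$ with $\Phi_r,\Psi_s\in\intnoncommLI{\mathcal{E},\mathcal{A}}$. Then $\lim_{L\to 0}\bigl(J-\sum_r f_r(L)\Phi_r+\sum_s h_s(L)\Psi_s\bigr)=0$, so $J$ lies in the closure of the subspace spanned by the finitely many functionals $\Phi_r,\Psi_s$; that subspace is finite-dimensional, hence closed, so $J$ is a finite linear combination of local functionals and is therefore local. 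This yields $J\in F_p\intnoncommLI{\mathcal{E},\mathcal{A}}/F_{p+1}\intnoncommLI{\mathcal{E},\mathcal{A}}$, as required.
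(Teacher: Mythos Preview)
Your proposal is correct and follows essentially the same strategy as the paper: set $J[L]=I_{[p]l}[L]-I_{[p]l}'[L]$, use the renormalization group equation together with Proposition~\ref{prop_filterformula} and Remark~\ref{rem_filterformula} to reduce the difference of $p$-tree sums to the corolla contribution and hence show $J$ is constant in $L$, and then deduce locality from asymptotic locality plus constancy. The paper carries this out by decomposing $I[L]=I'[L]+(I[L]-I'[L])$ rather than your $I[L]=I_{<p}[L]+I_{[p]}[L]$, but this is a cosmetic variation; your locality argument via the closedness of a finite-dimensional span is a more explicit unpacking of the paper's one-line ``asymptotically local and independent of $L$ implies local''.
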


\begin{proof}
The argument is essentially the same as that given in \cite[\S 2.11]{CosEffThy}. Set
\[ J[L] := I_{[p]l}[L] - I_{[p]l}'[L]. \]
We must show that $J$ does not depend on $L$ and is local.

From the renormalization group equation, Proposition \ref{prop_filterformula} and Remark \ref{rem_filterformula} following it, as well as Equation \eqref{eqn_intgrpactzero}, we get;
\begin{displaymath}
\begin{split}
I_{[p]l}'[L_2] + J[L_2] &= W_{[p]l}(I[L_1],P(L_1,L_2)) \\
&= W_{[p]l}(I'[L_1],P(L_1,L_2)) + I_{[p]l}[L_1]-I_{[p]l}'[L_1] \\
&=  I_{[p]l}'[L_2] + J[L_1].
\end{split}
\end{displaymath}
Note that it follows from Proposition \ref{prop_orderformula} that the equation still holds when $p=0$. Since $J$ is asymptotically local and does not depend on $L$, it follows that $J$ must be local as claimed.
\end{proof}

Next, we will need the following description of the tree-level theories.

\begin{prop} \label{prop_treelevelthy}
Let $\mathcal{E}$ be a family of free theories over $\mathcal{A}$ and $P(\varepsilon,L)$ be a family of propagators of the form \eqref{eqn_canonicalpropagator}. Then for every tree-level theory
\[ I\in\NCEffThyL{0}{\mathcal{E},\mathcal{A}} \]
there is a unique tree-level local interaction $I^*\in\intnoncommItree{\mathcal{E},\mathcal{A}}$ such that
\begin{equation} \label{eqn_treelevelthy}
I[L] = \lim_{\varepsilon\to 0}W^{\mathrm{Tree}}(I^*,P(\varepsilon,L)).
\end{equation}
We then have $I[L]\to I^*$ as $L\to 0$.
\end{prop}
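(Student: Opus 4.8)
The plan is to reduce to the tree-level flow and then to build $I^*$ order by order in the number $l$ of legs. Since $I$ lies in $\NCEffThyL{0}{\mathcal{E},\mathcal{A}}$, each $I[L]$ is a tree-level interaction, so only the components $I_{001l}[L]$ (with $l\geq 3$) are nonzero, and by Proposition~\ref{prop_treeflow} the renormalization group equation reads $I[L']=W^{\mathrm{Tree}}(I[L],P(L,L'))$; likewise $W$ may be replaced by $W^{\mathrm{Tree}}$ in \eqref{eqn_treelevelthy}. The combinatorial input I would use is that a tree with $l$ legs has every vertex of valency at most $l$, with equality exactly for the $l$-valent corolla (this follows from $|L(\Gamma)|=\sum_{v}|v|-2(|V(\Gamma)|-1)$ together with $|v|\geq 3$). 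Hence, for any local tree-level interaction $J$,
\[ W^{\mathrm{Tree}}(J,P(\varepsilon,L))_{001l}=J_{001l}+R_l\bigl(J_{001,<l};P(\varepsilon,L)\bigr), \]
where $R_l$ is the sum of Feynman weights over all non-corolla trees with $l$ legs, and therefore depends only on the components $J_{001q}$ with $q<l$; by Lemma~\ref{lem_treecterterm}\eqref{itm_treecterterm1} (the tree-level counterterms vanish, so no renormalization is needed) the limit of $R_l(J_{001,<l};P(\varepsilon,L))$ as $\varepsilon\to 0$ exists and tends to $0$ as $L\to 0$. Uniqueness then follows by induction on $l$: if $I^*$ and $J^*$ both satisfy \eqref{eqn_treelevelthy} and agree in all orders $q<l$, subtracting the displayed identity at order $l$, letting $\varepsilon\to 0$, and using the inductive hypothesis to cancel the remainder terms leaves $I^*_{001l}-J^*_{001l}=I[L]_{001l}-I[L]_{001l}=0$.

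For existence I would construct $I^*_{001l}$ inductively. Assuming local components $I^*_{001q}$, $q<l$, have been found with $\lim_{\varepsilon\to 0}W^{\mathrm{Tree}}(I^*,P(\varepsilon,L))_{001q}=I_{001q}[L]$ for all $q<l$ and $L>0$, I would set $R_l[L]:=\lim_{\varepsilon\to 0}R_l(I^*_{001,<l};P(\varepsilon,L))$ and $I^*_{001l}:=I_{001l}[L_0]-R_l[L_0]$ for a fixed $L_0>0$. Using the renormalization group equation for $I$, the semigroup identity \eqref{eqn_intgrpactsum} restricted to trees by Proposition~\ref{prop_treeflow}, continuity of the flow, and the inductive hypothesis, one identifies the non-corolla part of $W^{\mathrm{Tree}}(I[L],P(L,L'))_{001l}$ with $R_l[L']-R_l[L]$; this shows $I_{001l}[L]-R_l[L]$ is independent of $L$, so $I^*_{001l}$ is well defined and equals $\lim_{L\to 0}I_{001l}[L]$. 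Since $I_{001l}[L]$ is asymptotically local and differs from the constant $I^*_{001l}$ only by $R_l[L]$, which vanishes at $L=0$, the argument of Lemma~\ref{lem_localintegration} (with $p=0$, valid by Proposition~\ref{prop_orderformula}) shows $I^*_{001l}$ is local. By construction $\lim_{\varepsilon\to0}W^{\mathrm{Tree}}(I^*,P(\varepsilon,L))_{001l}=I^*_{001l}+R_l[L]=I_{001l}[L]$, completing the induction and establishing \eqref{eqn_treelevelthy}. The final claim, $I[L]\to I^*$ as $L\to 0$, then follows at once from Lemma~\ref{lem_treecterterm}\eqref{itm_treecterterm1}: in $\lim_{\varepsilon\to 0}W^{\mathrm{Tree}}(I^*,P(\varepsilon,L))$ every non-corolla tree amplitude vanishes as $L\to 0$, while the corollas contribute exactly $I^*$.

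The hard part will be the step showing that $I_{001l}[L]-R_l[L]$ is $L$-independent and that $I^*_{001l}$ is local: this requires matching the non-corolla contributions to the tree-level flow against $R_l[L']-R_l[L]$ — a bookkeeping exercise with the subgraph/insertion calculus underlying Theorem~\ref{thm_intgrpact}, specialized to trees — followed by an appeal to the asymptotic-locality hypothesis in exactly the manner of the proof of Lemma~\ref{lem_localintegration}. Everything else is the elementary combinatorics of trees and a routine induction.
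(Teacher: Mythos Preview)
Your proposal is correct and follows essentially the same strategy as the paper: induction on the number of legs $l$, using that non-corolla trees with $l$ legs only see vertices of valency $<l$, together with Lemma~\ref{lem_treecterterm}\eqref{itm_treecterterm1} for convergence and Lemma~\ref{lem_localintegration} for locality. The paper is slightly more economical in two places: it deduces uniqueness directly from the $L\to 0$ limit (once existence is known, $I[L]\to I^*$ forces $I^*$ to be unique) rather than by a separate induction; and it invokes Lemma~\ref{lem_localintegration} directly to compare the given theory $I$ with the tree-level theory generated by $\sum_{r<l}I^*_r$ (the latter being a genuine tree-level effective theory by Proposition~\ref{prop_renormalizedthy} and Lemma~\ref{lem_treecterterm}\eqref{itm_treecterterm2}), obtaining both $L$-independence and locality of the difference in one stroke, whereas you establish $L$-independence by hand via the semigroup identity before appealing to the lemma's argument for locality.
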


\begin{rem}
We should be careful to emphasize that the convergence of $I[L]$ as $L\to 0$ is something specific to \emph{tree-level} theories due to the absence of counterterms, and does not hold for effective field theories in general. We will denote the limit by
\[ I[0] := \lim_{L\to 0}I[L]. \]
\end{rem}

\begin{proof}[Proof of Proposition \ref{prop_treelevelthy}]
It is sufficient to show that $I^*$ exists, for if it does then it follows from Lemma \ref{lem_treecterterm}\eqref{itm_treecterterm1} and Equation \eqref{eqn_intgrpactzero} that $I[L]\to I^*$ as $L\to 0$, establishing its uniqueness. Also note that given any local interaction $I^*\in\intnoncommItree{\mathcal{E},\mathcal{A}}$, it follows from Proposition \ref{prop_renormalizedthy} and Lemma \ref{lem_treecterterm}\eqref{itm_treecterterm2} that Equation \eqref{eqn_treelevelthy} defines a tree-level theory.

We then use Lemma \ref{lem_localintegration} to construct, by induction on $l$, a series of local tree-level interactions
\[ I^*=\sum_{l=3}^{\infty}I^*_l\in\intnoncommItree{\mathcal{E},\mathcal{A}} \]
satisfying
\[ I^*_l = I_{[0]l}[L] -\lim_{\varepsilon\to 0}W_{[0]l}^{\mathrm{Tree}}\biggl(\sum_{r=3}^{l-1}I^*_r,P(\varepsilon,L)\biggr), \quad\text{for all }L>0. \]
It then follows from Proposition \ref{prop_orderformula} that $I^*$ satisfies Equation \eqref{eqn_treelevelthy}.
\end{proof}

\subsubsection{The main theorem: effective field theories and local interactions}

Now we are ready to describe the fiber of level (p+1) theories that sit over a level $p$ theory. The following result is the analogue in noncommutative effective field theory of one of the main results of \cite{CosEffThy}; cf. \S 1.1.2, Theorem 1.2.1.

\begin{prop} \label{prop_fiberaction}
Let $\mathcal{E}$ be a family of free theories over $\mathcal{A}$ and $P(\varepsilon,L)$ be a family of propagators of the form \eqref{eqn_canonicalpropagator}. Suppose that
\[ I\in\NCEffThyL{p}{\mathcal{E},\mathcal{A}} \]
is a level $p$ noncommutative effective field theory.
Denote by
\[ \mathscr{F}_{p+1}(I)\subset\NCEffThyL{p+1}{\mathcal{E},\mathcal{A}} \]
the fiber of level $(p+1)$ theories that sit over the level $p$ theory $I$. Then the abelian group
\[ \mathcal{G}_{p+1} := F_{p+1}\intnoncommLI{\mathcal{E},\mathcal{A}}/F_{p+2}\intnoncommLI{\mathcal{E},\mathcal{A}} \]
of local interactions acts freely and transitively on $\mathscr{F}_{p+1}(I)$ according to the formula
\begin{equation} \label{eqn_fiberaction}
\bigl(J\cdot\bar{I}\bigr)[L] := \bar{I}[L] + \sum_{\Gamma\in\ptree[p+1]} \frac{\gamma^{g(\Gamma)}\nu^{b(\Gamma)}}{|\Aut(\Gamma)|} \lim_{\varepsilon\to 0}w_{\Gamma}\bigl(I_{[0]}[0]+J,P(\varepsilon,L)\bigr);
\end{equation}
where $J\in\mathcal{G}_{p+1}$, $\bar{I}\in\mathscr{F}_{p+1}(I)$ and the sum is taken over all those stable ribbon graphs which are $(p+1)$-trees.
\end{prop}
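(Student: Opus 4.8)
The plan is to verify, in order: that the right-hand side of \eqref{eqn_fiberaction} is well-defined and lands in $\mathscr{F}_{p+1}(I)$; that $(J,\bar I)\mapsto J\cdot\bar I$ is a group action; that it is free; and that it is transitive.

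\textbf{Well-definedness and membership.} First I would note that, by Proposition \ref{prop_treelevelthy}, the tree-level part $I_{[0]}$ of the level $p$ theory $I$ converges as $L\to 0$ to a local tree-level interaction $I_{[0]}[0]$, so $I_{[0]}[0]+J$ is a local interaction. Every $(p+1)$-tree $\Gamma$ is connected with vanishing first Betti number and has all but one vertex of loop number zero, so Lemma \ref{lem_treecterterm}\eqref{itm_treecterterm1} gives the existence of $\lim_{\varepsilon\to 0}w_\Gamma(I_{[0]}[0]+J,P(\varepsilon,L))$; the inequalities \eqref{eqn_graphinequalities} make the sum over $\ptree[p+1]$ finite in each multidegree. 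Writing $K_J[L]$ for that sum, every summand sits in loop number $p+1$, so $K_J[L]\in F_{p+1}/F_{p+2}$ and $(J\cdot\bar I)[L]=\bar I[L]+K_J[L]$ agrees with $\bar I[L]$, hence with $I[L]$, modulo $F_{p+1}$. It remains to verify the renormalization group equation and asymptotic locality. For the former --- the crux of the whole proof --- I would apply Proposition \ref{prop_filterformula} to $\bar I[L]+K_J[L]$ with $K_J[L]$ playing the role of the $F_{p+1}$-perturbation, reduce $W(\bar I[L],P(L,L'))$ to $\bar I[L']$ via the renormalization group equation for $\bar I$ (valid modulo $F_{p+2}$ since $\bar I$ is a level $p+1$ theory), and identify the remaining sum of $(p+1)$-tree weights using Equation \eqref{eqn_ptreegrpact}: feeding $P_1=P(\varepsilon,L)$ and $P_2=P(L,L')$ into \eqref{eqn_ptreegrpact}, using $P(\varepsilon,L)+P(L,L')=P(\varepsilon,L')$ together with $W^{\mathrm{Tree}}(I_{[0]}[0],P(\varepsilon,L))\to I_{[0]}[L]$, and letting $\varepsilon\to 0$ with the help of continuity of the flow and Lemma \ref{lem_treecterterm}, one obtains $\sum_{\Gamma}\frac{\gamma^{g(\Gamma)}\nu^{b(\Gamma)}}{|\Aut(\Gamma)|}w_\Gamma(I_{[0]}[L]+K_J[L],P(L,L'))=K_J[L']$, which is exactly what is needed to conclude $(J\cdot\bar I)[L']=W((J\cdot\bar I)[L],P(L,L'))\bmod F_{p+2}$. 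Asymptotic locality of each $(J\cdot\bar I)_{ijkl}[L]$ follows by adding the asymptotic locality of $\bar I$ to that of $K_J[L]$, the latter being a consequence of Theorem \ref{thm_asymptotics} applied to the local interaction $I_{[0]}[0]+J$.

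\textbf{Group action, freeness, transitivity.} For $J=0$ the loop-number-$(p+1)$ vertex of any $(p+1)$-tree receives nothing from the purely tree-level $I_{[0]}[0]$, so $K_0[L]=0$ and $0\cdot\bar I=\bar I$. For additivity I would use that every member of $\mathscr{F}_{p+1}(I)$ has the same tree-level limit $I_{[0]}[0]$, so the iterated action $J_1\cdot(J_2\cdot\bar I)$ again uses $I_{[0]}[0]$ at the loop-number-zero vertices; since a $(p+1)$-tree has a unique vertex of loop number $p+1$, multilinearity of $w_\Gamma$ in the vertex interactions gives $K_{J_1+J_2}=K_{J_1}+K_{J_2}$ and hence $(J_1+J_2)\cdot\bar I=J_1\cdot(J_2\cdot\bar I)$. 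For freeness, if $J\cdot\bar I=\bar I$ then $K_J[L]\equiv 0$; the corolla $(p+1)$-trees contribute to $K_J[L]$ an $L$-independent term equal to $J$ after the same cancellation of automorphism factors as in the proof of \eqref{eqn_intgrpactzero}, while every non-corolla $(p+1)$-tree has an edge and so contributes a term vanishing as $L\to 0$ by Lemma \ref{lem_treecterterm}\eqref{itm_treecterterm1}; letting $L\to 0$ forces $J=0$. For transitivity, given $\bar I,\bar I'\in\mathscr{F}_{p+1}(I)$ I would build $J$ by induction on the order $l$ in $\mathcal{E}^{\dag}$: if $(J\cdot\bar I)$ already agrees with $\bar I'$ modulo $F_{p+1}$ and in all components $I_{[p+1]q}$ with $q<l$, then Lemma \ref{lem_localintegration} (with $p$ replaced by $p+1$) produces an $L$-independent local interaction $J_l$ of pure order $l$ equal to $\bar I'_{[p+1]l}[L]-(J\cdot\bar I)_{[p+1]l}[L]$; replacing $J$ by $J+J_l$ disturbs $K_J$ only in orders $\geq l$ (a $(p+1)$-tree whose loop-number-$(p+1)$ vertex has valency $l$ has at least $l$ legs, with equality only for corollas, by Remark \ref{rem_filterformula}), and changes its order-$l$ part by exactly the corolla contribution $J_l$, so $(J+J_l)\cdot\bar I$ agrees with $\bar I'$ through order $l$; completeness of the filtration then yields $J\cdot\bar I=\bar I'$.

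\textbf{Main obstacle.} The hard part is the renormalization group equation in the first step: it forces one to run Proposition \ref{prop_filterformula}, the tree-level group identity \eqref{eqn_ptreegrpact}, the additivity $P(\varepsilon,L)+P(L,L')=P(\varepsilon,L')$, continuity of the flow, and the $\varepsilon\to 0$ limits supplied by Lemma \ref{lem_treecterterm} all at once, consistently modulo $F_{p+2}$.
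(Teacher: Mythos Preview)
Your proposal is correct and follows essentially the same route as the paper: well-definedness via Lemma \ref{lem_treecterterm}\eqref{itm_treecterterm1}, the renormalization group equation via Proposition \ref{prop_filterformula} together with Equation \eqref{eqn_ptreegrpact}, asymptotic locality via Theorem \ref{thm_asymptotics}, and transitivity via Lemma \ref{lem_localintegration} with induction on the order $l$ using Remark \ref{rem_filterformula}.

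There is one small but genuine difference worth noting. The paper organizes both freeness and transitivity around the single recursive identity
\[
\bigl(J\cdot\bar{I}\bigr)_{[p+1]l}[L] = J_{[p+1]l} + \biggl(\sum_{r=0}^{l-1} J_{[p+1]r}\cdot\bar{I}\biggr)_{[p+1]l}[L],
\]
which it deduces from Remark \ref{rem_filterformula}, and then runs an induction on $l$ in both directions. Your freeness argument instead takes the limit $L\to 0$ directly, using Lemma \ref{lem_treecterterm}\eqref{itm_treecterterm1} to kill all non-corolla contributions and the corolla cancellation from \eqref{eqn_intgrpactzero} to isolate $J$; this is a slight shortcut that avoids the induction at that point. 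Your explicit verification of the group-action axioms (identity and additivity via linearity in the unique loop-number-$(p+1)$ vertex) is something the paper leaves implicit. Both approaches rely on the same structural observation about $(p+1)$-trees from Remark \ref{rem_filterformula}, so the underlying content is the same.
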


\begin{proof}
Note that it follows from Lemma \ref{lem_treecterterm}\eqref{itm_treecterterm1} that the expression in Equation \eqref{eqn_fiberaction} converges as $\varepsilon\to 0$. It follows from Proposition \ref{prop_filterformula} and Equation \eqref{eqn_ptreegrpact} applied to $I_{[0]}[0]$ and $J$ that \eqref{eqn_fiberaction} satisfies the renormalization group equation. Asymptotic locality of the effective actions follows from Theorem \ref{thm_asymptotics}.

We begin by noting that it follows from Remark \ref{rem_filterformula} that for every nonnegative integer $l$ and all $J\in\mathcal{G}_{p+1}$, $\bar{I}\in\mathscr{F}_{p+1}(I)$;
\begin{equation} \label{eqn_recursivefiberformula}
\bigl(J\cdot\bar{I}\bigr)_{[p+1]l}[L] = J_{[p+1]l} + \biggl(\sum_{r=0}^{l-1} J_{[p+1]r}\cdot\bar{I}\biggr)_{[p+1]l}[L], \quad\text{for all } L>0.
\end{equation}
It then follows by a simple induction on $l$ that if $J\in\mathcal{G}_{p+1}$ fixes the effective action, then $J_{[p+1]l}=0$ for all $l\geq 0$, in which case $J=0$.

Similarly, given $\bar{I},\bar{I}'\in\mathscr{F}_{p+1}(I)$ it follows from Equation \eqref{eqn_recursivefiberformula} that we may use Lemma \ref{lem_localintegration} and induction on $l$ to construct a series of local functionals
\[ J = \sum_{l=0}^\infty J_l \in\mathcal{G}_{p+1} \]
satisfying;
\[ J_l = \bar{I}'_{[p+1]l}[L] - \biggl(\sum_{r=0}^{l-1} J_r\cdot\bar{I}\biggr)_{[p+1]l}[L], \quad\text{for all }L>0. \]
It then follows from Equation \eqref{eqn_recursivefiberformula} again that $J\cdot\bar{I}=\bar{I}'$.
\end{proof}

The formulation of the preceding proposition did not rely in any way on the construction of counterterms and hence is completely independent of the choice of a renormalization scheme. We now turn our attention to proving one of the main theorems of the paper, for which such counterterms will be required.

\begin{lemma} \label{lem_filtercterms}
Let $\mathcal{E}$ be a family of free theories over $\mathcal{A}$ and $P(\varepsilon,L)$ be a family of propagators of the form \eqref{eqn_canonicalpropagator}. If $p$ is a nonnegative integer and
\[ I,I'\in\intnoncommLI{\mathcal{E},\mathcal{A}} \]
are two local interactions that agree modulo $F_{p+1}$, then their corresponding counterterms $I^{\mathrm{CT}}$ and $I'^{\mathrm{CT}}$ agree modulo $F_{p+2}$.
\end{lemma}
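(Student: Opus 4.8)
The plan is to prove, by induction along the lexicographic well-order on multidegrees $(i,j,k,l)$, that $I^{\mathrm{CT}}_{ijkl} = I'^{\mathrm{CT}}_{ijkl}$ whenever $n := 2i+j+k-1 \le p+1$; since $F_{p+2}\intnoncomm{\mathcal{E},\mathcal{A}}$ consists of the functionals all of whose components of loop number $\le p+1$ vanish, this is precisely the assertion $I^{\mathrm{CT}} \equiv I'^{\mathrm{CT}} \bmod F_{p+2}$. Write $J := I'-I \in F_{p+1}\intnoncommLI{\mathcal{E},\mathcal{A}}$. Fix such an $(i,j,k,l)$ and an $L>0$, and abbreviate $A(\varepsilon) := I - I^{\mathrm{CT}}_{<(i,j,k,l)}(\varepsilon)$ and $A'(\varepsilon) := I' - I'^{\mathrm{CT}}_{<(i,j,k,l)}(\varepsilon)$, so that by \eqref{eqn_counterterms} we have $I^{\mathrm{CT}}_{ijkl}(\varepsilon) = \Sing_{\varepsilon}\bigl[W_{ijkl}(A(\varepsilon),P(\varepsilon,L))\bigr]$ and similarly for the primed side.

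The key point is that $B(\varepsilon) := A'(\varepsilon)-A(\varepsilon)$ lies in $F_{p+1}\intnoncommI{\mathcal{E},\mathcal{A}}$ with loop-number-$(p+1)$ component $B_{[p+1]} = J_{[p+1]}$, while $A_{[0]} = A'_{[0]} = I_{001}$. Indeed $B = J - \bigl(I'^{\mathrm{CT}}_{<(i,j,k,l)} - I^{\mathrm{CT}}_{<(i,j,k,l)}\bigr)$: the first term lies in $F_{p+1}$ with loop-$(p+1)$ component $J_{[p+1]}$, while in the second term each summand $\gamma^a\nu^b\bigl(I'^{\mathrm{CT}}_{abcd} - I^{\mathrm{CT}}_{abcd}\bigr)$ with $2a+b+c-1 \le p+1$ vanishes by the inductive hypothesis applied to the lex-smaller index $(a,b,c,d)$, the remaining summands lying in $F_{p+2}$; and $A_{[0]} = I_{001}$ because the tree-level counterterms vanish by Lemma \ref{lem_treecterterm}\eqref{itm_treecterterm2}. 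Applying Proposition \ref{prop_filterformula} with the propagator $P(\varepsilon,L)$, the interaction $A(\varepsilon)$, and $B(\varepsilon) \in F_{p+1}$ (legitimate since $p+1>0$) gives
\[
W\bigl(A'(\varepsilon),P(\varepsilon,L)\bigr) = W\bigl(A(\varepsilon),P(\varepsilon,L)\bigr) + \sum_{\Gamma\in\ptree[p+1]}\frac{\gamma^{g(\Gamma)}\nu^{b(\Gamma)}}{|\Aut(\Gamma)|}\,w_{\Gamma}\bigl(I_{001}+J_{[p+1]},P(\varepsilon,L)\bigr) \bmod F_{p+2}.
\]
If $n \le p$, extracting the loop-number-$n$ component kills the correction sum entirely (all of whose graphs have loop number $p+1 > n$), so $W_{ijkl}(A',P(\varepsilon,L)) = W_{ijkl}(A,P(\varepsilon,L))$; applying $\Sing_{\varepsilon}$ yields $I^{\mathrm{CT}}_{ijkl} = I'^{\mathrm{CT}}_{ijkl}$. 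If $n = p+1$, extracting the loop-number-$(p+1)$ component and then the $(i,j,k,l)$-part, and afterward applying $\Sing_{\varepsilon}$, turns the left-hand side into $I'^{\mathrm{CT}}_{ijkl}$ and the first term on the right into $I^{\mathrm{CT}}_{ijkl}$; each summand of the surviving correction sits inside $\Sing_{\varepsilon}$ and contributes nothing, because $\Gamma$ being a $(p+1)$-tree the limit $\lim_{\varepsilon\to 0}F_{\Gamma}(I_{001}+J_{[p+1]},P(\varepsilon,L))$ — and hence $w_{\Gamma}$ — exists by Lemma \ref{lem_treecterterm}\eqref{itm_treecterterm1}, whose hypothesis is met since the tree-level part of $I_{001}+J_{[p+1]}$ equals $I_{001}$, which is local. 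Hence $I^{\mathrm{CT}}_{ijkl} = I'^{\mathrm{CT}}_{ijkl}$, closing the induction.

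The only real difficulty is organizational: one must be scrupulous about which loop-number components of the two mutually truncated counterterm series are already known to coincide by the time the induction reaches loop number $p+1$, so that $B(\varepsilon)$ is genuinely concentrated in loop numbers $\ge p+1$ with leading component exactly $J_{[p+1]}$. Granting this, the gain of one filtration level — from $F_{p+1}$ on the inputs $I,I'$ to $F_{p+2}$ on their counterterms — is forced by the structure of Proposition \ref{prop_filterformula}: at loop number $p+1$ the only graphs that register the difference $J$ between $I$ and $I'$ are $(p+1)$-trees, whose Feynman amplitudes are nonsingular by Lemma \ref{lem_treecterterm} and are therefore annihilated by $\Sing$.
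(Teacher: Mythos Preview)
Your proof is correct and follows essentially the same strategy as the paper's: a lexicographic induction that isolates the difference of the two counterterm constructions as (the singular part of) a sum of Feynman weights over $(p+1)$-trees, which are nonsingular by Lemma~\ref{lem_treecterterm}\eqref{itm_treecterterm1} and hence killed by $\Sing$. The only difference is organizational---the paper inducts on pairs $(q,l)$ of loop number and leg count and writes out an explicit decomposition of $W_{[q]l}\bigl(I-I^{\mathrm{CT}},P\bigr)$ before subtracting, whereas you induct on the finer quadruples $(i,j,k,l)$ and invoke Proposition~\ref{prop_filterformula} directly on $A(\varepsilon)$ and $A(\varepsilon)+B(\varepsilon)$---but the substance is the same.
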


\begin{proof}
We must show that
\[ I^{\mathrm{CT}}_{[q]l}=I'^{\mathrm{CT}}_{[q]l}, \quad\text{for all }l\geq 0\text{ and }q\leq p+1.\]
We do this by induction on the lexicographical well-ordering of the pair $(q,l)$, so assume that the above holds for all smaller pairs in the lexicographical order. Note that by Lemma \ref{lem_treecterterm}\eqref{itm_treecterterm2} we may assume that $q>0$. Denote by $\ptree[ql]$, the class of $q$-trees that have $l$ legs. Using Proposition \ref{prop_filterformula} and Remark \ref{rem_filterformula} we have
\begin{displaymath}
\begin{split}
W_{[q]l}\bigl(I-I^{\mathrm{CT}}(\varepsilon),P(\varepsilon,L)\bigr) =& W_{[q]l}\biggl(\sum_{q'<q}I_{[q']} - I_{[q']}^{\mathrm{CT}}(\varepsilon),P(\varepsilon,L)\biggr) - I_{[q]l}^{\mathrm{CT}}(\varepsilon) \\
&+ \sum_{\Gamma\in\ptree[ql]}\frac{\gamma^{g(\Gamma)}\nu^{b(\Gamma)}}{|\Aut(\Gamma)|} w_{\Gamma}\biggl(I_{[0]}+I_{[q]}-\sum_{r<l}I_{[q]r}^{\mathrm{CT}}(\varepsilon),P(\varepsilon,L)\biggr),
\end{split}
\end{displaymath}
where we have again made use of Lemma \ref{lem_treecterterm}\eqref{itm_treecterterm2}.

By Theorem \ref{thm_counterterms} we know that the above expression converges as $\varepsilon\to 0$. Of course, the same formula holds for the interaction $I'$. Taking the difference of these two expressions for the interactions $I$ and $I'$ and applying the inductive hypothesis we get
\[ \sum_{\Gamma\in\ptree[ql]}\frac{\gamma^{g(\Gamma)}\nu^{b(\Gamma)}}{|\Aut(\Gamma)|} w_{\Gamma}\bigl(I_{[0]}+I_{[q]}-I'_{[q]},P(\varepsilon,L)\bigr) - \bigl(I_{[q]l}^{\mathrm{CT}}(\varepsilon)-I'^{\mathrm{CT}}_{[q]l}(\varepsilon)\bigr), \]
which of course is still convergent. Now, by Lemma \ref{lem_treecterterm}\eqref{itm_treecterterm1} the sum on the left converges as $\varepsilon\to 0$, and hence the difference between the counterterms will also converge. Since the counterterms are purely singular, they must be equal, completing the induction.
\end{proof}

Note that it follows from the preceding lemma that Definition \ref{def_intrenormalized} and Proposition \ref{prop_renormalizedthy} provide a well-defined  map from local interactions modulo $F_{p+1}$ to level $p$ effective field theories.

\begin{lemma} \label{lem_liftgrpequivariance}
Let $\mathcal{E}$ be a family of free theories over $\mathcal{A}$ and $P(\varepsilon,L)$ be a family of propagators of the form \eqref{eqn_canonicalpropagator}. Given a nonnegative integer $p$, consider the commutative diagram
\begin{displaymath}
\xymatrix{
\intnoncommLI{\mathcal{E},\mathcal{A}}/F_{p+2}\intnoncommLI{\mathcal{E},\mathcal{A}} \ar^-{I\mapsto I^{\mathrm{R}}}[rr] \ar[d] && \NCEffThyL{p+1}{\mathcal{E},\mathcal{A}} \ar[d] \\
\intnoncommLI{\mathcal{E},\mathcal{A}}/F_{p+1}\intnoncommLI{\mathcal{E},\mathcal{A}} \ar^-{I\mapsto I^{\mathrm{R}}}[rr] && \NCEffThyL{p}{\mathcal{E},\mathcal{A}}
}
\end{displaymath}
and the group
\[ \mathcal{G}_{p+1} := F_{p+1}\intnoncommLI{\mathcal{E},\mathcal{A}}/F_{p+2}\intnoncommLI{\mathcal{E},\mathcal{A}} \]
which acts freely and transitively on the fibers of the vertical projection maps in the above diagram. Then the top horizontal map of this diagram is $\mathcal{G}_{p+1}$-equivariant.
\end{lemma}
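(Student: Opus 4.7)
The goal is to show that for every $I\in\intnoncommLI{\mathcal{E},\mathcal{A}}/F_{p+2}\intnoncommLI{\mathcal{E},\mathcal{A}}$ and every $J\in\mathcal{G}_{p+1}$ one has $(I+J)^{\mathrm{R}} = J\cdot I^{\mathrm{R}}$ as level $(p+1)$ theories. The plan is to expand the left-hand side using Proposition \ref{prop_filterformula} and match it, term by term, against the action formula \eqref{eqn_fiberaction}.

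Since a representative of $J$ lies in $F_{p+1}$, the local interactions $I$ and $I+J$ agree modulo $F_{p+1}$, so Lemma \ref{lem_filtercterms} gives $(I+J)^{\mathrm{CT}}(\varepsilon) = I^{\mathrm{CT}}(\varepsilon) + Q(\varepsilon)$ with $Q(\varepsilon)\in F_{p+2}$. Thus $(I+J)^{\mathrm{R}}[L] = \lim_{\varepsilon\to 0} W((I-I^{\mathrm{CT}}(\varepsilon))+(J-Q(\varepsilon)),P(\varepsilon,L))$. Now I would apply Proposition \ref{prop_filterformula} to the base $I-I^{\mathrm{CT}}(\varepsilon)$ and perturbation $J-Q(\varepsilon)\in F_{p+1}$, working modulo $F_{p+2}$: using Lemma \ref{lem_treecterterm}\eqref{itm_treecterterm2} (tree-level counterterms vanish) to see $(I-I^{\mathrm{CT}}(\varepsilon))_{[0]}=I_{[0]}$, together with $(J-Q(\varepsilon))_{[p+1]}=J$ since $Q(\varepsilon)\in F_{p+2}$, this yields
\[ W\bigl((I-I^{\mathrm{CT}}(\varepsilon))+(J-Q(\varepsilon)),P(\varepsilon,L)\bigr) \equiv W\bigl(I-I^{\mathrm{CT}}(\varepsilon),P(\varepsilon,L)\bigr) + \sum_{\Gamma\in\ptree[p+1]} \frac{\gamma^{g(\Gamma)}\nu^{b(\Gamma)}}{|\Aut(\Gamma)|}\, w_{\Gamma}\bigl(I_{[0]}+J,P(\varepsilon,L)\bigr) \pmod{F_{p+2}}. \]
The $\varepsilon\to 0$ limit of the first summand on the right is $I^{\mathrm{R}}[L]$ by Theorem \ref{thm_counterterms}, and each Feynman weight in the sum converges by Lemma \ref{lem_treecterterm}\eqref{itm_treecterterm1}, since every $(p+1)$-tree has vanishing first Betti number and exactly one vertex with nonzero loop number.

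To finish, it suffices to identify $I_{[0]}$ with the quantity $I^{\mathrm{R}}_{[0]}[0]$ appearing in the action formula \eqref{eqn_fiberaction}. By Proposition \ref{prop_treeflow} combined with Lemma \ref{lem_treecterterm}\eqref{itm_treecterterm2}, the tree-level part of the renormalized theory is $I^{\mathrm{R}}_{[0]}[L] = \lim_{\varepsilon\to 0} W^{\mathrm{Tree}}(I_{[0]},P(\varepsilon,L))$; this is precisely the tree-level renormalized theory with underlying local interaction $I_{[0]}$, and the uniqueness clause of Proposition \ref{prop_treelevelthy} then gives $I^{\mathrm{R}}_{[0]}[0]=I_{[0]}$. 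Substituting back into \eqref{eqn_fiberaction} recovers the expression computed above and proves the equivariance.

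The main technical obstacle is the filtration bookkeeping: one must verify carefully that the counterterm discrepancy $Q(\varepsilon)$ genuinely lies in $F_{p+2}$ (so it washes out modulo $F_{p+2}$) and that the index-shifted application of Proposition \ref{prop_filterformula} isolates exactly the $(p+1)$-tree sum appearing in Proposition \ref{prop_fiberaction}. Once the filtration is organized correctly, all convergence issues are handled by Lemma \ref{lem_treecterterm}\eqref{itm_treecterterm1}, and the identification $I^{\mathrm{R}}_{[0]}[0]=I_{[0]}$ is forced by the uniqueness built into Proposition \ref{prop_treelevelthy}.
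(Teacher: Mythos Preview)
Your proof is correct and follows precisely the same route as the paper's: it invokes Lemma~\ref{lem_filtercterms}, Proposition~\ref{prop_filterformula}, Lemma~\ref{lem_treecterterm}\eqref{itm_treecterterm2}, and Proposition~\ref{prop_treelevelthy} to derive the formula $(I+J)^{\mathrm{R}}[L] = I^{\mathrm{R}}[L] + \sum_{\Gamma\in\ptree[p+1]}\frac{\gamma^{g(\Gamma)}\nu^{b(\Gamma)}}{|\Aut(\Gamma)|}\lim_{\varepsilon\to 0}w_{\Gamma}(I_{[0]}+J,P(\varepsilon,L)) = (J\cdot I^{\mathrm{R}})[L]$. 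The paper's proof is simply a terse citation of these four results, whereas you have correctly unpacked the filtration bookkeeping and the identification $I^{\mathrm{R}}_{[0]}[0]=I_{[0]}$ that are implicit there.
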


\begin{proof}
It follows from Lemma \ref{lem_filtercterms}, Proposition \ref{prop_filterformula}, Lemma \ref{lem_treecterterm}\eqref{itm_treecterterm2} and Proposition \ref{prop_treelevelthy} that
\begin{displaymath}
\begin{split}
(I+J)^{\mathrm{R}}[L] &= I^{\mathrm{R}}[L]  + \sum_{\Gamma\in\ptree[p+1]}\frac{\gamma^{g(\Gamma)}\nu^{b(\Gamma)}}{|\Aut(\Gamma)|} \lim_{\varepsilon\to 0}w_{\Gamma}\bigl(I_{[0]}+J,P(\varepsilon,L)\bigr) \\
&= (J\cdot I^{\mathrm{R}})[L],
\end{split}
\end{displaymath}
for all $I\in\intnoncommLI{\mathcal{E},\mathcal{A}}/F_{p+2}\intnoncommLI{\mathcal{E},\mathcal{A}}$ and $J\in\mathcal{G}_{p+1}$.
\end{proof}

Finally, we are ready to formulate and prove our main theorem, which is the corresponding analogue in noncommutative geometry of Theorem 13.4.3 of \cite[\S 2.13.4]{CosEffThy}.

\begin{theorem} \label{thm_locinteffthy}
Let $\mathcal{E}$ be a family of free theories over $\mathcal{A}$ and $P(\varepsilon,L)$ be a family of propagators of the form \eqref{eqn_canonicalpropagator}.
\begin{enumerate}
\item \label{itm_locinteffthy1}
There is a one-to-one correspondence between local interactions modulo $F_{p+1}$ and level $p$ effective field theories;
\[ \intnoncommLI{\mathcal{E},\mathcal{A}}/F_{p+1}\intnoncommLI{\mathcal{E},\mathcal{A}}\longrightarrow\NCEffThyL{p}{\mathcal{E},\mathcal{A}}, \qquad I\mapsto I^{\mathrm{R}}. \]
\item \label{itm_locinteffthy2}
There is a one-to-one correspondence between local interactions and effective field theories;
\[ \intnoncommLI{\mathcal{E},\mathcal{A}}\longrightarrow\NCEffThy{\mathcal{E},\mathcal{A}}, \qquad I\mapsto I^{\mathrm{R}}. \]
\end{enumerate}
\end{theorem}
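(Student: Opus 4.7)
The plan is to prove \eqref{itm_locinteffthy1} by induction on $p$ and then deduce \eqref{itm_locinteffthy2} by passing to the inverse limit. First, observe that the map $I\mapsto I^{\mathrm{R}}$ genuinely descends to the quotient modulo $F_{p+1}$: by Lemma \ref{lem_filtercterms}, if $I\equiv I'\pmod{F_{p+1}}$ then $I^{\mathrm{CT}}\equiv I'^{\mathrm{CT}}\pmod{F_{p+2}}$, so $I-I^{\mathrm{CT}}(\varepsilon)$ and $I'-I'^{\mathrm{CT}}(\varepsilon)$ agree modulo $F_{p+1}$, and applying Proposition \ref{prop_filterformula} (whose $(p{+}1)$-tree correction term lives in $F_{p+1}$) shows that $I^{\mathrm{R}}[L]\equiv I'^{\mathrm{R}}[L]\pmod{F_{p+1}}$.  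The base case $p=0$ is precisely the content of Proposition \ref{prop_treelevelthy}: every tree-level theory $I$ determines a unique tree-level local interaction $I^*$ (its $L\to 0$ limit) with $I^*$ mapping back to $I$ under renormalization.

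For the inductive step, assume the bijection holds at level $p$ and consider the commutative square of Lemma \ref{lem_liftgrpequivariance}.  The abelian group $\mathcal{G}_{p+1}=F_{p+1}\intnoncommLI{\mathcal{E},\mathcal{A}}/F_{p+2}\intnoncommLI{\mathcal{E},\mathcal{A}}$ acts freely and transitively on the fibers of the left vertical projection (tautologically) and on the fibers of the right vertical projection (by Proposition \ref{prop_fiberaction}), and the top horizontal map is $\mathcal{G}_{p+1}$-equivariant by Lemma \ref{lem_liftgrpequivariance}. Combining these facts with the inductive hypothesis that the bottom map is a bijection gives, fiberwise, a $\mathcal{G}_{p+1}$-equivariant map between $\mathcal{G}_{p+1}$-torsors, which is automatically a bijection. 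Hence the top map is a bijection, completing the induction.

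For \eqref{itm_locinteffthy2}, note that the filtration $F_p$ on $\intnoncommLI{\mathcal{E},\mathcal{A}}$ is complete and Hausdorff by construction, so
\[ \intnoncommLI{\mathcal{E},\mathcal{A}} = \varprojlim_p \intnoncommLI{\mathcal{E},\mathcal{A}}/F_{p+1}\intnoncommLI{\mathcal{E},\mathcal{A}}. \]
On the effective field theory side, the filtration on $\intnoncommI{\mathcal{E},\mathcal{A}}$ is likewise complete, and a noncommutative effective field theory is an element of $\intnoncommI{\mathcal{E},\mathcal{A}}\cotimes\smooth{0,\infty}$ satisfying the renormalization group equation and asymptotic locality; both conditions are tested multidegree by multidegree and so are preserved under the inverse limit. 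Consequently $\NCEffThy{\mathcal{E},\mathcal{A}} = \varprojlim_p \NCEffThyL{p}{\mathcal{E},\mathcal{A}}$, and the maps $I\mapsto I^{\mathrm{R}}$ of \eqref{itm_locinteffthy1} assemble into a bijection by the universal property of the inverse limit.

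The principal obstacle is not really in the proof of the theorem itself, but rather in ensuring that the machinery assembled in Lemmas \ref{lem_filtercterms} and \ref{lem_liftgrpequivariance}, together with Propositions \ref{prop_filterformula}, \ref{prop_treelevelthy}, and \ref{prop_fiberaction}, is applied in a correctly compatible order; in particular, one must verify the well-definedness on the quotient \emph{before} invoking the torsor argument, since otherwise the equivariance statement of Lemma \ref{lem_liftgrpequivariance} cannot even be parsed. Once assembled, the actual theorem is a formal consequence via the standard principle that a morphism of torsors over a bijection of bases is itself a bijection.
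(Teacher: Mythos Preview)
Your proof is correct and follows essentially the same approach as the paper's: induction on $p$ for part \eqref{itm_locinteffthy1} using the $\mathcal{G}_{p+1}$-equivariance from Lemma \ref{lem_liftgrpequivariance} and the torsor structure from Proposition \ref{prop_fiberaction}, with Proposition \ref{prop_treelevelthy} supplying the base case; then a limit argument for part \eqref{itm_locinteffthy2}. The paper phrases \eqref{itm_locinteffthy2} as a direct injectivity/surjectivity check rather than invoking an inverse-limit identification, but these are equivalent. One small omission: for the base case you should note (as the paper does) that Lemma \ref{lem_treecterterm}\eqref{itm_treecterterm2} is needed to identify the renormalization map $I\mapsto I^{\mathrm{R}}$ at level $0$ with the map of Proposition \ref{prop_treelevelthy}, since the latter is stated in terms of $W^{\mathrm{Tree}}$ without counterterms.
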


\begin{proof}
The proof of \eqref{itm_locinteffthy1} follows from Lemma \ref{lem_liftgrpequivariance} by induction on $p$. Note that the base case $p=0$ is taken care of by Proposition \ref{prop_treelevelthy} and Lemma \ref{lem_treecterterm}\eqref{itm_treecterterm2}.

Now \eqref{itm_locinteffthy2} merely follows as a formal consequence of \eqref{itm_locinteffthy1}:
\begin{itemize}
\item
If $I$ and $I'$ are two local interactions such that the effective field theories $I^{\mathrm{R}}$ and $I'^{\mathrm{R}}$ agree, then in particular they agree modulo $F_{p+1}$, and by \eqref{itm_locinteffthy1} it follows that the interactions $I$ and $I'$ agree modulo $F_{p+1}$. Since this holds for all $p$ and the filtration is Hausdorff, it follows that $I=I'$.
\item
Further, if $I'\in\NCEffThy{\mathcal{E},\mathcal{A}}$ is an effective field theory then by \eqref{itm_locinteffthy1} it follows that there is a sequence
\[ I_p\in\intnoncommLI{\mathcal{E},\mathcal{A}}/F_{p+1}\intnoncommLI{\mathcal{E},\mathcal{A}}, \quad p\geq 0 \]
of local interactions such that $I_p^{\mathrm{R}}$ and $I'$ agree modulo $F_{p+1}$.

Using \eqref{itm_locinteffthy1} again it follows that $I_{p+1}$ and $I_p$ agree modulo $F_{p+1}$. Since the filtration is complete, there is a local interaction $I\in\intnoncommLI{\mathcal{E},\mathcal{A}}$ such that $I$ and $I_p$ agree modulo $F_{p+1}$ for all $p\geq 0$. Hence, $I^{\mathrm{R}}$ and $I'$ agree modulo $F_{p+1}$ for all $p$, and since the filtration is Hausdorff it follows that $I^{\mathrm{R}}=I'$.
\end{itemize}
\end{proof}

\begin{rem}
By Remark \ref{rem_filteranalogue} there is an analogue of Proposition \ref{prop_filterformula} for $\intcomm{\mathcal{E},\mathcal{A}}$. We note that all the arguments provided above for noncommutative effective field theories could be equally well applied to (commutative) effective field theories, where the corresponding results appear in \cite{CosEffThy}.
\end{rem}

\section{Large $N$ phenomena} \label{sec_largeN}

In this section we will explain how noncommutative effective field theories provide a framework for the exploration of large $N$ limits in effective field theories, and how this is connected to the string-gauge theory paradigm. We should be careful to explain what we mean by the latter, for here we emphasize that we are not explicitly referring to some of the more specific formulations of this principle; such as the AdS/CFT correspondence \cite{AdSCFT}, the Gopakumar-Vafa conjecture \cite{GopVafa}, or the holographic principle \cite{MAGOO}. Instead, we are referring more broadly to the general phenomena discovered by 't~Hooft in \cite{tHooftplanar}, that when calculations in $U(N)$ gauge theories are expanded in powers of the rank $N$, the Feynman diagram expansion arranges itself into open string diagrams in which planar diagrams dominate.

\subsection{The string/gauge theory correspondence} \label{sec_StrGaugeCorr}

We have explained in Section \ref{sec_RGflow} how the renormalization group flow in noncommutative effective field theory is defined through the use of Feynman amplitudes of stable ribbon graphs. These stable ribbon graphs parameterize orbi-cells in certain compactifications of the moduli space of Riemann surfaces \cite{KontAiry}, \cite{LooCompact}; that is, they describe diagrams of open string interactions in which the worldsheet is a Riemann surface with marked points that fix the endpoints of the open strings. In other words, a noncommutative effective field theory is a type of open string theory.

Now suppose that $\mathcal{E}$ is a family of free theories over $\mathcal{A}$ and $\textgoth{A}:=\mat{N}{\mathbb{\gf}}$ is a matrix algebra. Let $P(\varepsilon,L)$ be a family of propagators of the form \eqref{eqn_canonicalpropagator} and $P_{\textgoth{A}}(\varepsilon,L)$ be defined by \eqref{eqn_OTFTpropagator}. Consider the maps $\sigma_{\gamma,\nu}$ and $\Morita$ defined by Definition \ref{def_mapNCtoCom} and Definition \ref{def_OTFTtransformation} respectively. Combining these maps yields the following diagram of effective field theories:
\begin{equation} \label{dig_largeN}
\xymatrix{
& \NCEffThy{\mathcal{E},\mathcal{A}} \ar^<<<<<<<<{\ldots}_>>>>>>>{\sigma_{\gamma,\nu}\circ\Morita[\mat{1}{\gf}]}[ld] \ar^>>>{\, \sigma_{\gamma,\nu}\circ\Morita[\mat{N}{\gf}]}[d] \ar^>>>>>>{\qquad \sigma_{\gamma,\nu}\circ\Morita[\mat{N+1}{\gf}] \ \ldots}[rd] \\
\EffThy{\mathcal{E}_{\mat{1}{\gf}},\mathcal{A}}\ldots & \EffThy{\mathcal{E}_{\mat{N}{\gf}},\mathcal{A}} & \EffThy{\mathcal{E}_{\mat{N+1}{\gf}},\mathcal{A}}\ldots
}
\end{equation}
That is, any noncommutative effective field theory on $\mathcal{E}$ gives rise to effective field theories on $\mathcal{E}_{\mat{N}{\mathbb{\gf}}}$ at all ranks $N$. Identifying the former as a type of open string theory provides a concrete realization of the principle that connects string theories to large $N$ limits of gauge theories. It follows from the formula in Example \ref{exm_matrixOTFT} that under the above correspondence, the formal variable $\nu$ is mapped to $N\hbar$ while $\gamma$ is mapped to $\hbar^2$. Hence, at a fixed order in $\hbar$, the order in $N$ is maximized when the order in $\gamma$ is zero. In the stable ribbon graph Feynman diagram expansion, this corresponds to surfaces of genus zero; that is, planar diagrams.

We should emphasize that there are no horizontal arrows in Diagram \eqref{dig_largeN}. There is, in principle, no way to pass between the effective field theories at different ranks $N$. It is only the extra structure present in the noncommutative geometry that is encoded on the space of interactions that makes the above diagram possible. We will see that the correspondence \eqref{dig_largeN} may be made even more concrete through the application of the Loday-Quillen-Tsygan Theorem, which will allow us to draw conclusions about the noncommutative effective field theory using the properties of the rank $N$ effective field theories.

\subsection{Application of the Loday-Quillen-Tsygan Theorem}

The Loday-Quillen-Tsygan Theorem is a result that arose in algebraic $K$-theory as a result of attempts to compute the homology of matrix Lie algebras, cf. \cite[Ch. 10]{LodayCyclicHomology}. It was proved independently by Loday-Quillen in \cite{LodayQuillen} and Tsygan in \cite{Tsygan}. It arises as a consequence of the invariant theory for the general linear group. In this section we will make use of that part of the Loday-Quillen-Tsygan Theorem which relies on this invariant theory, and use it to provide a necessary and sufficient condition for the vanishing of terms in the noncommutative effective field theory. This result will be an appropriate analogue of Lemma 7.2 of \cite{GiGqHaZeLQT}, and will be proved in essentially the same way.

\subsubsection{Invariant theory from the Loday-Quillen-Tsygan Theorem}

We now recall the relevant parts that we will need from the Loday-Quillen-Tsygan Theorem. Let $\mathcal{E}$ be the space of sections of a vector bundle over a smooth manifold and consider the matrix algebra $\mat{N}{\gf}$. Recall from Definition \ref{def_OTFTtransformation} that we have a map
\[ \csalg{\KontHamP{\mathcal{E}}}\longrightarrow\csalgdelim{(\KontHamPdelim{(\mathcal{E}_{\mat{N}{\gf}})})}, \qquad \lceil x\rceil^{\mathbf{r}} \longmapsto \lceil x\otimes\OTFT{0,0}{\mathbf{r}}\rceil^{\mathbf{r}}; \]
where we remind the reader:
\begin{itemize}
\item
that $x\in(\mathcal{E}^{\dag})^{\cotimes l}$ and $r_1,\ldots,r_k$ is a list of $k$ positive integers whose sum is $l$,
\item
that $\lceil x\rceil^{\mathbf{r}}$ denotes the image of $x$ under the map \eqref{eqn_attachmapscycdecompoff} determined by the cyclic decomposition $C_{\mathbf{r}}$ defined by \eqref{eqn_cycdecpartition},
\item
that $\OTFT{0,0}{\mathbf{r}}$ is the multilinear map on matrices
\[ A_{11},\ldots,A_{1r_1};\ldots;A_{k1},\ldots,A_{kr_k} \longmapsto \Tr(A_{11}\cdots A_{1r_1})\cdots\Tr(A_{k1}\cdots A_{kr_k}) \]
defined in Example \ref{exm_matrixOTFT},
\item
and $\mathcal{E}_{\mat{N}{\gf}}$ was used to denote $\mathcal{E}\otimes\mat{N}{\gf}$.
\end{itemize}
Additionally, recall from Definition \ref{def_mapNCtoCom} that we also have a map
\[ \csalgdelim{\bigl(\KontHamPdelim{(\mathcal{E}_{\mat{N}{\gf}})}\bigr)}\longrightarrow\csalgdelim{\bigl(\mathcal{E}^{\dag}_{\mat{N}{\gf}}\bigr)}, \qquad x_1\cdots x_k \longmapsto \sigma(x_1)\cdots \sigma(x_k); \]
where $x_1,\ldots,x_k\in\KontHamPdelim{(\mathcal{E}_{\mat{N}{\gf}})}$.

Note that the Lie algebra $\gl{N}{\gf}$ acts on $\mat{N}{\gf}$ by the adjoint action, and hence on $\mathcal{E}_{\mat{N}{\gf}}$ and $\csalgdelim{(\mathcal{E}^{\dag}_{\mat{N}{\gf}})}$. Since the trace map on matrices is $\gl{N}{\gf}$-invariant, composing the maps defined above yields a family of maps
\[ \csalgdelim{\bigl(\KontHamP{\mathcal{E}}\bigr)}\longrightarrow\csalgdelim{\bigl(\mathcal{E}^{\dag}_{\mat{N}{\gf}}\bigr)}^{\gl{N}{\gf}}, \quad N\geq 1. \]

Finally, for every $N\geq 1$ we also have a map
\[ \csalgdelim{\bigl(\mathcal{E}^{\dag}_{\mat{N+1}{\gf}}\bigr)}^{\gl{N+1}{\gf}}\longrightarrow\csalgdelim{\bigl(\mathcal{E}^{\dag}_{\mat{N}{\gf}}\bigr)}^{\gl{N}{\gf}} \]
which is induced by the inclusion of the algebra $\mat{N}{\gf}$ into $\mat{N+1}{\gf}$, defined by inserting an $N$-by-$N$ matrix into the top left-hand corner of an $(N+1)$-by-$(N+1)$ matrix and setting the remaining entries to zero.

All told, combining the above maps we have the following commutative diagram:
\begin{equation} \label{dig_LQTThm}
\xymatrix{
& \csalgdelim{(\KontHamP{\mathcal{E}})} \ar[ld]^{\qquad\qquad\ldots\ldots} \ar[rd] \ar[rrd]^>>>>>>>>{\quad\ldots} \\
\csalgdelim{\bigl(\mathcal{E}^{\dag}_{\mat{1}{\gf}}\bigr)}^{\gl{1}{\gf}} & \ldots\ldots \ar[l] & \csalgdelim{\bigl(\mathcal{E}^{\dag}_{\mat{N}{\gf}}\bigr)}^{\gl{N}{\gf}} \ar[l] & \csalgdelim{\bigl(\mathcal{E}^{\dag}_{\mat{N+1}{\gf}}\bigr)}^{\gl{N+1}{\gf}}\ldots \ar[l]
}
\end{equation}

\begin{theorem} \label{thm_LQT}
Diagram \eqref{dig_LQTThm} establishes $\csalgdelim{(\KontHamP{\mathcal{E}})}$ as a projective limit
\[ \csalgdelim{(\KontHamP{\mathcal{E}})} = \varprojlim_N \Bigl[ \csalgdelim{\bigl(\mathcal{E}^{\dag}_{\mat{N}{\gf}}\bigr)}^{\gl{N}{\gf}}\Bigr]. \]
\end{theorem}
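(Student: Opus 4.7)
The plan is to filter both sides of Diagram \eqref{dig_LQTThm} by the total order $l$ in tensor powers of $\mathcal{E}^{\dag}$ and to reduce the statement to a sequence of classical invariant-theoretic calculations for $GL_N(\gf)$. Every arrow in the diagram preserves this order filtration, which is complete and Hausdorff on both sides, so it suffices to show that at each fixed order $l$ the canonical map from $\csalgdelim{(\KontHamP{\mathcal{E}})}$ to $\csalgdelim{(\mathcal{E}^{\dag}_{\mat{N}{\gf}})}^{\gl{N}{\gf}}$ becomes an isomorphism as soon as $N \geq l$; for then the projective limit stabilizes termwise in order and coincides with $\csalgdelim{(\KontHamP{\mathcal{E}})}$.

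At fixed order $l$, the space on the right reads
\[ \Bigl[ (\mathcal{E}^{\dag})^{\cotimes l} \cotimes \bigl(((\mat{N}{\gf})^{\dag})^{\otimes l}\bigr)^{\gl{N}{\gf}} \Bigr]_{\sg{l}}, \]
where $\sg{l}$ acts diagonally. Passing $\gl{N}{\gf}$-invariants through the completed tensor is legitimate since $\gl{N}{\gf}$ is finite-dimensional and acts only on the matrix factor, and nuclearity of $\mathcal{E}^{\dag}$ guarantees the required compatibility. The space on the left, at the same order, decomposes as a direct sum over compositions $(r_1, \ldots, r_k)$ of $l$ into positive parts, of copies of $(\mathcal{E}^{\dag})^{\cotimes l}$, quotiented by the group generated by cyclic rotations within each block and permutations of blocks of equal size.

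The heart of the proof is then the application of the first and second fundamental theorems of invariant theory for $GL_N(\gf)$ acting by simultaneous conjugation on $(\mat{N}{\gf})^{\otimes l}$; compare Chapter 9 of \cite{LodayCyclicHomology}. The first theorem identifies $\bigl(((\mat{N}{\gf})^{\dag})^{\otimes l}\bigr)^{\gl{N}{\gf}}$ with the linear span of trace-monomial products indexed by permutations $\sigma \in \sg{l}$, each $\sigma$ encoding a disjoint-cycle decomposition of $\{1, \ldots, l\}$; and the second theorem asserts linear independence of these products for $N \geq l$. This yields an $\sg{l}$-equivariant isomorphism with $\gf[\sg{l}]$ carrying the conjugation action. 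Taking $\sg{l}$-coinvariants of $(\mathcal{E}^{\dag})^{\cotimes l} \cotimes \gf[\sg{l}]$ and decomposing $\gf[\sg{l}]$ into conjugacy classes indexed by partitions of $l$, the centralizer of a permutation of cycle type $(r_1, \ldots, r_k)$ reproduces exactly the cyclic-within-block and permute-equal-blocks symmetry of the order-$l$ component of $\csalgdelim{(\KontHamP{\mathcal{E}})}$, so the two sides match.

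The main obstacle I expect is the careful bookkeeping: matching cyclic coinvariants inside each $\KontHamP{\mathcal{E}}$-factor against the symmetric coinvariants of $\csalg{\cdot}$, and against the trace-monomial encoding of permutations by their disjoint-cycle decomposition. Because $\gl{N}{\gf}$ is finite-dimensional and touches only the matrix factor, there are no serious analytic hurdles; nuclearity handles the commutation of invariants with completed tensor products, and the remaining content is purely combinatorial and reduces to the classical fundamental theorems.
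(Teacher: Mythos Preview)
Your proposal is correct and follows essentially the same route as the paper: both arguments reduce to fixed tensor order $l$, invoke the first and second fundamental theorems of invariant theory to identify $\bigl((\mat{N}{\gf}^{\dag})^{\otimes l}\bigr)^{\gl{N}{\gf}}$ with $\gf[\sg{l}]$ under conjugation for $N$ large, and then match the decomposition by conjugacy classes (cycle types) with the cyclic-within-block and permute-blocks symmetry of $\csalgdelim{(\KontHamP{\mathcal{E}})}$. Your framing of the projective-limit stabilization via the order filtration is slightly more explicit than the paper's summary, but the content is the same.
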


The proof of Theorem \ref{thm_LQT} follows the arguments of \cite[\S 10.2.10]{LodayCyclicHomology} with no significant modifications, making use of the invariant theory from Proposition 9.2.2 of the cited text. We therefore summarize these arguments below only briefly.

When $N\geq k$, we may use the invariant theory to identify
\begin{displaymath}
\begin{split}
\Bigl[\Bigl[\bigl(\mathcal{E}^{\dag}_{\mat{N}{\gf}}\bigr)^{\cotimes l}\Bigr]_{\sg{l}}\Bigr]^{\gl{N}{\gf}} &= \Bigl[\bigl(\mathcal{E}^{\dag}\bigr)^{\cotimes l}\otimes\Bigl[\bigl(\mat{N}{\gf}^{\dag}\bigr)^{\otimes l}\Bigr]^{\gl{N}{\gf}}\Bigr]_{\sg{l}} \\
&= \Bigl[\bigl(\mathcal{E}^{\dag}\bigr)^{\cotimes l}\otimes\gf[\sg{l}]\Bigr]_{\sg{l}} \\
&= \prod_{k=1}^l\biggl[\prod_{r_1+\cdots+r_k=l}\bigl[\bigl(\mathcal{E}^{\dag}\bigr)^{\cotimes r_1}\bigr]_{\cyc{r_1}}\cotimes\ldots\cotimes\bigl[\bigl(\mathcal{E}^{\dag}\bigr)^{\cotimes r_k}\bigr]_{\cyc{r_k}}\biggr]_{\sg{k}};
\end{split}
\end{displaymath}
where $\gf[\sg{l}]$, on which $\sg{l}$ acts by conjugation, is identified with the $\gl{N}{\gf}$-invariants by assigning the map $\OTFT{0,0}{\mathbf{r}}$ above to the permutation $C_{\mathbf{r}}\in\sg{l}$ that is defined by \eqref{eqn_cycdecpartition}. Then, on the final line, the two spaces are identified by applying the map \eqref{eqn_attachmapscycdecompoff} determined by the cyclic decomposition $C_{\mathbf{r}}$.

\subsubsection{A vanishing criteria} \label{sec_vanishing}

We now formulate a result that will allow us to draw conclusions about a noncommutative effective field theory from the knowledge of the large $N$ behavior of the corresponding effective field theories. This will shortly be applied in Section \ref{sec_CSthy}, as well as later in \cite{NCRBV}.

\begin{theorem} \label{thm_vanishing}
Let $\mathcal{E}$ be a family of free theories over $\mathcal{A}$. Then a functional $I\in\intnoncomm{\mathcal{E},\mathcal{A}}$ vanishes if and only if
\[ \sigma_{\gamma,\nu}\Morita[\mat{N}{\gf}](I)=0, \quad\text{for all } N\geq 1. \]
\end{theorem}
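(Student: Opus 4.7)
The forward direction is immediate, so the plan is to handle the converse. First I would fix a decomposition $I = \sum_{i,j,k,l} \gamma^i \nu^j I_{ijkl}$ with each $I_{ijkl} \in \bigl[\KontHamP{\mathcal{E}}^{\cotimes k}\bigr]_{\sg{k}} \cotimes \mathcal{A}$ of homogeneous order $l$ in $\mathcal{E}^{\dag}$. Unpacking Definition \ref{def_OTFTtransformation} with the explicit formula of Example \ref{exm_matrixOTFT}, and then applying Definition \ref{def_mapNCtoCom}, one finds
\[ \sigma_{\gamma,\nu}\Morita[\mat{N}{\gf}](I) = \sum_{i,j,k,l} \hbar^{2i+j+k-1}\, N^{j}\, \Phi_N(I_{ijkl}), \]
where $\Phi_N$ is the composite map from $\csalg{\KontHamP{\mathcal{E}}}\cotimes\mathcal{A}$ to $\csalg{\mathcal{E}^{\dag}_{\mat{N}{\gf}}}^{\gl{N}{\gf}}\cotimes\mathcal{A}$ appearing in Diagram \eqref{dig_LQTThm}. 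The crucial observation is that all of the $N$-dependence is captured by the prefactor $N^j$: the map $\OTFT{i,j}{\mathbf{r}}$ differs from the $\OTFT{0,0}{\mathbf{r}}$ entering the LQT map precisely by this scalar.

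The strategy from here is to peel off gradings one at a time. Assuming the displayed expression vanishes for every $N \geq 1$, separating by the $\hbar$-degree $\ell$ and by the $\mathcal{E}^{\dag}$-order $l$ gives, for every $(\ell, l)$ and every $N \geq 1$,
\[ \sum_{(i,j,k):\,2i+j+k-1=\ell} N^{j}\, \Phi_N(I_{ijkl}) = 0. \]
The invariant-theoretic input to the sketch of Theorem \ref{thm_LQT} identifies $\Phi_N$ restricted to the $l$-graded piece of $\csalg{\KontHamP{\mathcal{E}}}$ (which factors as $\prod_{k=1}^{l} \bigl[\KontHamP{\mathcal{E}}^{\cotimes k}\bigr]_{\sg{k}}$ at order $l$) with an isomorphism onto the corresponding $l$-graded invariant subspace of $\csalg{\mathcal{E}^{\dag}_{\mat{N}{\gf}}}$ as soon as $N \geq l$. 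Inverting this isomorphism would pull the relation back to
\[ \sum_{(i,j,k):\,2i+j+k-1=\ell} N^{j}\, I_{ijkl} = 0 \]
in $\csalg{\KontHamP{\mathcal{E}}}\cotimes\mathcal{A}$, valid for all $N \geq l$. Since the left side is a polynomial of bounded degree in $N$ (with $0 \leq j \leq \ell$) vanishing at infinitely many integers, a Vandermonde argument forces each coefficient to vanish, yielding $\sum_{(i,k):\,2i+k=\ell-j+1} I_{ijkl} = 0$ for each fixed $j$.

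The final step is then immediate: the summands $I_{ijkl}$ for distinct $k$ lie in separate direct factors of the topological product $\prod_k \bigl[\KontHamP{\mathcal{E}}^{\cotimes k}\bigr]_{\sg{k}}\cotimes\mathcal{A}$, so each must vanish individually, and running this over all $(i,j,k,l)$ delivers $I = 0$. Beyond the bookkeeping of the three independent gradings (by $\hbar$-degree, by $\mathcal{E}^{\dag}$-order $l$, and by the number of cyclic factors $k$) together with the Vandermonde trick in $N$, the single substantive input is the injectivity of $\Phi_N$ on each $l$-graded piece when $N \geq l$, which is exactly the invariant theory content feeding Theorem \ref{thm_LQT}. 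I expect the main care in writing this up will be the uniform handling of the $\mathcal{A}$-factor, but since $\mathcal{A}$ is flat and the identifications are all $\mathcal{A}$-linear, this should pose no genuine obstacle.
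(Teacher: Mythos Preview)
Your argument is correct and matches the paper's approach: both extract the $N^j$ factor from the matrix OTFT, separate by loop number, use a polynomial-in-$N$ argument to isolate each $j$, and appeal to the Loday--Quillen--Tsygan invariant theory. The only organizational difference is that you first decompose by the $\mathcal{E}^{\dag}$-order $l$ and use injectivity of $\Phi_N$ for $N \geq l$, whereas the paper avoids this extra grading by passing along the horizontal maps of Diagram \eqref{dig_LQTThm} to a fixed $\pi_M$ before running the polynomial argument and then invoking the projective-limit form of Theorem \ref{thm_LQT} directly.
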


\begin{proof}
If the condition above holds then by Remark \ref{rem_filter} we have
\[ \sigma_{\gamma,\nu}\Morita[\mat{N}{\gf}](I_{[n]})=0; \quad\text{for all } n\geq 0,N\geq 1. \]
If we denote, temporarily, the vertical maps in the projective limit \eqref{dig_LQTThm} by $\pi_N$, then using the formula from Example \ref{exm_matrixOTFT} the above equation becomes
\[ \sum_{\begin{subarray}{c} i,j,k\geq 0: \\ 2i+j+k-1 = n\end{subarray}} N^j \pi_N(I_{ijk}) = 0; \quad\text{for all } n\geq 0,N\geq 1. \]
Passing to the left in Diagram \eqref{dig_LQTThm} we get
\[ \sum_{\begin{subarray}{c} i,j,k\geq 0: \\ 2i+j+k-1 = n\end{subarray}} N^j \pi_M(I_{ijk}) = 0; \quad\text{for all } n\geq 0,N\geq M\geq 1. \]

Since a nontrivial polynomial can have only finitely many roots, it follows that
\[ \sum_{\begin{subarray}{c} i,k\geq 0: \\ 2i+j+k-1 = n\end{subarray}} \pi_M(I_{ijk}) = 0; \quad\text{for all } n\geq 0, M\geq 1\text{ and }0\leq j\leq n+1. \]
Applying Theorem \ref{thm_LQT} we conclude that
\[ \sum_{\begin{subarray}{c} i,k\geq 0: \\ 2i+j+k-1 = n\end{subarray}} I_{ijk} = 0; \quad\text{for all } n\geq 0\text{ and }0\leq j\leq n+1. \]
From this it follows that $I=0$.
\end{proof}

We will also need a version of this result that applies modulo constants. The constants are those parts of $\intcomm{\mathcal{E}}$ and $\intnoncomm{\mathcal{E}}$ that are homogeneous of order zero in $\mathcal{E}^{\dag}$; that is, those parts that are contained in $\gf[[\hbar]]$ and $\gf[[\gamma,\nu]]$ respectively.

\begin{cor} \label{cor_vanishing}
Let $\mathcal{E}$ be a family of free theories over $\mathcal{A}$ and $I\in\intnoncomm{\mathcal{E},\mathcal{A}}$ be a functional. Then $I\in\gf[[\gamma,\nu]]\cotimes\mathcal{A}$ if and only if
\[ \sigma_{\gamma,\nu}\Morita[\mat{N}{\gf}](I)\in\gf[[\hbar]]\cotimes\mathcal{A}, \quad\text{for all } N\geq 1. \]
\end{cor}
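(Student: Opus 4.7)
The plan is to derive this as a short consequence of Theorem \ref{thm_vanishing}, by splitting off the homogeneous order zero part in $\mathcal{E}^\dag$ and applying the theorem to what remains. The key observation is that both maps $\sigma_{\gamma,\nu}$ and $\Morita[\mat{N}{\gf}]$ preserve the homogeneous order in $\mathcal{E}^\dag$ (respectively $\mathcal{E}_{\mat{N}{\gf}}^\dag$); this is immediate from the formulas in Definition \ref{def_mapNCtoCom} and Definition \ref{def_OTFTtransformation}, since $\Morita$ merely decorates each $I^{\mathbf{r}}_{ijkl}\in (\mathcal{E}^\dag)^{\cotimes l}$ by tensoring with an OTFT map, and $\sigma$ sends $[(\mathcal{E}^\dag)^{\cotimes l}]_{\cyc{l}}$ to $[(\mathcal{E}^\dag)^{\cotimes l}]_{\sg{l}}$.

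The forward implication is a direct computation. If $I = \sum_{i,j} \gamma^i \nu^j \otimes a_{ij}$ with $a_{ij}\in\mathcal{A}$ then, since there are no $\mathcal{E}^\dag$ tensor factors on which $\Morita[\mat{N}{\gf}]$ can act, one has $\Morita[\mat{N}{\gf}](I) = I$, which one regards as living in $\intnoncomm{\mathcal{E}_{\mat{N}{\gf}},\mathcal{A}}$. Applying $\sigma_{\gamma,\nu}$ and using its defining formula in the case $k=0$ of no $\KontHamP{}$ factors gives $\sigma_{\gamma,\nu}(\gamma^i\nu^j) = \hbar^{2i+j-1}$, so the composite lands in $\gf[[\hbar]]\cotimes\mathcal{A}$ as required.

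For the reverse implication, decompose $I = I_0 + I'$, where $I_0\in\gf[[\gamma,\nu]]\cotimes\mathcal{A}$ collects the summands of homogeneous order zero in $\mathcal{E}^\dag$, and $I'\in\intnoncomm{\mathcal{E},\mathcal{A}}$ collects the summands of strictly positive homogeneous order. By the forward direction already established, $\sigma_{\gamma,\nu}\Morita[\mat{N}{\gf}](I_0)\in\gf[[\hbar]]\cotimes\mathcal{A}$; subtracting from the hypothesis therefore yields $\sigma_{\gamma,\nu}\Morita[\mat{N}{\gf}](I')\in\gf[[\hbar]]\cotimes\mathcal{A}$ for every $N\geq 1$. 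But by the observation above, $\sigma_{\gamma,\nu}\Morita[\mat{N}{\gf}](I')$ is homogeneous of strictly positive order in $\mathcal{E}_{\mat{N}{\gf}}^\dag$, and the only element of $\gf[[\hbar]]\cotimes\mathcal{A}$ with this property is zero. Hence $\sigma_{\gamma,\nu}\Morita[\mat{N}{\gf}](I') = 0$ for all $N\geq 1$, and Theorem \ref{thm_vanishing} then forces $I' = 0$, giving $I = I_0 \in \gf[[\gamma,\nu]]\cotimes\mathcal{A}$.

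There is no real obstacle here beyond carefully tracking the homogeneous order grading through the definitions of $\sigma_{\gamma,\nu}$ and $\Morita$; the whole argument is a mechanical reduction to Theorem \ref{thm_vanishing} applied to the non-constant part of $I$.
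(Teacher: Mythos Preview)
Your proof is correct and takes essentially the same approach as the paper, whose proof is the single sentence that the result follows from Theorem~\ref{thm_vanishing} since $\sigma_{\gamma,\nu}$ and $\Morita[\mat{N}{\gf}]$ respect the grading by order in $\mathcal{E}^\dag$. One minor imprecision: your claim that $\Morita[\mat{N}{\gf}](I)=I$ for constant $I$ need not hold literally, since $\Morita$ tensors each term $\gamma^i\nu^j$ with the scalar OTFT invariant $\OTFT{i,j}{\emptyset}$ of the corresponding closed surface; but this is harmless, as all your argument actually uses is that constants map to constants, which already follows from your grading-preservation observation.
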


\begin{proof}
This follows from Theorem \ref{thm_vanishing} since the maps $\sigma_{\gamma,\nu}$ and $\Morita[\mat{N}{\gf}]$ respect the grading by order in $\mathcal{E}^{\dag}$.
\end{proof}

\section{Noncommutative Chern-Simons theory} \label{sec_CSthy}

In this section we will consider a noncommutative analogue of Chern-Simons theory---that is, we will give a treatment of Chern-Simons theory as a noncommutative effective field theory and demonstrate that it produces $U(N)$ Chern-Simons theory at all ranks $N$ under the correspondence defined by Diagram \eqref{dig_largeN}.

Our construction of this noncommutative Chern-Simons theory will involve a very simple minded generalization of ordinary Chern-Simons theory. In this way it differs, for example, from the theory considered by Susskind in \cite[\S 7]{SussCS}, in which the Chern-Simons Lagrangian was deformed using a Moyal star-product. Instead, it treats Chern-Simons interactions through open string diagrams, and thus is more in keeping with the spirit of \cite{CosTCFT} and Witten's treatment of Chern-Simons theory in \cite{WitCSstring}.

We must of course acknowledge that, perhaps most significantly, Chern-Simons theory is a gauge theory. We do not treat the gauge theory aspects of this noncommutative theory here. Instead, this subject will be taken up in \cite{NCRBV}, where it will be treated in the framework of the Batalin-Vilkovisky formalism following \cite{CosEffThy}.

After constructing the promised theory, we will apply our string-gauge theory principle to conclude that the counterterms for our noncommutative Chern-Simons theory all vanish.

In this section we will not consider families of theories; that is, we will assume that $\mathcal{A}$ is the ground field $\gf$. Although we work with three-manifolds throughout this section, we emphasize that the results of this section hold for any manifold of odd dimension at least three, provided that we are willing to give up a $\mathbb{Z}$-grading for a $\mathbb{Z}/2\mathbb{Z}$-grading.

\subsection{Construction of the theory}

Before constructing our noncommutative theory, we review the commutative case for the gauge group $U(N)$. For this, we must briefly discuss complexification.

\subsubsection{Complexification} \label{sec_complexification}

Given a real topological vector space $\mathcal{V}$, we will denote its complexification by
\[ \mathcal{V}_{\mathbb{C}}:=\mathcal{V}\underset{\mathbb{R}}{\otimes}\mathbb{C}. \]
Complexification is a monoidal functor from real to complex topological vector spaces.

Consequently, it follows that if $\mathcal{E}$ is a family of free real theories over $\mathcal{A}$, then $\mathcal{E}_{\mathbb{C}}$ will be a family of free complex theories over $\mathcal{A}_{\mathbb{C}}$. The vector bundle, the local pairing and the generalized Laplacian are all obtained from applying this functor; that is, by extension of scalars. Likewise for the heat kernel and any propagator for this theory.

Furthermore, for any real topological vector space $\mathcal{V}$ we have
\[ \bigl[\mathcal{V}_{\mathbb{C}}\bigr]^{\dag} = \bigl[\mathcal{V}^{\dag}\bigr]_{\mathbb{C}}. \]
It therefore follows that the space of interactions for the complexification of a free theory is the complexification of its space of interactions;
\begin{align*}
\intcomm{\mathcal{E}_{\mathbb{C}},\mathcal{A}_{\mathbb{C}}} &= \intcomm{\mathcal{E},\mathcal{A}}_{\mathbb{C}}, & \intcommL{\mathcal{E}_{\mathbb{C}},\mathcal{A}_{\mathbb{C}}} &= \intcommL{\mathcal{E},\mathcal{A}}_{\mathbb{C}}; \\
\intnoncomm{\mathcal{E}_{\mathbb{C}},\mathcal{A}_{\mathbb{C}}} &= \intnoncomm{\mathcal{E},\mathcal{A}}_{\mathbb{C}}, & \intnoncommL{\mathcal{E}_{\mathbb{C}},\mathcal{A}_{\mathbb{C}}} &= \intnoncommL{\mathcal{E},\mathcal{A}}_{\mathbb{C}}.
\end{align*}

We therefore conclude from the above that we may equivalently treat a real theory whose interactions are complex-valued, as a complex theory over its complexification.

\subsubsection{The commutative case}

Here we recall the framework of $U(N)$ Chern-Simons theory. Suppose that $M$ is a compact oriented Riemannian three-manifold and consider the free Chern-Simons theory defined by Example \ref{exm_freeCSthy}, where we use the real Lie algebra $\un{N}$ of $N$-by-$N$ skew-Hermitian matrices with its negative-definite trace pairing. This has space of fields
\[ \mathcal{E}^{U(N)}=\Sigma\Omega^{\bullet}(M,\mathbb{R})\otimes\un{N} \]
and propagator
\[ P^{U(N)}(\varepsilon,L) := \left(\int_{t=\varepsilon}^L\mathrm{d}t\cotimes Q^*\cotimes\mathds{1}\right)[K] \]
given by Example \ref{exm_CSpropagator}, where $Q^*$ is the Hodge adjoint of the exterior derivative $Q$ on $\mathcal{E}^{U(N)}$ and $K$ is the heat kernel for the free theory.

The Chern-Simons interaction is defined as follows. Consider the multilinear map
\begin{equation} \label{eqn_CSinteraction}
\bigl(\mathcal{E}^{U(N)}\bigr)^{\otimes 3}\to\mathbb{C}, \qquad A_1,A_2,A_3 \longmapsto (-1)^{|A_2|}\frac{1}{3}\int_M\Tr(A_1\wedge A_2\wedge A_3).
\end{equation}
Then we define the (commutative) interaction for the $U(N)$ theory
\[ I^{U(N)}\in\intcommLIdelim{\bigl(\mathcal{E}^{U(N)}\bigr)}_{\mathbb{C}}=\intcommLIdelim{\bigl(\mathcal{E}^{U(N)}_{\mathbb{C}}\bigr)} \]
to be the interaction such that $I^{U(N)}_{ij}$ vanishes for all $i$ and $j$, except $I^{U(N)}_{03}$, which is represented by \eqref{eqn_CSinteraction}.

Following the principle outlined in Section \ref{sec_complexification}, we treat this complex-valued real theory as a theory over the complexification of the free theory $\mathcal{E}^{U(N)}$. Using the fact that the complexification of the real Lie algebra $\mathfrak{u}_N$ is $\gl{N}{\mathbb{C}}$, we see that the complexification of $\mathcal{E}^{U(N)}$ is the free complex Chern-Simons theory
\[ \mathcal{E}^{GL_N(\mathbb{C})}=\Sigma\Omega^{\bullet}(M,\mathbb{C})\otimes\gl{N}{\mathbb{C}} \]
defined by Example \ref{exm_freeCSthy}, where we use the trace pairing on $\gl{N}{\mathbb{C}}$. The complexification of the propagator $P^{U(N)}$ is the propagator $P^{GL_N(\mathbb{C})}$ defined by Example \ref{exm_CSpropagator} for the Chern-Simons theory with Lie algebra $\gl{N}{\mathbb{C}}$.

\subsubsection{The noncommutative case}

To define our noncommutative analogue of Chern-Simons theory, we consider the free theory whose vector bundle $E$ is the shifted complexified exterior algebra on the cotangent space so that
\[ \mathcal{E}^{\mathrm{NC}}=\Sigma\Omega^{\bullet}(M,\mathbb{C}). \]
This space has the usual integration pairing and Hodge-Laplacian determined by the orientation and Riemannian metric on $M$. Note that this is just the free $GL_1(\mathbb{C})$ Chern-Simons theory. Likewise, the propagator $P^{\mathrm{NC}}$ for our noncommutative theory is just the propagator $P^{GL_1(\mathbb{C})}$.

To define the noncommutative interaction, consider the multilinear map
\begin{equation} \label{eqn_NCCSinteraction}
\bigl(\mathcal{E}^{\mathrm{NC}}\bigr)^{\otimes 3}\to\mathbb{C}, \qquad \omega_1,\omega_2,\omega_3 \longmapsto (-1)^{|\omega_2|}\frac{1}{3}\int_M(\omega_1\wedge \omega_2\wedge \omega_3).
\end{equation}
We define the noncommutative interaction
\[ I^{\mathrm{NC}}\in\intnoncommLI{\mathcal{E}^{\mathrm{NC}}} \]
to be the interaction such that $I^{\mathrm{NC}}_{ijkl}$ vanishes for all $i$, $j$, $k$ and $l$; except $I^{\mathrm{NC}}_{0013}\in\KontHamP{\mathcal{E}^{\mathrm{NC}}}$, which is represented by \eqref{eqn_NCCSinteraction}.

\subsection{The correspondence}

Consider the construction of Example \ref{exm_OTFTfreethy} applied to the free theory $\mathcal{E}^{\mathrm{NC}}$ and Frobenius algebra $\mat{N}{\mathbb{C}}$. The resulting theory will be the free $GL_N(\mathbb{C})$ Chern-Simons theory,
\[ \mathcal{E}^{\mathrm{NC}}_{\mat{N}{\mathbb{C}}} = \mathcal{E}^{GL_N(\mathbb{C})}. \]
The propagator $P^{\mathrm{NC}}_{\mat{N}{\mathbb{C}}}$ defined by \eqref{eqn_OTFTpropagator} will be the propagator $P^{GL_N(\mathbb{C})}$.

Consider the maps
\[ \sigma_{\gamma,\nu}\circ\Morita[\mat{N}{\mathbb{C}}]:\intnoncomm{\mathcal{E}^{\mathrm{NC}}}\to\intcomm{\mathcal{E}^{GL_N(\mathbb{C})}} ,\quad N\geq 1. \]
Using the formula from Example \ref{exm_matrixOTFT} we see that
\begin{equation} \label{eqn_intNCtoUN}
\sigma_{\gamma,\nu}\Morita[\mat{N}{\mathbb{C}}](I^{\mathrm{NC}}) = I^{U(N)}, \quad\text{for all }N\geq 1.
\end{equation}

Let $(I^{\mathrm{NC}})^{\mathrm{R}}$ denote the noncommutative effective field theory associated to $I^{\mathrm{NC}}$ by Proposition \ref{prop_renormalizedthy}. Likewise, denote by $(I^{U(N)})^{\mathrm{R}}$ the effective field theory associated to the local interaction $I^{U(N)}$. It follows from the foregoing that our noncommutative Chern-Simons theory yields the $U(N)$ Chern-Simons theories at all ranks $N$ under the correspondence outlined in Section \ref{sec_StrGaugeCorr}.

\begin{prop}
The noncommutative Chern-Simons theory $(I^{\mathrm{NC}})^{\mathrm{R}}$ yields the $U(N)$ Chern-Simons theories $(I^{U(N)})^{\mathrm{R}}$ at all ranks $N$ under the correspondence defined by Diagram \eqref{dig_largeN}.
\end{prop}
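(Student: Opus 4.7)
The plan is to apply Proposition \ref{prop_renormcommute} twice, since the correspondence in Diagram \eqref{dig_largeN} factors as $\sigma_{\gamma,\nu}\circ\Morita[\mat{N}{\mathbb{C}}]$, and both maps are compatible with the renormalization construction $I\mapsto I^{\mathrm{R}}$.

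First, I would observe that the heavy lifting has already been done. The identification of the relevant free theories and propagators with the $U(N)$ data is in hand: Example \ref{exm_OTFTfreethy} combined with the fact that the complexification of $\un{N}$ is $\gl{N}{\mathbb{C}}$ gives $\mathcal{E}^{\mathrm{NC}}_{\mat{N}{\mathbb{C}}}=\mathcal{E}^{GL_N(\mathbb{C})}$, and the propagator $P^{\mathrm{NC}}_{\mat{N}{\mathbb{C}}}$ defined via \eqref{eqn_OTFTpropagator} coincides with $P^{GL_N(\mathbb{C})}$. The identification at the level of local interactions is Equation \eqref{eqn_intNCtoUN}, namely $\sigma_{\gamma,\nu}\Morita[\mat{N}{\mathbb{C}}](I^{\mathrm{NC}})=I^{U(N)}$.

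Next, the key step is a straightforward diagram chase. Applying the right-hand (Frobenius) square of Proposition \ref{prop_renormcommute} to the local interaction $I^{\mathrm{NC}}\in\intnoncommLI{\mathcal{E}^{\mathrm{NC}}}$ and the Frobenius algebra $\textgoth{A}=\mat{N}{\mathbb{C}}$ yields
\[ \Morita[\mat{N}{\mathbb{C}}]\bigl((I^{\mathrm{NC}})^{\mathrm{R}}\bigr) = \bigl(\Morita[\mat{N}{\mathbb{C}}](I^{\mathrm{NC}})\bigr)^{\mathrm{R}} \]
as noncommutative effective field theories for $\mathcal{E}^{\mathrm{NC}}_{\mat{N}{\mathbb{C}}}=\mathcal{E}^{GL_N(\mathbb{C})}$ with the propagator $P^{GL_N(\mathbb{C})}$. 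Then applying the left-hand ($\sigma_{\gamma,\nu}$) square of Proposition \ref{prop_renormcommute} to the local interaction $\Morita[\mat{N}{\mathbb{C}}](I^{\mathrm{NC}})\in\intnoncommLI{\mathcal{E}^{GL_N(\mathbb{C})}}$ gives
\[ \sigma_{\gamma,\nu}\Bigl(\bigl(\Morita[\mat{N}{\mathbb{C}}](I^{\mathrm{NC}})\bigr)^{\mathrm{R}}\Bigr) = \bigl(\sigma_{\gamma,\nu}\Morita[\mat{N}{\mathbb{C}}](I^{\mathrm{NC}})\bigr)^{\mathrm{R}}. \]
Substituting \eqref{eqn_intNCtoUN} on the right-hand side identifies this with $(I^{U(N)})^{\mathrm{R}}$.

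Composing the two equalities yields
\[ \sigma_{\gamma,\nu}\Morita[\mat{N}{\mathbb{C}}]\bigl((I^{\mathrm{NC}})^{\mathrm{R}}\bigr) = (I^{U(N)})^{\mathrm{R}}, \]
which is exactly the claim. I do not anticipate any serious obstacle: the content of the proposition is essentially a book-keeping corollary of Proposition \ref{prop_renormcommute}, which in turn rests on Theorems \ref{thm_flowNCtoCom} and \ref{thm_flowOTFT} together with Remark \ref{rem_ctermtransform} asserting compatibility of the counterterms with both transformations. The only minor point worth flagging in the write-up is to confirm that the propagator $P^{\mathrm{NC}}$ satisfies the hypotheses of Example \ref{exm_canonicalpropagator} (with $D=Q^*$), so that the counterterm machinery of Theorem \ref{thm_counterterms} and hence the renormalized interactions $I^{\mathrm{R}}$ are defined in both the noncommutative and the $U(N)$ settings; this is immediate, because $P^{\mathrm{NC}}$ is precisely the propagator constructed in Example \ref{exm_CSpropagator}.
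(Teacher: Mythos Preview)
Your proposal is correct and follows the same approach as the paper, which simply cites Equation \eqref{eqn_intNCtoUN} and Proposition \ref{prop_renormcommute}. Your write-up merely unpacks the two commutative squares of that proposition explicitly, which is fine.
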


\begin{proof}
This follows immediately from Equation \eqref{eqn_intNCtoUN} and Proposition \ref{prop_renormcommute}.
\end{proof}

\begin{rem}
Although we have previously mentioned that we will not address the gauge theory aspects of our theory here, we would be remiss if we did not point out that it is ordinarily necessary to quantize gauge theories such as Chern-Simons theory by perturbing the interaction. Nonetheless, it was shown in \cite{CosBVrenormalization} that this is not necessary in the presence of a flat metric, and this was used to analyze Chern-Simons theory on any manifold of dimension at least two.
\end{rem}

\subsection{Vanishing of the counterterms}

As a fairly simple application of the vanishing criteria outlined in Section \ref{sec_vanishing}, we will show that the counterterms for our noncommutative Chern-Simons theory vanish on a flat manifold. Here, we rely on the results of Costello \cite{CosBVrenormalization}, which in turn rely on the results of Kontsevich \cite{KontFeyn}. The analysis of the counterterms in Chern-Simons theory is a nontrivial problem, cf. \cite{AxelSingI, AxelSingII}, carried out in \cite{CosBVrenormalization, KontFeyn} by compactifying the configuration spaces of points using the methods of \cite{FuMcCompact}. Hence, it is convenient that this result may be deduced as a consequence of the correspondence \eqref{eqn_intNCtoUN}, without having to repeat any of this same detailed analysis.

\begin{prop} \label{prop_nocterms}
Let $M$ be a flat compact oriented Riemannian three-manifold and let
\[ (I^{\mathrm{NC}})^{\mathrm{CT}}(\varepsilon)\in\intnoncomm{\mathcal{E}^{\mathrm{NC}}} \]
denote the counterterms associated to the corresponding noncommutative Chern-Simons theory. Then these counterterms vanish modulo constants; that is, for all $\varepsilon>0$;
\[ (I^{\mathrm{NC}})^{\mathrm{CT}}(\varepsilon)\in\mathbb{C}[[\gamma,\nu]]. \]
\end{prop}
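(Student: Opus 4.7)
The plan is to deduce this result purely formally from the large $N$ correspondence, by reducing the vanishing statement for the noncommutative theory to the known vanishing of counterterms for ordinary Chern-Simons theory on a flat manifold.

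First, I would note that by Remark \ref{rem_ctermtransform}, the assignment of counterterms is natural with respect to the transformations $\sigma_{\gamma,\nu}$ and $\Morita[\mat{N}{\mathbb{C}}]$. Combined with Equation \eqref{eqn_intNCtoUN}, which identifies the image of $I^{\mathrm{NC}}$ under $\sigma_{\gamma,\nu}\Morita[\mat{N}{\mathbb{C}}]$ with the $U(N)$ Chern-Simons interaction $I^{U(N)}$, and the fact that $P^{\mathrm{NC}}_{\mat{N}{\mathbb{C}}} = P^{GL_N(\mathbb{C})}$ is the standard Chern-Simons propagator, this yields the identity
\[ \sigma_{\gamma,\nu}\Morita[\mat{N}{\mathbb{C}}]\bigl((I^{\mathrm{NC}})^{\mathrm{CT}}(\varepsilon)\bigr) = (I^{U(N)})^{\mathrm{CT}}(\varepsilon), \quad\text{for all } \varepsilon>0, N\geq 1. \]

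Next, I would invoke the analysis of the counterterms for ordinary Chern-Simons theory on a flat manifold due to Axelrod-Singer \cite{AxelSingI, AxelSingII} and Kontsevich \cite{KontFeyn}, as reformulated in the framework of effective field theory by Costello in \cite{CosBVrenormalization}. These results assert that on a flat compact oriented Riemannian three-manifold, all the (nonconstant) counterterms for $U(N)$ Chern-Simons theory vanish; that is,
\[ (I^{U(N)})^{\mathrm{CT}}(\varepsilon) \in \mathbb{C}[[\hbar]], \quad\text{for all }\varepsilon>0, N\geq 1. \]
Combining this with the displayed equation above yields that $\sigma_{\gamma,\nu}\Morita[\mat{N}{\mathbb{C}}]((I^{\mathrm{NC}})^{\mathrm{CT}}(\varepsilon))$ lies in $\mathbb{C}[[\hbar]]$ for every $N\geq 1$.

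Finally, I would apply Corollary \ref{cor_vanishing}, the vanishing-modulo-constants criterion derived from the Loday-Quillen-Tsygan theorem, to conclude that $(I^{\mathrm{NC}})^{\mathrm{CT}}(\varepsilon) \in \mathbb{C}[[\gamma,\nu]]$ for all $\varepsilon>0$, as required. The only potential obstacle is citational rather than mathematical: one must verify that the vanishing result of \cite{KontFeyn, CosBVrenormalization} applies in exactly the renormalization scheme and propagator conventions used here, and that the ``modulo constants'' qualification aligns precisely with the constants $\mathbb{C}[[\hbar]]$ treated as invisible to the vanishing criterion of Corollary \ref{cor_vanishing}. Once this is checked, the entire argument is a direct application of the formalism developed in Sections \ref{sec_NCeffthy} and \ref{sec_largeN}, requiring no new analytic input beyond the cited flat-space calculation.
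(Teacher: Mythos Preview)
Your proposal is correct and follows essentially the same approach as the paper's proof: use Remark~\ref{rem_ctermtransform} together with Equation~\eqref{eqn_intNCtoUN} to identify $\sigma_{\gamma,\nu}\Morita[\mat{N}{\mathbb{C}}]\bigl((I^{\mathrm{NC}})^{\mathrm{CT}}(\varepsilon)\bigr)$ with $(I^{U(N)})^{\mathrm{CT}}(\varepsilon)$, invoke the known flat-metric vanishing of the $U(N)$ counterterms modulo constants, and then apply Corollary~\ref{cor_vanishing}. The paper cites Theorem~15.2.1 of \cite{CosBVrenormalization} and Proposition~14.10.1(4) of \cite[\S 2.14.10]{CosEffThy} for the commutative input, which addresses precisely the citational point you flag.
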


\begin{proof}
It follows from Theorem 15.2.1 of \cite{CosBVrenormalization} and Proposition 14.10.1(4) of \cite[\S 2.14.10]{CosEffThy} that the counterterms for $U(N)$ Chern-Simons theory all vanish modulo constants in a flat metric, that is
\[ (I^{U(N)})^{\mathrm{CT}}(\varepsilon)\in\mathbb{C}[[\hbar]], \quad\text{for all }N\geq 1. \]
It now follows from Remark \ref{rem_ctermtransform} and Equation \eqref{eqn_intNCtoUN} that for all $N\geq 1$,
\[ \sigma_{\gamma,\nu}\Morita[\mat{N}{\mathbb{C}}]\bigl((I^{\mathrm{NC}})^{\mathrm{CT}}(\varepsilon)\bigr) = (I^{U(N)})^{\mathrm{CT}}(\varepsilon)\in\mathbb{C}[[\hbar]]. \]
Now apply the vanishing criteria from Corollary \ref{cor_vanishing}.
\end{proof}

\begin{rem}
Again, we emphasize that Proposition \ref{prop_nocterms} holds for any manifold $M$ of odd dimension $n\geq 3$, provided that we switch to a $\mathbb{Z}/2\mathbb{Z}$-grading.

Stronger results may be derived using our vanishing criteria, but lie outside the scope of the current paper. For instance, we may show using the results of \cite{CosBVrenormalization} that the noncommutative Chern-Simons action on a flat manifold satisfies the noncommutative analogue of the quantum master equation, see \cite[\S 7.3]{NCRBV}. Moreover, in \cite{CosBVrenormalization} this fact was used to demonstrate that Chern-Simons theory has a canonical quantization on \emph{any} manifold. To do so, it was necessary to show that effective field theories form a sheaf over the manifold. This topic too, lies outside the present paper's scope.
\end{rem}

\appendix
\section{Topological vector spaces}

In this appendix we provide some basic definitions and recall some standard results on topological vector spaces. We first begin by providing a definition for the $C^{\infty}$-topology on the space of smooth sections $\mathcal{E}:=\Gamma(M,E)$ of a $\gf$-vector bundle $E$ over a smooth manifold $M$.

\begin{defi} \label{def_cinftopology}
Let $U$ be any open neighborhood of the zero section in the $i$th jet bundle $J^i(E)$ of $E$ and let $K$ be any compact subset of $M$. Consider the family
\[ \mathscr{G}^i(K,U):=\{\gamma\in\mathcal{E}:\gamma^i(K)\subset U\} \]
of subsets of sections of $E$ that is generated by allowing $K$ and $U$ to vary freely, where $\gamma^i$ denotes the prolongation of $\gamma$ to $J^i(E)$. This family forms a basis of neighborhoods of zero in $\mathcal{E}$ for the \emph{$C^i$-topology} on $\mathcal{E}$.

To obtain the \emph{$C^\infty$-topology}, we consider the basis of neighborhoods that is generated when we also allow $i\geq 0$ to vary. This will be the projective limit of the $C^i$-topologies.
\end{defi}

The $C^i$-topology may be equivalently defined by considering the family of seminorms that arises by placing any metric on the jet bundle $J^i(E)$. By choosing a system of charts for $M$ consisting of (countably many) relatively compact open subsets over which the bundle $E$ is trivial, we can express the $C^\infty$-topology on $\mathcal{E}$ as the projective limit of the $C^\infty$-topologies over each chart. The latter is known to form a nuclear Fr\'echet space, see \cite[Ch. 10, 51]{Treves}. It therefore follows from \cite[Ch. 50]{Treves} that $\mathcal{E}$ is also a nuclear Fr\'echet space.

If $E$ and $F$ are smooth $\gf$-vector bundles over manifolds $M$ and $N$ respectively, we may consider their external tensor product $E\boxtimes F$, which is a vector bundle over $M\times N$ formed by taking the tensor product of the pullback bundles of $E$ and $F$ along the canonical projection maps.

\begin{prop}
There is a canonical isomorphism of topological vector spaces
\begin{displaymath}
\begin{array}{ccc}
\Gamma(M,E)\cotimes\Gamma(N,F) & \cong & \Gamma(M\times N,E\boxtimes F) \\
(\xi,\eta) & \longmapsto & \left[(x,y)\mapsto\xi(x)\otimes\eta(y)\right]
\end{array}
\end{displaymath}
\end{prop}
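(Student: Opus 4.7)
The plan is to construct the map first, then establish that it is a topological isomorphism by a reduction-to-trivialization argument combined with the classical scalar result of Grothendieck--Schwartz.

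First I would check that the formula $(\xi,\eta)\mapsto[(x,y)\mapsto\xi(x)\otimes\eta(y)]$ defines a continuous bilinear map $\Gamma(M,E)\times\Gamma(N,F)\to\Gamma(M\times N,E\boxtimes F)$. Continuity is a straightforward estimate on seminorms: in any local trivialization, prolongations of $\xi\otimes\eta$ at a point of $M\times N$ are polynomial combinations of prolongations of $\xi$ and $\eta$ at the corresponding factor points, so the $C^i$-seminorms over a compact $K\times L$ are controlled by the $C^i$-seminorms of $\xi$ over $K$ and of $\eta$ over $L$. By the universal property of the projective tensor product and the completeness of $\Gamma(M\times N,E\boxtimes F)$ (which is Fr\'echet, by the same argument sketched in the appendix for $\Gamma(M,E)$), this bilinear map extends uniquely to a continuous linear map
\[ \Phi:\Gamma(M,E)\cotimes\Gamma(N,F)\longrightarrow\Gamma(M\times N,E\boxtimes F). \]

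Next I would reduce to the scalar case. Choose a system of charts $\{U_\alpha\}$ of $M$ and $\{V_\beta\}$ of $N$, consisting of relatively compact opens over which $E$ and $F$ are trivial, together with subordinate partitions of unity $\{\chi_\alpha\}$ and $\{\psi_\beta\}$. The $C^\infty$-topology on $\Gamma(M,E)$ presents it as a closed subspace of $\prod_\alpha C^\infty(U_\alpha,\gf^{\mathrm{rk}(E)})$, and similarly for $F$ and for $E\boxtimes F$ with respect to the product cover $\{U_\alpha\times V_\beta\}$. Using that the completed projective tensor product of nuclear Fr\'echet spaces commutes with countable projective limits (Proposition~50.1 and the nuclearity discussion in \cite[Ch.~50--51]{Treves}) and that it is compatible with taking closed subspaces in the nuclear Fr\'echet setting, the problem reduces to showing, for each pair of charts, the isomorphism
\[ C^\infty(U_\alpha,\gf^m)\cotimes C^\infty(V_\beta,\gf^n)\;\cong\;C^\infty(U_\alpha\times V_\beta,\gf^{mn}). \]
Since tensoring by a finite-dimensional space commutes with $\cotimes$, this in turn reduces to the classical result
\[ C^\infty(U)\cotimes C^\infty(V)\;\cong\;C^\infty(U\times V), \]
which is Theorem~51.6 of \cite{Treves}.

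I would then verify that $\Phi$ intertwines the decompositions induced by the partitions of unity: under the map $\gamma\mapsto(\chi_\alpha\psi_\beta\gamma)_{\alpha,\beta}$, a pure tensor $\xi\otimes\eta$ on the left maps to $(\chi_\alpha\xi\otimes\psi_\beta\eta)_{\alpha,\beta}$, which on the right is the corresponding local piece of $\xi\boxtimes\eta$. By continuity and density of finite sums of pure tensors in $\Gamma(M,E)\cotimes\Gamma(N,F)$, this identification persists on the completion. Combined with the local isomorphisms above, this forces $\Phi$ to be a topological isomorphism.

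The main obstacle I anticipate is the bookkeeping needed to justify the interchange of the completed projective tensor product with the countable projective limit describing the $C^\infty$-topology via the cover, as well as the compatibility with restriction to closed subspaces (cut out by the cocycle/gluing conditions of the bundle). In the nuclear Fr\'echet category both operations are well-behaved, but the argument has to cite the right preservation properties of $\cotimes$ rather than merely invoke the scalar kernel theorem; everything else is routine continuity and density.
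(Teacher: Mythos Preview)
Your proposal is correct and follows essentially the same approach as the paper: reduce to the scalar case $C^\infty(U)\cotimes C^\infty(V)\cong C^\infty(U\times V)$ of Theorem~51.6 in \cite{Treves} via local trivializations and a partition of unity. The paper's proof is a two-sentence sketch of exactly this strategy; you have simply filled in the details (continuity of the bilinear map, compatibility of $\cotimes$ with projective limits and closed subspaces) that the paper omits.
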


\begin{proof}
This result is proved in Theorem 51.6 of \cite{Treves} when $E$ and $F$ are trivial line bundles and $M$ and $N$ are open subsets of real affine spaces. The general case above may be reduced to this case using a partition of unity.
\end{proof}

Given two locally convex Hausdorff topological vector spaces $\mathcal{U}$ and $\mathcal{V}$, there are two main topologies that we may consider on the space $\Hom_{\gf}(\mathcal{U},\mathcal{V})$ of continuous linear maps:
\begin{itemize}
\item
pointwise convergence, also known as \emph{weak} convergence; and
\item
uniform convergence on bounded sets, which is also called \emph{strong} convergence.
\end{itemize}

Now suppose that $\mathcal{U}$ is in fact a nuclear Fr\'echet space, in which case it must also be a Montel space; see \cite[Ch. 50]{Treves}. Given a one-parameter family of linear operators
\[ \phi_t:\mathcal{U}\to\mathcal{V}, \quad t\in (0,1) \]
it follows from the Banach-Steinhaus Theorem \cite[Ch. 33]{Treves} that $\phi_t$ converges weakly as $t\to 0$ if and only if it converges strongly.

\begin{lemma} \label{lem_compositionconverge}
Let $\mathcal{U}$ and $\mathcal{V}$ be nuclear Fr\'echet spaces and $\mathcal{W}$ be a locally convex Hausdorff topological vector space and suppose that as $t\to 0$, we have weakly converging one-parameter families of linear operators;
\[ \phi_t:\mathcal{U}\to\mathcal{V},\quad\psi_t:\mathcal{V}\to\mathcal{W}; \qquad t\in (0,1). \]
Then the one-parameter family $\psi_t\circ\phi_t:\mathcal{U}\to\mathcal{W}$ converges strongly as $t\to 0$.
\end{lemma}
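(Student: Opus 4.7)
The plan is to leverage the Banach--Steinhaus theorem twice and then reduce to an elementary equicontinuity estimate. First I would invoke the observation recorded immediately before the lemma's statement: since $\mathcal{U}$ and $\mathcal{V}$ are nuclear Fr\'echet, hence Montel, both weakly convergent families $\phi_t$ and $\psi_t$ converge \emph{strongly}---that is, uniformly on bounded sets---to linear limits $\phi:\mathcal{U}\to\mathcal{V}$ and $\psi:\mathcal{V}\to\mathcal{W}$. Applying Banach--Steinhaus again, this time using only that $\mathcal{U}$ and $\mathcal{V}$ are Fr\'echet (hence barrelled), yields equicontinuity of each family and continuity of the two limit operators.

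Next I would fix an arbitrary bounded set $B\subset\mathcal{U}$ and an arbitrary neighborhood $W$ of zero in $\mathcal{W}$, and aim to show that $(\psi_t\circ\phi_t - \psi\circ\phi)(B)\subset W$ for all sufficiently small $t$. The obvious decomposition
\[ \psi_t\circ\phi_t \;-\; \psi\circ\phi \;=\; \psi_t\circ(\phi_t-\phi) \;+\; (\psi_t-\psi)\circ\phi \]
splits the problem into two pieces. For the second summand, continuity of $\phi$ forces $\phi(B)$ to be bounded in $\mathcal{V}$, and the strong convergence $\psi_t\to\psi$ then yields $(\psi_t-\psi)\phi(B)\subset\tfrac{1}{2}W$ for $t$ small (where $\tfrac{1}{2}W$ denotes any chosen balanced neighborhood with $\tfrac{1}{2}W+\tfrac{1}{2}W\subset W$). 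For the first summand, equicontinuity of $\{\psi_t\}$ produces a neighborhood $V$ of zero in $\mathcal{V}$ with $\psi_t(V)\subset\tfrac{1}{2}W$ for every $t$; the strong convergence $\phi_t\to\phi$ then gives $(\phi_t-\phi)(B)\subset V$ for $t$ small, whence $\psi_t(\phi_t-\phi)(B)\subset\tfrac{1}{2}W$.

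Combining the two estimates produces the required uniform convergence of $\psi_t\circ\phi_t$ to $\psi\circ\phi$ on the bounded set $B$, which is exactly strong convergence. The only subtle ingredient is the equicontinuity of $\{\psi_t\}$: this is where the Fr\'echet (and not merely locally convex Hausdorff) hypothesis on the intermediate space $\mathcal{V}$ is essential, since otherwise pointwise convergence on its own would not suffice to control $\psi_t$ on the bounded target set $(\phi_t-\phi)(B)$. No further difficulties are expected; the remaining work consists of routine bookkeeping with balanced neighborhoods of zero.
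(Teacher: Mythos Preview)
Your proof is correct and uses the same basic ingredients as the paper---Banach--Steinhaus and the upgrade from weak to strong convergence on Montel spaces---but the organization differs. The paper's argument is shorter: rather than working directly with an arbitrary bounded set $B\subset\mathcal{U}$ and the decomposition $\psi_t\circ(\phi_t-\phi)+(\psi_t-\psi)\circ\phi$, it first invokes Banach--Steinhaus on $\mathcal{U}$ to reduce strong convergence of the composite to \emph{pointwise} convergence, and then for a fixed $u$ observes that the single convergent sequence $\{\phi_t(u)\}$ is bounded in $\mathcal{V}$, so strong convergence of $\psi_t$ on that bounded set finishes the job. Your route is equally valid and arguably more self-contained, since it makes explicit the equicontinuity of $\{\psi_t\}$ needed to handle the first summand; the paper's route trades that explicitness for a quicker reduction that avoids ever naming an arbitrary bounded $B$.
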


\begin{proof}
The result would be trivial if it were not for the fact that composition of operators is not generally continuous (in the strong operator topology) when $\mathcal{V}$ is a nuclear space. By the Banach-Steinhaus Theorem, it is sufficient to show that if $\phi_n$ and $\psi_n$ are sequences of operators converging strongly to zero, then the sequence $\psi_n(\phi_n(u))$ converges to zero at every point $u\in\mathcal{U}$. Since the sequence $\phi_n(u)$ is convergent and therefore bounded, the result now follows trivially.
\end{proof}

Another result that we often make use of throughout the paper is a corollary of Proposition 50.5 of \cite{Treves}.

\begin{prop}
If $\mathcal{U}$ is a nuclear Fr\'echet space and $\mathcal{V}$ is a complete locally convex Hausdorff topological vector space then
\[ \mathcal{U}^{\dag}\cotimes\mathcal{V} = \Hom_{\gf}(\mathcal{U},\mathcal{V}). \]
\end{prop}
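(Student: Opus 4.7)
The plan is to deduce this as a direct corollary of Proposition 50.5 of \cite{Treves} by making the relevant identifications explicit. First I would construct the canonical evaluation map. The bilinear pairing
\[ \mathcal{U}^{\dag} \times \mathcal{V} \longrightarrow \Hom_{\gf}(\mathcal{U},\mathcal{V}), \qquad (\phi,v) \longmapsto [u\mapsto \phi(u)v], \]
is separately continuous, and is jointly continuous once $\mathcal{U}^{\dag}$ carries the strong topology: this follows from the Banach-Steinhaus theorem, since $\mathcal{U}$ is Fr\'echet and hence barrelled. By the universal property of the projective tensor product, together with the completeness of $\Hom_{\gf}(\mathcal{U},\mathcal{V})$ in the strong operator topology (which holds because $\mathcal{V}$ is complete and $\mathcal{U}$ is barrelled), this pairing extends uniquely to a continuous linear map
\[ \iota:\mathcal{U}^{\dag}\cotimes\mathcal{V}\longrightarrow\Hom_{\gf}(\mathcal{U},\mathcal{V}), \]
which on elementary tensors is the formula above.

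Next I would argue that $\iota$ is a topological isomorphism. Since $\mathcal{U}$ is a nuclear Fr\'echet space, so is its strong dual $\mathcal{U}^{\dag}$, and $\mathcal{U}$ is in particular reflexive (being Montel), so $\mathcal{U}^{\dag\dag} = \mathcal{U}$. The core input is Proposition 50.5 of \cite{Treves}, which identifies, for nuclear $\mathcal{U}^{\dag}$ and complete $\mathcal{V}$,
\[ \mathcal{U}^{\dag}\cotimes\mathcal{V} \;\cong\; \mathcal{U}^{\dag}\,\varepsilon\,\mathcal{V} \]
where the right-hand side is the $\varepsilon$-product. Using reflexivity of $\mathcal{U}$, the $\varepsilon$-product is then canonically identified with the space of continuous linear maps from $\mathcal{U}$ to $\mathcal{V}$ endowed with the topology of uniform convergence on bounded (equivalently, on equicontinuous) sets. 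Chasing the definitions, this composite identification agrees with $\iota$ on the dense subspace $\mathcal{U}^{\dag}\otimes\mathcal{V}$, hence coincides with $\iota$ globally by continuity.

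The main obstacle is bookkeeping: one must keep careful track of the various topologies (the strong dual topology on $\mathcal{U}^{\dag}$, the strong operator topology on $\Hom_{\gf}(\mathcal{U},\mathcal{V})$, the $\varepsilon$-topology, and the projective tensor topology) and verify that the isomorphism obtained by assembling the two steps above really does restrict to $\iota$ on elementary tensors. The single nontrivial analytic ingredient underlying Proposition 50.5 is the nuclear representation theorem: every continuous linear map $T:\mathcal{U}\to\mathcal{V}$ out of a nuclear Fr\'echet space admits an absolutely convergent expansion
\[ T(u) = \sum_{i=1}^{\infty}\lambda_i\,\phi_i(u)\,v_i, \qquad \sum_i |\lambda_i|<\infty, \]
with $\{\phi_i\}$ equicontinuous in $\mathcal{U}^{\dag}$ and $v_i\to 0$ in $\mathcal{V}$. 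Granted this, surjectivity of $\iota$ is immediate by summing $\sum_i \lambda_i\,\phi_i\otimes v_i$ in $\mathcal{U}^{\dag}\cotimes\mathcal{V}$, and injectivity follows from the separating property of $\mathcal{U}^{\dag}$ on $\mathcal{U}$ together with Hahn-Banach applied in $\mathcal{V}$.
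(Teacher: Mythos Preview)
Your approach is the same as the paper's: the paper offers no proof of this proposition beyond noting in the preceding sentence that it ``is a corollary of Proposition 50.5 of \cite{Treves}''. Your proposal simply unpacks that citation, so there is nothing to compare.

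One correction, though: the sentence ``Since $\mathcal{U}$ is a nuclear Fr\'echet space, so is its strong dual $\mathcal{U}^{\dag}$'' is not right as stated. The strong dual of an infinite-dimensional Fr\'echet space is never Fr\'echet (it is a complete nuclear DF-space). What is true, and all that your argument actually uses, is that $\mathcal{U}^{\dag}$ is \emph{nuclear} (this is Proposition~50.6 of \cite{Treves}) and that $\mathcal{U}$ is reflexive. With that adjustment the rest of your sketch goes through: nuclearity of $\mathcal{U}^{\dag}$ gives $\mathcal{U}^{\dag}\cotimes\mathcal{V} = \mathcal{U}^{\dag}\,\varepsilon\,\mathcal{V}$, and reflexivity identifies $(\mathcal{U}^{\dag})^{\dag}$ with $\mathcal{U}$ so that the $\varepsilon$-product becomes $\Hom_{\gf}(\mathcal{U},\mathcal{V})$.
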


Note that when we make use of the above proposition in the text, we must be careful to apply the Koszul sign rule, so that $(f\otimes v)[u]:=(-1)^{|v||u|}f(u)v$.

\end{document}